\let\mathcal\undefined
\DeclareMathAlphabet{\mathcal}{U}{dutchcal}{m}{n}
\tikzset{curve/.style={settings={#1},to path={(\tikztostart)
			.. controls ($(\tikztostart)!\pv{pos}!(\tikztotarget)!\pv{height}!270:(\tikztotarget)$)
			and ($(\tikztostart)!1-\pv{pos}!(\tikztotarget)!\pv{height}!270:(\tikztotarget)$)
			.. (\tikztotarget)\tikztonodes}},
	settings/.code={\tikzset{quiver/.cd,#1}
		\def\pv##1{\pgfkeysvalueof{/tikz/quiver/##1}}},
	quiver/.cd,pos/.initial=0.35,height/.initial=0}
\tikzset{tail reversed/.code={\pgfsetarrowsstart{tikzcd to}}}
\tikzset{2tail/.code={\pgfsetarrowsstart{Implies[reversed]}}}
\tikzset{2tail reversed/.code={\pgfsetarrowsstart{Implies}}}
\newcommand*{\doublerightarrow}[2]{\mathrel{
		\settowidth{\@tempdima}{$\scriptstyle#1$}
		\settowidth{\@tempdimb}{$\scriptstyle#2$}
		\ifdim\@tempdimb>\@tempdima \@tempdima=\@tempdimb\fi
		\mathop{\vcenter{
				\offinterlineskip\ialign{\hbox to\dimexpr\@tempdima+1em{##}\cr
					\rightarrowfill\cr\noalign{\kern.5ex}
					\rightarrowfill\cr}}}\limits^{\!#1}_{\!#2}}}
\newcommand*{\triplerightarrow}[1]{\mathrel{
		\settowidth{\@tempdima}{$\scriptstyle#1$}
		\mathop{\vcenter{
				\offinterlineskip\ialign{\hbox to\dimexpr\@tempdima+1em{##}\cr
					\rightarrowfill\cr\noalign{\kern.5ex}
					\rightarrowfill\cr\noalign{\kern.5ex}
					\rightarrowfill\cr}}}\limits^{\!#1}}}
\def\@tocline#1#2#3#4#5#6#7{\relax
	\ifnum #1>\c@tocdepth % then omit
	\else
	\par \addpenalty\@secpenalty\addvspace{#2}%
	\begingroup \hyphenpenalty\@M
	\@ifempty{#4}{%
		\@tempdima\csname r@tocindent\number#1\endcsname\relax
	}{%
		\@tempdima#4\relax
	}%
	\parindent\z@ \leftskip#3\relax \advance\leftskip\@tempdima\relax
	\rightskip\@pnumwidth plus4em \parfillskip-\@pnumwidth
	#5\leavevmode\hskip-\@tempdima
	\ifcase #1
	\or\or \hskip 1em \or \hskip 2em \else \hskip 3em \fi%
	#6\nobreak\relax
	\dotfill\hbox to\@pnumwidth{\@tocpagenum{#7}}\par
	\nobreak
	\endgroup
	\fi}
\newcommand{\infinity}{\mbox{\footnotesize $\infty$}}
\newcommand{\LL}{\mathbb{L}}
\newcommand{\Prim}{\operatorname{Prim}}
\newcommand{\triv}{\operatorname{triv}}
\newcommand{\oblv}{\operatorname{oblv}}
\newcommand{\fil}{\operatorname{fil}}
\newcommand{\Mor}{{\operatorname{Mor}}}
\newcommand{\Mormixgr}{\Mor^{\mixgr}}
\newcommand{\CoCommmixgr}{{\varepsilon\text{-}\operatorname{cCcAlg}^{\operatorname{gr}}_{\Bbbk}}}
\newcommand{\Einf}{\mathbb{E}_{\infty}}
\newcommand{\Linf}{\mathbb{L}_{\infty}}
\newcommand{\Lie}{{\operatorname{Lie}}}
\newcommand{\dgLie}{{\operatorname{dgLie}}}
\newcommand{\CEcAlgmixgr}{{\varepsilon\text{-}\operatorname{CEcAlg}^{\operatorname{gr}}_{\Bbbk}}}
\newcommand{\Lieperf}{\Lie^{\operatorname{perf}}}
\newcommand{\Rep}{{\operatorname{Rep}}}
\newcommand{\dgRep}{{\operatorname{dgRep}}}
\newcommand{\GrpChev}{{{\operatorname{Grp}}{\lp\operatorname{Chev}^{\operatorname{enh}}\rp}}}
\newcommand{\LMod}{{\operatorname{LMod}}}
\newcommand{\RMod}{{\operatorname{RMod}}}
\newcommand{\GrpLie}{{{\operatorname{Grp}}{\lp\Lie_{\Bbbk}\rp}}}
\newcommand{\op}{\operatorname{op}}
\newcommand{\End}{{\operatorname{End}}}
\newcommand{\Liemixgr}{\varepsilon\text{-}\Lie^{\gr}}
\newcommand{\Perf}{{\operatorname{Perf}}}
\newcommand{{\Cn}}{\operatorname{Cn}}
\newcommand{\Cnmixgr}{\Cn^{\gr}_{\varepsilon}}
\newcommand{\BGa}{\mathsf{B}\Ga}
\newcommand{\RR}{\mathbb{R}}
\newcommand{\colim}{\operatorname{colim}}
\newcommand{\Mapin}{{\underline{\smash{\Map}}}}
\newcommand{\scrC}{\mathscr{C}}
\newcommand{\scrD}{\mathscr{D}}
\newcommand{\Hom}{{\operatorname{Hom}}}
\newcommand{\hsp}{\hspace{0.1cm}}
\newcommand{\Tor}{{\operatorname{Tor}}}
\newcommand{\Ext}{{\operatorname{Ext}}}
\newcommand{\fib}{{\operatorname{fib}}}
\newcommand{\Spec}{{\operatorname{Spec}}}
\newcommand{\Qcoh}{{\operatorname{QCoh}}}
\newcommand{\Coh}{{\operatorname{Coh}}}
\newcommand{\IndCoh}{{\operatorname{IndCoh}}}
\newcommand{\lpd}{(\!(}
\newcommand{\rpd}{)\!)}
\newcommand{\lqd}{[\!\![}
\newcommand{\rqd}{]\!\!]}
\newcommand{\Mod}{{\operatorname{Mod}}}
\newcommand{\lp}{\left(}
\newcommand{\rp}{\right)}
\newcommand{\Catinfty}{\operatorname{Cat}_{\infty}}
\newcommand{\Ind}{{\operatorname{Ind}}}
\newcommand{\ModCEmixgr}{{{\Mod_{\CEgmixgr}}{\lp\Modmixgr_{\Bbbk}\rp}}}
\newcommand{\ModCEmixgrconst}{{{\Mod^{\operatorname{const}}_{\CEgmixgr}}{\lp\Modmixgr_{\Bbbk}\rp}}}
\newcommand*{\longhookrightarrow}{\ensuremath{\lhook\joinrel\relbar\joinrel\rightarrow}}
\appto\maketitle{%
	\let\@makefnmark\relax  \let\@thefnmark\relax
	\ifx\@empty\addresses\else\@footnotetext{%
		\vskip-\bigskipamount\@setaddresses}
}
\def\enddoc@text{}
\newcounter{savedchapter}% for remembering the last chapter number
\preto\appendix{\setcounter{savedchapter}{\arabic{chapter}}}% remembering!
\newcommand\resumechapters{% the \appendix command with some tweaks
	\setcounter{chapter}{\arabic{savedchapter}}% restore chapter number
	\setcounter{section}{0}% reset section counter
	\gdef\@chapapp{\chaptername}% reset chapter name
	\gdef\thechapter{\@arabic\c@chapter}% make chapter numbers arabic
}
\newcommand{\dgMod}{{\operatorname{dgMod}}}
\newcommand{\Gr}{{\operatorname{Gr}}}
\newcommand{\Ebb}{\mathbb{E}}
\newcommand{\Ga}{\mathbb{G}_{\operatorname{a},\Bbbk}}
\newcommand{\ZZ}{\mathbb{Z}}
\newcommand{\Map}{{\operatorname{Map}}}
\newcommand{\Sym}{{\operatorname{Sym}}}
\newcommand{\CoCommmixgraug}{{\varepsilon\text{-}\operatorname{cCcAlg}^{\operatorname{gr}}_{\Bbbk//\Bbbk}}}
\newcommand{\CoCommgraug}{\operatorname{cCcAlg}_{\Bbbk//\Bbbk}^{\gr}}
\newcommand{\Symgr}{\operatorname{Sym}^{\gr}}
\newcommand{\gfrak}{\mathfrak{g}}
\newcommand{\hfrak}{\mathfrak{h}}
\newcommand{\Lfrak}{\mathfrak{L}}
\newcommand{\Ug}{{\operatorname{U}}{(\gfrak)}}
\newcommand{\Umixgr}{{\operatorname{U}^{\gr}_{\varepsilon}}}
\newcommand{\UCngmixgr}{{\Umixgr}{\lp\Cnmixgr(\gfrak)\rp}}
\newcommand{\CE}{{\operatorname{CE}}}
\newcommand{\CEgmixgr}{{\CE}^{\varepsilon}{\lp\gfrak\rp}}
\newcommand{\Fun}{{\operatorname{Fun}}}
\newcommand{\CAlg}{{\operatorname{CAlg}}}
\newcommand{\dgCAlg}{{\operatorname{dgCAlg}}}
\newcommand{\Alg}{{\operatorname{Alg}}}
\newcommand{\dgAlg}{{\operatorname{dgAlg}}}
\newcommand{\Modgr}{\Mod^{{\operatorname{gr}}}}
\newcommand{\gr}{{\operatorname{gr}}}
\newcommand{\Modmixgr}{\varepsilon\operatorname{-}\Mod^{{\operatorname{gr}}}}
\newcommand{\Modmixgrcn}{\varepsilon\operatorname{-}\Mod^{{\operatorname{gr},\geqslant0}}}
\newcommand{\Perfmixgrcn}{\varepsilon\operatorname{-}\Perf^{{\operatorname{gr},\geqslant0}}}
\newcommand{\Modmixgrccn}{\varepsilon\operatorname{-}\Mod^{{\operatorname{gr},\leqslant0}}}
\newcommand{\Perfmixgrccn}{\varepsilon\operatorname{-}\Perf^{{\operatorname{gr},\leqslant0}}}
\newcommand{\mixgr}{\varepsilon\operatorname{-gr}}
\newcommand{\Algmixgr}{\varepsilon\operatorname{-}\Alg^{{\operatorname{gr}}}}
\newcommand{\CAlgmixgr}{\varepsilon\operatorname{-}\CAlg^{{\operatorname{gr}}}}
\newcommand{\CAlgmixgraug}{\varepsilon\operatorname{-}\CAlg_{\Bbbk//\Bbbk}^{{\operatorname{gr}}}}
\newcommand{\CAlggraug}{\CAlg_{\Bbbk//\Bbbk}^{{\operatorname{gr}}}}
\newcommand{\LModmixgr}{\varepsilon\operatorname{-}\LMod^{{\operatorname{gr}}}}
\newcommand{\RModmixgr}{\varepsilon\operatorname{-}\RMod^{{\operatorname{gr}}}}
\newcommand{\Modfil}{\Mod^{{\operatorname{fil}}}}
\newcommand{\otimesmixgr}{\otimes^{\varepsilon\text{-}\gr}}
\newcommand{\Mapmixgr}{{\Mapin}^{\mixgr}}
\numberwithin{equation}{subsection}
\theoremstyle{plain}
\newtheorem{lemman}[equation]{Lemma}
\newtheorem{propositionn}[equation]{Proposition}
\newtheorem{corollaryn}[equation]{Corollary}
\newtheorem{conjn}[equation]{Conjecture}
\newtheorem*{theoremn}{Theorem}
\theoremstyle{definition}
\newtheorem{defn}[equation]{Definition}
\newtheorem{parag}[equation]{}
\newtheorem{remark}[equation]{Remark}
\newtheorem{construction}[equation]{Construction}
\newtheorem{porism}[equation]{Porism}
\newtheorem{warning}[equation]{Warning}
\newtheorem{notation}[equation]{Notation}
\newtheorem{exmp}[equation]{Example}
\newcommand\restr[2]{{
		\left.\kern-\nulldelimiterspace
		#1
		\vphantom{\|}
		\right|_{#2} 
}}
\providecommand{\abstract}{}
\providecommand{\keyword}[1]{\textbf{\textit{Key words and phrases---}} #1}
\begin{document}

 \title{Mixed graded structure on Chevalley-Eilenberg functors}
\author{Emanuele Pavia}
\address{Università degli Studi di Milano}
\email{emanuele.pavia@unimi.it}

	\maketitle
	\begin{abstract}
In this paper, we shall provide a purely \infinity-categorical construction of the mixed graded structure (in the sense of \cite{PTVV,CPTVV}) over Chevalley-Eilenberg complexes computing homology and cohomology of Lie algebras defined over a field $\Bbbk$ of characteristic $0$. While this additional piece of structure on Chevalley-Eilenberg complexes is expected and described in \cite{CG} in terms of explicit models, there is not a formal and model independent description of the mixed graded Chevalley-Eilenberg \infinity-functors in available literature. After constructing in all details the Chevalley-Eilenberg \infinity-functors and studying their main formal properties, we present some further conjectures on their behavior.
	\end{abstract}
\keyword{Mixed graded modules, algebras and coalgebras, Chevalley-Eilenberg complexes, Lie algebras, homology and cohomology.}
\tableofcontents
\section*{Introduction}
\addtocontents{toc}{\protect\setcounter{tocdepth}{0}}
\subsection*{Motivations}
Given a classical Lie algebra $\gfrak$ defined over a base field of characteristic $0$ together with some fixed representation $M$, one can consider its homology $\operatorname{H}_{\bullet}(\gfrak;\hsp M)$ and cohomology $\operatorname{H}^{\bullet}(\gfrak;\hsp M)$ with values in $M$. It is well known that these gadgets can be seen under many different, but equivalent, perspectives.
\begin{enumerate}
\item The $n$-th homology $\Bbbk$-vector space $\operatorname{H}_{n}(\gfrak;\hsp M)$ is the $n$-th left derived functor of the functor sending a representation $M$ of $\gfrak$ to its invariant sub-$\Bbbk$-vector space $M^{\gfrak}$. Analogously, the $n$-th cohomology $\Bbbk$-vector space $\operatorname{H}^n(\gfrak;\hsp M)$ is the $n$-th right derived functor of the functor sending $M$ to the $\Bbbk$-vector space of coinvariants $M/\gfrak\cdot M$.
\item The homology $\operatorname{H}_{\bullet}(\gfrak;\hsp M)$ is also computed by $\Tor^{\Ug}_{\bullet}{\lp \Bbbk,\hsp M\rp}$, where $\Ug$ is the universal enveloping algebra of $\gfrak$ and $\Bbbk$ is the trivial representation of $\gfrak$. Analogously, the cohomology $\operatorname{H}^{\bullet}(\gfrak;\hsp M)$ is computed by $\Ext^{\bullet}_{\Ug}{\lp\Bbbk,\hsp M\rp}$. 
\end{enumerate}
The latter point of view is particularly interesting, since $\Bbbk$ admits a well known projective resolution as a $\Ug$-module (see \cite[Theorem $7.7.2$]{weibel}) given by the graded $\Bbbk$-vector space $$\operatorname{V}_{\bullet}(\gfrak)\coloneqq\Ug\otimes_{\Bbbk}\bigwedge\nolimits^{\!\bullet}\gfrak$$endowed with differential\begin{align*}
u\otimes\lp g_1\wedge\ldots\wedge g_n\rp&\mapsto\sum_{1\leqslant i\leqslant n}(-1)^{i+1}u\cdot g_i\otimes\lp g_1\wedge\ldots\wedge\widehat{g}_i\wedge\ldots\wedge g_n\rp\\&+\sum_{1\leqslant i<j\leqslant n}(-1)^{i+j}u\otimes \lp g_1\wedge\ldots\wedge\widehat{g}_i\wedge\ldots\wedge g_{j-1}\wedge[g_i,g_j]\wedge\ldots\wedge g_n\rp.
\end{align*}In particular, one can compute the homology and the cohomology of the Lie algebra $\gfrak$ with coefficients in a representation $M$ by considering respectively the honest (underived) tensor product $\CE_{\bullet}(\gfrak;\hsp M)\coloneqq M\otimes_{\Ug}\operatorname{V}_{\bullet}(\gfrak)$ and the hom-vector space $\CE^{\bullet}(\gfrak;\hsp M)\coloneqq \Hom_{\Ug}{\lp \operatorname{V}_{\bullet}(\gfrak),\hsp M\rp}$, and taking their homology. When $M$ is the trivial representation $\Bbbk$, then the complexes $\CE_{\bullet}(\gfrak;\hsp \Bbbk)$ and $\CE^{\bullet}(\gfrak;\hsp \Bbbk)$ are respectively a coaugmented cocommutative differential graded coalgebra and an augmented commutative differential graded algebra. Moreover, for any other representation $M$ of $\gfrak$, the chain complex $\CE^{\bullet}(\gfrak;\hsp M)$ is naturally a differential graded $\CE^{\bullet}(\gfrak;\hsp \Bbbk)$-module. Actually, there is more: the Chevalley-Eilenberg coalgebra functor determines an equivalence of categories between the category of Lie algebras over $\Bbbk$ and the differential graded cocommutative coalgebras over $\Bbbk$ which are semi-free over generators sitting in homological degree $1$ (i.e., which are isomorphic to $\bigwedge^{\bullet}V$ for some $\Bbbk$-vector space $V$ as graded coalgebras). The analogous statement holds also for the Chevalley-Eilenberg algebra functor, if one considers \textit{finitely generated} Lie algebras.\\

In the derived setting, where one considers chain complexes only up to quasi-isomorphism, this assertions cannot hold: already in the discrete case, there are plenty of Lie algebras which are not isomorphic one to the other, having the same homology or cohomology. Indeed, the classical statement takes into consideration not only the homology, but also the \textit{filtration} on the homology; we actually need to consider the Chevalley-Eilenberg algebras and coalgebras as objects in the derived filtered category of Beilinson of \cite{beilinsonderived} (or, better, in its stable enhancement presented in \cite{BMS}).\\

Recently, the research in derived deformation theory and derived differential geometry has been working extensively with \textit{mixed graded modules}, which are equivalent to modules with an action of the circle $S^1\coloneqq\mathsf{B}\ZZ$ when working over a base ring of characteristic $0$. The theory of mixed graded modules in characteristic $0$, which was developed in \cite{PTVV} and \cite{CPTVV}, has the concrete computational advantage that it strongly resembles a theory of \textit{double complexes}, with an internal homological degree presenting some homotopical and purely derived datum and an external differential collecting more geometrical meaning. Indeed, while the treatment of mixed graded modules can be carried out completely internally to the theory of \infinity-categories, the definition of the \infinity-category of mixed graded modules of \cite{PTVV,CPTVV} is given in terms of a Dwyer-Kan localization of a model category of \textit{mixed graded chain complexes}, which are very close to \textit{double chain complexes}. at the class of weight-wise weak equivalences: this is the approach that we follow in this work, see \cref{def:mixedgraded}.\\

Recently, the theory of mixed graded modules has also been linked to the usual derived filtered category of Beilinson in \cite{UHKR}, \cite{toen2020algebraic, derivedfoliations2}, \cite{calaque2021lie}, and \cite{pavia1}. Because of this, in the last years mixed graded modules have been employed also in the homotopy theory of Lie algebras and Lie algebroids, and it is expected that mixed graded modules can provide a natural setting where to work with formal geometry and deformation theory. In particular, it has been known for a while that given a differential graded Lie algebra $\gfrak_{\bullet}$, its Chevalley-Eilenberg algebra has a richer structure of \textit{mixed graded commutative algebra}. In some sense, the differential of the classical Chevalley-Eilenberg algebra of a differential graded Lie algebra $\gfrak_{\bullet}$ contains a homotopical part, coming from the differential of the Lie algebra and which preserves the symmetric power, and a "more geometric" component that keeps track of the Lie bracket of $\gfrak_{\bullet}$: a feasible example of this philosophy is explicitly provided in \cref{example:differential}. This folklore result, expected by many researchers, is still not available in existing literature. Works such as \cite{CG} and \cite{nuiten19} define the mixed graded structure on the Chevalley-Eilenberg algebra in terms of mixed differential on explicit graded chain complexes models; while being very clear when dealing with differential graded Lie algebras, such construction is not obviously functorial and well defined when one deals with $\Linf$-algebras whose brackets have to respect higher and more complicated homotopy coherences. The aim of this work is to fill this gap in the existing literature, carrying out a completely \infinity-categorical construction of the mixed graded structure on the Chevalley-Eilenberg algebra.
\subsection*{Outline of the paper}
In \cref{chapter:mixedgradedmodules} we collect the main definitions and notations regarding mixed graded modules over an arbitrary base ring of characteristic $0$, following the conventions of \cite{pavia1} (hence, for the most part, the ones of \cite{PTVV} and \cite{CPTVV}). The main addition to existing literature is Section \ref{sec:algcoalg}, concerning algebras and coalgebras in the \infinity-category of mixed graded $\Bbbk$-modules for arbitrary (unital and augmented) operads and cooperads. In \cref{sec:dglie}, we recall the main definitions and notations concerning derived Lie algebras in characteristic $0$ (including the classical theory of Chevalley-Eilenberg complexes of discrete Lie algebras), following \cite{dagx}. The core of the article is \cref{chapter:liealgebras}, where we construct with purely \infinity-categorical techniques both homological (\cref{sec:homologicalCE}) and cohomological (\cref{sec:cohomCE}) mixed graded versions of the Chevalley-Eilenberg complexes, which provide a mixed graded cocommutative coalgebra and a mixed graded commutative algebra respectively. This has to be interpreted as a model independent construction of the mixed graded structure on the Chevalley-Eilenberg cohomological complex which has been provided, in terms of explicit mixed differential for a graded chain complex, in \cite[Appendix B]{CG}. Moreover, in \cref{sec:CEmod} we constructed a mixed graded version of the Chevalley-Eilenberg modules of a homotopy Lie algebra with coefficients in some representation. While this description of the Chevalley-Eilenberg complexes is known, to our knowledge there is no description of this functor in a model independent way in existing literature. Moreover, our construction highlights some important features of the Chevalley-Eilenberg functors, such as the preservation of homotopy limits and/or colimits. We can sum up the results of the second section as follows.
\begin{theoremn}[Propositions \ref{prop:promotionCEcocommutativealgebra}, \ref{prop:CEcohomologicalalgebra}, \ref{prop:CEmod}]\
	\begin{enumerate}
		\item There exists a covariant \textit{homological Chevalley-Eilenberg mixed graded coalgebra} \infinity-functor$$\CE_{\varepsilon}\colon\Lie_{\Bbbk}\longrightarrow\CoCommmixgraug$$from the \infinity-category of Lie algebras to the \infinity-category of coaugmented mixed graded cocommutative coalgebras, such that for any Lie algebra $\gfrak$ the underlying graded cocommutative coalgebra of $\CE_{\varepsilon}(\gfrak)$ is equivalent to the graded symmetric coalgebra on $\gfrak[-1]$, with $\gfrak[-1]$ sitting in weight $1$. Moreover, the totalization of $\CE_{\varepsilon}(\gfrak)$ agrees with the classical Chevalley-Eilenberg homology of $\gfrak$.
		\item There exists a contravariant \textit{cohomological Chevalley-Eilenberg mixed graded algebra} \infinity-functor$$\CE^{\varepsilon}\colon\Lie_{\Bbbk}^{\op}\longrightarrow\CAlgmixgraug$$from the \infinity-category of Lie algebras to the \infinity-category of augmented mixed graded commutative algebras, such that for any Lie algebra $\gfrak$ the underlying graded commutative algebra of $\CE^{\varepsilon}(\gfrak)$ is equivalent to the graded symmetric algebra on $\gfrak^{\vee}[1]$, with $\gfrak^{\vee}[1]$ sitting in weight $-1$. Moreover, the totalization of $\CE^{\varepsilon}(\gfrak)$ agrees with the classical Chevalley-Eilenberg cohomology of $\gfrak$.
		\item Fixing a Lie algebra $\gfrak$ which is perfect as a $\Bbbk$-module, there exists a covariant \textit{Chevalley-Eilenberg cohomology} \infinity-functor$$\CE^{\varepsilon}(\gfrak;-)\colon\LMod_{\Ug}\longrightarrow\ModCEmixgr$$from the \infinity-category of representations of $\gfrak$ to the \infinity-category of mixed graded $\Bbbk$-modules which are modules for the commutative algebra object $\CE^{\varepsilon}(\gfrak)$, such that for any representation $M$ of $\gfrak$ the underlying graded module of $\CE^{\varepsilon}(\gfrak;\hsp M)$ is equivalent to the (graded) tensor product of the symmetric algebra over $\gfrak^{\vee}[1]$ with $M$, with naturally induced grading. 
	\end{enumerate}
\end{theoremn}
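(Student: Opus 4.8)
The plan is to deduce the three statements from Propositions~\ref{prop:promotionCEcocommutativealgebra}, \ref{prop:CEcohomologicalalgebra} and \ref{prop:CEmod}; I sketch here the common strategy behind these. The organising observation is that, relative to the symmetric-power grading on $\Sym(\gfrak[-1])$, the classical Chevalley--Eilenberg differential decomposes into a weight-preserving summand --- the internal differential of $\gfrak$, once $\gfrak$ is presented by a dg or $\Linf$-model --- and a summand shifting the weight by one, which carries the Lie bracket. Upgrading $\CE$ to a mixed graded functor amounts to recording the first as the differential of a graded $\Bbbk$-module and the second as a \emph{coherent} mixed structure. The \infinity-categorical device that makes this precise and functorial is the theory of (co)algebras over unital augmented (co)operads in $\Modmixgr_{\Bbbk}$ developed in Section~\ref{sec:algcoalg}, applied to the Koszul-dual pair formed by the Lie operad and the cocommutative cooperad, together with the associated operadic bar/cobar adjunction.

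For part (1) I would view $\gfrak$ inside $\Modmixgr_{\Bbbk}$ through the symmetric monoidal functor placing a $\Bbbk$-module in weight $0$ with trivial mixed structure, and then apply the operadic bar construction for this Koszul-dual pair; this functor sends a Lie algebra to (a model for) its Chevalley--Eilenberg coalgebra. Because that bar construction is built from the cofree graded cocommutative comonad $\Symgr$, its output carries an intrinsic weight grading by symmetric powers; after normalising so that $\gfrak[-1]$ lies in weight $1$, the underlying graded cocommutative coalgebra is $\Symgr(\gfrak[-1])$, while the Koszul twisting term --- the unique part changing the weight, and by exactly one --- is a square-zero coderivation of that weight, i.e. a mixed structure. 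This defines $\CE_{\varepsilon}\colon\Lie_{\Bbbk}\to\CoCommmixgraug$, automatically functorial as a composite of \infinity-functors. To match its totalization with classical Chevalley--Eilenberg homology I would use that totalization $\Modmixgr_{\Bbbk}\to\Mod_{\Bbbk}$ is symmetric monoidal and sends a mixed graded module to the sum of its weight pieces with total differential the internal differential plus the mixed one; on a strict dg-Lie model this returns the usual complex $\Bbbk\lotimes_{\Ug}\Bbbk$, and since both constructions invert quasi-isomorphisms of Lie algebras --- which one verifies by reduction to free Lie algebras, where the computation is explicit --- the identification holds in general.

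Part (2) is the contravariant, algebra-valued mirror of part (1): one applies the operadic cobar construction in $\Modmixgr_{\Bbbk}$ (equivalently, when $\gfrak$ is perfect, one dualises part (1) weight by weight), producing $\CE^{\varepsilon}\colon\Lie_{\Bbbk}^{\op}\to\CAlgmixgraug$ whose underlying graded commutative algebra is $\Symgr(\gfrakv[1])$ with $\gfrakv[1]$ in weight $-1$ --- hence concentrated in non-positive weights --- and whose mixed structure is again the bracket term; the (completed) totalization then returns $\Ext^{\bullet}_{\Ug}(\Bbbk,\Bbbk)$. For part (3), fix $\gfrak$ perfect over $\Bbbk$ and feed a representation $M\in\LMod_{\Ug}$ into the coefficient version of the same bar construction: this yields, by Koszul duality, a comodule over $\CE_{\varepsilon}(\gfrak)$ with underlying graded object $\Symgr(\gfrak[-1])\otimes M$ and $M$ in weight $0$; dualising weight by weight --- legitimate precisely because perfectness makes each $\Sym^{n}(\gfrak[-1])$ perfect --- converts it into an object of $\ModCEmixgr$ with underlying graded module $\Symgr(\gfrakv[1])\otimes M$, functorially in $M$.

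The main obstacle is not the identification of the underlying graded objects, which is formal, but two coherence questions. First, one must know that the bracket part of the differential assembles into a genuine mixed structure --- equivalently a homotopy-coherent $\Sone$-action --- rather than merely a compatible (multiplicative) filtration; second, that the whole package is natural in $\gfrak$ for \emph{all} homotopy Lie algebras, not only strict dg ones, whose explicit mixed-differential models do not visibly glue along quasi-isomorphisms. This is exactly what the operadic bar/cobar formalism of Section~\ref{sec:algcoalg} is built to handle --- the twisting differential is produced as a morphism of \infinity-(co)operads, so coherence and functoriality are automatic --- at the price of having to reconcile the resulting objects with the classical ones, which is done by evaluating on free Lie algebras and on strict models and comparing after totalization.
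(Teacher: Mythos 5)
Your proposal identifies the right heuristic --- the classical Chevalley--Eilenberg differential splits into a weight-preserving internal part and a weight-lowering bracket part --- but the step where you actually produce the mixed structure is exactly where the argument has a gap. You propose to embed $\gfrak$ into $\Modmixgr_{\Bbbk}$ in weight $0$ with trivial mixed structure and then run the operadic bar construction; but since $(-)(0)$ is strongly monoidal and the bar construction is assembled from tensor products and colimits, this only returns the ordinary Chevalley--Eilenberg coalgebra concentrated in weight $0$ with trivial mixed structure. The subsequent ``normalisation'' placing $\gfrak[-1]$ in weight $1$, together with the claim that the unique weight-shifting summand of the differential is a square-zero coderivation and hence a mixed structure, is precisely the folklore assertion the paper sets out to make rigorous: in an \infinity-category one cannot decompose a differential into summands and declare one of them to be a coherent $\varepsilon$-action --- that works only on strict dg models, which, as you note yourself, do not visibly glue along quasi-isomorphisms. \Cref{sec:algcoalg} does not supply this refinement; it only fixes the (co)operadic vocabulary in $\Modmixgr_{\Bbbk}$ and proves fully faithfulness of $\Symgr_{\Bbbk}{\lp(-)[-1](1)\rp}$, and it contains no bar/cobar functor out of $\Lie_{\Bbbk}$ landing in mixed graded objects.

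The mechanism the paper actually uses, and which is absent from your outline, is the mixed graded cone: the right adjoint $\Cnmixgr$ to $(-)_0\colon\Liemixgr_{\Bbbk}\to\Lie_{\Bbbk}$, built as $\operatorname{R}_{\varepsilon}\circ(-)(0)$ at the level of Lie algebra objects (see \ref{parag:cng}), carries a tautologically nontrivial mixed differential --- the canonical equivalence $\gfrak\simeq\gfrak[1][-1]$ --- produced by pure adjunction formalism with no choices. Applying $\Umixgr$ and then the relative tensor product $\Bbbk(0)\otimesmixgr_{\Ug(0)}(-)$, respectively the morphism objects $\Mormixgr_{\Ug(0)}{\lp-,\hsp\Bbbk(0)\rp}$ and $\Mormixgr_{\Ug(0)}{\lp-,\hsp M(0)\rp}$, yields the three functors of the statement; the coalgebra structure in (1) then comes from strong monoidality of $\CE_{\varepsilon}$ against the Cartesian structure on $\Lie_{\Bbbk}$, the algebra structure in (2) from $\Bbbk$-linear duality (\cref{lemma:dualcoalgebraisalgebra} and \cref{prop:CEhomCEcohomdual}) rather than from a cobar construction, and the module structure in (3) from lax monoidality against the Hopf tensor product on $\LMod_{\Ug}$. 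Without a construction playing the role of $\Cnmixgr$ --- some functorial source of a nontrivial mixed differential --- your bar-theoretic outline does not get off the ground, so the coherence problem you flag as ``the main obstacle'' is not actually resolved by it.
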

Finally, in \cref{chapter:conj}, we describe some conjectures regarding the fully faithfulness of the Chevalley-Eilenberg \infinity-functors (Conjectures \ref{conj:main}, \ref{conj:main2}, \ref{conj:modules}) and their essential image, hinting at possible strategies to tackle the proof. 
\subsection*{Possible future developments of the theory}
%\addcontentsline{toc}{subsection}{Possible future developments of the theory}
The work of this paper started by investigating possible frameworks where to study formal derived algebraic geometry. In particular, we expect that our constructions can be carried out, with only minor modifications, in the more general setting of Chevalley-Eilenberg complexes of Lie algebroids. Moreover, the proof of one of our conjectures (namely, \cref{conj:modules}) should provide an insight to the problem of studying $\scrD$-modules for possibly non-smooth derived schemes. In particular, in \cite{pantev2021moduli} the authors show that given the de Rham algebra $\operatorname{dR}(X)$ of some smooth, affine $\Bbbk$-scheme $X\coloneqq\Spec(A)$, with its natural mixed structure given by the de Rham differential, then there exists a fully faithful embedding$$\Mod_{\scrD_X}\longhookrightarrow\Mod_{\operatorname{dR}(X)}{\lp\Modmixgr_{\Bbbk}\rp}$$whose essential image is spanned by objects $M_{\bullet}$ which are equivalent, at the level of the underlying graded object, to $M\otimes_{A}\Sym_{\Bbbk}{\lp\LL_{X/\Bbbk}[1](-1)\rp}$ for some $A$-module $M$. This leads naturally to seek for an analogous result also for (possibly non-smooth) derived affine schemes, using the computation of the left adjoint to the inclusion of constant mixed graded modules over the Chevalley-Eilenberg algebra into all mixed graded modules described in \cref{sec:conjrep}.
\subsection*{Acknowledgments}I am very thankful to my PhD advisor, M. Porta, who introduced me to the theory of mixed graded complexes. I would also like to thank F. Battistoni, I. Di Liberti, A. Gagna, and G. Nocera for fruitful discussions and suggestions which helped shaping the content of this paper. Finally I would like D. Calaque and M. Robalo for precious comments and their careful refereeing.  
\subsection*{Notations, conventions and main references}\
%\addcontentsline{toc}{subsection}{Notations, conventions and main references}\
\begin{itemize}
	\item Throughout all this paper, we employ freely the language of derived algebraic geometry, \infinity-categories, and homotopical algebra provided by \cite{htt} and \cite{ha}, from which we borrow the formalism and most notations. Our language is \textit{innerly} derived: every definition and construction has to be interpreted, without further indication suggesting the contrary, in the context of higher algebra. In particular, by \textit{module} over a discrete commutative ring $\Bbbk$ we mean an object of the stable derived \infinity-category of $\Bbbk$-modules, by \textit{limits and colimits} we mean homotopy limits and colimits, by \textit{tensor product} we mean derived tensor product, and so forth.
	\item Our main references for the homotopy theory of mixed graded complexes are provided by \cite{PTVV} and \cite{CPTVV}.
	\item Our main references for the homotopy theory of Lie algebras in characteristic $0$, and its relationship with the derived deformation theory, are provided by \cite{dagx} and \cite{studyindag2}.
	\item When dealing with explicit models provided by chain complexes of $\Bbbk$-modules, we use a homological notation. 
	\item Our standing assumption is that we work over a fixed base field $\Bbbk$ which is assumed to be of characteristic $0$.
	\item Throughout this paper, we shall often work with closed symmetric monoidal \infinity-categories $\scrC^{\otimes}$ enriched over $\Bbbk$-modules. In particular, such \infinity-categories are endowed with both internal mapping objects, obtained as a right adjoint to $\otimes$, and mapping $\Bbbk$-modules providing the enrichment over $\Mod_{\Bbbk}$. In order to avoid confusion, we shall denote the former with $\underline{\smash{\Map}}$ (or, occasionally, with $\Mor$) and the latter with $\Map$.
\end{itemize}
\addtocontents{toc}{\protect\setcounter{tocdepth}{2}}
\section{Mixed graded modules}
\label{chapter:mixedgradedmodules}
In this first section we shall review the main definitions and notations concerning mixed graded $\Bbbk$-modules defined over a base field of characteristic $0$. Everything presented here is already available in existing literature: see for example \cite{PTVV,CPTVV,pavia1}. The only partially original material is \cref{sec:algcoalg}, which studies the monoidal structure on many relevant \infinity-functors defined over mixed graded $\Bbbk$-modules, and describes which \infinity-functors can be defined at the level of algebras or coalgebras. In any case, this subsection contains little profound mathematics; the only non-trivial statement is probably \cref{prop:gradedsymfullyfaithful}, which states that the \textit{graded} symmetric coalgebra \infinity-functor is fully faithful.
\subsection{Main definitions and notations}
\label{sec:basicmixedgraded}
This subsection serves mainly as a quick recollection on definitions, notations and conventions adopted throughout the paper. For additional discussions and proofs, see \cite{PTVV,CPTVV} and especially \cite[Section $1.1$]{pavia1}, of which this subsection is actually a brief summary.
\begin{parag}
Let $\varepsilon\text{-}\dgMod_{\Bbbk}^{\gr}$ be the category of chain complexes of $\Bbbk$-modules $M_{\bullet}$ with a \textit{weight decomposition} of sub-complexes $\left\{ \lp M_{\bullet}\rp_p\right\}_{p\in\ZZ}$ and with a \textit{mixed differential}, i.e., of chain complexes $$\varepsilon_p\colon \lp M_{\bullet}\rp_p\longrightarrow \lp M_{\bullet}\rp_{p-1}[-1]$$such that $\varepsilon_{p-1}[-1]\circ \varepsilon_p = 0$ for all $p\in\ZZ$, with obvious morphisms between them. This category is endowed with a cofibrantly generated symmetric monoidal model category, where the class $\mathscr{W}$ of weak equivalences and the class $\mathscr{F}$ of fibrations are respectively given by the classes of quasi-isomorphisms and surjections in each weight (\cite{PTVV}).
\end{parag}
\begin{defn}
\label{def:mixedgraded}
The \textit{$\infinity$-category of mixed graded $\Bbbk$-modules} $\Modmixgr_{\Bbbk}$ is the simplicial nerve of the Dwyer-Kan localization along the class of weak equivalences ${\operatorname{N}_{\Delta}}{\lp\operatorname{L}^{\operatorname{H}}\lp\varepsilon\text{-}\dgMod_{\Bbbk}^{\gr},\mathscr{W}\rp\rp}$.
\end{defn}
\begin{parag}
	\label{parag:monoidalstructureonmodmixgr}
	The $\infinity$-category of mixed graded $\Bbbk$-modules admits all limits and colimits and they are computed weight-wise (\cite[Lemma $1.1.25$]{pavia1}), and in particular it is stable. Moreover, it is endowed with a symmetric closed monoidal structure which we can describe as follows (see also \cite[Section $1.1$]{CPTVV}).\begin{enumerate}
		\item Given two mixed graded $\Bbbk$-modules $M_{\bullet}$ and $N_{\bullet}$, the tensor product $M_{\bullet}\otimesmixgr_{\Bbbk}N_{\bullet}$ is the mixed graded $\Bbbk$-module whose $p$-th weight component is given by the formula$$\lp M_{\bullet}\otimesmixgr_{\Bbbk} N_{\bullet}\rp_p\coloneqq\bigoplus_{i+j=p} M_i\otimes_{\Bbbk}N_j$$with mixed differential defined on every summand by the formula$$\varepsilon_M\otimes\operatorname{id}_N+\operatorname{id}_M\otimes\varepsilon_N\colon M_i\otimes N_j\longrightarrow\lp M_{i-1}\otimes N_j\rp \bigoplus \lp M_i\otimes N_{j-1}\rp[-1].$$
		\item The unit for $\otimesmixgr_{\Bbbk}$ is the mixed graded $\Bbbk$-module $\Bbbk(0)$, consisting of $\Bbbk$ sitting in pure weight $0$ with trivial mixed structure.
		\item Given two mixed graded $\Bbbk$-modules $M_{\bullet}$ and $N_{\bullet}$, the internal mapping space ${\Mapmixgr_{\Bbbk}}{\lp M_{\bullet},\hsp N_{\bullet}\rp}$ is the mixed graded $\Bbbk$-module whose $p$-th weight component is given by the formula$$\lp{\Mapmixgr_{\Bbbk}}{\lp M_{\bullet},\hsp N_{\bullet}\rp}\rp_p\coloneqq\prod_{q\in\ZZ}{{\Map_{\Mod_{\Bbbk}}}{\lp M_{q},\hsp N_{q+p}\rp}}$$with mixed differential$$\varepsilon_p\colon {\Mapmixgr_{\Bbbk}}{\lp M_{\bullet},\hsp N_{\bullet}\rp}_p\longrightarrow {\Mapmixgr_{\Bbbk}}{\lp M_{\bullet},\hsp N_{\bullet}\rp}_{p-1}[-1]$$given by the sum of the post-composition of the mixed differential for $N_{\bullet}$ and the pre-composition of the mixed differential for $M_{\bullet}$.
	\end{enumerate}
	The enrichment of $\Modmixgr_{\Bbbk}$ over $\Mod_{\Bbbk}$ is then given by$$\Map_{\Modmixgr_{\Bbbk}}{\lp M_{\bullet},\hsp N_{\bullet}\rp}\coloneqq\fib\lp\Mapmixgr_{\Bbbk}{\lp M_{\bullet},\hsp N_{\bullet}\rp}_0\overset{\varepsilon_0}{\longrightarrow}\Mapmixgr_{\Bbbk}{\lp M_{\bullet},\hsp N_{\bullet}\rp}_{-1}[-1]\rp.$$
\end{parag}
\begin{parag}
	\label{parag:modeladjunction}
For any integer $p$, we can consider the \infinity-functor $(-)(p)\colon\Mod_{\Bbbk}\to\Modmixgr_{\Bbbk}$ sending a $\Bbbk$-module $M$ to the mixed graded $\Bbbk$-module $M(p)$ consisting of $M$ sitting in pure weight $p$ and with trivial mixed structure. For $p=0$, we have an adjunction
	\begin{align}
	\label{adjunction:realizationadjunction}
	\begin{tikzpicture}[scale=0.75,baseline=-0.5ex]
	\node at (-3.1,0){$(-)(0)\colon$};
	\node (a) at (-1.5,0){$\Mod_{\Bbbk}$};
	\node (b) at (1.5,0){$\Modmixgr_{\Bbbk}$};
	\node at (3.1,-0){$: \left|-\right|$};
	\draw[->] ([yshift=3.5pt]a.east) -- ([yshift=3.5pt]b.west);
	\draw[->] ([yshift=-3.5pt]b.west) -- ([yshift=-3.5pt]a.east);
	\end{tikzpicture}
	\end{align}
	where the left adjoint simply sends a $\Bbbk$-module $M$ to the mixed graded $\Bbbk$-module consisting of $M$ concentrated in weight $0$, and the right adjoint $\left|-\right|$ is the $\infinity$-functor that sends a mixed graded ${\Bbbk}$-module $M_{\bullet}$ to the mapping $\Bbbk$-module$$\left|M_{\bullet}\right|\coloneqq\Map_{\Modmixgr_{\Bbbk}}{\lp {\Bbbk}(0),\hsp M_{\bullet}\rp}.$$This right adjoint is called the \textit{realization $\infinity$-functor}: a strict model of $\left|M_{\bullet}\right|$ is provided by the chain complex of ${\Bbbk}$-modules $$\prod_{p\geqslant 0}M_{-p}[-2p]$$endowed with the total differential, sum of the usual differential of chain complexes and the mixed differential (\cite[Proposition $1.5.1$]{CPTVV}).
\end{parag}
	For our purposes, it will be convenient to introduce another realization $\infinity$-functor that can keep track of the ${\Bbbk}$-modules in positive weights of a mixed graded module $M_{\bullet}$. Let us recall (\cite[Section $1.5$]{CPTVV}) that for all $p\in\ZZ$ we have that
	\begin{align}
	\label{map}
	{\Map_{\Modmixgr_{\Bbbk}}}{\lp {\Bbbk}(i)[-2i],\hsp {\Bbbk}(i-1)[-2(i-1)]\rp}\simeq {\Bbbk}.
	\end{align}
	So we have a pro-object in $\Modmixgr_{\Bbbk}$, defined by
	\begin{align}
	\label{taterealizationexplicit}
	{\Bbbk}(\infinity) \coloneqq\left\{\ldots\to {\Bbbk}(i)[-2i]\to {\Bbbk}(i-1)[-2(i-1)]\to\ldots \to {\Bbbk}(1)[-2]\to {\Bbbk}(0)\right\}
	\end{align}
	where the morphism ${\Bbbk}(i)[-2i]\to {\Bbbk}(i-1)[-2(i-1)]$ is the unique morphism corresponding to the unit $1$ of ${\Bbbk}$ under the equivalence \ref{map}.
\begin{defn}[{\cite[Definition $1.5.2$]{CPTVV}}]
	\label{def:taterealization}
	The \textit{Tate} or \textit{stabilized realization} $\infinity$-functor is defined as
	\begin{displaymath}
	\left|-\right|^{\operatorname{t}}\coloneqq{\Map_{\Modmixgr_{\Bbbk}}}{\lp {\Bbbk}(\infinity),\hsp -\rp}\colon \Modmixgr_{\Bbbk}\longrightarrow \Ind\lp\Mod_{\Bbbk}\rp\overset{\colim}{\longrightarrow}\Mod_{\Bbbk}.
	\end{displaymath}
\end{defn}
Working with explicit models given by graded chain complexes and mixed differentials, the $\infinity$-functor of Definition \ref{def:taterealization} sends a mixed graded ${\Bbbk}$-module $M_{\bullet}=\{M_p\}_p$ to the ${\Bbbk}$-module \begin{align}
			\label{parag:tateexplicit}
			\left|	M_{\bullet}\right|^{\operatorname{t}}\coloneqq\underset{i\leqslant 0}{\colim}\prod_{p\geqslant i}M_{-p}[-2p]
				\end{align}
again endowed with the total differential.
\begin{parag}
	\label{parag:freeleftadjointmixedgraded}
	For all $p\in\ZZ$ we have a \textit{$p$ weight part $\infinity$-functor}
	\begin{displaymath}
	(-)_p\colon \Modmixgr_{\Bbbk}\longrightarrow\Mod_{\Bbbk}
	\end{displaymath}
	which selects the component in weight $p$ of a mixed graded $\Bbbk$-module. For $p=0$, the $\infinity$-functor $(-)_0$ has a left adjoint \begin{align}
	\label{functor:0weightadjoint}
	\operatorname{Free}_{\varepsilon}\colon \Mod_{\Bbbk}\longrightarrow\Modmixgr_{\Bbbk}
	\end{align} given by the free mixed graded ${\Bbbk}$-module construction, see \cite[$1.4.1$]{CPTVV}. By pre-composing $(-)_0$ with the weight shift by $q$ on the left $\infinity$-endofunctor
	\begin{displaymath}
	(-)\lpd q\rpd \coloneqq-\otimesmixgr_{\Bbbk} {\Bbbk}(q)\colon \Modmixgr_{\Bbbk}\longrightarrow\Modmixgr_{\Bbbk}
	\end{displaymath}
	which informally sends a mixed graded ${\Bbbk}$-module $M_{\bullet}=\left\{M_p\right\}_{\!p\in\ZZ}$ to the mixed graded ${\Bbbk}$-module $M\lpd q\rpd_{\bullet}\coloneqq\left\{M_{p-q}\right\}_{\!p\in\ZZ}$, and by post-composing $\operatorname{Free}_{\varepsilon}$ with the shift $\infinity$-functor $\lpd q\rpd$ we obtain for all $q\in\ZZ$ another adjunction 
	\begin{align}
	\label{adjunction:weightadjunction}
	\begin{tikzpicture}[scale=0.75,baseline=0.5ex]
	\node at (-3.5,0){$\operatorname{Free}_{\varepsilon}\lpd q\rpd \colon$};
	\node (a) at (-1.5,0){$\Mod_{\Bbbk}$};
	\node (b) at (1.5,0){$\Modmixgr_{\Bbbk}$};
	\node at (3.4,-0){$\colon(-)_q$};
	\draw[->] ([yshift=3pt]a.east) -- ([yshift=3pt]b.west);
	\draw[->] ([yshift=-3pt]b.west) -- ([yshift=-3pt]a.east);
	\end{tikzpicture}
	\end{align}
	which generalizes the adjunction \ref{functor:0weightadjoint} to all weights. The description of this left adjoint is  very explicit: indeed, the proof of \cite[Proposition $1.3.8$]{CPTVV} together with the observations made in \cite[Section $1.4.1$]{CPTVV} shows that such left adjoint simply sends a $\Bbbk$-module $M$ to the mixed graded $\Bbbk$-module $\operatorname{Free}_{\varepsilon}(M)\lpd q\rpd$ consisting of $M$ in weight $q$, $M[1]$ in weight $q-1$, and with mixed structure given by the natural equivalence $M\simeq M[1][-1]$.
\end{parag}
\begin{remark}
	\label{porism:rightadjointforgetful}
	We can actually describe the free mixed graded construction of \ref{parag:freeleftadjointmixedgraded} in a more general way. Since both $\Modmixgr_{\Bbbk}$ and $\Modgr_{\Bbbk}$ are presentable $\infinity$-categories, the forgetful $\infinity$-functor $\oblv_{\varepsilon}\colon\Modmixgr_{\Bbbk}\to\Modgr_{\Bbbk}$ which forgets the mixed structure is both a left and right adjoint in virtue of the Adjoint Functor Theorem (\cite[Corollary $5.5.2.9$]{htt}). Its adjoints $\operatorname{L}_{\varepsilon}$ and $\operatorname{R}_{\varepsilon}$ can be described with explicit models in the following way.
	\begin{itemize}
		\item The left adjoint $\operatorname{L}_{\varepsilon}$ sends a graded $\Bbbk$-module $M_{\bullet}$ to the mixed graded $\Bbbk$-module $\operatorname{L}_{\varepsilon}M_{\bullet}$ defined in weight $p$ by the formula$$\lp\operatorname{L}_{\varepsilon}M_{\bullet}\rp_p\simeq M_p\oplus M_{p+1}[1],$$and whose $\BGa$-action is described by the morphism $$\varepsilon_p\colon M_p\oplus M_{p+1}[1]\longrightarrow\lp M_{p-1}\oplus M_{p-1+1} [1]\rp [-1]\simeq M_{p-1}[-1]\oplus M_p$$given by the canonical equivalence $M_p\simeq M_p[1][-1]$ and the zero map on $M_{p+1}[1]$.
		\item The right adjoint $\operatorname{R}_{\varepsilon}$ sends a graded $\Bbbk$-module $M_{\bullet}$ to the mixed graded $\Bbbk$-module $\operatorname{R}_{\varepsilon} M_{\bullet}$ defined in weight $p$ by the formula $\lp\operatorname{R}_{\varepsilon}M_{\bullet}\rp_p\simeq M_p\oplus M_{p-1}[-1]$, and whose $\BGa$-action is described by the morphism
		\begin{displaymath}
		\varepsilon_p\colon M_p\oplus M_{p-1}[-1]\longrightarrow \lp M_{p-1}\oplus M_{p-2}[-1]\rp [-1]\simeq M_{p-1}[-1]\oplus M_{p-2}[-2]
		\end{displaymath}
		given by the canonical equivalence $M_{p-1}[-1]\simeq M_{p-1}[-1]$ and the zero map on $M_p$.
	\end{itemize}
	Since for any integer $q$ one has an ambidextrous adjunction$$
	\begin{tikzpicture}[scale=0.75,baseline=0.5ex]
	\node at (-3.2,0){$(-)(q)\colon$};
	\node (a) at (-1.5,0){$\Mod_{\Bbbk}$};
	\node (b) at (1.5,0){$\Modgr_{\Bbbk}$};
	\node at (3.1,-0){$\colon(-)_q$};
	\draw[->] ([yshift=3pt]a.east) -- ([yshift=3pt]b.west);
	\draw[->] ([yshift=-3pt]b.west) -- ([yshift=-3pt]a.east);
	\end{tikzpicture}$$
	which is an ambidextrous adjunction, one can see that the  $\infinity$-functor $\operatorname{Free}_{\varepsilon}\lpd q\rpd$ of \ref{adjunction:weightadjunction} is canonical equivalent to the composition $$\operatorname{L}_{\varepsilon}\circ\hsp(-)(q)\colon\Mod_{\Bbbk}\longrightarrow\Modmixgr_{\Bbbk}.$$
\end{remark}
\begin{parag}
	\label{construction:trivialmixedstructure}
Given a graded $\Bbbk$-module $M_{\bullet}$, one can equip it with the trivial mixed structure described by the zero mixed differential. This construction yields an \infinity-functor$$\triv_{\varepsilon}\colon\Modgr_{\Bbbk}\longrightarrow\Modmixgr_{\Bbbk},$$which is a section of the forgetful $\infinity$-functor $\oblv_{\varepsilon}\colon\Modmixgr_{\Bbbk}\to\Modgr_{\Bbbk}$, i.e., 
	\begin{align}
\label{equivalence:oblvtriv}
\operatorname{id}_{\Modgr_{\Bbbk}}\simeq\oblv_{\varepsilon}\circ\triv_{\varepsilon}.
\end{align}Let us remark that $\triv_{\varepsilon}$, even if it is obviously fully faithful at the level of model categories, \textit{is not} fully faithful at the level of \infinity-categories. However, the \infinity-functor \begin{align}
\label{functor:insertioninweightq}
(-)(q)\coloneqq \triv_{\varepsilon}\circ\hsp (-)(q)\colon\Mod_{\Bbbk}\longrightarrow\Modgr_{\Bbbk}
\longrightarrow\Modmixgr_{\Bbbk}
\end{align}is fully faithful for any integer $q$. Both these assertions are proved in \cite[Warning $1.1.29$]{pavia1}
\end{parag}
\subsection{Algebras and coalgebras in the mixed graded setting}
\label{sec:algcoalg}
In virtue of \ref{parag:monoidalstructureonmodmixgr}, the $\infinity$-category $\Modmixgr_{\Bbbk}$ is a symmetric monoidal $\infinity$-category (which is also presentable and stable, thanks to the presentation via a Dwyer-Kan localization along weak equivalences of a stable model category provided in \cite[Definition $1.2.1$]{CPTVV}), and analogous results hold also for the $\infinity$-category of graded $\Bbbk$-modules $\Modgr_{\Bbbk}$. This allows us to consider algebras and coalgebras for various operads and cooperads in the graded and mixed graded setting: the aim of this subsection is just to fix notations and establish the lax, oplax, or even strong monoidal structures of the \infinity-functors we introduced earlier, in order to check which \infinity-functors can be defined also at the level of mixed graded algebras and/or coalgebras. 
This subsection serves mainly as a reference for \cref{chapter:liealgebras}, where we shall consider Lie algebras, (augmented) commutative algebras, (coaugmented) cocommutative coalgebras, and associative algebras in $\Modgr_{\Bbbk}$ and $\Modmixgr_{\Bbbk}$; all the results collected here are quite straight-forward and easy to prove. The only exception is probably \cref{prop:gradedsymfullyfaithful}, which concerns the fully faithfulness of the \textit{graded} symmetric coalgebra \infinity-functor, and requires some auxiliary technical results. 
\begin{parag}
	We shall begin by introducing the \infinity-categories of \textit{\infinity-operads} and \textit{\infinity-cooperads.} Even if \cite[Chapters $2$ and $3$]{ha} studies carefully the properties of the former, we shall also be interested in the formalism developed in \cite[Chapter $6$, Section $1$]{studyindag2} and \cite{chiralkoszul}, especially when dealing with coalgebras. Let $$\operatorname{Mod}_{\Bbbk}^{\Sigma}\coloneqq\prod_{n\geqslant 1}\Rep_{\Sigma_n}$$be the \infinity-category of symmetric sequences in the \infinity-category of $\Bbbk$-modules. This \infinity-category has a natural (non-symmetric) monoidal structure, called the \textit{composition monoidal structure}, which makes the \infinity-functor\begin{equation*}
	\begin{aligned}
	\Mod_{\Bbbk}^{\Sigma}&\longrightarrow{\Fun}{\lp\Mod_{\Bbbk},\hsp\Mod_{\Bbbk}\rp}\\
	\left\{M_n\right\}_{n\geqslant1}&\mapsto\left\{N\mapsto \bigoplus_{n\geqslant 1}\lp M_n\otimes_{\Bbbk} N^{\otimes n}\rp_{\Sigma_n}\right\}
	\end{aligned}
	\end{equation*}strongly monoidal. Let us remark that this is just a "convenient" way to think about this monoidal structure: see \cite[Section $4$]{compositionproduct} for a more technical  (and purely \infinity-categorical) construction of the composition product on symmetric sequences.
	%and let $\scrC$ be any stable symmetric monoidal \infinity-category which is moreover tensored over $\Mod_{\Bbbk}$.
\end{parag}
\begin{defn}\
	\label{def:operadcooperad}
	\begin{enumerate}[label=(\arabic{enumi}), ref=\thelemman.\arabic{enumi}]
		\item An \textit{augmented \infinity-operad in $\Mod_{\Bbbk}$} is an augmented associative algebra object in $\Mod_{\Bbbk}^{\Sigma}$ for the above monoidal structure.
		\item A \textit{coaugmented \infinity-cooperad in $\Mod_{\Bbbk}$} is coaugmented coassociative coalgebra object in $\Mod_{\Bbbk}^{\Sigma}$ for the above monoidal structure.
	\end{enumerate}
\end{defn}
\begin{parag}
	Given an augmented $\infinity$-operad $\mathscr{O}$ in $\Mod_{\Bbbk}$ as in \cref{def:operadcooperad}, we can consider the \textit{$\infinity$-category of $\mathscr{O}$-algebra objects} in any stable symmetric monoidal $\infinity$-category $\scrC$ suitably tensored over $\Mod_{\Bbbk}$ (such as $\Modgr_{\Bbbk}$ or $\Modmixgr_{\Bbbk}$) via the usual formalism provided in \cite[Chapter $3$]{ha}. This agrees with taking modules for the monad  over $\scrC$ defined by\begin{equation}
	\begin{aligned}
	\label{eq:indnilpotentdp}
	\Mod_{\Bbbk}^{\Sigma}&\longrightarrow{\Fun}{\lp\scrC,\hsp\scrC\rp}\\
	\left\{M_n\right\}_{n\geqslant1}&\mapsto\left\{C\mapsto \bigoplus_{n\geqslant 1}\lp M_n\otimes_{\Bbbk} C^{\otimes n}\rp_{\Sigma_n}\right\}.
	\end{aligned}
	\end{equation}
	Under these assumptions, we shall denote with $\Alg_{\mathscr{O}}(\scrC)$ the \infinity-category of $\mathscr{O}$-algebras in $\scrC$.
\end{parag}
\begin{warning}[{\cite[Section $3.5$]{chiralkoszul}}]
	\label{warning:coalgebras}
	Given an augmented \infinity-cooperad $\mathscr{Q}$ in $\Mod_{\Bbbk}$, we would be tempted to define the \infinity-category of $\mathscr{Q}$-algebra objects in $\scrC$ as the comodules for the comonad on $\scrC$ induced by \ref{eq:indnilpotentdp}. However, this turns out to be the \textit{wrong} \infinity-category: indeed, there are \textit{four} different and \textit{inequivalent} types of coalgebras in a stable symmetric monoidal \infinity-category $\scrC$ for a given coaugmented cooperad $\mathscr{Q}$ in $\Mod_{\Bbbk}$. 
	\begin{enumerate}
		\item Comodules for the comonad $\scrC\to\scrC$ defined by the action \ref{eq:indnilpotentdp} of $\Mod_{\Bbbk}^{\Sigma}$ over $\scrC$ yield \textit{ind-nilpotent $\mathscr{Q}$-coalgebras with divided powers}, which we shall denote by $\operatorname{cAlg}^{\operatorname{ind-nil}}_{\mathscr{Q}}(\scrC)_{\operatorname{dp}}$.
		\item Comodules for the right lax action
		\begin{equation}
		\begin{aligned}
		\label{eq:dp}
		\Mod_{\Bbbk}^{\Sigma}&\longrightarrow{\Fun}{\lp\scrC,\hsp\scrC\rp}\\
		\left\{M_n\right\}_{n\geqslant1}&\mapsto\left\{C\mapsto \prod_{n\geqslant 1}\lp M_n\otimes_{\Bbbk} C^{\otimes n}\rp_{\Sigma_n}\right\}
		\end{aligned}
		\end{equation}
		yield \textit{coalgebras with divided powers}, which we shall denote with $\operatorname{cAlg}_{\mathscr{Q}}(\scrC)_{\operatorname{dp}}$. This is another different \infinity-category of coalgebras and the endofunctor $\scrC\to\scrC$ defined by this right lax action is not even comonadic. Because of this, the forgetful \infinity-functor $\oblv^{\operatorname{dp}}_{\mathscr{Q}}\colon \operatorname{cAlg}_{\mathscr{Q}}(\scrC)_{\operatorname{dp}}\to\scrC$ does not even admit a right adjoint in general. 
		\item Comodules for the action
		\begin{equation}
		\begin{aligned}
		\label{eq:indnil}
		\Mod_{\Bbbk}^{\Sigma}&\longrightarrow{\Fun}{\lp\scrC,\hsp\scrC\rp}\\
		\left\{M_n\right\}_{n\geqslant1}&\mapsto\left\{C\mapsto \bigoplus_{n\geqslant 1}\lp M_n\otimes_{\Bbbk} C^{\otimes n}\rp^{\Sigma_n}\right\}
		\end{aligned}
		\end{equation}
		yield \textit{ind-nilpotent coalgebras}, which we shall denote by $\operatorname{cAlg}_{\mathscr{Q}}^{\operatorname{ind-nil}}(\scrC)$.
		\item Finally, comodules for the right lax action
		\begin{equation}
		\begin{aligned}
		\label{eq:coalgebras}
		\Mod_{\Bbbk}^{\Sigma}&\longrightarrow{\Fun}{\lp\scrC,\hsp\scrC\rp}\\
		\left\{M_n\right\}_{n\geqslant1}&\mapsto\left\{C\mapsto \prod_{n\geqslant 1}\lp M_n\otimes_{\Bbbk} C^{\otimes n}\rp^{\Sigma_n}\right\}
		\end{aligned}
		\end{equation}
		yield the \infinity-category of \textit{all} coalgebras, which we shall denote simply by $\operatorname{cAlg}_{\mathscr{Q}}(\scrC)$. The \infinity-functor $\oblv_{\mathscr{Q}}\colon\operatorname{cAlg}_{\mathscr{Q}}(\scrC)\to\scrC$ is conservative, preserves all colimits, and preserves totalizations of $\oblv_{\mathscr{Q}}$-split co-simplicial objects, hence is comonadic and admits a right adjoint. However, such right adjoint it is not easy to describe because the endofunctor on $\scrC$ defined by this comonad \textit{does not }agree with the one induced by the right lax action of $\mathscr{Q}$. However, this is the correct \infinity-category to consider, at least in our two main cases of interest (namely, $\operatorname{CoComm}^{\operatorname{aug}}$ and $\operatorname{CoAssoc}^{\operatorname{aug}}$).
	\end{enumerate}
	The trace map $(-)_{\Sigma_n}\to(-)^{\Sigma_n}$ yields natural maps of right lax actions $\ref{eq:indnilpotentdp}\to\ref{eq:indnil}$ and $\ref{eq:dp}\to\ref{eq:coalgebras}$. In characteristic $0$, both maps are homotopy equivalences by the usual symmetrization argument: i.e., every coalgebra has naturally a divided power structure. Since this is our standing assumption, we shall omit all references to divided powers in our notations from now on. Still, the two actions $\ref{eq:indnilpotentdp}\simeq\ref{eq:indnil}$ and $\ref{eq:dp}\simeq\ref{eq:coalgebras}$ are \textit{not} equivalent: there is a natural map \begin{align}
	\label{eq:indniltosimple}
	\operatorname{res}\colon\operatorname{cAlg}^{\operatorname{ind-nil}}_{\mathscr{Q}}(\scrC)\longrightarrow\operatorname{cAlg}_{\mathscr{Q}}(\scrC)
	\end{align}and in \cite[Chapter $6$, Conjecture $2.8.4$]{studyindag2} it is conjectured that this map is fully faithful. It is known, however, that there is always an equivalence of \infinity-functors from $\operatorname{cAlg}^{\operatorname{ind-nil}}_{\mathscr{Q}}(\scrC)$ to $\scrC$ 
	\begin{align}
	\label{eq:forgetfulcoalgebras}
	\operatorname{oblv}_{\mathscr{Q}}\circ\operatorname{res}\simeq \oblv_{\mathscr{Q}}^{\operatorname{ind-nil}}
	\end{align}(\cite[Section $2.8.5$]{studyindag2}). Since $\oblv_{\mathscr{Q}}$ and $\oblv_{\mathscr{Q}}^{\operatorname{ind-nil}}$ both commute with colimits and are conservative, it follows that $\operatorname{res}$ preserves all colimits as well, therefore it admits a right adjoint.
\end{warning}
\begin{notation}
	\label{notation:gradedandmixedgradedalgebras}
	If $\Alg_{\mathscr{O}}$ is the $\infinity$-category of $\mathscr{O}$-algebra objects in $\Mod_{\Bbbk}$, we shall denote by $\Alg^{\gr}_{\mathscr{O}}$ and $\Algmixgr_{\mathscr{O}}$ the $\infinity$-categories of $\mathscr{O}$-algebras in $\Modgr_{\Bbbk}$ and $\Modmixgr_{\Bbbk}$, respectively.\\
	Dually, if $\operatorname{cAlg}_{\mathscr{Q}}$ is the $\infinity$-category of $\mathscr{Q}$-coalgebra objects in $\Mod_{\Bbbk}$, we shall denote by $\operatorname{cAlg}_{\mathscr{Q}}^{\gr}$ and $\varepsilon\text{-}\operatorname{cAlg}_{\mathscr{Q}}^{\gr}$ the $\infinity$-categories of $\mathscr{Q}$-coalgebras in $\Modgr_{\Bbbk}$ and $\Modmixgr_{\Bbbk}$. For example, the $\infinity$-category of commutative mixed graded $\Bbbk$-algebras will be denoted by $\CAlgmixgr_{\Bbbk}\coloneqq\CAlg{\lp\Modmixgr_{\Bbbk}\rp}$. Analogous notations will be used, without further explanation or definitions, for other algebra structures in (mixed) graded $\Bbbk$-modules in the remainder of the work.
\end{notation}
\begin{parag}
	\label{parag:forgettingmixedstructureisok}
	Let $\mathscr{O}$ be an $\infinity$-operad. As already observed in \cref{sec:basicmixedgraded}, the $\infinity$-functors$$\oblv_{\varepsilon}\colon\Modmixgr_{\Bbbk}\longrightarrow\Modgr_{\Bbbk}$$and$$\triv_{\varepsilon}\colon\Modgr_{\Bbbk}\longrightarrow\Modmixgr_{\Bbbk}$$are strongly symmetric monoidal and accessible $\infinity$-functors which preserve all limits and colimits, hence their right adjoints (which exist because of the Adjoint Functor Theorem) are lax symmetric monoidal. In particular, \textit{all} these $\infinity$-functors induce $\infinity$-functors at the level of algebras and coalgebras: this means that forgetting  a mixed structure, or setting a trivial one, does not alter the type of algebras considered. Moreover, forgetting the mixed structures preserves (and actually, creates) all limits and colimits.
	%	For analogous reasons, taking into account an $\infinity$-cooperad $\mathscr{Q}$, the \infinity-functors $\oblv_{\varepsilon}$ and $\triv_{\varepsilon}$ (as well as their \textit{left} adjoints) can be lifted to \infinity-functors between mixed graded and graded $\mathscr{Q}$-coalgebras. 
\end{parag}
\begin{parag}
	\label{parag:gradedandmixedgradedcoalgebras}
	Considering graded $\mathscr{O}$-algebras and mixed graded $\mathscr{O}$-algebras, we have two obvious forgetful $\infinity$-functors$$\oblv^{\mixgr}_{\mathscr{O}}\colon \varepsilon\text{-}\Alg^{\gr}_{\mathscr{O}}\longrightarrow\Modmixgr_{\Bbbk}$$and$$\oblv^{\gr}_{\mathscr{O}}\colon\Alg^{\gr}_{\mathscr{O}}\longrightarrow\Modgr_{\Bbbk}.$$Both $\infinity$-functors satisfy the hypothesis of \cite[Section $3.1$]{ha}, hence admit left adjoints$$\operatorname{Free}^{\mixgr}_{\mathscr{O}}\colon\Modmixgr_{\Bbbk}\longrightarrow\varepsilon\text{-}\Alg^{\gr}_{\mathscr{O}}$$and$$\operatorname{Free}^{\gr}_{\mathscr{O}}\colon \Modgr_{\Bbbk}\longrightarrow\Alg^{\gr}_{\mathscr{O}}.$$Moreover, the square of $\infinity$-functors$$	\begin{tikzpicture}[scale=0.75,baseline=0.5ex]
	\node (a) at (-3,0){$\Modmixgr_{\Bbbk}$};
	\node (b) at (3,0){$\Modgr_{\Bbbk}$};
	\node (a1) at (-3,3){$\varepsilon\text{-}\Alg^{\gr}_{\mathscr{O}}$};
	\node (b1) at (3,3){$\Alg^{\gr}_{\mathscr{O}}$};
	\draw[->,font=\scriptsize] ([yshift=3pt]a.east) to node[above]{$\oblv_{\varepsilon}$} ([yshift=3pt]b.west);
	\draw[->,font=\scriptsize] ([yshift=-3pt]b.west) to node[below]{$\triv_{\varepsilon}$} ([yshift=-3pt]a.east);
	\draw[->,font=\scriptsize] ([yshift=3pt]a1.east) to node[above]{$\oblv_{\varepsilon}$}  ([yshift=3pt]b1.west);
	\draw[->,font=\scriptsize] ([yshift=-3pt]b1.west) to node[below]{$\triv_{\varepsilon}$} ([yshift=-3pt]a1.east);
	\draw[->,font=\scriptsize] ([xshift=3pt]a1.south) to node[right]{$\oblv^{\mixgr}_{\mathscr{O}}$} ([xshift=3pt]a.north);
	\draw[->,font=\scriptsize] ([xshift=-3pt]a.north) to node[left]{$\operatorname{Free}^{\mixgr}_{\mathscr{O}}$} ([xshift=-3pt]a1.south);
	\draw[->,font=\scriptsize] ([xshift=3pt]b1.south) to node[right]{$\oblv^{\gr}_{\mathscr{O}}$}  ([xshift=3pt]b.north);
	\draw[->,font=\scriptsize] ([xshift=-3pt]b.north) to node[left]{$\operatorname{Free}^{\gr}_{\mathscr{O}}$} ([xshift=-3pt]b1.south);
	\end{tikzpicture}$$commutes straight-forwardly in every direction.
\end{parag}
\begin{parag}As foretold in \cref{warning:coalgebras}, one needs to be careful when dealing with coalgebras. For sure, replacing $\mathscr{O}$ with an $\infinity$-cooperad $\mathscr{Q}$ and considering the $\infinity$-categories $\varepsilon\text{-}\operatorname{cAlg}^{\gr,\operatorname{ind-nil}}_{\mathscr{Q}}$ and $\operatorname{cAlg}^{\gr,\operatorname{ind-nil}}_{\mathscr{Q}}$ of (respectively) mixed graded and graded ind-nilpotent $\mathscr{Q}$-coalgebras, we have another commutative square of $\infinity$-functors\begin{equation}
	\label{squarecoalgebras}
	\begin{aligned}\begin{tikzpicture}[scale=0.75,baseline=0.5ex]
	\node (a) at (-3,0){$\Modmixgr_{\Bbbk}$};
	\node (b) at (3,0){$\Modgr_{\Bbbk}$};
	\node (a1) at (-3,3){$\varepsilon\text{-}\operatorname{cAlg}^{\gr,\operatorname{ind-nil}}_{\mathscr{Q}}$};
	\node (b1) at (3,3){$\operatorname{cAlg}^{\gr,\operatorname{ind-nil}}_{\mathscr{Q}}$};
	\draw[->,font=\scriptsize] ([yshift=3pt]a.east) to node[above]{$\oblv_{\varepsilon}$} ([yshift=3pt]b.west);
	\draw[->,font=\scriptsize] ([yshift=-3pt]b.west) to node[below]{$\triv_{\varepsilon}$} ([yshift=-3pt]a.east);
	\draw[->,font=\scriptsize] ([yshift=3pt]a1.east) to node[above]{$\oblv_{\varepsilon}$}  ([yshift=3pt]b1.west);
	\draw[->,font=\scriptsize] ([yshift=-3pt]b1.west) to node[below]{$\triv_{\varepsilon}$} ([yshift=-3pt]a1.east);
	\draw[->,font=\scriptsize] ([xshift=-3pt]a1.south) to node[left]{$\oblv^{\mixgr,\operatorname{ind-nil}}_{\mathscr{Q}}$} ([xshift=-3pt]a.north);
	\draw[->,font=\scriptsize] ([xshift=3pt]a.north) to node[right]{$\operatorname{coFree}^{\mixgr,\operatorname{ind-nil}}_{\mathscr{Q}}$} ([xshift=3pt]a1.south);
	\draw[->,font=\scriptsize] ([xshift=-3pt]b1.south) to node[left]{$\oblv^{\gr,\operatorname{ind-nil}}_{\mathscr{Q}}$}  ([xshift=-3pt]b.north);
	\draw[->,font=\scriptsize] ([xshift=3pt]b.north) to node[right]{$\operatorname{coFree}^{\gr,\operatorname{ind-nil}}_{\mathscr{Q}}$} ([xshift=3pt]b1.south);
	\end{tikzpicture}
	\end{aligned}
	\end{equation}where now $\operatorname{coFree}_{\mathscr{Q}}^{\gr,\operatorname{ind-nil}}$ and $\operatorname{coFree}_{\mathscr{Q}}^{\mixgr,\operatorname{ind-nil}}$ are \textit{right} adjoints to the forgetful $\infinity$-functors $\oblv_{\mathscr{Q}}^{\gr,\operatorname{ind-nil}}$ and $\oblv_{\mathscr{Q}}^{\mixgr,\operatorname{ind-nil}}$ respectively. Again, since equivalences in all these $\infinity$-categories are checked at the level of the underlying graded $\Bbbk$-module, this square commutes in every direction as well.
\end{parag}
\begin{remark}
	Let $\scrC$ be a symmetric monoidal \infinity-category $\Bbbk$-linear over a base ring $\Bbbk$ of characteristic $0$. Even if we do not know whether \ref{eq:indniltosimple} is a fully faithful \infinity-functor, thanks to \cite[Chapter $6$, Theorem $2.9.4$]{studyindag2} we know that\begin{align*}
	\Map_{\scrC}{\lp X,\hsp Y\rp}\simeq \Map_{\scrC}{\lp \oblv_{\mathscr{Q}}{\lp\triv_{\mathscr{Q}}^{\operatorname{ind-nil}}(X)\rp},\hsp Y\rp}&\simeq \Map_{\operatorname{cAlg}^{\operatorname{ind-nil}}_{\mathscr{Q}}(\scrC)}{\lp \triv_{\mathscr{Q}}^{\operatorname{ind-nil}}(X),\hsp\operatorname{coFree}^{\operatorname{ind-nil}}_{\mathscr{Q}}(Y)\rp}\\&\simeq \Map_{\operatorname{cAlg}_{\mathscr{Q}}(\scrC)}{\lp \triv_{\mathscr{Q}}(X),\hsp\operatorname{res}{\lp\operatorname{coFree}^{\operatorname{ind-nil}}_{\mathscr{Q}}(Y)\rp}\rp}
	\end{align*}at least when $\mathscr{Q}$ is either $\operatorname{coAssoc}^{\operatorname{aug}}$ or $\operatorname{coComm}^{\operatorname{aug}}$, which are the two main cooperads we shall be interested in. In particular, denoting by $\operatorname{coFree}_{\mathscr{Q}}$ the composition of \infinity-functors $ \operatorname{res}\circ\operatorname{coFree}_{\mathscr{Q}}^{\operatorname{ind-nil}}$, in these cases one can obtain an analogous square to \ref{squarecoalgebras} where now the \infinity-categories of ind-nilpotent coalgebras are replaced by the \infinity-categories of all $\mathscr{Q}$-coalgebras, thanks to the equivalence \ref{eq:forgetfulcoalgebras}. The vertical arrows - while not being adjoints anymore - interact well one with the other just like in the ind-nilpotent case.
\end{remark}
\begin{parag}
	\label{parag:functorsthatpreservealgebras}
	Many of the important $\infinity$-functors relating usual $\Bbbk$-modules and (mixed) graded $\Bbbk$-modules can be defined at the level of $\mathscr{O}$-algebras and $\mathscr{Q}$-coalgebras.
	\begin{itemize}
		\item  From the description of the tensor product of mixed graded $\Bbbk$-modules provided in \ref{parag:monoidalstructureonmodmixgr}, it follows that the $\infinity$-functor $(-)(0)\colon\Mod_{\Bbbk}\to\Modmixgr_{\Bbbk}$ is obviously strongly monoidal, hence it defines an $\infinity$-functor at the level of algebras and coalgebras objects $$(-)(0)\colon\Alg_{\mathscr{O}}\longrightarrow\varepsilon\text{-}\Alg_{\mathscr{O}}^{\gr}$$ and $$(-)(0)\colon\operatorname{cAlg}_{\mathscr{Q}}\longrightarrow\varepsilon\text{-}\operatorname{cAlg}_{\mathscr{Q}}^{\gr}.$$As a consequence, its right adjoint $|-|$ is lax symmetric monoidal, and thus yields an $\infinity$-functor$$|-|\colon\Alg_{\mathscr{O}}\longrightarrow\varepsilon\text{-}\Alg_{\mathscr{O}}^{\gr}.$$
		\item Moreover, since $\Bbbk(\infinity)$ is a cocommutative and counital coalgebra object (\cite[Section $1.5$]{CPTVV}), the Tate realization acquires a lax symmetric monoidal structure as well. Again, it induces an $\infinity$-functor$$|-|^{\operatorname{t}}\colon\varepsilon\text{-}\Alg_{\mathscr{O}}^{\gr}\longrightarrow\Alg_{\mathscr{O}}.$$Actually, if we restrict ourselves to the $\infinity$-category $\Modmixgrcn_{\Bbbk}$ spanned by mixed graded $\Bbbk$-modules trivial in negative weights, such monoidal structure is \textit{strict} (\cite[Porism $2.3.20$]{pavia1}), hence it also provides an \infinity-functor$$\left|-\right|^{\operatorname{t}}\colon\operatorname{cCcAlg}{\lp\Modmixgrcn_{\Bbbk}\rp}\longrightarrow\operatorname{cCcAlg}_{\Bbbk}.$$
		\item Finally, consider the $\infinity$-functor $(-)_0\colon\Modmixgr_{\Bbbk}\to\Mod_{\Bbbk}$. In general, this is both lax and oplax monoidal, but not strongly so: in fact, given two mixed graded $\Bbbk$-modules $M_{\bullet}$ and $N_{\bullet}$, their tensor product $M_{\bullet}\otimesmixgr_{\Bbbk}N_{\bullet}$ exhibits $M_0\otimes_{\Bbbk}N_0$ as a direct summand of $\lp M_{\bullet}\otimesmixgr_{\Bbbk}N_{\bullet}\rp_0$, and the natural inclusions and projections yield the desired lax and oplax monoidal structures. In particular, we have $\infinity$-functors defined at the level of algebras and coalgebras objects$$(-)_0\colon\varepsilon\text{-}\Alg_{\mathscr{O}}^{\gr}\longrightarrow\Alg_{\mathscr{O}}$$ and $$(-)_0\colon\varepsilon\text{-}\operatorname{cAlg}_{\mathscr{Q}}^{\gr}\longrightarrow\operatorname{cAlg}_{\mathscr{Q}}.$$Let us remark that this lax and oplax monoidal structure is actually strong when restricted to the full sub-$\infinity$-categories $\Modmixgrcn_{\Bbbk}$ and $\Modmixgrccn_{\Bbbk}$: this follows trivially because, in such cases, the only summand in $\lp M_{\bullet}\otimesmixgr_{\Bbbk}N_{\bullet}\rp_0$ is $M_0\otimes_{\Bbbk} N_0$ itself.
	\end{itemize} 
\end{parag}
\begin{remark}
	\label{remark:cofreecoalgebra}
	As stated in \cite[Chapter $6$, Section $4.2$]{GR}, if one takes as $\mathscr{Q}$ the $\infinity$-cooperad $\operatorname{CoComm}^{\operatorname{aug}}$ of coaugmented cocommutative coalgebras, then the composition \begin{align}
	\label{functor:symgr}
	\operatorname{res}\circ\operatorname{coFree}^{\operatorname{ind-nil}}\colon \Mod_{\Bbbk}\longrightarrow\operatorname{cCcAlg}^{\operatorname{ind-nil}}_{\Bbbk//\Bbbk}\xrightarrow{\ref{eq:indniltosimple}}\operatorname{cCcAlg}_{\Bbbk//\Bbbk}
	\end{align}coincides with the usual symmetric algebra $\infinity$-functor with its cocommutative coalgebra structure given by its natural Hopf algebra structure.
\end{remark}
It is known (\cite[Chapter $6$, Conjecture $2.9.3$ and Theorem $2.9.4$]{GR}) that it is possible to extract primitive elements from the symmetric (co)algebra, i.e., there exists an $\infinity$-functor$$\Prim\colon\operatorname{cCcAlg}_{\Bbbk//\Bbbk}\longrightarrow\Mod_{\Bbbk}$$and a natural equivalence $M\simeq\Prim{\lp\Sym_{\Bbbk}{\lp M\rp}\rp}$ for any $\Bbbk$-module $M$. We want to show that, in the graded setting, the extraction of primitive elements is particularly simple, at least in the ind-nilpotent case.
\begin{parag}
	Let $\CoCommgraug$ be the $\infinity$-category ${\operatorname{cAlg}_{\operatorname{CoComm}^{\operatorname{aug}}}{\lp\Modgr_{\Bbbk}\rp}}$ of coaugmented cocommutative coalgebras in mixed graded $\Bbbk$-modules. Thanks to the discussion provided in \ref{parag:functorsthatpreservealgebras}, we know that the $\infinity$-functor $(-)(0)\colon\Mod_{\Bbbk}\to\Modgr_{\Bbbk}$, is strongly monoidal, and its left/right adjoint $(-)_0\colon\Modgr_{\Bbbk}\to\Mod_{\Bbbk}$ hence is both lax and oplax monoidal. So, consider the $\infinity$-functor\begin{align}
	%	\label{functor:symgr}
	\Symgr_{\Bbbk}\colon\Modgr_{\Bbbk}\longrightarrow\CoCommgraug
	\end{align}
	be the (graded version of the) \infinity-functor \ref{functor:symgr}, given by the composition of the cofree ind-nilpotent coalgebra \infinity-functor for the operad $\operatorname{CoComm}^{\operatorname{aug}}$, right adjoint to the forgetful $\infinity$-functor $$\oblv^{\operatorname{ind-nil}}_{\operatorname{cCcAlg}}\colon\operatorname{cCcAlg}_{\Bbbk//\Bbbk}^{\operatorname{gr,ind-nil}}\longrightarrow\Modgr_{\Bbbk}$$and the \infinity-functor $\operatorname{res}$ of \ref{eq:indniltosimple}. We want to prove the following result.
\end{parag}
\begin{propositionn}
	\label{prop:gradedsymfullyfaithful}
	The \infinity-functor 	\begin{align}
	\label{functor:gradedsym}
	\Symgr_{\Bbbk}{\lp(-)[-1](1)\rp}\colon\Mod_{\Bbbk}\overset{[-1]}{\simeq}\Mod_{\Bbbk}\overset{(-)(1)}{\longhookrightarrow}\Modgr_{\Bbbk}\xrightarrow{\Symgr_{\Bbbk}}\operatorname{cCcAlg}_{\Bbbk//\Bbbk}^{\operatorname{gr}}
	\end{align}is fully faithful.
\end{propositionn}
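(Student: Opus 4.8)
The plan is to strip off the two outer factors, which are harmless, and reduce to a computation with cofree coalgebras. The shift $[-1]\colon\Mod_{\Bbbk}\to\Mod_{\Bbbk}$ is an equivalence, and $(-)(1)\colon\Mod_{\Bbbk}\to\Modgr_{\Bbbk}$ is fully faithful because the unit of the ambidextrous adjunction $(-)(1)\dashv(-)_1$ is an equivalence (\cref{porism:rightadjointforgetful}). So it suffices to show that $\Symgr_{\Bbbk}$ induces an equivalence
\[
\Map_{\Modgr_{\Bbbk}}{\lp V(1),\hsp W(1)\rp}\;\longrightarrow\;\Map_{\CoCommgraug}{\lp\Symgr_{\Bbbk}{\lp V(1)\rp},\hsp\Symgr_{\Bbbk}{\lp W(1)\rp}\rp}
\]
for all $V,W\in\Mod_{\Bbbk}$. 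Recall from \cref{remark:cofreecoalgebra} that $\Symgr_{\Bbbk}=\operatorname{res}\circ\operatorname{coFree}^{\operatorname{gr,ind-nil}}$ for the cooperad $\operatorname{CoComm}^{\operatorname{aug}}$, and that by \ref{eq:forgetfulcoalgebras} the underlying graded $\Bbbk$-module of $\Symgr_{\Bbbk}{\lp V(1)\rp}$ is $\bigoplus_{n\geqslant 0}\Sym_{\Bbbk}^n(V)(n)$, with weight-$n$ component $\Sym^n_{\Bbbk}(V)$.

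First I would carry out the computation inside $\operatorname{cCcAlg}^{\operatorname{gr,ind-nil}}_{\Bbbk//\Bbbk}$, where things are easy because $\operatorname{coFree}^{\operatorname{gr,ind-nil}}$ is genuinely right adjoint to $\oblv^{\operatorname{gr,ind-nil}}$ and is computed by the naive symmetric-coalgebra formula. The adjunction gives
\[
\Map_{\operatorname{cCcAlg}^{\operatorname{gr,ind-nil}}_{\Bbbk//\Bbbk}}{\lp\operatorname{coFree}^{\operatorname{gr,ind-nil}}{\lp V(1)\rp},\hsp\operatorname{coFree}^{\operatorname{gr,ind-nil}}{\lp W(1)\rp}\rp}\;\simeq\;\Map_{\Modgr_{\Bbbk}}{\lp\textstyle\bigoplus_{n\geqslant 0}\Sym^n_{\Bbbk}(V)(n),\hsp W(1)\rp}.
\]
Since mapping spaces in $\Modgr_{\Bbbk}$ are computed weight by weight and $W(1)$ is concentrated in weight $1$, only the summand $\Sym^1_{\Bbbk}(V)(1)=V(1)$ contributes, and the right-hand side is $\Map_{\Mod_{\Bbbk}}{\lp V,\hsp W\rp}$. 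Moreover this identification is the one induced by the functor $\operatorname{coFree}^{\operatorname{gr,ind-nil}}{\lp(-)(1)\rp}$: under the adjunction, $\operatorname{coFree}^{\operatorname{gr,ind-nil}}{\lp g(1)\rp}$ corresponds to $\bigoplus_{n}\Sym^n(g)(n)$ postcomposed with the counit $\bigoplus_{n}\Sym^n_{\Bbbk}(W)(n)\to W(1)$, i.e.\ the projection onto the weight-$1$ cogenerators, whose weight-$1$ component is precisely $g$. Hence $\operatorname{coFree}^{\operatorname{gr,ind-nil}}{\lp(-)(1)\rp}\colon\Mod_{\Bbbk}\to\operatorname{cCcAlg}^{\operatorname{gr,ind-nil}}_{\Bbbk//\Bbbk}$ is already fully faithful.

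Next I would descend along $\operatorname{res}$. The object $\operatorname{coFree}^{\operatorname{gr,ind-nil}}{\lp V(1)\rp}$ has underlying graded module concentrated in non-negative weights with weight-$0$ part the counit $\Bbbk(0)$; on the full subcategory $\mathcal{D}$ of $\operatorname{cCcAlg}^{\operatorname{gr,ind-nil}}_{\Bbbk//\Bbbk}$ spanned by such objects I claim $\operatorname{res}$ is fully faithful, by the familiar ``non-negative weights kill all coalgebra subtleties'' mechanism. Indeed, for $C\in\mathcal{D}$ the $n$-fold tensor power of the coaugmentation coideal lies in weights $\geqslant n$, so in every fixed weight only finitely many tensor powers contribute; hence the ``$\bigoplus$''-actions \ref{eq:indnilpotentdp}, \ref{eq:indnil} and the ``$\prod$''-actions \ref{eq:dp}, \ref{eq:coalgebras} of $\Mod^{\Sigma}_{\Bbbk}$ from \cref{warning:coalgebras} (together with their divided-power variants, which coincide in characteristic $0$) all agree weight by weight on $\mathcal{D}$, and the genuine comonad computing $\CoCommgraug$ --- which in general is not given by any of these lax actions --- likewise collapses to the same common endofunctor on $\mathcal{D}$. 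Thus all four flavours of cocommutative coalgebra restrict to one and the same full subcategory, so $\operatorname{res}$ is fully faithful on $\mathcal{D}$. Combining this with the previous step identifies $\Map_{\CoCommgraug}{\lp\Symgr_{\Bbbk}{\lp V(1)\rp},\hsp\Symgr_{\Bbbk}{\lp W(1)\rp}\rp}$ with $\Map_{\Mod_{\Bbbk}}{\lp V,\hsp W\rp}$, compatibly with $\Symgr_{\Bbbk}$, and proves the proposition.

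The main obstacle is this last step: rigorously comparing the non-explicit comonad behind $\CoCommgraug$ with the naive symmetric-coalgebra comonad on $\mathcal{D}$, equivalently showing that on non-negatively-weighted coaugmented coalgebras with trivial weight-$0$ coideal the four notions of coalgebra of \cref{warning:coalgebras} agree. A safe way to run it is to filter by weight, observe that within each bounded weight window all four notions literally coincide (only finitely many symmetric powers enter), and pass to the limit over the weight filtration; alternatively one checks, using these weight-wise-finite descriptions, that the unit of the adjunction $\operatorname{res}\dashv\operatorname{res}^{R}$ is an equivalence on each $\operatorname{coFree}^{\operatorname{gr,ind-nil}}{\lp V(1)\rp}$. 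This bookkeeping is exactly the ``auxiliary technical results'' referred to in the discussion preceding the statement.
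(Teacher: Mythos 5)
Your first step is exactly the paper's \cref{prop:fakegradedsymfullyfaithful}: the composite into $\operatorname{cCcAlg}^{\operatorname{gr,ind-nil}}_{\Bbbk//\Bbbk}$ is fully faithful because the genuine adjunction $\oblv^{\operatorname{ind-nil}}\dashv\operatorname{coFree}^{\operatorname{ind-nil}}$ reduces the mapping space to maps into $W(1)$, and only the weight-$1$ cogenerators survive. That part is correct and is the same argument as in the paper.

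The second step — descending along $\operatorname{res}$ — is where there is a genuine gap. Your argument shows that the two \emph{right lax actions} \ref{eq:indnil} and \ref{eq:coalgebras} agree weight by weight on objects whose coaugmentation coideal sits in strictly positive weights, since the $m$-th cotensor power then lives in weights $\geqslant m$ and each fixed weight sees only finitely many terms. But this does not yet control $\Map_{\CoCommgraug}$: as \cref{warning:coalgebras} emphasizes, the comonad whose comodules are \emph{all} coalgebras is the right adjoint to $\oblv_{\mathscr{Q}}$ and \emph{does not} agree with the endofunctor induced by the lax action \ref{eq:coalgebras}. Your sentence asserting that this genuine comonad ``likewise collapses to the same common endofunctor on $\mathcal{D}$'' is precisely the missing content, not a consequence of the finiteness-in-each-weight observation; the weight filtration sketch would have to identify the (admittedly hard to describe) right adjoint to $\oblv_{\mathscr{Q}}$ on $\mathcal{D}$, and as written it does not. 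The paper closes exactly this gap by quoting two facts from \cite[Chapter $6$]{studyindag2}: Corollary $2.10.7$, which says that $\operatorname{res}$ induces an equivalence
\begin{equation*}
\Map_{\operatorname{cAlg}_{\mathscr{Q}}^{\operatorname{ind-nil}}(\scrC)}{\lp A,\hsp B\rp}\overset{\simeq}{\longrightarrow}\Map_{\operatorname{cAlg}_{\mathscr{Q}}(\scrC)}{\lp \operatorname{res}(A),\hsp\operatorname{res}(B)\rp}
\end{equation*}
whenever $A$ lies in the essential image of the Koszul duality \infinity-functor $\operatorname{coPrim}^{\operatorname{enh,ind-nil}}_{\mathscr{Q}^{\vee}}$, together with the identity $\operatorname{coPrim}^{\operatorname{enh,ind-nil}}_{\mathscr{Q}^{\vee}}\circ\triv_{\mathscr{Q}^{\vee}}\simeq\operatorname{coFree}^{\operatorname{ind-nil}}_{\mathscr{Q}}$, which exhibits $\Symgr_{\Bbbk}{\lp V[-1](1)\rp}$ as such a Koszul dual. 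Either import that result, or genuinely prove the comonad comparison on non-negatively weighted coalgebras; the latter is a real argument, not bookkeeping.
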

We shall deduce \cref{prop:gradedsymfullyfaithful} from the following Lemma.
\begin{lemman}
	\label{prop:fakegradedsymfullyfaithful}
	The $\infinity$-functor 
	\begin{align}
	\label{functor:propedeutico}
	\Mod_{\Bbbk}\overset{[-1]}{\simeq}\Mod_{\Bbbk}\overset{(-)(1)}{\longhookrightarrow}\Modgr_{\Bbbk}\xrightarrow{\operatorname{coFree}^{\operatorname{ind-nil}}_{\operatorname{cCcAlg}}}\operatorname{cCcAlg}_{\Bbbk//\Bbbk}^{\operatorname{gr,ind-nil}}
	\end{align}
	is fully faithful.
\end{lemman}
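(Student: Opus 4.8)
The plan is to exploit that, by its very construction, $\operatorname{coFree}^{\operatorname{ind-nil}}_{\operatorname{cCcAlg}}$ is right adjoint to the forgetful \infinity-functor $\oblv^{\gr,\operatorname{ind-nil}}_{\operatorname{cCcAlg}}\colon\operatorname{cCcAlg}_{\Bbbk//\Bbbk}^{\operatorname{gr,ind-nil}}\to\Modgr_{\Bbbk}$, which turns a mapping space out of a cofree coalgebra into a mapping space of graded $\Bbbk$-modules, and then to let the weight grading collapse it. Write $F$ for the composite \infinity-functor \ref{functor:propedeutico} and fix a $\Bbbk$-module $V$. First I would identify the underlying graded $\Bbbk$-module of $F(V)$: since the graded symmetric coalgebra \infinity-functor $\Symgr_{\Bbbk}$ from the paragraph preceding the statement is $\operatorname{res}\circ\operatorname{coFree}^{\operatorname{ind-nil}}_{\operatorname{cCcAlg}}$, and $\operatorname{res}$ does not change the underlying graded module by \ref{eq:forgetfulcoalgebras}, one has
\[
\oblv^{\gr,\operatorname{ind-nil}}_{\operatorname{cCcAlg}}{\lp F(V)\rp}\;\simeq\;\bigoplus_{n\geqslant1}{\lp\Sym^n_{\Bbbk}{\lp V[-1]\rp}\rp}(n),
\]
where the $n$-th summand is concentrated in pure weight $n$ because the monoidal structure of $\Modgr_{\Bbbk}$ is additive in the weight and $V[-1](1)$ lives in weight $1$, and where $\Sym^n_{\Bbbk}$ is the usual derived symmetric power (coinvariants agreeing with invariants since $\operatorname{char}\Bbbk=0$).

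Next, for $\Bbbk$-modules $V,W$ I would combine the cofree adjunction with the fact that $\Modgr_{\Bbbk}\simeq\prod_{p\in\ZZ}\Mod_{\Bbbk}$, so that mapping spaces of graded modules split as products over the weight:
\[
\Map_{\operatorname{cCcAlg}_{\Bbbk//\Bbbk}^{\operatorname{gr,ind-nil}}}{\lp F(V),\hsp F(W)\rp}\;\simeq\;\Map_{\Modgr_{\Bbbk}}{\lp\oblv^{\gr,\operatorname{ind-nil}}_{\operatorname{cCcAlg}}F(V),\hsp W[-1](1)\rp}\;\simeq\;\prod_{n\geqslant1}\Map_{\Modgr_{\Bbbk}}{\lp{\lp\Sym^n_{\Bbbk}{\lp V[-1]\rp}\rp}(n),\hsp W[-1](1)\rp}.
\]
Every factor with $n\neq1$ is contractible, the source and target sitting in different weights, while the $n=1$ factor is $\Map_{\Modgr_{\Bbbk}}{\lp V[-1](1),\hsp W[-1](1)\rp}\simeq\Map_{\Mod_{\Bbbk}}{\lp V[-1],\hsp W[-1]\rp}\simeq\Map_{\Mod_{\Bbbk}}{\lp V,\hsp W\rp}$, the last equivalence because $[-1]$ is an autoequivalence of the stable \infinity-category $\Mod_{\Bbbk}$. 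Hence the mapping space between the two cofree coalgebras is canonically $\Map_{\Mod_{\Bbbk}}{\lp V,\hsp W\rp}$.

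Finally I would check that this chain of equivalences is inverse to the map induced by $F$, which is the step requiring genuine care. By the triangle identity, the composition of $\Map_{\Mod_{\Bbbk}}{\lp V,\hsp W\rp}\to\Map_{\operatorname{cCcAlg}_{\Bbbk//\Bbbk}^{\operatorname{gr,ind-nil}}}{\lp F(V),\hsp F(W)\rp}$ (induced by $F$) with the adjunction equivalence above is post-composition with the unit $\eta_V\colon V[-1](1)\to\oblv^{\gr,\operatorname{ind-nil}}_{\operatorname{cCcAlg}}{\lp F(V)\rp}$ of the cofree adjunction; and $\eta_V$ is exactly the inclusion of the linear (weight $1$) summand, which under the identification above is $V[-1](1)$ itself, so composing further with the projection onto that summand returns the identity of $\Map_{\Mod_{\Bbbk}}{\lp V,\hsp W\rp}$. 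Therefore $F$ is fully faithful. I expect the main obstacle to be precisely this last identification: confirming that the forgetful \infinity-functor to $\Modgr_{\Bbbk}$ really sees the weight-graded direct sum above (and not a completion of it), and that the unit of the ind-nilpotent cofree-coalgebra adjunction, read through its identification with $\Symgr_{\Bbbk}$, is the canonical inclusion of linear terms.
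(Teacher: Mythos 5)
Your argument is correct and is essentially the paper's own proof in a different guise: both hinge on the cofree adjunction together with the identification of the underlying graded object of $\operatorname{coFree}^{\operatorname{ind-nil}}_{\operatorname{cCcAlg}}{\lp V[-1](1)\rp}$ with the graded symmetric coalgebra, whose weight-$1$ piece is exactly $V[-1]$. The paper packages this as the statement that the counit of the composite adjunction (left adjoint $[1]\circ(-)_1\circ\oblv^{\operatorname{ind-nil}}_{\operatorname{cCcAlg}}$) is an equivalence, which subsumes your final triangle-identity check; your worry about sum versus completion is harmless here since the weight grading separates the symmetric powers.
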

\begin{proof}
	By abstract nonsense, the composition \ref{functor:propedeutico} admits a left adjoint
	$$\operatorname{cCcAlg}_{\Bbbk//\Bbbk}^{\operatorname{gr,ind-nil}}\xrightarrow{\oblv^{\operatorname{ind-nil}}_{\operatorname{cCcAlg}}}\Modgr_{\Bbbk}\xrightarrow{(-)_1}\Mod_{\Bbbk}\overset{[1]}{\simeq}\Mod_{\Bbbk}$$given by composing all left adjoints of each $\infinity$-functor which makes up \ref{functor:propedeutico}. So it will suffice to show that the counit$$M\longrightarrow \oblv^{\operatorname{ind-nil}}_{\operatorname{cCcAlg}}\lp\operatorname{coFree}^{\operatorname{ind-nil}}_{\operatorname{cCcAlg}}\lp M[-1](1)\rp\rp_1[1]$$ of such adjunction is an equivalence. Thanks to the equivalence \ref{eq:forgetfulcoalgebras}, we know that $$\operatorname{oblv}^{\operatorname{ind-nil}}_{\operatorname{cCcAlg}}\simeq \operatorname{oblv}_{\operatorname{cCcAlg}}\circ \operatorname{res}$$and \cref{remark:cofreecoalgebra} assures us that the composition $\operatorname{coFree}_{\operatorname{cCcAlg}}\coloneqq\operatorname{res}\circ \operatorname{coFree}^{\operatorname{ind-nil}}_{\operatorname{cCcAlg}}$ is just the symmetric coalgebra \infinity-functor $\Symgr_{\Bbbk}$, with its natural grading. So, we can rewrite the counit above as
	\begin{align*}
	M\longrightarrow& \oblv^{\operatorname{ind-nil}}_{\operatorname{cCcAlg}}\lp\operatorname{coFree}^{\operatorname{ind-nil}}_{\operatorname{cCcAlg}}\lp M[-1](1)\rp\rp_1[1]\\
	\simeq \hspace{0.1cm} & \oblv_{\operatorname{cCcAlg}}\lp\operatorname{res}\lp\operatorname{coFree}^{\operatorname{ind-nil}}_{\operatorname{cCcAlg}}\lp M[-1](1)\rp\rp\rp_1[1]\\
	\simeq\hspace{0.1cm}  & \Symgr_{\Bbbk}\lp M[-1](1)\rp_1[1]\simeq M
	\end{align*}
	which proves our assertion.
\end{proof}
\begin{proof}[Proof of \cref{prop:gradedsymfullyfaithful}]
The \infinity-functor \ref{eq:indniltosimple} induces an equivalence of mapping spaces$$\Map_{\operatorname{cAlg}_{\mathscr{Q}}^{\operatorname{ind-nil}}(\scrC)}{\lp A,\hsp B\rp}\simeq \Map_{\operatorname{cAlg}_{\mathscr{Q}}(\scrC)}{\lp \operatorname{res}(A),\hsp\operatorname{res}(B)\rp}$$whenever $A$ lies in the essential image of the Koszul duality \infinity-functor$$\operatorname{coPrim}^{\operatorname{enh,ind-nil}}_{\mathscr{O}}\colon\operatorname{Alg}_{\mathscr{O}}{\lp\scrC\rp}\longrightarrow\operatorname{cAlg}^{\operatorname{ind-nil}}_{\mathscr{Q}}(\scrC),$$where $\mathscr{O}\coloneqq\mathscr{Q}^{\vee}$ is the Koszul dual operad to $\mathscr{Q}$: this is the content of \cite[Chapter $6$, Corollary $2.10.7$]{studyindag2}. Moreover, in virtue of \cite[Chapter $6$, Section $2.4.3$]{studyindag2}, it is always true that
$$\operatorname{coPrim}^{\operatorname{enh,ind-nil}}_{\mathscr{Q}^{\vee}}\circ \triv_{\mathscr{Q}^{\vee}}\simeq \operatorname{coFree}^{\operatorname{ind-nil}}_{\mathscr{Q}},$$and since we defined $\Symgr_{\Bbbk}$ as the composition $\operatorname{res}\circ \operatorname{coFree}^{\operatorname{ind-nil}}_{\operatorname{cCcAlg}}$ we have a chain of equivalences of mapping spaces\begin{align*}
\Map_{\Mod_{\Bbbk}}{\lp M,\hsp N\rp}\overset{\ref{prop:fakegradedsymfullyfaithful}}{\simeq}\hspace{0.1cm}& \Map_{\operatorname{cCcAlg}_{\Bbbk//\Bbbk}^{\operatorname{gr,ind-nil}}}{\lp\operatorname{coFree}^{\operatorname{ind-nil}}_{\operatorname{cCcAlg}}\lp M[-1](1)\rp,\hsp \operatorname{coFree}^{\operatorname{ind-nil}}_{\operatorname{cCcAlg}}\lp N[-1](1)\rp\rp}\\\simeq\hspace{0.3cm} &\Map_{\CoCommgraug}{\lp\Symgr_{\Bbbk}{\lp M[-1](1)\rp},\hsp\Symgr_{\Bbbk}{\lp N[-1](1)\rp}\rp}
\end{align*}for any $\Bbbk$-modules $M$ and $N$.
\end{proof}
\begin{remark}
	\label{remark:gradedsymalgebrafullyfaithful}
	The analogous claim of \cref{prop:gradedsymfullyfaithful} holds if we consider the graded symmetric algebra $\infinity$-functor as the \textit{free augmented commutative algebra} $\infinity$-functor from $\Mod_{\Bbbk}$ to $\CAlggraug$, where now the latter denotes the $\infinity$-category of augmented commutative algebras in graded $\Bbbk$-modules. The proof is completely analogous to the one of \cref{prop:fakegradedsymfullyfaithful}, without needing the further technical auxiliary result of \cite{studyindag2}.
	%Moreover, thanks to the discussion in \ref{parag:gradedandmixedgradedcoalgebras}, it is clear that an analogous claim holds if we consider the $\infinity$-functors$$\Symmixgr_{\Bbbk}{\lp(-)[-1](1)\rp}\colon\Mod_{\Bbbk}\overset{[-1]}{\simeq}\Mod_{\Bbbk}\overset{(-)(1)}{\longhookrightarrow}\Modmixgr_{\Bbbk}\xrightarrow{\Symmixgr_{\Bbbk}}\CoCommmixgraug$$and$$\Symmixgr_{\Bbbk}{\lp(-)[-1](1)\rp}\colon\Mod_{\Bbbk}\overset{[-1]}{\simeq}\Mod_{\Bbbk}\overset{(-)(1)}{\longhookrightarrow}\Modmixgr_{\Bbbk}\xrightarrow{\Symmixgr_{\Bbbk}}\CAlgmixgraug.$$
\end{remark}
\section{Lie algebras in characteristic $0$}
\label{sec:dglie}
In this section, we shall recollect the main notions and fix the notations we need in order to study the homotopy theory of Lie algebras in characteristic $0$. For the sake of clarity, we shall gather here also elementary definitions and constructions, so as to give some motivations and to make explicit how we generalized the classical theory. Our main reference is \cite{dagx}. 
\subsection{Main definitions and notations}
It is well known (see, for instance, \cite[Propositions $7.1.4.6$ and $7.1.4.11$]{ha}) that the homotopy theory of $\Ebb_1$-ring spectra (respectively $\Einf$-ring spectra) and differential graded algebras (respectively commutative differential graded algebras) are equivalent, when working over a base provided by a discrete commutative ring $\Bbbk$ of characteristic $0$. Following this philosophy, we can define Lie algebras in the derived setting in two different ways.
\begin{defn}[Differential graded Lie algebras]
	\label{def:dgla}
	A \textit{differential graded Lie algebra} defined over a commutative ring $\Bbbk$ is a differential graded $\Bbbk$-module $\gfrak_{\bullet}$, endowed with a bracket $$[-,\hsp-]\colon\gfrak_p\otimes_{\Bbbk}\gfrak_q\longrightarrow\gfrak_{p+q}$$satisfying the following axioms.
	\begin{itemize}
		\item For $x\in\gfrak_p$ and $y\in\gfrak_q$, we have that $$[x,\hsp y]+(-1)^{pq}[y,\hsp x]=0.$$
		\item For $x\in\gfrak_p$, $y\in\gfrak_q$ and $z\in\gfrak_r$, we have that $$(-1)^{pr}[x,\hsp [y,\hsp z]]+(-1)^{pq}[y,\hsp [z,\hsp x]]+(-1)^{qr}[z,\hsp[x,\hsp y]]=0.$$
		\item The differential $\operatorname{d}\colon\gfrak\to\gfrak[1]$ is a derivation with respect to the Lie bracket; i.e. for $x\in\gfrak_p$ and $y\in\gfrak_q$ we have that $$\operatorname{d}[x,\hsp y]=[\operatorname{d}(x),\hsp y]+(-1)^{pq}[x,\hsp\operatorname{d}(y)].$$
	\end{itemize}
	A morphism of differential graded Lie algebras $f\colon\gfrak\to\mathfrak{h}$ is a morphism of the underlying chain complexes preserving the Lie bracket, i.e. such that ${f}{\lp[x,\hsp y]\rp}=\left[f(x),\hsp f(y)\right]$ for all $x$, $y$ in $\gfrak$.
\end{defn}
\begin{parag}
	With the descriptions provided in Definition \ref{def:dgla}, it follows that differential graded Lie algebras are naturally gathered in a category, $\dgLie_{\Bbbk}$. This category is moreover endowed with a model structure where weak equivalences and fibrations are detected by the forgetful functor $$\oblv_{\Lie}\colon\dgLie_{\Bbbk}\longrightarrow\dgMod_{\Bbbk},$$ see for example \cite[Proposition $2.1.10$]{dagx}. In particular, using the formalism of (\cite{dwyerkan}), we can consider the Dwyer-Kan localization of $\dgLie_{\Bbbk}$ at the class $\mathscr{W}$ of weak equivalences (i.e., quasi-isomorphisms) and consider its simplicial nerve to get an $\infinity$-category ${\operatorname{N}_{\Delta}}{\lp\operatorname{L}^{\operatorname{H}}\lp\dgLie_{\Bbbk},\mathscr{W}\rp\rp}$, that we simply denote by $\Lie_{\Bbbk}$.
	%	 ${\operatorname{N}_{\Delta}}{\lp\dgLie_{\Bbbk}[\mathscr{W}^{-1}]\rp}$, that we simply denote by $\Lie_{\Bbbk}$.
\end{parag}
\begin{defn}
	The $\infinity$-category $\Lie_{\Bbbk}$ is the \textit{$\infinity$-category of Lie algebras over $\Bbbk$}.
\end{defn}
Alternatively, we can define Lie algebras via the operadic approach in the following way. Let us denote by $\Lie$ the ordinary Lie ($1$-)operad. 
\begin{defn}
	\label{def:linftyalgebras}
	A \textit{Lie algebra over a commutative ring $\Bbbk$} is an algebra for the Lie operad in the $\infinity$-category $\Mod_{\Bbbk}$ of $\Bbbk$-modules. The $\infinity$-category ${\Alg_{\Lie}}{\lp\Mod_{\Bbbk}\rp}$ is the \textit{$\infinity$-category of Lie algebras over $\Bbbk$}, and shall be denoted by $\Lie_{\Bbbk}$.
\end{defn}
\begin{remark}
	Definitions \ref{def:dgla} and \ref{def:linftyalgebras} agree in the characteristic $0$ setting, hence the same notation. Let us briefly recall the main steps that provide the equivalence of these two homotopy theories: recall that homotopy algebras for the Lie operad are the same as $\LL_{\infty}$-algebras in chain complexes, where $\LL_{\infty}$ is any cofibrant replacement of the operad $\Lie$ in the model category of operads. There exists an obvious inclusion $$\operatorname{dgLie}_{\Bbbk}\longhookrightarrow{\operatorname{Alg}_{\LL_{\infty}}}{\lp\dgMod_{\Bbbk}\rp}.$$There exist two different categories related to $\dgLie_{\Bbbk}$ and ${\operatorname{Alg}_{\LL_{\infty}}}{\lp\dgMod_{\Bbbk}\rp}.$
	\begin{enumerate}
		\item The category ${{\operatorname{Pro}}{\lp\operatorname{dgArt}_{\Bbbk}\rp}}^{\op}$, which is the opposite category of the category of pro-objects in local differential graded Artinian commutative $\Bbbk$-algebras. This category is endowed with a model structure, whose fibrant objects are precisely $\LL_{\infty}$-algebras (this is showed in the proof of \cite[Proposition $4.42$]{pridham}). 
		\item The category $\operatorname{dgcCcAlg}^{\operatorname{un}}_{\Bbbk}$, which is the category of unital differential graded cocommutative coalgebras (in the sense of \cite[Definition $2.1.1$]{hinichcoalgebras}). This category is endowed with a model structure as well, and in virtue of \cite[Theorem $3.2$]{hinichcoalgebras} there exists a Quillen equivalence$$\operatorname{dgcCcAlg}^{\operatorname{un}}_{\Bbbk}\overset{\simeq}{\longrightarrow}\dgLie_{\Bbbk}.$$
	\end{enumerate}
	Moreover, there exists an equivalence of categories$${{\operatorname{Pro}}{\lp\operatorname{dgArt}_{\Bbbk}\rp}}^{\op}\overset{\simeq}{\longrightarrow}\operatorname{dgcCcAlg}^{\operatorname{un}}_{\Bbbk}$$which identifies the two model structures (this is the content of \cite[Corollary $4.56$]{pridham}). In particular, the inclusion of fibrant differential graded Lie algebras into the category of fibrant $\LL_{\infty}$-algebras extends to a right Quillen functor, whose left adjoint agrees with the Quillen equivalence $\operatorname{dgcCcAlg}^{\operatorname{un}}_{\Bbbk}\simeq\dgLie_{\Bbbk}.$ This induces the desired equivalence at the level of associated \infinity-categories.
\end{remark}
\begin{parag}Let $\Alg_{\Bbbk}$ denote the $\infinity$-category of associative $\Bbbk$-algebras. We have an $\infinity$-functor$$\operatorname{L}\colon\Alg_{\Bbbk}\longrightarrow\Lie_{\Bbbk}$$ given by the \textit{underlying commutator Lie algebra}, which can be understood as the $\infinity$-functor  induced by the functor $$\dgAlg_{\Bbbk}\coloneqq{\Alg}^{\otimes}{\lp\dgMod_{\Bbbk}\rp}\longrightarrow\dgLie_{\Bbbk}$$sending a differential graded $\Bbbk$-algebra to the differential graded Lie algebra given by the same underlying chain complex and endowed with the commutator Lie bracket. This $\infinity$-functor is actually the right adjoint in the adjoint couple
	\begin{align}
	\label{adjunction:envelopingalgebra/lie}
	\begin{tikzpicture}[scale=0.75,baseline=-0.5ex]
	\node (a)at (-1.5,0){$\operatorname{U}\colon\Lie_{\Bbbk}$};
	%	\node (a) at (-1.5,0){$\Lie_{\Bbbk}$};
	%	\node (b) at (1.5,0){$\Alg_{\Bbbk}$};
	\node(b) at (1.5,-0){$\Alg_{\Bbbk}:\operatorname{L}.$};
	\draw[->] ([yshift=4.5pt]a.east) -- ([yshift=4.5pt]b.west);
	\draw[->] ([yshift=-1.5pt]b.west) -- ([yshift=-1.5pt]a.east);
	\end{tikzpicture}
	\end{align}
\end{parag}
\begin{defn}
	The left adjoint in the adjunction \ref{adjunction:envelopingalgebra/lie} is the \textit{universal enveloping algebra $\infinity$-functor}.
\end{defn}
\begin{remark}
	\label{remark:universalenvelopingalgebragaitsgory}
	The universal enveloping algebra $\infinity$-functor has been observed from many perspectives, and a construction in the setting of \textit{$\Linf$-algebras in chain complexes} (i.e., algebras over the operad $\Linf$, which provides a cofibrant replacement in the model category of operads for the operad $\Lie$) is provided for instance in \cite{baranovsky}. Another interesting (and more general) approach to $\mathbb{E}_n$-enveloping algebras for arbitrary $n$ can be found in \cite[Section $2.7$]{linearbatalin}. In \cite[Remark $2.1.7$]{dagx}, it is provided an explicit model in the context of differential graded Lie algebras.\\
	In the remainder of this paper, however, we shall need a universal enveloping algebra $\infinity$-functor also in the context of Lie algebra objects in less standard symmetric monoidal stable $\infinity$-categories than the usual $\infinity$-category $\Mod_{\Bbbk}$; in particular, we shall be interested in the case of ${\Alg_{\Lie}}{\lp\scrC\rp}$ where $\scrC$ is the $\infinity$-category of mixed graded $\Bbbk$-modules $\Modmixgr_{\Bbbk}$. It appears that a universal enveloping algebra construction can be carried out also in this more general setting since we have a natural map of operads$$\Lie\longrightarrow\operatorname{Assoc}^{\operatorname{aug}}$$which induces a restriction $\infinity$-functor between associative augmented algebra objects in a symmetric monoidal $\infinity$-category $\scrC$ to Lie algebra objects. This $\infinity$-functor admits a left adjoint which is the universal enveloping algebra $\infinity$-functor we need: see \cite[Chapter $6$, Section $5.1$]{studyindag2}.
\end{remark}
\begin{construction}
	\label{construction:hopfalgebrastructureonUg}
	Let $\gfrak$ be a classical (i.e., discrete) Lie algebra defined over a field $\Bbbk$. Its enveloping algebra $\Ug$ is naturally endowed with a cocommutative Hopf structure over $\Bbbk$. The morphims of $\Bbbk$-algebras $\epsilon\colon\Ug\to \Bbbk$, $\mu\colon\Ug\to\Ug\otimes_{\Bbbk}\Ug$ and $\iota\colon\Ug\to\Ug$ which provide respectively the counit, the comultiplication and the coinversion for such Hopf structure arise in the following way.
	\begin{itemize}
		\item[$1.$] The counit $\epsilon\colon\Ug\to\operatorname{U}(0)\simeq\Bbbk$ is induced by applying the functor $\operatorname{U}$ to the trivial map of Lie algebras $\gfrak\to0$.
		\item[$2.$]The comultiplication $\mu\colon\Ug\to\operatorname{U}{\lp\gfrak\times\gfrak\rp}\simeq\Ug\otimes_{\Bbbk}\Ug$ is induced by applying the functor $\operatorname{U}$ to the diagonal map $\Delta\colon\gfrak\hookrightarrow\gfrak\times\gfrak$.
		\item[$3.$]The coinversion $\iota\colon\Ug\to\Ug$ is induced by applying the functor $\operatorname{U}$ to the inversion map  $-\operatorname{id}_{\gfrak}\colon\gfrak\to\gfrak$. 
	\end{itemize}
	We can adapt this idea to the context of \infinity-categories, and make the universal enveloping algebra of a Lie algebra in $\scrC$ into a Hopf algebra in $\scrC$ (see \cite[Appendix B]{rationalhomotopytheory} for the case of differential graded Lie algebras or, for a more modern and general perspective, \cite[Chapter $6$, Section $5.1.4$]{studyindag2}). Given a symmetric monoidal stable \infinity-category $\scrC$, we can consider algebras for the associative operad $\Ebb_1$ (i.e., associative algebras) and algebras for the Lie operad. Both these \infinity-categories of algebras are symmetric monoidal, with the monoidal structure given by the Cartesian monoidal structure on $\Alg_{\Lie}(\scrC)$ and by the usual tensor product of algebras in $\Alg_{\Ebb_1}(\scrC)$. The universal enveloping algebra \infinity-functor $\operatorname{U}\colon\Alg_{\Lie}(\scrC)\to\Alg_{\Ebb_1}(\scrC)$ is then a \textit{strongly monoidal \infinity-functor} for these symmetric monoidal structures (\cite[Chapter $6$, Lemma $5.2.8$]{studyindag2}), hence it induces an \infinity-functor at the level of cocommutative coalgebras$$\operatorname{U}\colon\operatorname{cCcAlg}{\lp\Alg_{\Lie}(\scrC)\rp}\longrightarrow\operatorname{cCcAlg}{\lp\Alg_{\Ebb_1}(\scrC)\rp}\eqqcolon \operatorname{cCBAlg}(\scrC)$$where the target is the \infinity-category of cocommutative coalgebras in associative algebras, i.e., \textit{cocommutative bialgebras}. Since the monoidal structure on $\Alg_{\Lie}(\scrC)$ is Cartesian, there is an equivalence of $\infinity$-categories
	\begin{align}
	\label{equiv:everyliealgebracocommcoalg}
	\Alg_{\Lie}(\scrC)\simeq \operatorname{cCcAlg}{\lp\Alg_{\Lie}(\scrC)\rp}
	\end{align}
	in virtue of \cite[Proposition $2.4.3.9$]{ha}: essentially, the diagonal morphism $\Delta\colon\gfrak\to\gfrak\times\gfrak$ turns every Lie algebra into a cocommutative coalgebra object in Lie algebras. By precomposing with the equivalence \ref{equiv:everyliealgebracocommcoalg}, we get our universal enveloping algebra \infinity-functor with its bialgebra structure
	\begin{align}
	\label{functor:universalenvelopingbialgebra}
	\operatorname{U}\colon\Alg_{\Lie}(\scrC)\simeq\operatorname{cCcAlg}{\lp\Alg_{\Lie}(\scrC)\rp}\longrightarrow\operatorname{cCBAlg}(\scrC).
	\end{align}The fact that the \infinity-functor $\operatorname{U}$ factors through the \infinity-category of Hopf algebras follows from the fact that$$\operatorname{cCBAlg}(\scrC)\coloneqq\operatorname{cCcAlg}{\lp\Alg_{\Ebb_1}(\scrC)\rp}\simeq \Alg_{\Ebb_1}{\lp\operatorname{cCcAlg}(\scrC)\rp}\eqqcolon \operatorname{Mon}{\lp\operatorname{cCcAlg}(\scrC)\rp},$$since the monoidal structure on $\operatorname{cCcAlg}(\scrC)$ is given by the tensor product in $\scrC$, which coincides with Cartesian monoidal structure for coalgebras, see\cite[Chapter $6$, Appendix C$.1.1.$]{studyindag2}. So the universal enveloping algebra \infinity-functor preserves products of Lie algebras, hence gives rise to a \infinity-functor at the level of group objects$$\operatorname{U}\colon\operatorname{Grp}{\lp\Alg_{\Lie}(\scrC)\rp}\longrightarrow\operatorname{Grp}{\lp\operatorname{cCcAlg}(\scrC)\rp}.$$Since looping and delooping provides an equivalence $\Alg_{\Lie}(\scrC)\simeq\operatorname{Grp}{\lp\Alg_{\Lie}(\scrC)\rp}$ for any symmetric monoidal stable \infinity-category $\scrC$ (\cite[Proposition $1.6.4$]{studyindag2}), and that a Hopf algebra is precisely a group object in cocommutative coalgebras, we obtain the Hopf structured we asked for.
\end{construction}
\subsection{Representations of Lie algebras}
\label{sec:replie}
In this subsection, we briefly review the definition and properties of representations of a Lie algebra. The most convenient and straight-forward definition for a representation  of a Lie algebra in the derived setting is the following.
\begin{defn}
	\label{def:repg}
	A \textit{representation of a Lie algebra $\gfrak$ over $\Bbbk$} is a left $\Ug$-module.
	The \textit{$\infinity$-category of representations of a Lie algebra $\gfrak$} is then the $\infinity$-category $\LMod_{\Ug}$ of left $\Ug$-modules, and shall be denoted by $\Rep_{\gfrak}$.
\end{defn}
\begin{remark}
	\label{remark:leftUgmodulesandgrepresentations}
	The fact that this definition, motivated by the equivalence of left modules on the enveloping algebra of a classical (discrete) Lie algebra $\gfrak$ and classical representations of $\gfrak$, is still the right notion for derived representations of derived Lie algebras is explained by the following argument. Consider a differential graded Lie algebra $\gfrak_{\bullet}$ over $\Bbbk$; then one can consider, in analogy to the classical setting, a \textit{differential graded representation of $\gfrak_{\bullet}$} to be a chain complex of $\Bbbk$-modules $V_{\bullet}$ endowed with a left action$$\gfrak_{\bullet}\otimes_{\Bbbk}V_{\bullet}\longrightarrow V_{\bullet}$$ such that $$[x,y]\cdot v=x\cdot(y\cdot v)+(-1)^{pq}y\cdot(x\cdot v)$$ for any $x\in\gfrak_p$ and $y\in\gfrak_q$.\ 
	Differential graded representations of a differential graded Lie algebra $\gfrak_{\bullet}$ are gathered in a category, which we shall denote with $\dgRep_{\gfrak_{\bullet}}$. Similarly to the case of differential graded Lie algebras, also $\dgRep_{\gfrak_{\bullet}}$ is endowed with a model structure whose weak equivalences and fibrations are detected by the forgetful functor $\dgRep_{\gfrak_{\bullet}}\longrightarrow\dgMod_{\Bbbk}$ (\cite[Proposition $2.4.5$]{dagx}). By Dwyer-Kan localization with respect to the class $\mathscr{W}$ of weak equivalences yields an $\infinity$-category ${\operatorname{N}_{\Delta}}{\lp\operatorname{L}^{\operatorname{H}}\lp\dgRep_{\gfrak_{\bullet}},\mathscr{W}\rp\rp}$ which we simply denote by $\Rep_{\gfrak}$.\\
	It is known (\cite[Theorem $5.4$]{stronglyhomotopyliealgebras}) that giving a differential graded representation $V_{\bullet}$ of a differential graded Lie algebra $\gfrak_{\bullet}$ over $\Bbbk$ is equivalent to giving a morphism of differential graded Lie algebras$$\gfrak_{\bullet}\longrightarrow{\operatorname{L}}{\lp{\End_{\Bbbk}}{\lp V_{\bullet}\rp}\rp}$$where ${\operatorname{L}}{\lp{\End_{\Bbbk}}{\lp V_{\bullet}\rp}\rp}$ is the associative differential graded $\Bbbk$-algebra of endomorphisms of $V_{\bullet}$ thought as a differential graded Lie algebra via the right adjoint \ref{adjunction:envelopingalgebra/lie}. In particular, because of that adjunction, it follows that giving a morphism of associative differential graded $\Bbbk$-algebras $\gfrak_{\bullet}\to{\operatorname{L}}{\lp{{\End_{\Bbbk}}{\lp V_{\bullet}\rp}}\rp}$ is equivalent to giving a differential graded $\Bbbk$-algebra morphism ${\operatorname{U}}{\lp\gfrak_{\bullet}\rp}\to{\End_{\Bbbk}}{\lp V_{\bullet}\rp}$, which is equivalent (see for instance \cite[Section $4.7.1$]{ha}) to giving a left ${\operatorname{U}}{\lp\gfrak_{\bullet}\rp}$-module structure to the chain complex of $\Bbbk$-modules $V_{\bullet}$ in the category $\dgMod_{\Bbbk}$. In particular, it follows that the $\infinity$-category $\Rep_{\gfrak}$ and the $\infinity$-category $\LMod_{\Ug}$ of left $\Ug$-modules are equivalent one to the other as stable $\Bbbk$-linear $\infinity$-categories, for any Lie algebra $\gfrak$ over $\Bbbk$, hence the motivation for Definition \ref{def:repg}.
\end{remark}
\begin{remark}
	\label{remark:leftorright}
	Given a Lie algebra $\gfrak$, then $\Ug$ is equivalent as an associative algebra to its opposite $\Ug^{\operatorname{rev}}$ (in the sense of \cite[Remark $4.1.1.7$]{ha}). Indeed, the antipode involution $-\operatorname{id}\colon\gfrak\to\gfrak$ induces an equivalence $\Ug\overset{\simeq}{\to}\Ug^{\operatorname{rev}}$. In particular, pulling back along such equivalence induces a natural equivalence of \infinity-categories $$\LMod_{\Ug}\simeq\RMod_{\Ug^{\operatorname{rev}}}\overset{\simeq}{\longrightarrow}\RMod_{\Ug}.$$This implies that any representation of a Lie algebra $\gfrak$ can be equivalently interpreted as a left or right $\Ug$-module, depending on our need.
\end{remark}
\begin{parag}
	Classically, given a Hopf algebra $H$ over a base commutative ring $\Bbbk$ (such as the universal enveloping algebra of a Lie algebra $\gfrak$ over a base field of characteristic $0$) one can define a monoidal structure on the abelian $1$-category of left $H$-modules such that the forgetful functor$$\oblv_H\colon\LMod_H \longrightarrow\Mod_{\Bbbk}$$is strongly monoidal. The left $H$-action on the tensor product $M\otimes_{\Bbbk}N$ is induced by the comultiplication $\mu\colon H\to H\otimes_{\Bbbk}H.$ Moreover, such monoidal structure on $\LMod_H$ is \textit{symmetric} monoidal if $H$ is a \textit{cocommutative} Hopf algebra. The above discussion is actually part of a larger framework, namely \textit{Tannaka duality}, which affirms that any symmetric monoidal category with a strongly monoidal fiber functor to the abelian category of $\Bbbk$-vector spaces is equivalent to the category of left modules on some cocommutative bialgebra (a nice account of this theory is provided in \cite{tannakaduality}). While we do not need the whole Tannaka duality picture for Hopf algebras, we shall need the symmetric monoidal structure on the \infinity-category of $\LMod_{A}$, for $A$ a cocommutative $\Bbbk$-bialgebra. This has been spelled out in great generality in \cite[Section $3.1$]{bialgebrasmonoidalstructure}, even if we need only the following special case.
\end{parag}
\begin{construction}[{\cite[Corollary $3.19$]{bialgebrasmonoidalstructure}}]
	\label{construction:monoidalstructureonLModUG}
	Let $\gfrak$ be a Lie algebra object in a symmetric monoidal stable $\infinity$-category $\scrC$, which is $\Bbbk$-linear over a field $\Bbbk$ of characteristic $0$. Then the \infinity-category $\LMod_{\Ug}(\scrC)$ of left $\Ug$-modules in $\scrC$ is the \infinity-category of left modules over a Hopf algebra, hence of a cocommutative bialgebra: in particular, it is endowed with a symmetric monoidal structure and with a strongly monoidal forgetful \infinity-functor$$\oblv_{\Ug}\colon\LMod_{\Ug}(\scrC)\longrightarrow\scrC.$$In particular, the action of $\Ug$ on the underlying object $M\otimes_{\scrC}N$ of $M\otimes_{\gfrak}N$ is given by$$\Ug\otimes_{\scrC}M\otimes_{\scrC}N\overset{\mu^*}{\longrightarrow}\Ug\otimes_{\scrC}\Ug \otimes_{\scrC}M\otimes_{\scrC}N\simeq \Ug\otimes_{\scrC}M\otimes_{\scrC}\Ug\otimes_{\scrC}N\xrightarrow{\alpha_M\otimes\alpha_N}M\otimes_{\scrC}N$$where $\mu\colon\Ug\to\Ug\otimes_{\scrC}\Ug$ is the Hopf algebra comultiplication and $\alpha_M$, $\alpha_N$ are the two natural left $\Ug$-actions (\cite[Remark $3.20$]{bialgebrasmonoidalstructure}). In the following, we shall refer to such monoidal structure as the \textit{tensor product of left $\Ug$-modules}. Abusing notations, fixed a left $\Ug$-module $M$, we shall refer to the \infinity-functor$$-\otimes_{\gfrak}M\colon \LMod_{\Ug}(\scrC)\longrightarrow\LMod_{\Ug}(\scrC)$$as the \textit{tensor product with $M$.}
\end{construction}
\begin{remark}
	\label{remark:ugmapleftmodules}
	In the situation of \cref{construction:monoidalstructureonLModUG}, given a Lie algebra object $\gfrak$ in a symmetric monoidal \infinity-category $\scrC$ which is $\Bbbk$-linear over a field $\Bbbk$ of characteristic $0$, the comultiplication map $\mu\colon\Ug\to\Ug\otimes_{\scrC}\Ug$ is \textit{always} a morphism of left $\Ug$-modules, i.e., it is actually a map $$\mu\colon\Ug\longrightarrow\Ug\otimes_{\gfrak}\Ug.$$ Indeed, the fact that $\mu$ is a map of left $\Ug$-modules is equivalent to saying that it is compatible with the left action of $\Ug$ on itself, i.e., with the multiplication of $\Ug$. But now \cite[Proposition C.$1.3$]{studyindag2} assures us that the \infinity-category $\operatorname{cBAlg}(\scrC)$ of cocommutative bialgebras of $\scrC$ can be described equivalently either as $\operatorname{Alg}{\lp\operatorname{cCcAlg}(\scrC)\rp}$ or as $\operatorname{cCcAlg}{\lp\Alg(\scrC)\rp},$ hence we have the desired compatibility.
\end{remark}
\subsection{Classical Chevalley-Eilenberg complexes}
\label{sec:classicalCE}Classically, homology and cohomology of a Lie algebra $\gfrak$ can be interpreted respectively in terms of $\Tor$ and $\Ext$ groups, but in concrete computations is often more convenient to deal with standard, explicit presentations at the level of chains - the \textit{homological} and \textit{cohomological Chevalley-Eilenberg complexes}, respectively. In this subsection, we shall briefly review the classical construction of the Chevalley-Eilenberg complexes, providing motivations and ideas to generalize this construction at the \infinity-categorical level.
\begin{parag}
	\label{parag:projectiveres}
	Let $\gfrak$ be a classical finite dimensional Lie algebra defined over $\Bbbk$. We can consider the graded $\Bbbk$-module$$\operatorname{V}_{\bullet}(\gfrak)\coloneqq\Ug\otimes_{\Bbbk}\bigwedge\nolimits^{\!\bullet}\gfrak$$endowed with differential\begin{align*}
	u\otimes\lp g_1\wedge\ldots\wedge g_n\rp&\mapsto\sum_{1\leqslant i\leqslant n}(-1)^{i+1}u\cdot g_i\otimes\lp g_1\wedge\ldots\wedge\widehat{g}_i\wedge\ldots\wedge g_n\rp\\&+\sum_{1\leqslant i<j\leqslant n}(-1)^{i+j}u\otimes \lp g_1\wedge\ldots\wedge\widehat{g}_i\wedge\ldots\wedge g_{j-1}\wedge[g_i,g_j]\wedge\ldots\wedge g_n\rp.
	\end{align*} 
\end{parag}
\begin{defn}Let $\rho\colon \gfrak\to \operatorname{M}_n(M)$ be a representation of $\gfrak$ in some free classical $\Bbbk$-module $M$ that, in virtue of \cref{remark:leftorright}, we can view both as a left or right $\Ug$-module.
	\label{def:CE} 
	\begin{enumerate}[label=(\arabic{enumi}), ref=\thelemman.\arabic{enumi}]
		\item \label{def:CEhom} The \textit{homological Chevalley-Eilenberg complex of $\gfrak$ with coefficients in $M$} is the chain complex $$\CE_{\bullet}{\lp\gfrak;\hsp M\rp}\coloneqq  M\otimes_{\Ug}\operatorname{V}_{\bullet}(\gfrak).$$The \textit{homology of $\gfrak$ with coefficients in $M$} is the graded vector space $\operatorname{H}_{\bullet}{\lp\CE_{\bullet}\lp\gfrak;\hsp M\rp\rp}$, and we will simply denote it as $\operatorname{H}_{\bullet}\lp\gfrak;\hsp M\rp.$
		\item \label{def:CEcohom} The \textit{cohomological Chevalley-Eilenberg complex of $\gfrak$ with coefficients in $M$} is the chain complex $$\CE^{\bullet}{\lp\gfrak;\hsp M\rp}\coloneqq\Hom_{\Ug}{\lp \operatorname{V}_{\bullet}(\gfrak),\hsp M\rp}.$$The \textit{cohomology of $\gfrak$ with coefficients in $M$} is the graded vector space $\operatorname{H}^{\bullet}{\lp\CE^{\bullet}\lp\gfrak;\hsp M\rp\rp}$, and we sill simply denote it as $\operatorname{H}^{\bullet}\lp\gfrak;\hsp M\rp$
	\end{enumerate}
\end{defn}
\begin{notation}
	If $M=\Bbbk$, we shall refer to $\Bbbk\otimes_{\Ug}\operatorname{V}_{\bullet}(\gfrak)$ (respectively, $\Hom_{\Ug}{\lp \operatorname{V}_{\bullet}(\gfrak),\hsp \Bbbk\rp}$) simply as the \textit{homological} (respectively, the \textit{cohomological}) \textit{Chevalley-Eilenberg complex of $\gfrak$}, and denote it simply as $\CE_{\bullet}(\gfrak)$ (respectively, $\CE^{\bullet}(\gfrak)$).
\end{notation}
\begin{remark}
	\label{remark:CEalgebras}
	It is well-known (see \cite[Theorem $7.7.2$]{weibel}) that the natural augmentation $\operatorname{V}_{\bullet}(\gfrak)\to\Bbbk$ induced by the zero morphism $\gfrak \to\Bbbk$ exhibits $\operatorname{V}_{\bullet}(\gfrak)$ as a projective resolution of the trivial representation $\Bbbk$. This means that $\operatorname{CE}_{\bullet}(\gfrak;\hsp M)$ and $\operatorname{CE}^{\bullet}(\gfrak;\hsp M)$ provide explicit models for the derived tensor product $\Bbbk\otimes^{\mathbb{L}}_{\Ug}\Bbbk$ and the derived hom-space $\RR\Hom_{\Ug}{\lp\Bbbk,\hsp\Bbbk\rp}$, respectively. In particular,  the homology and cohomology of Lie algebras can be expressed using the language of derived functors as$$\operatorname{H}_{\bullet}(\gfrak;\hsp M)\coloneqq\Tor^{\Ug}_{\bullet}(\Bbbk,\hsp M)$$and$$\operatorname{H}^{\bullet}(\gfrak;\hsp M)\coloneqq\Ext^{\bullet}_{\Ug}{\lp\gfrak,\hsp M\rp}.$$Moreover, in the case $M=\Bbbk$, unraveling all the definitions one can recover the usual isomorphisms of graded $\Bbbk$-modules $$\CE_{\bullet}(\gfrak)=\Bbbk\otimes_{\Ug}\Ug\otimes_{\Bbbk}\bigwedge\nolimits^{\!\bullet}\gfrak\cong \bigwedge\nolimits^{\!\bullet}\gfrak$$and$$\CE^{\bullet}(\gfrak)=\Hom_{\Ug}{\lp\Ug\otimes_{\Bbbk}\bigwedge\nolimits^{\!\bullet}\gfrak,\hsp\Bbbk\rp}\cong\Hom_{\Bbbk}{\lp\bigwedge\nolimits^{\!\bullet}\gfrak,\hsp\Bbbk\rp}\cong\bigwedge\nolimits^{\!-\bullet}\gfrak^{\vee}.$$In general these isomorphisms cannot be promoted to isomorphisms of chain complexes, unless the Lie algebra is abelian.
\end{remark}
\begin{parag}\label{parag:koszulduality}
	It is however true that the usual comultiplication and multiplication of $\bigwedge\nolimits^{\!\bullet}\gfrak$ and $\bigwedge\nolimits^{\!-\bullet}\gfrak^{\vee}$ are compatible with the differential, turning $\CE_{\bullet}(\gfrak)$ and $\CE^{\bullet}(\gfrak)$ into a differential graded cocommutative coalgebra and into a differential graded commutative algebra, respectively. It is moreover true that the set of Lie algebra structures on $\gfrak$ is in bijective correspondence with the set of differential graded cocommutative coalgebra structures of $\bigwedge\nolimits^{\!\bullet}\gfrak$: in fact, a morphism $$\operatorname{d}_{\bullet}\colon\bigwedge\nolimits^{\!\bullet}\gfrak\longrightarrow\bigwedge\nolimits^{\!\bullet-1}\gfrak$$squares to zero if and only if $[-,-]\coloneqq\operatorname{d}_2\colon\gfrak\otimes\gfrak\to\gfrak$ is anti-symmetric, and it respects the coalgebra structure precisely if it satisfies the Leibniz rule. This statement can be rephrased as follows.
	\begin{propositionn}
		\label{prop:vogliamogeneralizzare}
		Let $\Lie_{\Bbbk}^{\operatorname{disc}}$ be the $1$-category of discrete Lie algebras. Then the Chevalley-Eilenberg functor can be a promoted to a functor with values in differential graded cocommutative coalgebras$$\operatorname{CE}_{\bullet}\colon\Lie_{\Bbbk}^{\operatorname{disc}}\longrightarrow\operatorname{dgcCcAlg}_{\Bbbk}.$$Such functor is fully faithful, and yields an equivalence of categories between the category of discrete Lie algebras and the full subcategory of differential graded cocommutative coalgebras which are semi-free (i.e., isomorphic as graded cocommutative coalgebras to the Grassmann algebra over some $\Bbbk$-vector space).
	\end{propositionn}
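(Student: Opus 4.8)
The statement is classical; the plan is a direct argument exploiting the cofreeness of the Grassmann coalgebra in characteristic $0$.

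\textbf{Step 1: promoting $\CE_\bullet$ to a functor.} Recall that the underlying graded coalgebra of $\CE_\bullet(\gfrak)$ is $\bigwedge^\bullet\gfrak$, with $\gfrak$ placed in homological degree $1$ and equipped with the standard unshuffle comultiplication; in characteristic $0$ this is exactly the cofree conilpotent (coaugmented) cocommutative coalgebra $\Sym^c(\gfrak[1])$. In particular it is conilpotent, hence an object of $\operatorname{dgcCcAlg}_{\Bbbk}$ once a differential is added, and it enjoys the cofreeness universal property: a coderivation of it is determined by its corestriction to the cogenerators $\gfrak[1]$, and a morphism into it out of any conilpotent cocommutative coalgebra is determined by the corresponding corestriction. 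First I would check, by a sign computation, that the Chevalley--Eilenberg differential is a coderivation for the unshuffle coproduct; since $\bigwedge^n\gfrak$ sits in homological degree $n$ while $\gfrak[1]$ sits in degree $1$, the corestriction of any degree $-1$ coderivation is forced to be concentrated on $\bigwedge^2\gfrak$, and the Chevalley--Eilenberg differential is the unique coderivation of degree $-1$ whose corestriction is the bracket $[-,-]\colon\bigwedge^2\gfrak\to\gfrak$. Functoriality then drops out: for a Lie algebra morphism $\phi\colon\gfrak\to\hfrak$, the coalgebra morphism $\bigwedge^\bullet\phi$ cofreely induced by $\phi$ commutes with the differentials, because both $\pi_\hfrak\circ d_\hfrak\circ\bigwedge^\bullet\phi$ and $\pi_\hfrak\circ\bigwedge^\bullet\phi\circ d_\gfrak$ are corestrictions of $(\bigwedge^\bullet\phi)$-coderivations which on $\bigwedge^2\gfrak$ both send $g_1\wedge g_2$ to $\phi[g_1,g_2]=[\phi g_1,\phi g_2]$ and vanish elsewhere for degree reasons, and corestriction is injective. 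This defines $\CE_\bullet\colon\Lie_{\Bbbk}^{\operatorname{disc}}\to\operatorname{dgcCcAlg}_{\Bbbk}$.

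\textbf{Step 2: full faithfulness.} Given discrete Lie algebras $\gfrak,\hfrak$ and a morphism $F\colon\CE_\bullet(\gfrak)\to\CE_\bullet(\hfrak)$ of differential graded cocommutative coalgebras, since the source is conilpotent the underlying coalgebra morphism is determined by its corestriction $\bigwedge^{\geqslant1}\gfrak\to\hfrak$; this corestriction preserves homological degree, so it reduces to a single linear map $f\colon\gfrak\to\hfrak$ and $F=\bigwedge^\bullet f$. Corestricting the compatibility $F\circ d_\gfrak=d_\hfrak\circ F$ to $\hfrak$ — which by cofreeness is enough — gives exactly $f[g_1,g_2]=[fg_1,fg_2]$, i.e.\ that $f$ is a Lie algebra morphism; conversely every Lie morphism produces such an $F$, and distinct $f$'s are already distinguished in homological degree $1$. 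Thus $F\mapsto f$ is a natural bijection between $\Hom(\CE_\bullet\gfrak,\CE_\bullet\hfrak)$ and $\Hom(\gfrak,\hfrak)$, so $\CE_\bullet$ is fully faithful.

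\textbf{Step 3: essential image and conclusion.} By construction every $\CE_\bullet(\gfrak)$ is semi-free on generators in homological degree $1$. Conversely, let $(C,d_C)$ be a differential graded cocommutative coalgebra together with an isomorphism of graded cocommutative coalgebras $C\cong\bigwedge^\bullet V$ for some $\Bbbk$-vector space $V$ in homological degree $1$; transporting $d_C$ along this isomorphism makes it a degree $-1$ coderivation of $\bigwedge^\bullet V$, hence (by the degree count of Step 1) it is uniquely encoded by an antisymmetric bilinear map $\ell_2\colon\bigwedge^2 V\to V$. The condition $d_C^2=0$ is equivalent to the vanishing of the degree $-2$ coderivation $d_C^2$, hence to the vanishing of its corestriction $\bigwedge^3 V\to V$, and a short computation identifies this corestriction with the Jacobiator of $\ell_2$; therefore $d_C^2=0$ if and only if $(V,\ell_2)$ is a Lie algebra, and in that case $(C,d_C)\cong\CE_\bullet(V,\ell_2)$ by the uniqueness established in Step 1. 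Combined with Step 2, this exhibits $\CE_\bullet$ as an equivalence of $\Lie_{\Bbbk}^{\operatorname{disc}}$ onto the full subcategory of semi-free differential graded cocommutative coalgebras.

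\textbf{Main obstacle.} The substantive content is entirely in the sign bookkeeping: verifying that the Chevalley--Eilenberg differential is a coderivation for the unshuffle coproduct, and that the corestriction of its square is literally the Jacobiator of $\ell_2$; one must also track the Koszul/décalage signs relating $\bigwedge^\bullet\gfrak$ to $\Sym^c(\gfrak[1])$, and should flag that the cofree-coalgebra universal property is invoked only for conilpotent coalgebras — which is harmless here, every object in play being a (locally finite) Grassmann coalgebra.
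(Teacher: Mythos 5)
Your proof is correct and follows exactly the route the paper has in mind: the paper states this proposition as a classical fact, justifying it only by the one-line remark in the preceding paragraph that coderivations of the Grassmann coalgebra $\bigwedge^{\bullet}\gfrak$ correspond to candidate brackets and that $\operatorname{d}^2=0$ encodes the Lie axioms, which is precisely the cofreeness/corestriction argument you spell out. Your version is in fact more careful than the paper's sketch (which conflates antisymmetry with the Jacobi identity), and the caveats you flag — the décalage signs and the restriction of the cofree universal property to conilpotent coalgebras — are exactly the right ones.
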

	An analogous statement can be made taking into account the commutative algebra structures of $\bigwedge\nolimits^{\!-\bullet}\gfrak$, once we add the assumption for $\gfrak$ to be finitely generated as a $\Bbbk$-module. In this case, the functor
	$$\operatorname{CE}^{\bullet}\colon\Lie_{\Bbbk}^{\operatorname{disc}}\longrightarrow\dgCAlg_{\Bbbk}$$is valued in differential graded commutative algebras.
\end{parag}
\begin{parag}
	The previous discussion can be made, almost \textit{verbatim}, also in the context of differential graded Lie algebras, using \cite[Section $2.2$]{dagx}. Using the fact that for any discrete vector space $M$ there is an isomorphism of non-negatively graded vector spaces$$\bigwedge\nolimits^{\!\bullet}M\cong\Sym^{\bullet}_{\Bbbk}(M[1])$$it is possible to define the \textit{homological Chevalley-Eilenberg complex of a differential graded Lie algebra $\gfrak_{\bullet}$ with values in a differential graded representation $M_{\bullet}$} as the chain complex$$\CE_{\bullet}{\lp\gfrak_{\bullet};\hsp M_{\bullet}\rp}\coloneqq M_{\bullet}\otimes_{\Bbbk}\Sym_{\Bbbk}^{\bullet}{\lp\gfrak_{\bullet}[1]\rp}$$with differential given by the formula
	\begin{equation}
	\label{eq:CEdifferential}
	\begin{aligned}
	m\otimes\lp g_1\ldots g_n\rp&\mapsto\sum_{1\leqslant i\leqslant n}(-1)^{\left|x_1\right|+\ldots+\left|x_{i-1}\right|}m\otimes\lp g_1\ldots\operatorname{d}_{\gfrak}(g_i)\ldots g_n\rp\\
	&+(-1)^{\left|x_i\right|\lp\left|x_{i+1}\right|+\ldots+\left|x_{n}\right|\rp}\operatorname{d}_{M}(m)\otimes \lp g_1\ldots g_n\rp\\
	&+\sum_{1\leqslant i\leqslant n}(-1)^{\left|x_i\right|\lp\left|x_{i+1}\right|+\ldots+\left|x_{n}\right|\rp}m\cdot g_i\otimes\lp g_1\ldots\widehat{g}_i\ldots\wedge g_n\rp\\
	&+\sum_{1\leqslant i<j\leqslant n}(-1)^{\left|x_i\right|\lp\left|x_{i+1}\right|+\ldots+\left|x_{j-1}\right|\rp}m\otimes \lp g_1\ldots\widehat{g}_i\ldots g_{j-1}\cdot [g_i,g_j]\wedge\ldots g_n\rp.
	\end{aligned}
	\end{equation}
	In the same way, the \textit{cohomological Chevalley-Eilenberg complex of a differential graded Lie algebra $\gfrak_{\bullet}$ with coefficients in a differential graded representation $M_{\bullet}$} is the chain complex$$\CE^{\bullet}{\lp\gfrak_{\bullet};\hsp M_{\bullet}\rp}\coloneqq \Hom_{\Bbbk}{\lp\Sym_{\Bbbk}{\lp\gfrak_{\bullet}[1]\rp},\hsp M_{\bullet}\rp}.$$Again, homological and cohomological Chevalley-Eilenberg complexes with coefficients in the trivial $\operatorname{U}(\gfrak_{\bullet})$-module $\Bbbk$ inherit a cocommutative coalgebra and a commutative algebra structure, respectively, from the commutative and cocommutative bialgebra structure of the symmetric algebra. Moreover, they both preserve quasi-isomorphisms of differential graded Lie algebras, hence they provide \infinity-functors$$\CE_{\bullet}\colon\Lie_{\Bbbk}\longrightarrow\operatorname{cCcAlg}_{\Bbbk}$$and$$\CE^{\bullet}\colon\Lie_{\Bbbk}\longrightarrow\CAlg_{\Bbbk}.$$
	%with differential given by the formula\begin{align*}
	%f\lp g_1\ldots g_n\rp\mapsto\begin{Bmatrix}
	%g_1\ldots g_{n+1}\mapsto 
	%\end{Bmatrix}\sum_{1\leqslant i\leqslant n}(-1)^{\left|x_1\right|+\ldots+\left|x_{i-1}\right|}m\otimes\lp g_1\wedge\ldots\wedge\operatorname{d}_{\gfrak}(g_i)\wedge\ldots\wedge g_n\rp\\
	%&+(-1)^{\left|x_i\right|\lp\left|x_{i+1}\right|+\ldots+\left|x_{n}\right|\rp}\operatorname{d}_{M}(m)\otimes \lp g_1\wedge\ldots\wedge g_n\rp\\
	%&+\sum_{1\leqslant i\leqslant n}(-1)^{\left|x_i\right|\lp\left|x_{i+1}\right|+\ldots+\left|x_{n}\right|\rp}m\cdot g_i\otimes\lp g_1\wedge\ldots\wedge\widehat{g}_i\wedge\ldots\wedge g_n\rp\\
	%&+\sum_{1\leqslant i<j\leqslant n}(-1)^{\left|x_i\right|\lp\left|x_{i+1}\right|+\ldots+\left|x_{j-1}\right|\rp}m\otimes \lp g_1\wedge\ldots\wedge\widehat{g_i}\wedge\ldots\wedge g_{j-1}\wedge[g_i,g_j]\wedge\ldots\wedge g_n\rp.
	%\end{align*}
\end{parag}
\begin{warning}
	There is \textit{no hope} that these \infinity-functors are again fully faithful: passing to the \infinity-categorical setting, we identify strict commutative differential graded algebras with isomorphic homology $\Bbbk$-vector spaces. Thus, for example, the Lie algebras $\Bbbk$ with abelian Lie structure and $\Bbbk\oplus\Bbbk$ with its essentially unique non-abelian Lie bracket are both mapped, via the cohomological Chevalley-Eilenberg \infinity-functor $\CE^{\bullet}$, onto the square-zero extension $\Bbbk\oplus\Bbbk[-1]$, even if they are obviously not equivalent as homotopy Lie algebras. In order to solve this issue, we need to keep track of the datum of the \textit{filtration} of the Chevalley-Eilenberg complexes. This is the content of \cref{sec:homologicalCE} and \cref{sec:cohomCE}.
\end{warning}
\section{Mixed graded Chevalley-Eilenberg modules}
\label{chapter:liealgebras}
In this section, which is the core of the paper, we shall describe how Chevalley-Eilenberg chain complexes of differential graded Lie algebras (in the sense of \cite{dagx}) are actually the shadow of objects endowed with a much richer structure. Namely, they arise as the Tate realization of mixed graded $\Bbbk$-modules, which are moreover either cocommutative coalgebras (in the case of the homological complex) or commutative algebras (in the case of the cohomological complex) in mixed graded $\Bbbk$-modules. This fact is a folklore result expected, if not known, by many experts (the most explicit formulation appears in \cite[Appendix B]{CG}); however, the mixed graded structure on the Chevalley-Eilenberg algebra and coalgebra of a derived Lie algebra $\gfrak$ has until now appeared only in terms of explicit graded chain complexes equipped with mixed differentials, heavily relying on strict model choices. The main result of this section is the (completely \infinity-categorical) construction of \infinity-functors$$\CE_{\varepsilon}\colon\Lie_{\Bbbk}\longrightarrow\CoCommmixgraug,$$$$\CE^{\varepsilon}\colon\Lie^{\op}_{\Bbbk}\longrightarrow\CAlgmixgr_{\Bbbk//\Bbbk}$$and$$\CE^{\varepsilon}(\gfrak;\hsp-)\colon\LMod_{\Ug}\longrightarrow\ModCEmixgr,$$computing respectively the homological Chevalley-Eilenberg mixed graded cocommutative coalgebra (\cref{prop:promotionCEcocommutativealgebra}), the cohomological Chevalley-Eilenberg mixed graded commutative algebra (\cref{prop:CEcohomologicalalgebra}), and the Chevalley-Eilenberg cohomology mixed graded $\Bbbk$-module of a fixed Lie algebra $\gfrak$ with values in any representation (\cref{prop:CEmod}). The construction is quite technical, but we shall emphasize the main conceptual tools and motivations of our construction. Finally, employing the theory of mixed graded $\Bbbk$-modules developed in \cite{pavia1} (and briefly recalled in \cref{chapter:mixedgradedmodules}), we shall study some formal properties of the Chevalley-Eilenberg mixed graded \infinity-functors.
\subsection{The homological Chevalley-Eilenberg $\infty$-functor}
\label{sec:homologicalCE}
We shall now provide a mixed graded (hence, filtered) structure on the Chevalley-Eilenberg cocommutative coalgebra of a Lie algebra $\gfrak$ (\cref{def:CE}).  Our main objective in this subsection is to show that the left $\Ug$-module $\Ug\otimes_{\Bbbk}{\Sym_{\Bbbk}}{\lp\gfrak[-1]\rp}$, which is classically a projective resolution as a chain complex of left $\Ug$-modules of the discrete left $\Ug$-module $\Bbbk$, is naturally endowed with a mixed graded (hence, filtered) structure. What is more important, such mixed graded structure is naturally induced by the mixed graded structure on the \textit{cone} of $\gfrak$ (\cite[Construction $2.2.1$]{dagx}), which here naturally appears as some right adjoint to the \infinity-functor $(-)_0\colon\Liemixgr_{\Bbbk}\to\Lie_{\Bbbk}.$ All the Chevalley-Eilenberg $\Bbbk$-modules $\CE_{\bullet}(\gfrak;\hsp N)$ and $\CE^{\bullet}(\gfrak;\hsp M)$ for a right $\Ug$-module $N$ or a left $\Ug$-module $M$ (in particular, for $M=N=\Bbbk$) hence inherit a mixed graded / filtered structure as well, naturally induced by the one on $\Ug\otimes_{\Bbbk}{\Sym_{\Bbbk}}{\lp\gfrak[-1]\rp}$.
%In the case of the Chevalley-Eilenberg algebra $\CE^{\bullet}(\gfrak)$ of \cref{remark:CEalgebras}, we shall prove that our mixed structure is exactly the one exhibited in \cite[Proposition $A.3$]{CG}.\\
%We shall construct the Chevalley-Eilenberg (both homological and cohomological) mixed graded $\Bbbk$-modules in terms of a family of $\infinity$-functors on (respectively right and left $\Ug$-modules) parametrized functorially by $\Lie_{\Bbbk}$, and then prove that the cohomological Chevalley-Eilenberg mixed graded $\Bbbk$-algebra induces a fully faithful embedding$$\lp\Lie^{\operatorname{perf}}_{\Bbbk}\rp^{\op}\longhookrightarrow\CAlgmixgr$$whose essential image is described by the full sub-$\infinity$-category comprising of those commutative mixed graded $\Bbbk$-algebras whose underlying graded $\Bbbk$-module is equivalent to $\Sym^{\gr}_{\Bbbk}{\lp M[1](-1)\rp}$ for some perfect $\Bbbk$-module $M$.
\begin{parag}
	\label{parag:const}
	Let ${\Fun}{\lp\Delta^1,\hsp\Lie_{\Bbbk}\rp}$ be the \infinity-category of morphisms in $\Lie_{\Bbbk}$, and consider the constant $\infinity$-functor\begin{align}
	\label{functor:constant}
	\operatorname{const}\colon\Lie_{\Bbbk}\longrightarrow{\Fun}{\lp\Delta^1,\hsp \Lie_{\Bbbk}\rp},
	\end{align} right adjoint to the evaluation on the codomain and left adjoint to the evaluation on the domain, informally described by the assignation $$\gfrak\mapsto\left\{\gfrak\xrightarrow{\operatorname{id}_{\gfrak}}\gfrak\right\}.$$Thanks to the discussion of \ref{parag:functorsthatpreservealgebras}, we know that the fully faithful embedding \ref{functor:insertioninweightq}, for $q=0$, induces an $\infinity$-functor at the level of the $\infinity$-categories of Lie algebra objects
	\begin{align}
	\label{functor:liealgebrainweight0}
	(-)(0)\colon\Lie_{\Bbbk}\longrightarrow {\Alg_{\Lie}}{\lp\Modmixgr_{\Bbbk}\rp}\eqqcolon\Liemixgr_{\Bbbk}.
	\end{align}
	The $\infinity$-functor \ref{functor:liealgebrainweight0} simply sends a Lie algebra $\gfrak$ to the graded Lie algebra $\gfrak(0)$ with same Lie bracket, and trivial mixed structure. In particular, since the \infinity-functor $(-)_0$ is an inverse for the \infinity-functor $(-)(0)$, and it is moreover both lax and oplax monoidal (as already stated in \ref{parag:functorsthatpreservealgebras}), we can think the \infinity-functor \ref{functor:constant} to land in the \infinity-category$${\Fun}{\lp\Delta^1,\hsp\Lie_{\Bbbk}\rp}_{(-)_0/}\simeq{\Fun}{\lp\Delta^1,\hsp\Lie_{\Bbbk}\rp}$$of morphisms in $\Lie_{\Bbbk}$ such that the domain lies in the essential image of the \infinity-functor $(-)_0\colon\Liemixgr_{\Bbbk}\to\Lie_{\Bbbk}$. 
	%Composing the $\infinity$-functors \ref{functor:liealgebrainweight0} and \ref{functor:constant}, we obtain the $\infinity$-functor\begin{align}
	%\label{functor:composition1}
	%\Lie_{\Bbbk}\xrightarrow{\ref{functor:liealgebrainweight0}}\Liemixgr_{\Bbbk}\xrightarrow{\ref{functor:constant}}{\Fun}{\lp\Delta^1,\hsp\Liemixgr_{\Bbbk}\rp}.
	%\end{align}
\end{parag}
\begin{parag}
	\label{parag:cng}
	Recall now that the \infinity-functor $(-)_0\colon\Modmixgr_{\Bbbk}\to\Mod_{\Bbbk}$ admits both a left and right adjoint, and its right adjoint is given by  $$\operatorname{R}_{\varepsilon}\circ\hsp (-)(0)\colon\Mod_{\Bbbk}\longrightarrow\Modmixgrcn_{\Bbbk}\subseteq\Modmixgr_{\Bbbk},$$where $\operatorname{R}_{\varepsilon}\colon\Modgr_{\Bbbk}\to\Modmixgr_{\Bbbk}$ is the \infinity-functor described in \cref{porism:rightadjointforgetful}. Since it is right adjoint to an oplax monoidal \infinity-functor, $\operatorname{R}_{\varepsilon}\circ\hsp (-)(0)$ is lax monoidal itself. The lax monoidal transformation
	\begin{align}
	\label{map:laxmonoidal}
	\operatorname{R}_{\varepsilon}(M(0))\otimesmixgr_{\Bbbk}\operatorname{R}_{\varepsilon}(N(0))\longrightarrow\operatorname{R}_{\varepsilon}\lp M(0)\otimes^{\gr}_{\Bbbk}N(0)\rp,
	\end{align}
	is, by definition, the map adjoint to the map given by tensoring the unit for $M$ and $N$ $$\lp\operatorname{R}_{\varepsilon}(M(0))\otimesmixgr_{\Bbbk}\operatorname{R}_{\varepsilon}(N(0))\rp_0\simeq \operatorname{R}_{\varepsilon}(M(0))_0\otimes_{\Bbbk}\operatorname{R}_{\varepsilon}(N(0))_0\simeq M\otimes_{\Bbbk}N\overset{\simeq}{\longrightarrow}M\otimes_{\Bbbk}N.$$In particular, using the explicit model for $\operatorname{R}_{\varepsilon}$ provided in \cref{porism:rightadjointforgetful}, one can see that the map \ref{map:laxmonoidal} is given by the identity of $M\otimes_{\Bbbk}N$ in weight $0$, and by the codiagonal $M\otimes_{\Bbbk}N\oplus M\otimes_{\Bbbk}N\to M\otimes_{\Bbbk}N$ in weight $1$ (with an appropriate homological shift). In particular,  this right adjoint $\operatorname{R}_{\varepsilon}\circ \hsp(-)(0)$ induces an $\infinity$-functor at the level of Lie algebras
	\begin{align}
	\label{functor:Cng}
	\Cnmixgr\colon\operatorname{Lie}_{\Bbbk}\longrightarrow\Liemixgr_{\Bbbk}.
	\end{align}
	For formal reasons (described in \cite[Chapter $6$, Section $1.2$]{studyindag2}), the \infinity-functor $\Cnmixgr$ is again the right adjoint to the \infinity-functor $(-)_0\colon\Liemixgr_{\Bbbk}\to\Lie_{\Bbbk},$ at least after restricting to non-negatively graded mixed Lie algebras. Using explicit models, such \infinity-functor is given by sending a Lie algebra $\gfrak$ to the mixed graded Lie algebra described in the following way.
	\begin{itemize}
		\item As a mixed graded $\Bbbk$-module, $\Cnmixgr(\gfrak)$ is equivalent to the mixed graded $\Bbbk$-module $\operatorname{R}_{\varepsilon}(\oblv_{\Lie}\gfrak(0))$; in virtue of  \cref{porism:rightadjointforgetful}, this mixed graded $\Bbbk$-module is given by a copy of $\gfrak$ in weight $0$ and a copy of $\gfrak[-1]$ in weight $1$, together with mixed differential given by the identity of $\gfrak[-1]$. This is true because $\Cnmixgr$ is the \infinity-functor induced at the level of algebras for some operad by the lax monoidal \infinity-functor $\operatorname{R}_{\varepsilon}$.
		\item The Lie bracket of $\Cnmixgr(\gfrak)$ is given by the composition of the natural transformation \ref{map:laxmonoidal} with the image under $\operatorname{R}_{\varepsilon}$ of the bracket $[-,-]\colon\gfrak\otimes_{\Bbbk}\gfrak\to\gfrak$. In particular, consider an explicit differential graded Lie algebra $\gfrak$, assumed to be fibrant and cofibrant for the model structure on $\dgLie_{\Bbbk}$, with Lie bracket $[-,-]$: the Lie bracket of $\Cnmixgr(\gfrak)$ is then given by the composition$$\operatorname{R}_{\varepsilon}(\gfrak(0))\otimesmixgr_{\Bbbk}\operatorname{R}_{\varepsilon}(\gfrak(0))\xrightarrow{\ref{map:laxmonoidal}}\operatorname{R}_{\varepsilon}\lp\lp\gfrak\otimes_{\Bbbk}\gfrak\rp(0)\rp\xrightarrow{\operatorname{R}_{\varepsilon}([-,-])}\operatorname{R}_{\varepsilon}(\gfrak(0)).$$In particular, following this composition, we have that the Lie bracket on $\Cnmixgr(\gfrak)$ is given by taking two elements $x+\epsilon y$ and $x'+\epsilon y'$ in $\gfrak\oplus\gfrak[-1]$ and sending them to $[x,x']+\epsilon\lp[x,y']+[y,x']\rp$.
	\end{itemize}
	This has to be understood as a model independent rewriting of the mixed structure on the free loop space $\mathcal{L}(\gfrak)$ of a Lie algebroid $\gfrak$, in the case when $\gfrak$ is an ordinary Lie algebra, described in \cite[Section $6.3$]{nuiten19}. Indeed, in the case of an ordinary Lie algebras, the fiber $\mathfrak{n}$ of the anchor map $\gfrak\to 0$ is all $\gfrak$, hence we have an equivalence of mixed graded Lie algebras $\mathcal{L}(\gfrak)\simeq \Cnmixgr(\gfrak).$ The main ingredient in order to define a mixed structure on any Chevalley-Eilenberg \infinity-functor is already contained in this right adjoint: all mixed structures on Chevalley-Eilenberg algebras, coalgebras and modules are straightly derived from the mixed structure of $\Cnmixgr(\gfrak).$
\end{parag}
\begin{remark}
	Given a Lie algebra $\gfrak$ it is clear that the Tate realization  $\left|\Cnmixgr(\gfrak)\right|^{\operatorname{t}}$  of the mixed graded Lie algebra $\Cnmixgr(\gfrak)$ yields again a Lie algebra (in virtue of \ref{parag:functorsthatpreservealgebras}) which as a explicit differential graded Lie algebra is presented by the contractible differential graded Lie algebra $\operatorname{Cn}(\gfrak)$ of \cite[Construction $2.2.1$]{dagx}. This explains our choice of notation for the $\infinity$-functor \ref{functor:Cng}. 
\end{remark}
\begin{parag}
	Using \cref{remark:universalenvelopingalgebragaitsgory}, we can consider the universal enveloping mixed graded algebra $\infinity$-functor\begin{align}
	\label{functor:mixedgradedug}
	\Umixgr\colon\Liemixgr_{\Bbbk}\longrightarrow\Alg{\lp\Modmixgr_{\Bbbk}\rp}\eqqcolon\Algmixgr_{\Bbbk}.
	\end{align}
	Such $\infinity$-functor sits in a commutative diagram 
	$$\begin{tikzpicture}[scale=0.75]
	\node (a) at (-3,0){$\Liemixgr_{\Bbbk}$};
	\node(b) at (2,0){$\Algmixgr_{\Bbbk}$};
	\node (c) at (-3,-2.5){$\Lie^{\gr}_{\Bbbk}$};
	\node (d) at (2,-2.5){$\Alg^{\gr}_{\Bbbk}$};
	\draw[->,font=\scriptsize] (a) to node[above]{$\operatorname{U}^{\mixgr}$}(b);
	\draw[->,font=\scriptsize] (b) to node[right]{$\oblv_{\varepsilon}$} (d);
	\draw[->,font=\scriptsize] (a) to node[left]{$\oblv_{\varepsilon}$} (c);
	\draw[->,font=\scriptsize] (c) to node[below]{$\operatorname{U}^{\gr}$}(d);
	\end{tikzpicture}$$
	where the bottom arrow is analogously given by applying \cref{remark:universalenvelopingalgebragaitsgory} at the \infinity-category $\Modgr_{\Bbbk}$. The fact that this square is commutative follows from the fact that one can check that the Beck-Chevalley morphism is an equivalence at the level of the underlying graded modules by forgetting both the algebra structure and the mixed structure, since they are both conservative $\infinity$-functors. Then the commutativity becomes a corollary of the commutativity of the square of $\infinity$-functors in \ref{parag:gradedandmixedgradedcoalgebras}.
\end{parag}
As explained in \cite[Remark $2.2.4$]{dagx}, there is an obvious morphism $\gfrak(0)\hookrightarrow\Cnmixgr(\gfrak)$ which, after applying the universal enveloping algebra $\infinity$-functor, equips $\UCngmixgr$ with a left $\Ug(0)$-module structure. Applying the Tate realization $\infinity$-functor and dealing with explicit presentations via chain complexes, one recovers the usual left $\Ug$-module structure on $\left|\UCngmixgr\right|^{\operatorname{t}}$, which is a well known cofibrant replacement for the trivial left $\Ug$-module $\Bbbk$ in $\LMod_{\Ug}$ (see \cite[Theorem $7.7.2$]{weibel} and \cite[Remark $2.2.11$]{dagx}). Since both the $\Ug$ and $\UCngmixgr$ are assignations functorial in $\gfrak$, we need to find a way to keep track of the change of base associative ring and of the change of $\UCngmixgr$ \textit{simultaneously}. The tool we need in order to do so is the \infinity-functorial assignation of the adjoint morphism$$\left\{\gfrak\xrightarrow{\operatorname{id}}\gfrak\right\}\mapsto\left\{\gfrak(0)\xrightarrow{\operatorname{id}^*}\Cnmixgr(\gfrak)\right\}$$via the adjunction $(-)_0\dashv \operatorname{R}_{\varepsilon}(-)(0)$. This will provide an \infinity-functor$${\Fun}{\lp\Delta^1,\hsp\Lie_{\Bbbk}\rp}\simeq{\Fun}{\lp\Delta^1,\hsp\Lie_{\Bbbk}\rp}_{(-)_0/}\longrightarrow{\Fun}{\lp\Delta^1,\hsp\Liemixgr_{\Bbbk}\rp}_{/\Cnmixgr}\subseteq{\Fun}{\lp\Delta^1,\hsp\Liemixgr_{\Bbbk}\rp},$$where again ${\Fun}{\lp\Delta^1,\hsp\Liemixgr_{\Bbbk}\rp}_{/\Cnmixgr}$ is the \infinity-category of morphisms of mixed graded Lie algebras with codomain of the form $\Cnmixgr(\gfrak)$ for some ordinary Lie algebra $\gfrak$. We show how this can be dealt with using the following general machinery.
\begin{construction}
	\label{construction:adjointmorphism}
	Let $\operatorname{L}\colon \scrC\to\scrD$ be an $\infinity$-functor with right adjoint $\operatorname{R}\colon\scrD\to\scrC$. By \cite[Definition $5.2.2.7$ and Corollary $5.2.2.8$]{htt}, the datum of the adjunction $\operatorname{L}\dashv \operatorname{R}$ is encoded by a unit morphism $\eta\colon\operatorname{id}_{\scrC}\Rightarrow \operatorname{R}\circ \operatorname{L}$ in the $\infinity$-category ${\Fun}{\lp\scrC,\hsp\scrC\rp}$, i.e., by an object\begin{align*}
	\eta&\in{\Fun}{\lp\Delta^1,\hsp{\Fun}{\lp\scrC,\hsp\scrC\rp}\rp}\\&\simeq{\Fun}{\lp\Delta^1\times\scrC,\hsp\scrC\rp}\\&\simeq{\Fun}{\lp\scrC,\hsp{\Fun}{\lp\Delta^1,\hsp\scrC\rp}\rp}.
	\end{align*}
	So, let ${\Fun}{\lp\Delta^1,\hsp\scrD\rp}_{\operatorname{L}/}$ be the full sub-$\infinity$-category of ${\Fun}{\lp\Delta^1,\hsp\scrD\rp}$ with domain lying in the image of $\operatorname{L}$. Applying the $\infinity$-functor $\operatorname{R}$, we obtain an $\infinity$-functor $${\Fun}{\lp\Delta^1,\hsp\scrD\rp}_{\operatorname{L}/}\longrightarrow{\Fun}{\lp\Delta^1,\hsp\scrC\rp}_{\operatorname{RL}//\operatorname{R}}.$$The latter $\infinity$-category sits in a $(\infinity,2)$-limit diagram of $\infinity$-categories
	\begin{displaymath}
	\begin{tikzpicture}[scale=0.75]
	\node (a) at (-4,2){${\Fun}{\lp\Delta^1,\hsp\scrC\rp}_{\operatorname{RL}//\operatorname{R}}$};
	\node (b) at (-4,0){${\Fun}{\lp\Delta^1,\hsp\scrC\rp}_{/\operatorname{R}}$};
	\node(c) at (2,2){$\scrC$};
	\node(d) at (2,0){$\scrC$};
	\node at (-3.25,1.25){$\lrcorner$};
	\draw[->,font=\scriptsize] (a) to node [above]{} (b);
	\draw[->] (a) to node[above]{$\Phi$}(c);
	\draw[->,font=\scriptsize] (b) to node [below]{$\operatorname{ev}_0$} (d);
	\draw[->,font=\scriptsize] (c) to node [right]{$\operatorname{R}\circ \operatorname{L}$} (d);
	\end{tikzpicture}
	\end{displaymath}
	Consider the $\infinity$-functor $\Phi\colon{\Fun}{\lp\Delta^1,\hsp\scrC\rp}_{\operatorname{RL}//\operatorname{R}}\to\scrC$: it is the $\infinity$-functor which given a morphism $\operatorname{RL}(X)\to \operatorname{R}(Y)$ selects the object $X$ itself. So we get an $\infinity$-functor$${\Fun}{\lp\Delta^1,\hsp\scrC\rp}_{\operatorname{RL}//\operatorname{R}}\xrightarrow{\lp \Phi,\hsp \operatorname{id}\rp}\scrC\times{\Fun}{\lp\Delta^1,\hsp\scrC\rp}_{\operatorname{RL}//\operatorname{R}}\xrightarrow{\lp\eta,\hsp\operatorname{id}\rp}{\Fun}{\lp\Delta^1,\hsp\scrC\rp}_{/\operatorname{R}}\times{\Fun}{\lp\Delta^1,\hsp\scrC\rp}_{\operatorname{RL}//\operatorname{R}}.$$Informally, this composition is given by the assignation$$\{f\colon \operatorname{RL}(X)\to \operatorname{R}(Y)\}\mapsto\lp X,\hsp f\rp\mapsto \lp \{\eta_X\colon X\to \operatorname{RL}(X)\},\hsp \{f\colon \operatorname{RL}(X)\to \operatorname{R}(Y)\}\rp.$$It is clear that the image of such composition lies in the $\infinity$-category ${\Fun}{\lp\Lambda^2_1,\hsp\scrC\rp}$, and so by the inner horn filling property of any $\infinity$-category we have a well defined composition$${\Fun}{\lp\Lambda^2_1,\hsp\scrC\rp}\longrightarrow{\Fun}{\lp\Delta^2,\hsp\scrC\rp}\xrightarrow{\restr{}{\{0,2\}}}{\Fun}{\lp\Delta^1,\hsp\scrC\rp}.$$This composition determines the adjoint morphism $\infinity$-functor$${\Fun}{\lp\Delta^1,\hsp\scrD\rp}_{\operatorname{L}/}\longrightarrow{\Fun}{\lp\Delta^1,\hsp\scrC\rp}_{/\operatorname{R}}.$$
\end{construction}
By applying \cref{construction:adjointmorphism} to the case of the adjunction $(-)_0\dashv\Cnmixgr$, we get an $\infinity$-functor
\begin{align}
\label{functor:composition2}
\Lie_{\Bbbk}\xrightarrow{\ref{functor:constant}}{\Fun}{\lp\Delta^1,\hsp\Lie_{\Bbbk}\rp}\simeq{\Fun}{\lp\Delta^1,\hsp\Lie_{\Bbbk}\rp}_{(-)_0/}\longrightarrow{\Fun}{\lp\Delta^1,\hsp\Liemixgr_{\Bbbk}\rp}_{/\Cnmixgr}.
\end{align}
\begin{remark}
	\label{remark:CEdifferential}
	This is a slight rewriting of \cite[Remark $2.2.4$]{dagx} in the mixed graded setting. Recall that $\Cnmixgr(\gfrak)$, as a graded $\Bbbk$-module, comes equipped with a decomposition$$\oblv_{\Lie}\oblv_{\varepsilon}{\lp\Cnmixgr(\gfrak)\rp}\simeq\oblv_{\Lie}\gfrak(0)\oplus\oblv_{\Lie}\gfrak[-1](1).$$The natural inclusion of graded $\Bbbk$-modules $\oblv_{\Lie}\gfrak[-1](1)\hookrightarrow\oblv_{\Lie}\oblv_{\varepsilon}{\lp\Cnmixgr(\gfrak)\rp}$ agrees straight-forwardly with the Lie bracket of $\gfrak[-1](1)$ and $\Cnmixgr(\gfrak)$, hence can be promoted to a morphism of graded Lie algebras$$\gfrak[-1](1)\longhookrightarrow\oblv_{\varepsilon}{\lp\Cnmixgr(\gfrak)\rp}.$$Applying the graded universal enveloping algebra \infinity-functor, we have a morphism of graded associative algebras (and, in particular, of graded $\Bbbk$-modules)\begin{align}
	\label{map:ucng}
	\operatorname{U}^{\gr}(\gfrak[-1](1))\simeq\Symgr_{\Bbbk}{\lp\oblv_{\Lie}\gfrak[-1](1)\rp}\longrightarrow\operatorname{U}^{\gr}{\lp\oblv_{\varepsilon}\Cnmixgr(\gfrak)\rp}
	%\simeq\oblv_{\varepsilon}\operatorname{U}^{\mixgr}{\lp\Cnmixgr(\gfrak)\rp}
	\end{align}which in turn induces by adjunction a map of graded left $\Ug$-modules$$\Ug(0)\otimes^{\gr}_{\Bbbk}\Symgr_{\Bbbk}{\lp\oblv_{\Lie}\gfrak[-1](1)\rp}\longrightarrow\operatorname{U}^{\gr}{\lp\oblv_{\varepsilon}\Cnmixgr(\gfrak)\rp}.$$This map is an equivalence: in order to prove this, we can check at the level of each weight$$\Ug\otimes_{\Bbbk}\Sym^p_{\Bbbk}{\lp\oblv_{\Lie}\gfrak[-1]\rp}\longrightarrow\operatorname{U}^p{\lp\oblv_{\varepsilon}\Cnmixgr(\gfrak)\rp}.$$The \infinity-functor which selects the weight $p$ part is obtained by first forgetting the associative algebra structure; in particular, by \cite[Corollary $6.1.7$]{studyindag2}, we have that\begin{align*}
	\operatorname{U}^p{\lp\oblv_{\varepsilon}\Cnmixgr(\gfrak)\rp}&\simeq \lp\oblv_{\Alg}\lp\operatorname{U}^{\gr}{\lp\oblv_{\varepsilon}\Cnmixgr(\gfrak)\rp}\rp\rp_p\\&\simeq \lp\Symgr_{\Bbbk}{\lp\oblv_{\Lie}\lp\oblv_{\varepsilon}\Cnmixgr(\gfrak)\rp\rp}\rp_p\\&\simeq \lp\Symgr_{\Bbbk}{\lp\oblv_{\Lie}\gfrak(0)\oplus\oblv_{\Lie}\gfrak[-1](1)\rp}\rp_p\\&\simeq \Sym_{\Bbbk}{\lp\gfrak\rp}(0)\otimes_{\Bbbk}\Sym^p_{\Bbbk}{\lp\oblv_{\Lie}\gfrak[-1]\rp}.
	\end{align*}
	In particular, the map \ref{map:ucng} is given by tensoring the equivalence of cocommutative coalgebras $\Ug\simeq\Sym_{\Bbbk}(\oblv_{\Lie}\gfrak)$ with the identity on the $p$-th symmetric power of $\oblv_{\Lie}\gfrak[-1].$ Unraveling all definitions, one can see that working with explicit models given by associative algebras in mixed graded complexes, the mixed differential of ${\operatorname{U}^{\mixgr}}{\lp\Cnmixgr(\gfrak)\rp}$ is given by\begin{align*}
	u\otimes \lp x_1,\ldots,x_p\rp\mapsto&\sum_{1\leqslant i\leqslant p}(-1)^{\left|x_i\right|\lp \left|x_{i+1}\right|+\ldots+\left|x_p\right|\rp}{\lp x_i\cdot u\rp}\otimes \lp x_1,\ldots,\hat{x_i},\ldots,x_p\rp+\\&\sum_{1\leqslant i<j\leqslant p}(-1)^{\left|x_i\right|\lp \left|x_{i+1}\right|+\ldots+\left|x_{j-1}\right|\rp}u\otimes \lp x_1,\ldots,\hat{x_i},\ldots,x_{j-1},[x_i,x_j],x_{j+1},\ldots,x_p\rp.
	\end{align*}
	The above description of the mixed differential employs the following computations and remarks.
	\begin{enumerate}
		\item The mixed graded universal enveloping algebra of a mixed graded Lie algebra $\gfrak_{\bullet}$ is explicitly computed by the (mixed graded) tensor algebra over $\gfrak_{\bullet}$ modded out by the usual relations between the multiplication and the Lie bracket.
		\item While the braiding on the category of chain complexes involves a change of signs, the braiding on the category of mixed graded complexes does not. In particular, the signs involved in the description of the mixed graded structure depend only on the homological (internal) degree of the elements $x_1,\ldots,x_p$ in $\gfrak$.
		\item Finally, the mixed differential on $\Umixgr(\Cnmixgr(\gfrak))$ is induced by the mixed differential on $\Cnmixgr(\gfrak)$, which is the identity. By the usual computations that classically show how the universal enveloping algebra on the cone of a differential graded Lie algebra $\gfrak$ (with its natural Lie bracket) is isomorphic as a differential graded $\Ug$-module to the Chevalley-Eilenberg complex $\Ug\otimes_{\Bbbk}\Sym_{\Bbbk}{\lp\gfrak[1]\rp}$, it follows that the  mixed differential on $\UCngmixgr$ agrees with the component of the differential on $\Ug\otimes_{\Bbbk}\Sym_{\Bbbk}{\lp\gfrak[1]\rp}$ which lowers the $\Sym$-degree.
	\end{enumerate}
	%In the remainder of this work, given a Lie algebra $\gfrak$, we shall denote by ${\Symmixgr_{\Bbbk}}{\lp\oblv_{\Lie}(\gfrak)[-1](1)\rp}$ the graded symmetric $\Bbbk$-(co)algebra ${\Sym^{\gr}_{\Bbbk}}{\lp\oblv_{\Lie}(\gfrak)[-1](1)\rp}$ endowed with the exotic mixed structure inherited by the Lie bracket described above. This is an obvious abuse of notation: $\Symmixgr_{\Bbbk}(\oblv_{\Lie}(\gfrak)[-1](1))$ is \textit{not} the mixed graded free (co)algebra over the underlying $\Bbbk$-module $\gfrak[-1](1)$.
\end{remark}
\begin{exmp}
\label{example:differential}
	Let $\gfrak$ be the essentially unique non abelian Lie algebra on $\Bbbk\oplus\Bbbk$ such that $[e_1,e_2]=e_1$. An explicit model for $\Cnmixgr(\gfrak)$ is the mixed graded Lie algebra consisting of $\gfrak$ in weight $0$, $\gfrak[-1]$ in weight $1$, with the canonical identity $\gfrak[-1]\cong\gfrak[-1]$ as mixed differential. Denoting by $\overline{e}_i$ the generator $e_i$ of $\gfrak[-1]$ sitting in weight $1$, the Lie bracket of $\Cnmixgr(\gfrak)$ is described by the formulas\begin{align*}
	[e_i,e_j]&=[e_i,e_j]\\
	[e_i,\overline{e}_j]&=\overline{[e_i,e_j]}\\
	[\overline{e}_i,\overline{e}_j]&=0.
	\end{align*}
	The mixed graded universal enveloping algebra on $\Cnmixgr(\gfrak)$ is then the mixed graded tensor algebra on $\Cnmixgr(\gfrak)$, modded out by the relations $x\otimes y-y\otimes x =[x,y].$ This implies that
	\begin{align*}
	\overline{e}_i\otimes\overline{e}_i&=0 && \text{in weight }2,\\
	\overline{e}_1\otimes\overline{e}_2&=-\overline{e}_2\otimes\overline{e}_1&&\text{in weight } 2,\\
	e_1\otimes\overline{e}_2-\overline{e}_2\otimes e_1&=\overline{e}_1&&\text{in weight }1,\\
	e_2\otimes\overline{e}_1-\overline{e}_1\otimes e_2&=-\overline{e}_1&&\text{in weight }1
	\end{align*}
	together with the usual relations for the universal enveloping algebra on $\gfrak$ in weight $0$. Now, consider the summand $$\Bbbk(0)\oplus\Cnmixgr(\gfrak)\oplus\lp\Cnmixgr(\gfrak)\otimesmixgr_{\Bbbk}\Cnmixgr(\gfrak)\rp$$of the mixed graded tensor algebra on $\Cnmixgr(\gfrak)$. This is a mixed graded complex concentrated in weights $[0,2]$, and its weight $2$ component is isomorphic to $$\lp\gfrak\otimes_{\Bbbk}\gfrak\rp[-2] \cong \bigoplus_{1\leqslant i,j\leqslant 2}  \langle  \overline{e}_i\otimes\overline{e}_j\rangle\Bbbk[-2].$$The mixed differential between weights $2$ and $1$ is then described as $$\langle\operatorname{id},-\operatorname{id}\rangle\colon\gfrak\otimes\gfrak[-2]\longrightarrow\lp\gfrak\otimes\gfrak[-1]\oplus\gfrak[-1]\otimes\gfrak\rp[-1].$$However, by the relations written above, it follows that the weight $2$ of this mixed graded complex is equivalent to the $\Bbbk$-module $\langle\overline{e}_1\otimes\overline{e}_2\rangle\Bbbk[-2]$, and under this identification the mixed differential above is defined by the assignation$$\overline{e}_1\otimes\overline{e}_2\mapsto \lp\overline{e}_1\otimes e_2, -e_2\otimes\overline{e}_1\rp.$$Again, the previous relations imply that $-e_2\otimes\overline{e}_1=-\overline{e}_1\otimes e_2+\overline{e}_1$, and thus the mixed differential is uniquely defined by the assignation $$\overline{e}_1\otimes\overline{e}_2\mapsto\overline{e}_1\otimes e_2-\overline{e}_1\otimes e_2+\overline{e}_1=\overline{e}_1.$$
	But now $\overline{e}_1$ is just $\overline{[e_1,e_2]}$, i.e., the Lie bracket $[e_1,e_2]$ sitting in weight $1$. On the other hand, the mixed differential between weights $1$ and $0$ is just the codiagonal$$\nabla\colon\lp\gfrak[-1]\otimes_{\Bbbk}\gfrak\rp\oplus\lp\gfrak\otimes_{\Bbbk}\gfrak[-1]\rp\longrightarrow\gfrak[-1].$$By the definition of the product on the universal enveloping algebra, this amounts to sending an element $u\otimes g$ to $u\cdot g$.\\
	Now, considering \textit{all} the summands of the universal enveloping algebra $\UCngmixgr$, we obtain $\Ug$ sitting in weight $0$, $\Ug\otimes_{\Bbbk}\gfrak[-1]$ sitting in weight $1$, and $\langle \overline{e}_1,\overline{e}_2\rangle\Ug[-2]$ sitting in weight $2$. The mixed differential of such mixed graded associative algebra is induced by tensoring the mixed differential of $\Cnmixgr(\gfrak)$ with itself a suitable number of times: in particular, taking carefully into account the signs and considering all the identifications implied by the relations for the universal enveloping algebra, we obtain that an element $u\otimes\overline{e}_1\wedge\overline{e}_2$ in weight $2$ is sent to $u\cdot \overline{e}_1\otimes \overline{e}_2-u\cdot\overline{e}_2\otimes\overline{e}_1-u\cdot\overline{[e_1,e_2]}$.
\end{exmp}
Applying the mixed graded version of the universal enveloping $\Bbbk$-algebra $\infinity$-functor \ref{functor:mixedgradedug}, we land in ${\Fun}{\lp\Delta^1,\hsp\Algmixgr_{\Bbbk}\rp}$ with an $\infinity$-functor\begin{align}\label{functor:composition3}
\Lie_{\Bbbk}\longrightarrow{\Fun}{\lp\Delta^1,\hsp\Algmixgr_{\Bbbk}\rp}.\end{align}
\begin{parag}
	Let us focus now on the $\infinity$-category ${\Fun}{\lp\Delta^1,\hsp\Alg_{\Ebb_1}{\lp\scrC\rp}\rp}\eqqcolon{\Fun}{\lp\Delta^1,\hsp\Alg(\scrC)\rp}$, where $\scrC$ is a stable symmetric monoidal $\infinity$-category which is $\Bbbk$-linear over a field of characteristic $0$ (in our case, $\scrC$ will be $\Modmixgr_{\Bbbk}$). By \cite[Corollary $4.2.3.2$]{ha}, we have a Cartesian fibration$$\theta\colon\LMod{\lp\scrC\rp}\longrightarrow\Alg(\scrC)$$which classifies the $\infinity$-functor$$\LMod_{(-)}(\scrC)\colon\lp\Alg(\scrC)\rp^{\op}\longrightarrow\Catinfty$$informally described by the assignation $\left\{f\colon A\to B\right\}$ to $\left\{f_*\colon\LMod_B(\scrC)\to\LMod_A(\scrC)\right\}$, where $f_*$ forgets the $B$-module structure to an $A$-module structure along $f$. 
	In particular, in view of \cite[Remark $2.4.2.9$]{htt}, we can promote such assignation to an $\infinity$-functor 
	\begin{align*}
	{\Fun}{\lp\Delta^1,\hsp\Alg(\scrC)\rp}\longrightarrow{\Fun}{\lp\Delta^1,\hsp\Catinfty^{\op}\rp}.
	\end{align*}
	Now, let $$s\colon\Alg(\scrC)\longrightarrow\LMod(\scrC)$$be the section of the Cartesian fibration sending an algebra object $A$ of $\scrC$ to the object $(A,\hsp A)$ of $\LMod(\scrC)$ where $A$ is seen as an $A$-module object (\cite[Example $4.2.1.17$]{ha}). Given an arrow $f\colon A\to B$, seen as an object of the \infinity-category ${\Fun}{\lp\Delta^1,\hsp\Alg(\scrC)\rp}$, we have a locally $\theta$-Cartesian morphism with codomain $s(B)$ which lifts $f$ (\cite[Proposition $2.4.2.8$]{htt}). Let $\overline{f}$ be such $\theta$-Cartesian lift. Then, the domain of $\overline{f}$ is described by the object $(A,\hsp B)$, where $B$ is seen as an $A$-module along $f$. Considering the codomain of an arrow $f\colon A \to B$ and taking its image under the section $s$, we obtain an \infinity-functor$${\Fun}{\lp\Delta^1,\hsp \Alg(\scrC)\rp}\longrightarrow\LMod(\scrC).$$
	Again, this can be promoted to an \infinity-functor\begin{align}
	\label{functor:algebramodule}{\Fun}{\lp\Delta^1,\hsp\Alg(\scrC)\rp}\longrightarrow{\Fun}{\lp\Delta^1,\hsp\LMod(\scrC)\rp}.\end{align}Indeed, for any square of arrows$$\begin{tikzpicture}[scale=0.75]
	\node (a) at (0,2){$A$};\node(b) at (2,2){$B$};
	\node (c) at (0,0){$C$};\node (d) at (2,0){$D$};
	\draw[->,font=\scriptsize] (a) to node[above]{$f$}(b);
	\draw[->,font=\scriptsize] (a) to node[left]{$\phi$}(c);
	\draw[->,font=\scriptsize] (b) to node[right]{$\psi$}(d);
	\draw[->,font=\scriptsize] (c) to node[below]{$g$}(d);
	\end{tikzpicture}$$
	the Cartesian lift $\overline{f}\colon (A,\hsp B)\to (B,\hsp B)$ produces an arrow in $\LMod(\scrC)$ given by $(A,\hsp B) \to (D,\hsp D)$. Considering now a $\theta$-cartesian lift $\overline{g}$ of $g$,  by the equivalence$$\LMod(\scrC)_{/\overline{g}}\simeq\LMod(\scrC)_{/s(D)}\times_{\Alg(\scrC)_{/D}}\Alg(\scrC)_{/f}$$(which is the definition of $\theta$-Cartesian morphism, \cite[Definition $2.4.1.1$]{htt}) we obtain an essentially unique arrow $(A,\hsp B)\to (C,\hsp D)$, where $(C,\hsp D)$ is the domain of the Cartesian lift $\overline{g}$. This arrow has to be understood as the map of associative algebras $\phi \colon A\to C$, and the map $\psi\colon B\to D$ that, being compatible with the associative algebra structure, is also $A$-linear with respect to the $A$-module structure of $B$ and $D$ inherited by the maps $f\colon A\to B$ and $\psi\circ f\colon A\to D$, respectively. The functoriality of this construction is then a consequence of \cite[Remark $2.4.1.4$]{htt}.
\end{parag}
\begin{parag}
	In the case $\scrC=\Modmixgr_{\Bbbk}$, composing \ref{functor:composition3} with the $\infinity$-functor \ref{functor:algebramodule}, we obtain the \infinity-functor\begin{align}
	\label{functor:composition4}
	\Lie_{\Bbbk}\longrightarrow\LMod{\lp\Modmixgr_{\Bbbk}\rp}
	\end{align}
	which can be interpreted as the $\infinity$-functor sending a Lie algebra $\gfrak$ to the object ${\Umixgr}{\lp\Cnmixgr(\gfrak)\rp}$, seen as a left $\Ug(0)$-module. 
\end{parag}
\begin{parag}
	\label{parag:relativetensorproduct}
	Let us consider the $\infinity$-category $\LMod{\lp\Modmixgr_{\Bbbk}\rp}$. Every mixed graded $\Bbbk$-module is a $\Bbbk(0)$-bimodule, since $\Bbbk(0)$ is the unit for the symmetric monoidal structure on $\Modmixgr_{\Bbbk}$, and in particular it is both left and right $\Bbbk(0)$-module. Moreover, these $\Bbbk(0)$-module structures are clearly compatible with any left or right $A_{\bullet}$-module structure over an associative algebra $A_{\bullet}$ in $\Modmixgr_{\Bbbk}$. So, by \cite[Theorem $4.3.2.7$]{ha}, the $\infinity$-categories $$\LMod{\lp\Modmixgr_{\Bbbk}\rp}\subseteq\LMod{\lp\RMod{\lp\Modmixgr_{\Bbbk}\rp}\rp}$$and$$\RMod{\lp\Modmixgr_{\Bbbk}\rp}\subseteq\RMod{\lp\LMod{\lp\Modmixgr_{\Bbbk}\rp}\rp}$$can be seen as a sub-$\infinity$-categories of the $\infinity$-category ${\operatorname{BMod}}{\lp\Modmixgr_{\Bbbk}\rp}$. The relative tensor product $\infinity$-functor$$T\colon{\operatorname{BMod}{\lp\Modmixgr_{\Bbbk}\rp}}\times_{\Alg{\lp\Modmixgr_{\Bbbk}\rp}}{\operatorname{BMod}{\lp\Modmixgr_{\Bbbk}\rp}}\longrightarrow{\operatorname{BMod}{\lp\Modmixgr_{\Bbbk}\rp}}$$(see \cite[Definition $4.4.2.10$]{ha}) sends an $(A_{\bullet},\hsp B_{\bullet})$-bimodule $M_{\bullet}$ and a $(B_{\bullet},\hsp C_{\bullet})$-bimodule $N_{\bullet}$ to the tensor product $M_{\bullet}\otimesmixgr_{B_{\bullet}}N_{\bullet}$, which is now an $(A_{\bullet},\hsp C_{\bullet})$-bimodule.\\
	
	With all these notations, we can consider the relative tensor product $\infinity$-functor $T$ restricted to the sub-$\infinity$-category$$\RMod\lp\Modmixgr_{\Bbbk}\rp\times_{\Alg{\lp\Modmixgr_{\Bbbk}\rp}}\LMod\lp\Modmixgr_{\Bbbk}\rp$$of$$ {\operatorname{BMod}{\lp\Modmixgr_{\Bbbk}\rp}}\times_{\Alg{\lp\Modmixgr_{\Bbbk}\rp}}{\operatorname{BMod}{\lp\Modmixgr_{\Bbbk}\rp}}$$spanned by those couples $\lp M_{\bullet},\hsp N_{\bullet}\rp$ comprising of right and left module objects $M_{\bullet}$ and $N_{\bullet}$ in $\Modmixgr_{\Bbbk}$ such that $M_{\bullet}$ is a right module on the very same algebra for which $N_{\bullet}$ is a left module. In this case, the relative tensor product $\infinity$-functor lands naturally in the \infinity-category of mixed graded $\Bbbk$-modules $\Modmixgr_{\Bbbk}$.
\end{parag}
\begin{parag}
	\label{parag:kasugmodule}
	By the discussion provided in \ref{parag:relativetensorproduct}, we have a family of $\infinity$-functors parametrized by Lie algebras in the following way:\begin{align}
	\label{functor:CEhomological}
	\{\gfrak\}\longhookrightarrow\Lie_{\Bbbk}\xrightarrow{\ref{functor:composition4}}\LModmixgr_{\Ug(0)}\overset{T}{\longrightarrow}{\Fun}{\lp\RModmixgr_{\Ug(0)},\hsp\Modmixgr_{\Bbbk}\rp}.
	\end{align}
	Here, by $\LModmixgr_{\Ug(0)}$ and $\RModmixgr_{\Ug(0)}$ we mean, respectively, the $\infinity$-categories $$\LMod_{\Ug(0)}{\lp\Modmixgr_{\Bbbk}\rp}$$and $$\RMod_{\Ug(0)}{\lp\Modmixgr_{\Bbbk}\rp}.$$ Let us remark that, working with explicit models given by mixed graded complexes of $\Bbbk$-modules, these $\infinity$-categories can be described as the $\infinity$-categories whose objects are countable collections of chain complexes of left (resp. right) $\Ug$-modules such that the mixed differential is a morphism of left (resp. right) $\Ug$-modules, and whose $1$-morphisms are countably many morphisms of left (resp. right) $\Ug$-modules commuting with the mixed differentials.\\
	Informally, the association \ref{functor:CEhomological} is defined by sending $\gfrak$ to the $\infinity$-functor $$\CE_{\varepsilon}(\gfrak;\hsp-)\coloneqq-\otimesmixgr_{\Ug(0)}\Umixgr{\lp\Cnmixgr(\gfrak)\rp}.$$ 
\end{parag}
We cannot in general promote the association \ref{functor:CEhomological} to an $\infinity$-functor $$\Lie_{\Bbbk}\longrightarrow{\Fun}{\lp\RMod\lp\Modmixgr_{\Bbbk}\rp,\hsp\Modmixgr_{\Bbbk}\rp}$$because, for any $\gfrak$ in $\Lie_{\Bbbk}$, we cannot define the action of an $\infinity$-functor over those right module objects in $\Modmixgr_{\Bbbk}$ which are right modules over algebras different from $\Ug(0)$. 
%However, considering the sub-$\infinity$-category$$\RMod{\lp\Modmixgr_{\Bbbk}\rp}_{\operatorname{U}}\subseteq\RMod{\lp\Modmixgr_{\Bbbk}\rp}$$spanned by those right modules $M_{\bullet}$ which are \textit{simultaneously} right modules over all $\Ug(0)$, for any $\gfrak$, and the sub-$\infinity$-category$$\operatorname{U}{\lp\Lie_{\Bbbk}\rp}\subseteq\LMod{\lp\Modmixgr_{\Bbbk}\rp}$$spanned by the essential image of the $\infinity$-functor \ref{functor:composition4}, we have that$$\RMod{\lp\Modmixgr_{\Bbbk}\rp}_{\!\operatorname{U}}\times  \operatorname{U}{\lp\Lie_{\Bbbk}\rp}\simeq \RMod{\lp\Modmixgr_{\Bbbk}\rp}_{\!\operatorname{U}}\times_{\Alg{\lp\Modmixgr_{\Bbbk}\rp}} \operatorname{U}{\lp\Lie_{\Bbbk}\rp}$$and so by the closed Cartesian monoidal structure of $\Catinfty$ we have induced an $\infinity$-functor
%\begin{align}
%\label{functor:migliorfuntore}
%\Lie_{\Bbbk}\xrightarrow{\ref{functor:composition4}}\LMod{\lp\Modmixgr_{\Bbbk}\rp}\to{\Fun}{\lp\RMod{\lp\Modmixgr_{\Bbbk}\rp}_{\!\operatorname{U}},\hsp\Modmixgr_{\Bbbk}\rp}.
%\end{align}
However, we can observe that $\Bbbk(0)$ is naturally a (both left and right) $\Ug(0)$-module for any $\gfrak$. This follows from the fact that $\Ug$ is an augmented $\Bbbk$-algebra, and so applying the $\infinity$-functor $(-)(0)\colon\Alg_{\Bbbk}\hookrightarrow\Algmixgr_{\Bbbk}$, $\Ug(0)$ becomes naturally an augmented $\Bbbk(0)$-algebra, and this provides $\Bbbk(0)$ with a (trivial) left and right $\Ug(0)$-module structure for any Lie algebra $\gfrak$. In particular, we have an $\infinity$-functor\begin{align}
\label{functor:CEhomologicalcoalgebra}
\Lie_{\Bbbk}\xrightarrow{\ref{functor:composition4}}\LMod\lp\Modmixgr_{\Bbbk}\rp\subseteq\{\Bbbk(0)\}\times{\LMod\lp\Modmixgr_{\Bbbk}\rp}\overset{T}\longrightarrow\Modmixgr_{\Bbbk}.
\end{align}
\begin{defn}
	\label{def:homologicalCE}
	The $\infinity$-functor \ref{functor:CEhomologicalcoalgebra} is the \textit{(mixed graded) homological Chevalley-Eilenberg $\infinity$-functor}, and shall be denoted by $\CE_{\varepsilon}$.
\end{defn}
\begin{remark}
	\label{remark:explicitgradedpiecesCEcoalg}
	The mixed graded homological Chevalley-Eilenberg $\infinity$-functor $\CE_{\varepsilon}$ sends a Lie algebra $\gfrak$ to a mixed graded $\Bbbk$-module given by the mixed graded tensor product $$\CE_{\varepsilon}(\gfrak)\coloneqq\Bbbk(0)\otimesmixgr_{\Ug(0)}{\UCngmixgr}.$$Since forgetting the mixed structure is a strongly monoidal $\infinity$-functor, the underlying graded $\Bbbk$-module is equivalent, in virtue of the discussion of \cref{remark:CEdifferential}, to\begin{align*}
	\oblv_{\varepsilon}{\lp\CE_{\varepsilon}(\gfrak)\rp}&\simeq\oblv_{\varepsilon}{\lp \Bbbk(0)\otimesmixgr_{\Ug(0)}{\UCngmixgr}\rp}\\
	&\simeq\oblv_{\varepsilon}{\lp \Bbbk(0)\rp}\otimes^{\gr}_{\Ug(0)}\oblv_{\varepsilon}{\lp{\UCngmixgr}\rp}\\
	&\simeq\Bbbk(0)\otimes^{\gr}_{\Ug(0)}\Ug(0)\otimes^{\gr}_{\Bbbk(0)}{\Symgr_{\Bbbk}}{\lp\gfrak[-1](1)\rp}\simeq\Sym^{\gr}_{\Bbbk}{\lp \oblv_{\Lie}{\lp\gfrak\rp}[-1](1)\rp}.
	\end{align*} In other words, for all integers $p$ we have a natural equivalence of $\Bbbk$-modules$$\CE_{\varepsilon}(\gfrak)_p\simeq\Sym^p_{\Bbbk}{\lp\oblv_{\Lie}(\gfrak)[-1]\rp}.$$Moreover, working with explicit models given by mixed graded complexes, the mixed differential of $\CE_{\varepsilon}(\gfrak)$ is given by tensoring the mixed differential of $\UCngmixgr$ exhibited in \cref{remark:CEdifferential} with the trivial differential of $\Bbbk(0)$ which kills the part of the mixed differential which depends on the left $\Ug$-module structure. Hence, it follows that the Tate realization of $\CE_{\varepsilon}(\gfrak)$ agrees with the usual Chevalley-Eilenberg complex $\CE_{\bullet}(\gfrak)$, again by directly inspecting the usual explicit model for $\left|\CE_{\varepsilon}(\gfrak)\right|^{\operatorname{t}}$.
\end{remark}
\begin{propositionn}
	\label{prop:promotionCEcocommutativealgebra}
	The mixed graded homological Chevalley-Eilenberg $\infinity$-functor can be promoted to an $\infinity$-functor$$\CE_{\varepsilon}\colon\Lie_{\Bbbk}\longrightarrow\CoCommmixgraug$$where $\CoCommmixgraug$ is the $\infinity$-category of coaugmented cocommutative coalgebra objects in $\Modmixgr_{\Bbbk}$.
\end{propositionn}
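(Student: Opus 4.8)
The plan is to upgrade the $\infinity$-functor \ref{functor:CEhomologicalcoalgebra} along the chain of constructions that produced it, checking at each step that the relevant $\infinity$-functors lift to the level of coaugmented cocommutative coalgebras. The key observation is that $\Bbbk(0)$ is not merely a left $\Ug(0)$-module: since $\Ug$ is a \emph{Hopf} algebra (by \cref{construction:hopfalgebrastructureonUg}), $\Ug(0)$ is a cocommutative bialgebra in $\Modmixgr_{\Bbbk}$, and its counit makes $\Bbbk(0)$ into a coaugmented cocommutative coalgebra object in the monoidal $\infinity$-category $\LMod_{\Ug(0)}(\Modmixgr_{\Bbbk})$ equipped with the tensor product of \cref{construction:monoidalstructureonLModUG}. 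Likewise, the Hopf structure makes $\Umixgr(\Cnmixgr(\gfrak))$ a cocommutative bialgebra, hence in particular a cocommutative coalgebra object in $\LMod_{\Ug(0)}(\Modmixgr_{\Bbbk})$ once one records the left $\Ug(0)$-module structure coming from $\gfrak(0)\hookrightarrow\Cnmixgr(\gfrak)$. The relative tensor product $\otimesmixgr_{\Ug(0)}$ is lax symmetric monoidal (in fact the tensor product of left modules over a commutative-enough — here cocommutative-bialgebra — base is itself symmetric monoidal in a way compatible with $T$), so $T(-,-)$ carries a pair of cocommutative coalgebras to a cocommutative coalgebra in $\Modmixgr_{\Bbbk}$; applying it to $(\Bbbk(0),\Umixgr(\Cnmixgr(\gfrak)))$ produces $\CE_{\varepsilon}(\gfrak)$ with its desired structure, and the coaugmentation is induced by the unit $\Bbbk(0)\to\Umixgr(\Cnmixgr(\gfrak))$.

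To make this precise and functorial, first I would recall that $\Umixgr\colon\Liemixgr_{\Bbbk}\to\Algmixgr_{\Bbbk}$ factors through cocommutative bialgebras, i.e. through $\operatorname{cCBAlg}(\Modmixgr_{\Bbbk})\simeq\Alg_{\Ebb_1}(\operatorname{cCcAlg}(\Modmixgr_{\Bbbk}))$, by \cref{construction:hopfalgebrastructureonUg} applied to $\scrC=\Modmixgr_{\Bbbk}$; this is where one uses that the enveloping algebra $\infinity$-functor is strongly monoidal for the Cartesian structure on $\Liemixgr_{\Bbbk}$. Consequently the $\infinity$-functor \ref{functor:composition3} refines to land in ${\Fun}{\lp\Delta^1,\hsp\operatorname{cCBAlg}(\Modmixgr_{\Bbbk})\rp}$, and in particular every object and morphism in its image is a morphism of cocommutative coalgebras. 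Next, the module construction \ref{functor:algebramodule} has to be upgraded: working inside $\Alg_{\Ebb_1}(\operatorname{cCcAlg}(\Modmixgr_{\Bbbk}))$ rather than $\Alg(\Modmixgr_{\Bbbk})$, the Cartesian fibration $\LMod\to\Alg$ and its section $s$ are taken internally to $\operatorname{cCcAlg}(\Modmixgr_{\Bbbk})$, so that the resulting object $(\Ug(0),\Umixgr(\Cnmixgr(\gfrak)))$ is a left module \emph{in cocommutative coalgebras}; this is legitimate because all the cited results of \cite{ha} (Corollaries 4.2.3.2, the section of Example 4.2.1.17) apply verbatim in any presentable symmetric monoidal stable $\infinity$-category, and $\operatorname{cCcAlg}(\Modmixgr_{\Bbbk})$ is such — its monoidal structure is the tensor product, which is the Cartesian product for coalgebras. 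Finally, the relative tensor product $T$ of \cite[Definition 4.4.2.10]{ha}, formed inside $\operatorname{cCcAlg}(\Modmixgr_{\Bbbk})$, produces out of $(\Bbbk(0),\Umixgr(\Cnmixgr(\gfrak)))$ a cocommutative coalgebra object of $\Modmixgr_{\Bbbk}$; tracking the coaugmentation through $\Bbbk(0)\to\Umixgr(\Cnmixgr(\gfrak))$ lands us in $\CoCommmixgraug$. Forgetting the coalgebra structure recovers exactly \ref{functor:CEhomologicalcoalgebra}, so the underlying-graded-module computation of \cref{remark:explicitgradedpiecesCEcoalg} still identifies $\CE_{\varepsilon}(\gfrak)_p\simeq\Sym^p_{\Bbbk}(\oblv_{\Lie}(\gfrak)[-1])$.

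The main obstacle is the coherence of running the whole diagram \ref{functor:CEhomological}--\ref{functor:CEhomologicalcoalgebra} \emph{inside} $\operatorname{cCcAlg}(\Modmixgr_{\Bbbk})$ rather than $\Modmixgr_{\Bbbk}$ — in particular, checking that the construction of \cref{construction:adjointmorphism} applied to $(-)_0\dashv\Cnmixgr$ and the subsequent passage through ${\Fun}{\lp\Delta^1,\hsp-\rp}$-categories is compatible with the cocommutative-coalgebra refinements, so that the morphism $\Bbbk(0)\to\Umixgr(\Cnmixgr(\gfrak))$ is genuinely a morphism of cocommutative bialgebras and not just of associative algebras. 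The clean way to handle this is to observe that $\operatorname{cCcAlg}(\Modmixgr_{\Bbbk})\to\Modmixgr_{\Bbbk}$ is conservative and preserves (and reflects) all limits and colimits (by \cref{warning:coalgebras}, the forgetful $\infinity$-functor for the ``all coalgebras'' variant is comonadic and colimit-preserving, and limits of coalgebras are computed underneath), so it suffices to check each required equivalence — associativity of $T$, the unit laws, and functoriality in $\gfrak$ — after forgetting to $\Modmixgr_{\Bbbk}$, where everything has already been established in the construction of \ref{functor:CEhomologicalcoalgebra}. Assembling these observations, one obtains the promoted $\infinity$-functor $\CE_{\varepsilon}\colon\Lie_{\Bbbk}\to\CoCommmixgraug$ as claimed.
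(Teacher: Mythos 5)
Your strategy is sound in outline but genuinely different from the paper's, and considerably more expensive. The paper exploits the fact that the monoidal structure on $\Lie_{\Bbbk}$ is Cartesian, so that $\Lie_{\Bbbk}\simeq{\operatorname{cCcAlg}}{\lp\Lie_{\Bbbk}\rp}$ by \cite[Proposition $2.4.3.9$]{ha}; the entire proof then reduces to a single check, namely that $\CE_{\varepsilon}$ is strongly symmetric monoidal from $\Lie_{\Bbbk}^{\times}$ to $\lp\Modmixgr_{\Bbbk}\rp^{\otimesmixgr}$. That check is carried out on underlying graded modules (forgetting the mixed structure is strongly monoidal and conservative), where $\oblv_{\varepsilon}\CE_{\varepsilon}(\gfrak)\simeq\Symgr_{\Bbbk}{\lp\oblv_{\Lie}(\gfrak)[-1](1)\rp}$, the cofree ind-nilpotent cocommutative coalgebra functor is a right adjoint and hence sends products to products, and products of cocommutative coalgebras are tensor products. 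Your route instead lifts the whole two-sided bar construction computing $\Bbbk(0)\otimesmixgr_{\Ug(0)}\UCngmixgr$ into ${\operatorname{cCcAlg}}{\lp\Modmixgr_{\Bbbk}\rp}$ using the Hopf structure on $\Umixgr$; this is the classical Hopf-algebraic argument, and it can be made to work because all face maps of the bar complex are coalgebra morphisms and colimits of coalgebras are computed in the underlying category. What it buys is a transparent answer to where the comultiplication on $\CE_{\varepsilon}(\gfrak)$ comes from (compare \cref{porism:cnmixgrcoalgebra}); what it costs is having to re-run \cref{construction:adjointmorphism} and the $\LMod\to\Alg$ fibration formalism coherently at the coalgebra level, which the paper's argument avoids entirely.

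Two caveats on your execution. First, ${\operatorname{cCcAlg}}{\lp\Modmixgr_{\Bbbk}\rp}$ is presentable and symmetric monoidal but \emph{not} stable (nor $\Bbbk$-linear in the module-category sense), so the claim that the cited results of \cite{ha} apply ``verbatim'' is wrong as stated; what actually saves the argument is that the relative tensor product machinery only needs geometric realizations and a monoidal structure compatible with them, both of which hold because colimits and tensor products of coalgebras are computed in $\Modmixgr_{\Bbbk}$. Second, conservativity and colimit-preservation of the forgetful functor let you verify that a given map of coalgebras is an equivalence after forgetting, but they do not by themselves produce the coherent lift of the whole diagram; that lift must be \emph{constructed} (as you begin to do via the bialgebra refinement of $\Umixgr$ and the $\Ug(0)$-linearity of the comultiplication, cf.\ \cref{remark:ugmapleftmodules}), not merely checked, and this is precisely the coherence burden that the paper's strong-monoidality argument sidesteps.
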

\begin{proof}
	First of all, let us remark that we have an equivalence of $\infinity$-categories$$\Lie_{\Bbbk}\simeq {\operatorname{cCcAlg}}{\lp\Lie_{\Bbbk}\rp}$$in virtue of \cite[Proposition $2.4.3.9$]{ha}: essentially, the diagonal morphism $\Delta\colon\gfrak\to\gfrak\times\gfrak$ turns every Lie algebra into a cocommutative coalgebra object in Lie algebras. So, our claim will follow by proving that $\CE_{\varepsilon}$ is a (actually, \textit{strongly}) monoidal $\infinity$-functor with respect to the Cartesian monoidal structure on $\Lie_{\Bbbk}$ and the monoidal structure on $\Modmixgr_{\Bbbk}$ described in \ref{parag:monoidalstructureonmodmixgr}.\\
	We have a natural map $$\CE_{\varepsilon}(\gfrak\times\hfrak)\longrightarrow\CE_{\varepsilon}(\gfrak)\times\CE_{\varepsilon}(\hfrak)\longrightarrow\CE_{\varepsilon}\otimesmixgr_{\Bbbk}\CE_{\varepsilon}(\hfrak)$$and, since forgetting the mixed structure is a strongly monoidal operation, we can safely check that this is an equivalence by inspecting the map on the underlying graded $\Bbbk$-modules$$\oblv_{\varepsilon}\lp\CE_{\varepsilon}(\gfrak\times\hfrak)\rp\longrightarrow\oblv_{\varepsilon}\lp\CE_{\varepsilon}(\gfrak)\rp\otimesmixgr_{\Bbbk}\oblv_{\varepsilon}\lp\CE_{\varepsilon}(\hfrak)\rp.$$In virtue of \cref{remark:explicitgradedpiecesCEcoalg}, we have an equivalence$$\oblv_{\varepsilon}{\lp\CE_{\varepsilon}{\lp\gfrak\times\hfrak\rp}\rp}\simeq{{\Symgr_{\Bbbk}}{\lp\oblv_{\Lie}\lp\gfrak\times\hfrak\rp[-1](1)\rp}},$$where $\Symgr_{\Bbbk}{\lp\oblv_{\Lie}(\gfrak\times\hfrak)[-1](1)\rp}$ is the free ind-nilpotent graded cocommutative coalgebra over $\gfrak\times\hfrak$ (which agrees on the usual symmetric coalgebra with its natural grading). On the other hand, the free ind-nilpotent cocommutative coalgebra \infinity-functor is always a right adjoint, and the same holds for the inclusion of ind-nilpotent coalgebras into all coalgebras (\cref{warning:coalgebras}). Moreover, products in cocommutative coalgebras are described by tensor products (\cite[Chapter $6$, Section $4.1.1$]{studyindag2} and \cite[Section $3.3$]{ellipticcohomology1}), therefore we have another equivalence$$\oblv_{\varepsilon}\lp\CE_{\varepsilon}(\gfrak)\rp\otimesmixgr_{\Bbbk}\oblv_{\varepsilon}\lp\CE_{\varepsilon}(\hfrak)\rp\simeq \Symgr_{\Bbbk}{\lp\oblv_{\Lie}(\gfrak)[-1](1)\rp}\otimesmixgr_{\Bbbk}\Symgr_{\Bbbk}{\lp\oblv_{\Lie}(\hfrak)[-1](1)\rp}.$$It follows that $\CE_{\varepsilon}$ is a strongly monoidal $\infinity$-functor between $\Lie_{\Bbbk}^{\times}$ and $\lp\Modmixgr_{\Bbbk}\rp^{\otimesmixgr}$,  and so we can promote $\CE_{\varepsilon}$ to an \infinity-functor\begin{align}
	\label{funct:tocoalgebras}
	{\operatorname{cCcAlg}}{\lp\Lie_{\Bbbk}\rp}\simeq\Lie_{\Bbbk}\longrightarrow{\operatorname{cCcAlg}}{\lp\Modmixgr_{\Bbbk}\rp}\eqqcolon\CoCommmixgr.
	\end{align}The fact that every $\CE_{\varepsilon}{\lp\gfrak\rp}$ is actually a \textit{coaugmented} mixed graded cocommutative coalgebra follows from the obvious fact that $\operatorname{U}(0)\simeq \Sym_{\Bbbk}(0)\simeq\Bbbk$, and since $0$ is a zero object in $\Lie_{\Bbbk}$ one has a natural coaugmentation, as desired.
\end{proof}
\begin{notation}
	In the following, we shall denote both the \infinity-functor \ref{functor:CEhomologicalcoalgebra} and the \infinity-functor \ref{funct:tocoalgebras} as $\CE_{\varepsilon}$.
\end{notation}
\begin{porism}
	\label{porism:cnmixgrcoalgebra}
	Unraveling all the constructions, it turns out that the mixed graded cocommutative coalgebra structure of $\CE_{\varepsilon}(\gfrak)$ is induced simply by the diagonal morphism $\gfrak\to\gfrak\times\gfrak$, which depends only on the $\Bbbk$-module structure of $\gfrak$ and not on its Lie bracket. In fact, the cocommutative coalgebra structure of $\CE_{\varepsilon}(\gfrak)$ is induced by the fact that $\UCngmixgr$ is itself a mixed graded cocommutative coalgebra, since it is the universal enveloping algebra of $\Cnmixgr(\gfrak)$ (and $\Cnmixgr$ preserves products of Lie algebras, trivially). \\ In other words, the proof of \cref{prop:promotionCEcocommutativealgebra} shows that the underlying graded cocommutative coalgebra of $\CE_{\varepsilon}(\gfrak)$ is \textit{precisely} $\Symgr_{\Bbbk}(\oblv_{\Lie}(\gfrak)[-1](1))$. The Lie bracket plays a role only in setting a mixed structure of $\CE_{\varepsilon}(\gfrak)$ which agrees with such cocommutative coalgebra structure.
\end{porism}
We conclude this subsection by noticing that, working with explicit models, this construction actually yields the usual homological Chevalley-Eilenberg chain complex (or, better, its homology).
\begin{propositionn}
	\label{prop:CEexplicitmodel}
	Given a Lie algebra $\gfrak$, the Tate realization $\left|\CE_{\varepsilon}(\gfrak)\right|^{\operatorname{t}}$ agrees with the homological Chevalley-Eilenberg complex of \cite[Construction $2.2.3$]{dagx}.
\end{propositionn}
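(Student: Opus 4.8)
The plan is to evaluate the stabilized realization on the explicit presentation of $\CE_{\varepsilon}(\gfrak)$ supplied by \cref{remark:explicitgradedpiecesCEcoalg}. The crucial point is that $\CE_{\varepsilon}(\gfrak)$ is concentrated in non-negative weights: by \cref{remark:explicitgradedpiecesCEcoalg} its weight $p$ component is $\Sym^p_{\Bbbk}(\oblv_{\Lie}(\gfrak)[-1])$, which vanishes for $p<0$. Feeding this into the explicit model \ref{parag:tateexplicit} for $\left|-\right|^{\operatorname{t}}$, each factor $\CE_{\varepsilon}(\gfrak)_{-p}[-2p]$ with $p>0$ is zero, so the products $\prod_{p\geqslant i}\CE_{\varepsilon}(\gfrak)_{-p}[-2p]$ reduce to finite direct sums and the filtered colimit over $i\leqslant 0$ stabilizes at $i=0$. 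This yields an equivalence of graded $\Bbbk$-modules
$$\left|\CE_{\varepsilon}(\gfrak)\right|^{\operatorname{t}}\;\simeq\;\bigoplus_{p\geqslant 0}\Sym^p_{\Bbbk}\!\left(\oblv_{\Lie}(\gfrak)[-1]\right)[2p]\;\simeq\;\bigoplus_{p\geqslant 0}\Big(\textstyle\bigwedge^{p}\oblv_{\Lie}(\gfrak)\Big)[p],$$
where the last identification is the characteristic-zero décalage $\Sym^p_{\Bbbk}(V[-1])\simeq(\bigwedge^{p}V)[-p]$. The right-hand side is exactly the graded $\Bbbk$-module underlying the classical homological Chevalley-Eilenberg complex $\CE_{\bullet}(\gfrak)$, with $\bigwedge^{p}\gfrak$ placed in homological degree $p$.

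It remains to match the differentials. By \ref{parag:tateexplicit} the differential on $\left|\CE_{\varepsilon}(\gfrak)\right|^{\operatorname{t}}$ is the total differential: the internal (chain) differential of each weight piece, which by \cref{remark:CEdifferential} is exactly the $\operatorname{d}_{\gfrak}$-part and preserves the $\Sym$-degree, plus the mixed differential. By \cref{remark:explicitgradedpiecesCEcoalg} the latter is obtained from the mixed differential of $\UCngmixgr$ of \cref{remark:CEdifferential} by tensoring with the trivial differential of $\Bbbk(0)$, which annihilates the component depending on the left $\Ug$-action and leaves only the $\Sym$-degree–lowering component built from the Lie bracket. Comparing term by term with the classical formula \ref{eq:CEdifferential} specialized to $M=\Bbbk$ identifies the total differential with the classical Chevalley-Eilenberg differential, so $\left|\CE_{\varepsilon}(\gfrak)\right|^{\operatorname{t}}\simeq\CE_{\bullet}(\gfrak)$ as in \cite[Construction $2.2.3$]{dagx}. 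The comparison moreover upgrades to an equivalence of coaugmented cocommutative coalgebras: $\left|-\right|^{\operatorname{t}}$ restricts to a strong symmetric monoidal $\infinity$-functor $\operatorname{cCcAlg}(\Modmixgrcn_{\Bbbk})\to\operatorname{cCcAlg}_{\Bbbk}$ by \ref{parag:functorsthatpreservealgebras} (using \cite[Porism $2.3.20$]{pavia1}), and $\CE_{\varepsilon}(\gfrak)$ lives in the source by \cref{prop:promotionCEcocommutativealgebra}. As a cross-check, the same strong monoidality together with cocontinuity of $\left|-\right|^{\operatorname{t}}$ on non-negatively weighted modules lets one commute $\left|-\right|^{\operatorname{t}}$ past the relative tensor product and the enveloping algebra, recovering $\left|\CE_{\varepsilon}(\gfrak)\right|^{\operatorname{t}}\simeq\Bbbk\otimes_{\Ug}\left|\UCngmixgr\right|^{\operatorname{t}}\simeq\Bbbk\otimes_{\Ug}\operatorname{U}(\operatorname{Cn}(\gfrak))$, using the identification $\left|\Cnmixgr(\gfrak)\right|^{\operatorname{t}}\simeq\operatorname{Cn}(\gfrak)$ recorded in \ref{parag:cng} together with \cite[Theorem $7.7.2$]{weibel} and \cite[Remark $2.2.11$]{dagx}.

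I expect the bulk of the remaining work, and the only genuine obstacle, to be the Koszul-sign bookkeeping in the previous paragraph: one must check that the signs appearing in the total differential of \ref{parag:tateexplicit} together with those in \cref{remark:CEdifferential} reproduce exactly those of \ref{eq:CEdifferential} with $M=\Bbbk$. This is routine but must be done carefully; it is made tractable by the fact, recalled in \cref{remark:CEdifferential}, that the braiding on mixed graded complexes introduces no signs, so that all signs are controlled purely by the internal homological degrees of the elements of $\gfrak$.
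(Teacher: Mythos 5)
Your proposal is correct, but your primary argument takes a different route from the paper's proof. The paper argues structurally: since $\Bbbk(0)$, $\Ug(0)$ and $\UCngmixgr$ all live in $\Modmixgrcn_{\Bbbk}$, where $\left|-\right|^{\operatorname{t}}$ is strongly monoidal and colimit-preserving, the Tate realization commutes with the two-sided Bar construction computing the relative tensor product, giving $\left|\CE_{\varepsilon}(\gfrak)\right|^{\operatorname{t}}\simeq\Bbbk\otimes_{\Ug}\left|\UCngmixgr\right|^{\operatorname{t}}$, and one then identifies $\left|\UCngmixgr\right|^{\operatorname{t}}$ with $\operatorname{U}(\operatorname{Cn}(\gfrak))$, the standard cofibrant replacement of the trivial $\Ug$-module. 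In other words, your ``cross-check'' paragraph \emph{is} the paper's proof, while your main argument --- evaluating the explicit formula \ref{parag:tateexplicit} on the non-negatively weighted object $\CE_{\varepsilon}(\gfrak)$ and matching total differentials against \ref{eq:CEdifferential} --- is the direct inspection that the paper only sketches at the end of \cref{remark:explicitgradedpiecesCEcoalg}. The structural route buys you independence from chain-level sign bookkeeping (the only explicit input is the already-recorded mixed differential of $\UCngmixgr$); your route makes the identification of the underlying complex completely concrete but, as you note, shifts all the work into Koszul signs. One genuine slip in your computation: the filtered colimit in \ref{parag:tateexplicit} does \emph{not} stabilize at $i=0$. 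For a non-negatively weighted $M_{\bullet}$ each term $\prod_{p\geqslant i}M_{-p}[-2p]$ is the finite sum $\bigoplus_{0\leqslant q\leqslant -i}M_q[2q]$, and these grow as $i\to-\infinity$; the colimit is the infinite direct sum $\bigoplus_{q\geqslant 0}M_q[2q]$, whereas stabilization at $i=0$ would leave only the weight-$0$ piece $\Bbbk$. Your displayed formula is the correct colimit, so the argument survives, but the justification should be corrected (this is exactly the point where the Tate realization differs from the ordinary realization $\left|-\right|$, which for non-negatively weighted modules really does collapse to the weight-$0$ component).
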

\begin{proof}
	Our mixed graded Chevalley-Eilenberg $\infinity$-functor is given by the mixed graded relative tensor product$$\CE_{\varepsilon}(\gfrak)\coloneqq\Bbbk(0)\otimesmixgr_{\Ug(0)}\UCngmixgr,$$and $\Bbbk(0)$, $\Ug(0)$ and $\UCngmixgr$ are all mixed graded objects lying in the \infinity-category $\Modmixgrcn_{\Bbbk}$ of non-negatively graded mixed $\Bbbk$-modules. The restriction of the Tate realization \infinity-functor to non-negatively graded objects  is strongly monoidal in virtue of \cite[Porism $2.3.20$]{pavia1}, and it preserves colimits, hence geometric realizations, in virtue of \cite[Porism $2.3.21$]{pavia1}. Since the relative tensor product is calculated by a two-sided Bar construction (\cite[Section $4.4.2$]{ha}), it follows that\begin{align*}\left|\CE_{\varepsilon}(\gfrak)\right|^{\operatorname{t}}&\coloneqq\left|\Bbbk(0)\otimesmixgr_{\Ug(0)}\UCngmixgr\right|^{\operatorname{t}}\\&\simeq\left|\Bbbk(0)\right|^{\operatorname{t}}\otimes_{\left|\Ug(0)\right|^{\operatorname{t}}}\left|\UCngmixgr\right|^{\operatorname{t}}\\&\simeq \Bbbk\otimes_{\Ug}\left|\UCngmixgr\right|^{\operatorname{t}}.\end{align*}We are left to prove that the Tate realization of $\UCngmixgr$ is equivalent to $\Bbbk$, since the classical Chevalley-Eilenberg homology of a Lie algebra $\gfrak$ is given by $\Bbbk\otimes_{\Ug}\Bbbk$. This is a straightforward computation: the description of the mixed differential in \ref{remark:CEdifferential} shows that the Tate realization is the usual universal enveloping algebra of the cone of $\gfrak$ (in the sense of \cite[Construction $2.2.1$]{dagx}) which is a well known cofibrant replacement for the trivial $\Ug$-module $\Bbbk$.
\end{proof}
\begin{corollaryn}
	\label{corollary:trivliealgebrasaretrivcoalgebras}
	We have a natural equivalence$$\CE_{\varepsilon}\circ\triv_{\Lie}\overset{\simeq}{\longrightarrow}\triv_{\varepsilon}\circ\Symgr_{\Bbbk}{\lp(-)[-1](1)\rp}$$ of \infinity-functors from $\Bbbk$-modules to mixed graded $\Bbbk$-modules.
\end{corollaryn}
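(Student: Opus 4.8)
The plan is to evaluate the explicit description of $\CE_{\varepsilon}$ produced in \cref{remark:CEdifferential} and \cref{remark:explicitgradedpiecesCEcoalg} (together with \cref{porism:cnmixgrcoalgebra}) on the abelian Lie algebra $\triv_{\Lie}(M)$ and to check that everything trivialises. Concretely: since $\oblv_{\Lie}\circ\triv_{\Lie}\simeq\operatorname{id}_{\Mod_{\Bbbk}}$, \cref{porism:cnmixgrcoalgebra} identifies the underlying graded cocommutative coalgebra of $\CE_{\varepsilon}(\triv_{\Lie}(M))$ with $\Symgr_{\Bbbk}{\lp M[-1](1)\rp}$, naturally in $M$. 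On the other hand, \cref{remark:CEdifferential} and \cref{remark:explicitgradedpiecesCEcoalg} describe the mixed differential of $\CE_{\varepsilon}(\gfrak)=\Bbbk(0)\otimesmixgr_{\Ug(0)}\UCngmixgr$ as the specialisation of the mixed differential of $\UCngmixgr$ along $\Bbbk(0)\otimesmixgr_{\Ug(0)}(-)$; this tensoring annihilates the summand of that mixed differential governed by the left $\Ug$-action, leaving only the term $u\otimes\lp x_1,\dots,x_p\rp\mapsto\sum_{i<j}\pm\,u\otimes\lp x_1,\dots,\widehat{x_i},\dots,[x_i,x_j],\dots,x_p\rp$, which depends solely on the Lie bracket of $\gfrak$. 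For $\gfrak=\triv_{\Lie}(M)$ this bracket is zero, so the mixed differential of $\CE_{\varepsilon}(\triv_{\Lie}(M))$ vanishes identically; hence $\CE_{\varepsilon}(\triv_{\Lie}(M))$ is the mixed graded $\Bbbk$-module with zero mixed structure and underlying graded module $\Symgr_{\Bbbk}{\lp M[-1](1)\rp}$, that is, $\triv_{\varepsilon}{\lp\Symgr_{\Bbbk}{\lp M[-1](1)\rp}\rp}$.

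It is worth spelling out a cleaner, model-independent incarnation of the same computation, which also furnishes the natural transformation itself. Since $\Cnmixgr$ is, by \ref{parag:cng}, the $\infty$-functor induced on Lie algebras by the lax symmetric monoidal $\infty$-functor $\operatorname{R}_{\varepsilon}\circ(-)(0)\colon\Mod_{\Bbbk}\to\Modmixgr_{\Bbbk}$, and the trivial-algebra $\infty$-functor for the Lie operad (being restriction along the augmentation of the Lie operad) commutes with $\infty$-functors induced by lax symmetric monoidal $\infty$-functors, there is a natural equivalence $\Cnmixgr{\lp\triv_{\Lie}(M)\rp}\simeq\triv_{\Lie}{\lp\operatorname{R}_{\varepsilon}(M(0))\rp}$, the abelian mixed graded Lie algebra on the mixed graded cone of $M$. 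Applying $\Umixgr$, using the characteristic-$0$ Poincar\'e--Birkhoff--Witt identification of the enveloping algebra of an abelian Lie algebra with its symmetric algebra, i.e.\ $\Umixgr\circ\triv_{\Lie}\simeq\Symmixgr_{\Bbbk}$, together with $\Ug(0)\simeq\Sym_{\Bbbk}(M)(0)$, gives $\CE_{\varepsilon}{\lp\triv_{\Lie}(M)\rp}\simeq\Bbbk(0)\otimesmixgr_{\Sym_{\Bbbk}(M)(0)}\Symmixgr_{\Bbbk}{\lp\operatorname{R}_{\varepsilon}(M(0))\rp}$; unwinding the structure maps of $\operatorname{R}_{\varepsilon}$ one sees this relative tensor product collapses — on the nose, at the level of strict models — to $\triv_{\varepsilon}{\lp\Symgr_{\Bbbk}{\lp M[-1](1)\rp}\rp}$. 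Every step here is functorial in $M$, so their composite is the asserted natural transformation, and \cref{porism:cnmixgrcoalgebra} shows it is compatible with the cocommutative coalgebra structures as well.

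The only genuinely delicate point is the naturality. Because $\triv_{\varepsilon}$ is \emph{not} fully faithful at the $\infty$-categorical level (\cref{construction:trivialmixedstructure}), the pointwise statement that each $\CE_{\varepsilon}(\triv_{\Lie}(M))$ lies in the essential image of $\triv_{\varepsilon}$ with underlying object $\Symgr_{\Bbbk}(M[-1](1))$ does not by itself upgrade to a natural equivalence. I would secure this either by tracking functoriality through each intermediate $\infty$-functor in the chain above, or — more safely — by running the computation entirely with the strict models of $\Cnmixgr$, $\Umixgr$ and of the two-sided bar construction computing $\Bbbk(0)\otimesmixgr_{\Ug(0)}(-)$, all of which are strictly functorial on cofibrant $\Bbbk$-modules and, for an abelian Lie algebra, take values in the image of the model-categorical trivial-mixed-structure functor, which \emph{is} fully faithful; Dwyer--Kan localisation then delivers the desired natural equivalence of $\infty$-functors.
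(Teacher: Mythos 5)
Your proof is correct, but it takes a genuinely different route from the paper's. You argue directly on (strict models of) $\CE_{\varepsilon}(\triv_{\Lie}M)$: after base change along $\Bbbk(0)\otimesmixgr_{\Ug(0)}(-)$ the surviving part of the mixed differential of $\UCngmixgr$ is governed solely by the Lie bracket, hence vanishes for an abelian Lie algebra; your second variant repackages this as $\Cnmixgr(\triv_{\Lie}M)\simeq\triv_{\Lie}{\lp\operatorname{R}_{\varepsilon}(M(0))\rp}$ followed by the Poincar\'e--Birkhoff--Witt identification, and the collapse of the relative tensor product works because the derivation carries $1\otimes\Sym^{\bullet}(M[-1](1))$ into the augmentation ideal of $\Sym_{\Bbbk}(M)$. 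The paper instead argues indirectly: it first computes $\left|\CE_{\varepsilon}(\triv_{\Lie}M)\right|^{\operatorname{t}}\simeq\bigoplus_{q\geqslant0}\Sym^q_{\Bbbk}(M[1])$ via \cref{prop:CEexplicitmodel}, and then proves a stand-alone recognition lemma --- a non-negatively graded mixed $\Bbbk$-module has trivial mixed structure if and only if its Tate realization splits as $\bigoplus_{q\geqslant0}M_q[2q]$ --- by comparing mapping spaces into $N(0)$ through the two adjunctions $\left|-\right|^{\operatorname{t}}\dashv(-)(0)$ and $\triv_{\varepsilon}\dashv\operatorname{NC}^{\operatorname{w}}$ and invoking Yoneda. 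Your approach is more transparent about where the mixed structure lives and, in its strict-model incarnation, actually addresses the naturality issue more explicitly than the paper does (the paper's Yoneda argument is likewise pointwise and silent on functoriality); the paper's approach buys a reusable Tate-realization criterion for detecting trivial mixed structures without unwinding any differential. One claim to temper: your closing assertion that \cref{porism:cnmixgrcoalgebra} already yields compatibility with the cocommutative coalgebra structures overreaches --- the corollary as stated is only an equivalence of $\Modmixgr_{\Bbbk}$-valued $\infinity$-functors, and the paper treats the coalgebra-level refinement as a separate statement, only sketched in \cref{remark:trivliealgebrasaretrivcoalgebras}.
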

\begin{proof}
	If $\gfrak\simeq \triv_{\Lie}M$, since the Tate realization of $\CE_{\varepsilon}(\gfrak)$ agrees with the usual Chevalley-Eilenberg homological complex, it turns out that $$\left|\CE_{\varepsilon}(\gfrak)\right|^{\operatorname{t}}\simeq \CE_{\bullet}(\gfrak)\simeq\Sym_{\Bbbk}{\lp M[1]\rp}\coloneqq\bigoplus_{q\geqslant0}\Sym^q_{\Bbbk}{\lp M[1]\rp},$$where the latter equivalence is \cite[Chapter $6$, Section $4.2.3$]{studyindag2}. We prove the following useful remark.
	\begin{lemman}
		Let $M_{\bullet}$ be a non-negatively graded mixed $\Bbbk$-module. Then $$\left|M_{\bullet}\right|^{\operatorname{t}}\simeq \bigoplus_{q\geqslant0}M_q[2q]$$if and only if $M_{\bullet}\simeq \triv_{\varepsilon}\oblv_{\varepsilon}M_{\bullet}.$
	\end{lemman}
	Indeed, by the definition of Tate realization, it is clear that if $M_{\bullet}$ is trivial one has the desired description of its Tate realization. On the converse, suppose that $\left|M_{\bullet}\right|^{\operatorname{t}}\simeq\bigoplus_{q\geqslant0}M_q[2q]$. In virtue of the adjunction $\left|-\right|^{\operatorname{t}}\dashv (-)(0)$ described in \cite[Porism $2.3.21$]{pavia1}, for any $\Bbbk$-module $N$ we have a chain of equivalences of mapping spaces\begin{align*}
	\Map_{\Modmixgr_{\Bbbk}}{\lp M_{\bullet},\hsp N(0)\rp}&\simeq\Map_{\Mod_{\Bbbk}}{\lp\left|M_{\bullet}\right|^{\operatorname{t}},\hsp N\rp}\\&\simeq\Map_{\Mod_{\Bbbk}}{\lp\bigoplus_{q\geqslant0}M_q[2q],\hsp N\rp}\\&\simeq\prod_{q\geqslant 0}\Map_{\Mod_{\Bbbk}}{\lp M_q,\hsp N[-2q]\rp}.
	\end{align*}
	On the other hand, by the adjunction $\triv_{\varepsilon}\dashv \operatorname{NC}^{\operatorname{w}}$ of \cite[Remark $1.5$]{PTVV}, and recalling that $$\operatorname{NC}^{\operatorname{w}} {\lp N_{\bullet}\rp}_p\simeq\prod_{q\leqslant p} N_{q}[-2(p-q)],$$we have a chain of equivalences of mapping spaces\begin{align*}
	\Map_{\Modmixgr_{\Bbbk}}{\lp \triv_{\varepsilon}\oblv_{\varepsilon}M_{\bullet},\hsp N(0)\rp}&\simeq \Map_{\Modgr_{\Bbbk}}{\lp \oblv_{\varepsilon}M_{\bullet},\hsp \operatorname{NC}^{\operatorname{w}}(N)\rp}\\
	&\simeq \prod_{q\in\ZZ}\Map_{\Mod_{\Bbbk}}{\lp M_q,\hsp \operatorname{NC}^{\operatorname{w}}(N)_q\rp}\\
	&\simeq \prod_{q\geqslant 0 }\Map_{\Mod_{\Bbbk}}{\lp M_q,\hsp N[-2q]\rp}.
	\end{align*}
	Therefore the fully faithfulness of Yoneda embedding yields that $M_{\bullet}\simeq \triv_{\varepsilon}\oblv_{\varepsilon}M_{\bullet}$, i.e., that $M_{\bullet}$ has trivial mixed structure. 
\end{proof}
\begin{remark}
	\label{remark:trivliealgebrasaretrivcoalgebras}
	We believe that the statement of \cref{corollary:trivliealgebrasaretrivcoalgebras} can be refined in the following way: this is an equivalence of \infinity-functors with values in mixed graded \textit{cocommutative coalgebras}. We provide a sketch of the proof: since $\triv_{\varepsilon}$ is strongly monoidal and commutes also with limits, it becomes a strongly monoidal right adjoint also at the level of cocommutative coalgebras; hence, it admits a left adjoint which, after forgetting the cocommutative coalgebra structure, agrees with a left adjoint $\operatorname{triv}_{\varepsilon}^{\operatorname{L}}$ of $\operatorname{triv}_{\varepsilon}.$ One can see, using \cite[Porism $2.3.21$]{pavia1} and the naive truncation \infinity-functor of \cite[Definition $1.2.5$]{pavia1}, that a model for such left adjoint is given by the \infinity-functor sending a non-negatively mixed graded $\Bbbk$-module $M_{\bullet}$ to the non-negatively graded $\Bbbk$-module described in weight $p\geqslant0$ by$$\triv_{\varepsilon}^{\operatorname{L}}M_p\simeq \left|\sigma_{\geqslant 0}M_{\bullet}\lpd -p\rpd\right|^{\operatorname{t}}.$$
	In particular, since $\Symgr_{\Bbbk}$ is the ind-nilpotent cofree cocommutative coalgebra \infinity-functor, we have that providing a map of mixed graded cocommutative coalgebras $\CE_{\varepsilon}{\lp \triv_{\Lie}M\rp}\to\triv_{\varepsilon}\Symgr_{\Bbbk}{\lp M[-1](1)\rp}$ is equivalent to providing a map of $\Bbbk$-modules $$\operatorname{triv}_{\varepsilon}^{\operatorname{L}}{\lp \CE_{\varepsilon}{\lp \triv_{\Lie}M\rp}\rp}_1[1]\simeq \left|\sigma_{\geqslant 0}{\lp\CE_{\varepsilon}{\lp\triv_{\Lie}M_{\bullet}\rp}\lpd 1\rpd\rp}\right|^{\operatorname{t}}[1]\longrightarrow M.$$Since $\gfrak\simeq \triv_{\Lie}M$ has a trivial Lie bracket, such totalization contains $M$ as a direct summand. The map corresponding, by adjunction, to the projection on $M$ yields the map which after forgetting the cocommutative coalgebra structure realizes the equivalence of \cref{corollary:trivliealgebrasaretrivcoalgebras}.
\end{remark}
\begin{remark}
The associated filtered \infinity-functor $(-)^{\fil}\colon\Modmixgr_{\Bbbk}\to\Modfil_{\Bbbk}$ (see \cite{derivedfoliations2,calaque2021lie,pavia1}), being the right adjoint to the strongly monoidal \infinity-functor $(-)^{\mixgr}\colon\Modfil_{\Bbbk}\to\Modmixgr_{\Bbbk}$, is only lax monoidal in general. But it is strongly monoidal if one restricts it to the full sub-\infinity-category $\Modmixgrcn_{\Bbbk}$ of \textit{non-negatively} graded mixed $\Bbbk$-modules, as proved in \cite[Porism $2.3.20$]{pavia1}. In particular, $\lp\CE_{\varepsilon}(\gfrak)\rp^{\fil}$ is again a cocommutative coalgebra object in $\Modfil_{\Bbbk}$, whose underlying graded corresponds to$$\left\{\Gr_n\lp\CE_{\varepsilon}(\gfrak)\rp^{\fil}\right\}_{n\in\ZZ}\simeq \left\{\Sym^{-n}_{\Bbbk}{\lp\gfrak[-1]\rp}[-2n]\right\}_{n\in\ZZ}$$and whose underlying object $\lp\CE_{\varepsilon}(\gfrak)\rp^{\fil}_{-\infty}$ is the homological Chevalley-Eilenberg coalgebra $\CE_{\bullet}(\gfrak)$. This recovers the usual filtration on the homological Chevalley-Eilenberg coalgebra by increasing symmetric powers.
\end{remark}
\subsection{The cohomological Chevalley-Eilenberg $\infty$-functor}
\label{sec:cohomCE}
In this subsection, we want to construct a \textit{cohomological} variant of $\infinity$-functor \ref{functor:CEhomologicalcoalgebra}, i.e., an $\infinity$-functor$$\CE^{\varepsilon}\colon\Lie_{\Bbbk}^{\op}\longrightarrow\CAlgmixgr_{\Bbbk}.$$Luckily, for the most part the construction of our cohomological $\infinity$-functor relies on the technical machinery developed in order to define the \infinity-functor \ref{functor:CEhomologicalcoalgebra}, so we only need to apply some minor modifications to our previous construction.
\begin{parag}
	\label{parag:LModUGenriched}
	Let us recall the $\infinity$-functor \ref{functor:composition4}, sending a Lie algebra $\gfrak$ to $\UCngmixgr$ seen as a $\Ug(0)$-module in mixed graded $\Bbbk$-modules. We have the obvious "opposite" $\infinity$-functor\begin{align}
	\label{functor:composition4op}
	(\ref{functor:composition4})^{\op}\colon\Lie_{\Bbbk}^{\op}\longrightarrow\LMod{\lp\Modmixgr_{\Bbbk}\rp}^{\!\op}
	\end{align}which we can further compose with the inclusion $$\LMod{\lp\Modmixgr_{\Bbbk}\rp}^{\!\op}\subseteq\LMod{\lp\Modmixgr_{\Bbbk}\rp}^{\!\op}\times_{\Alg{\lp\Modmixgr_{\Bbbk}\rp}}\RMod{\lp\Modmixgr_{\Bbbk}\rp}.$$We can now observe that the $\infinity$-category $\LModmixgr_{\Ug(0)}$ is enriched over $\Modmixgr_{\Bbbk}$  (this is a mixed graded analogue of \cite[Variant $7.2.1.24$]{ha}). This can be shown in the following way: let $T$ denote the relative tensor product described in \ref{parag:relativetensorproduct}. Then, since $\LModmixgr_{\Ug(0)}$ is equivalent to the \infinity-category of $\lp\Ug(0),\hsp\Bbbk(0)\rp$-bimodules, the relative tensor product$$T\colon\LModmixgr_{\Ug(0)}\times\Modmixgr_{\Bbbk}\longrightarrow\LModmixgr_{\Ug(0)}$$provides a (right) tensor structure of $\LModmixgr_{\Ug(0)}$ over $\Modmixgr_{\Bbbk}$, in the sense of \cite[Definition $4.2.1.19$]{ha}. Such relative tensor product commutes with all colimits separably in each variable, since the forgetful \infinity-functor$$\oblv_{\Ug}\colon\LModmixgr_{\Ug(0)}\longrightarrow\Modmixgr_{\Bbbk}$$preserves all limits and colimits in virtue of \cite[Corollary $4.2.3.7$]{ha}, and the tensor product of mixed graded $\Bbbk$-modules commutes with all colimits separably in each variable as well. So, we can apply \cite[Proposition $4.2.1.33.(2)$]{ha} to get that $\LModmixgr_{\Ug(0)}$ is enriched over $\Modmixgr_{\Bbbk}$. In particular, we can consider the morphism object $\infinity$-functor $$\operatorname{Mor}^{\mixgr}_{\Ug(0)}\colon\lp\LModmixgr_{\Ug(0)}\rp^{\op}\times\LModmixgr_{\Ug(0)}\longrightarrow\Modmixgr_{\Bbbk}.$$Moreover, using \cite[Remark $4.2.1.31$]{ha}, we can promote the morphism object $\infinity$-functor to a functorial assignation\begin{align}
	\label{functor:morobj}
	\Mormixgr\colon\lp\LMod{\lp\Modmixgr_{\Bbbk}\rp}\rp^{\!\op}\times_{\Alg{\lp\Modmixgr_{\Bbbk}\rp}}\LMod{\lp\Modmixgr_{\Bbbk}\rp}\longrightarrow\Modmixgr_{\Bbbk}.
	\end{align}
\end{parag}
\begin{parag}
	Thanks to this formalism, we get a "cohomological" (contravariant) analogue of the $\infinity$-functor \ref{functor:CEhomological}\begin{align}
	\label{functor:CEcohomological}
	\{\gfrak\}\longhookrightarrow\Lie_{\Bbbk}^{\op}\xrightarrow{\ref{functor:composition4op}}\lp\LModmixgr_{\Ug(0)}\rp^{\!\op}\xrightarrow{\ref{functor:morobj}}{\Fun}{\lp\LModmixgr_{\Ug(0)},\hsp\Modmixgr_{\Bbbk}\rp}.
	\end{align}
	Informally, the assignation \ref{functor:CEcohomological} is defined by sending $\gfrak$ to the $\infinity$-functor $$\CE^{\varepsilon}(\gfrak;\hsp-)\coloneqq\Mormixgr_{\Ug(0)}{\lp\UCngmixgr,\hsp -\rp}.$$The above mixed graded $\Bbbk$-module must be understood as the mixed graded sub-$\Bbbk$-module of $\Mapmixgr_{\Bbbk}{\lp\UCngmixgr,\hsp-\rp}$ comprising of those morphisms which are also left $\Ug(0)$-linear, with mixed differential given by restricting the mixed differential described in \ref{parag:monoidalstructureonmodmixgr}.\\
	As in the homological case, this cannot be promoted to a contravariant $\infinity$-functor from Lie algebras to presheaves over ${\LMod}{\lp\Modmixgr_{\Bbbk}\rp}$ with values in mixed graded $\Bbbk$-modules; but since $\Bbbk(0)$ is obviously a left (trivial) $\Ug(0)$-module for any Lie algebra $\gfrak$ (as already observed in \ref{parag:kasugmodule}), we do have the contravariant $\infinity$-functor\begin{align}
	\label{functor:CEcohomologicalalgebra}
	\Lie_{\Bbbk}^{\op}\xrightarrow{\ref{functor:composition4op}}{\LMod{\lp\Modmixgr_{\Bbbk}\rp}}^{\!\op}\subseteq{\LMod{\lp\Modmixgr_{\Bbbk}\rp}}^{\!\op}\times\{\Bbbk(0)\}\xrightarrow{\Mormixgr}\Modmixgr_{\Bbbk}.
	\end{align}
\end{parag}
\begin{defn}
	\label{def:CEalgebra}
	The $\infinity$-functor \ref{functor:CEcohomologicalalgebra} is the \textit{(mixed graded) cohomological Chevalley-Eilenberg} $\infinity$-functor, and shall be denoted by $\CE^{\varepsilon}$.
\end{defn}
\begin{remark}
	\label{remark:weightCEcohomological}
	For the remainder of this subsection, it will be useful to show explicitly the underlying graded structure of the mixed graded cohomological Chevalley-Eilenberg $\CE^{\varepsilon}(\gfrak)$ for some Lie algebra $\gfrak$. For any integer $p$, we have\begin{align*}
	\CE^p{\lp\gfrak\rp}&\simeq{\lp\Mormixgr_{\operatorname{U}(\gfrak)(0)}{\lp\operatorname{U}^{\mixgr}(\Cnmixgr(\gfrak)),\hsp\Bbbk(0)\rp}\rp}_p\\
	&\simeq{\Map_{\Mod_{\operatorname{U}(\gfrak)}}{{\lp\operatorname{U}^{-p}(\Cnmixgr(\gfrak)),\hsp\Bbbk\rp}}}\\
	&\simeq {\Map_{\Mod_{\operatorname{U}(\gfrak)}}{\lp\operatorname{U}(\gfrak)(0)\otimes_{\Bbbk}\Sym^{-p}_{\Bbbk}{\lp\oblv_{\Lie}(\gfrak)[-1]\rp},\hsp\Bbbk\rp}}\\
	&\simeq {\Map_{\Mod_{\Bbbk}}{\lp{{\Sym^{-p}_{\Bbbk}}{\lp\oblv_{\Lie}(\gfrak)[-1]\rp}},\hsp\Map_{\Mod_{\operatorname{U}(\gfrak)}}{\lp\operatorname{U}(\gfrak),\hsp\Bbbk\rp}\rp}}\\
	&\simeq \Map_{\Mod_{\Bbbk}}{\lp{\Sym_{\Bbbk}^{-p}}{\lp\oblv_{\Lie}(\gfrak)[-1]\rp},\hsp\Bbbk\rp}.
	\end{align*}
	In particular, we have an equivalence of graded $\Bbbk$-modules$$\oblv_{\varepsilon}\lp \CE^{\varepsilon}(\gfrak)\rp\simeq\Mapin^{\gr}_{\Bbbk}{\lp\Symgr_{\Bbbk}{\lp\oblv_{\Lie}(\gfrak)[-1](1)\rp},\hsp\Bbbk(0)\rp}\eqqcolon\lp\Symgr_{\Bbbk}{\lp\oblv_{\Lie}(\gfrak)[-1](1)\rp}\rp^{\vee},$$where $\Mapin^{\gr}_{\Bbbk}$ is the internal graded mapping $\Bbbk$-module \infinity-functor for $\Modgr_{\Bbbk}$. 
\end{remark}
Just like in the homological case, the mixed graded cohomological Chevalley-Eilenberg $\Bbbk$-module is endowed with a much richer structure, namely it is a mixed graded commutative $\Bbbk$-algebra. 
\begin{propositionn}
	\label{prop:CEcohomologicalalgebra}
	The mixed graded Chevalley-Eilenberg $\infinity$-functor can be promoted to an $\infinity$-functor$$\CE^{\varepsilon}\colon\Lie_{\Bbbk}^{\op}\longrightarrow\CAlgmixgr_{\Bbbk//\Bbbk}$$where $\CAlgmixgr_{\Bbbk//\Bbbk}$ is the \infinity-category of augmented commutative algebra objects in $\Modmixgr_{\Bbbk}$.
	%	\todo{Dimostrarlo mostrando che $\CE_{\varepsilon}(\gfrak)^{\vee}\simeq\CE^{\varepsilon}(\gfrak)$.}
\end{propositionn}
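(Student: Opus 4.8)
\emph{Proof strategy.} The plan is to realise $\CE^{\varepsilon}$ as the $\Modmixgr_{\Bbbk}$-linear dual of the homological Chevalley--Eilenberg coalgebra of \cref{prop:promotionCEcocommutativealgebra}, and to read off the commutative algebra structure from the (automatic) lax symmetric monoidality of the dual \infinity-functor. Throughout write $(-)^{\vee}\coloneqq\Mapmixgr_{\Bbbk}(-,\Bbbk(0))$ for the internal dual in the closed symmetric monoidal \infinity-category $\Modmixgr_{\Bbbk}$ of \ref{parag:monoidalstructureonmodmixgr}.

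\emph{Step 1: $\CE^{\varepsilon}\simeq(-)^{\vee}\circ(\CE_{\varepsilon})^{\op}$.} By construction (see \ref{functor:CEcohomologicalalgebra}) one has $\CE^{\varepsilon}(\gfrak)=\Mormixgr_{\Ug(0)}(\UCngmixgr,\Bbbk(0))$, where $\Bbbk(0)$ is the trivial left $\Ug(0)$-module, i.e.\ the restriction of $\Bbbk(0)\in\Modmixgr_{\Bbbk}$ along the augmentation $\Ug(0)\to\Bbbk(0)$. The left adjoint of this restriction is $\Bbbk(0)\otimesmixgr_{\Ug(0)}(-)$, which is exactly the homological Chevalley--Eilenberg \infinity-functor of \cref{def:homologicalCE}. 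Combining the tensor--hom adjunction for the $\Modmixgr_{\Bbbk}$-enrichment of $\LModmixgr_{\Ug(0)}$ from \ref{parag:LModUGenriched} with this base-change (Frobenius reciprocity) adjunction identifies $\Mormixgr_{\Ug(0)}(\UCngmixgr,\Bbbk(0))$ with $\Mapmixgr_{\Bbbk}(\Bbbk(0)\otimesmixgr_{\Ug(0)}\UCngmixgr,\Bbbk(0))=\CE_{\varepsilon}(\gfrak)^{\vee}$. I would then upgrade this objectwise equivalence to an equivalence of \infinity-functors $\Lie_{\Bbbk}^{\op}\to\Modmixgr_{\Bbbk}$: the requisite naturality in $\gfrak$, where both the base algebra $\Ug(0)$ and the module $\UCngmixgr$ vary, is precisely the functoriality that goes into the construction of the morphism-object \infinity-functor \ref{functor:morobj} and of the relative tensor product (see \cite[Sections $4.2$ and $4.4$]{ha}), and matching the two amounts to comparing the two adjunctions above functorially.

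\emph{Step 2: duals of coalgebras and conclusion.} For any closed symmetric monoidal \infinity-category $\scrC$ with unit $\mathbf{1}$, the dual \infinity-functor $\Mmap_{\scrC}(-,\mathbf{1})\colon\scrC^{\op}\to\scrC$ is canonically lax symmetric monoidal; hence it carries coaugmented cocommutative coalgebras of $\scrC$ to augmented commutative algebras of $\scrC$, the coaugmentation $\mathbf{1}\to C$ dualising to an augmentation $C^{\vee}\to\mathbf{1}^{\vee}\simeq\mathbf{1}$. Specialising to $\scrC=\Modmixgr_{\Bbbk}$ promotes $(-)^{\vee}$ to an \infinity-functor $(\CoCommmixgraug)^{\op}\to\CAlgmixgr_{\Bbbk//\Bbbk}$. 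Composing it with $(\CE_{\varepsilon})^{\op}$ from \cref{prop:promotionCEcocommutativealgebra} and invoking Step $1$ produces the sought promotion of $\CE^{\varepsilon}$; compatibility with the underlying graded structure is then the equivalence $\oblv_{\varepsilon}\CE^{\varepsilon}(\gfrak)\simeq\bigl(\Symgr_{\Bbbk}(\oblv_{\Lie}(\gfrak)[-1](1))\bigr)^{\vee}$ of \cref{remark:weightCEcohomological}, which is the graded dual of the underlying cocommutative coalgebra of $\CE_{\varepsilon}(\gfrak)$ identified in \cref{porism:cnmixgrcoalgebra}.

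\emph{Main obstacle.} The technical heart is Step $1$: promoting $\CE^{\varepsilon}\simeq(-)^{\vee}\circ(\CE_{\varepsilon})^{\op}$ from a pointwise to a functorial equivalence, since one must keep track of the functoriality of the morphism-object construction simultaneously in the varying base ring $\Ug(0)$ and the varying module $\UCngmixgr$, and reconcile it with the functoriality of the relative tensor product defining $\CE_{\varepsilon}$. Let me stress that one should \emph{not} attempt to prove $\CE^{\varepsilon}$ strongly monoidal in the style of \cref{prop:promotionCEcocommutativealgebra}: the Künneth comparison $\CE^{\varepsilon}(\gfrak)\otimesmixgr_{\Bbbk}\CE^{\varepsilon}(\hfrak)\to\CE^{\varepsilon}(\gfrak\times\hfrak)$ is not an equivalence when $\gfrak$ fails to be perfect over $\Bbbk$, since the weight-$p$ part $\CE^{p}(\gfrak)\simeq\bigl(\Sym^{-p}_{\Bbbk}(\gfrak[-1])\bigr)^{\vee}$ of \cref{remark:weightCEcohomological} is then an infinite-rank dual; the algebra structure genuinely comes from lax monoidality of $(-)^{\vee}$ rather than from a monoidality property of $\CE^{\varepsilon}$ itself.
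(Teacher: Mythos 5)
Your proposal is correct and follows essentially the same route as the paper: the paper likewise establishes $\CE^{\varepsilon}(\gfrak)\simeq\CE_{\varepsilon}(\gfrak)^{\vee}$ (its Proposition \ref{prop:CEhomCEcohomdual}, proved via the evaluation map of the morphism object and associativity of the relative tensor product, then checked to be an equivalence on underlying graded modules) and combines it with the lax symmetric monoidality of $(-)^{\vee}$ (its Lemma \ref{lemma:dualcoalgebraisalgebra}) to transport the cocommutative coalgebra structure of $\CE_{\varepsilon}(\gfrak)$ to an augmented commutative algebra structure on $\CE^{\varepsilon}(\gfrak)$. Your remark that one must not attempt a strong-monoidality argument in the style of \cref{prop:promotionCEcocommutativealgebra} also matches the paper's own caveat in \cref{warning:CEduals}.
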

We shall deduce \cref{prop:CEcohomologicalalgebra} from the following results.
\begin{lemman}
	\label{lemma:dualcoalgebraisalgebra}
	Let $\scrC^{\otimes}$ be a closed symmetric monoidal \infinity-category with unit $\mathbbm{1}$, and denote with $(-)^{\vee}\colon\scrC^{\op}\to\scrC$ the \infinity-functor given by internally mapping objects onto $\mathbbm{1}$. Then, such $\infinity$-functor can be lifted to $(-)^{\vee}\colon\operatorname{cCcAlg}(\scrC)^{\op}\to\CAlg(\scrC)$.
\end{lemman}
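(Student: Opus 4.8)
The plan is to realize $(-)^{\vee}$ as a lax symmetric monoidal functor out of $\scrC^{\op}$ --- equipped with its induced (``opposite'') symmetric monoidal structure --- with values in $\scrC$, and then to invoke the functoriality of $\CAlg(-)$ along lax symmetric monoidal functors. Unwinding definitions (see \cite[Chapter $6$, Appendix C]{studyindag2} and \cite{ha}) one has a canonical equivalence $\CAlg(\scrC^{\op})\simeq\operatorname{cCcAlg}(\scrC)^{\op}$, the right-hand side being the $\infty$-category of all cocommutative coalgebras of $\scrC$ in the sense of \cref{warning:coalgebras}: a commutative algebra object of $\scrC^{\op}$ is precisely a cocommutative coalgebra object of $\scrC$, and similarly on morphisms after reversing arrows. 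Since a lax symmetric monoidal functor induces a functor on $\infty$-categories of commutative algebras, fitting into a commuting square with the forgetful functors to the underlying $\infty$-categories, it will be enough to endow $(-)^{\vee}\colon\scrC^{\op}\to\scrC$ with a lax symmetric monoidal structure: applying $\CAlg(-)$ to it then yields the desired functor $\operatorname{cCcAlg}(\scrC)^{\op}\to\CAlg(\scrC)$, and the compatibility with the forgetful functors ensures that it is a genuine lift of $(-)^{\vee}$.

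To produce the lax structure I would factor $(-)^{\vee}$ as the composite
\[
\scrC^{\op}\xrightarrow{\;(\operatorname{id},\,\underline{\mathbbm{1}})\;}\scrC^{\op}\times\scrC\xrightarrow{\;\Mapin_{\scrC}(-,-)\;}\scrC,
\]
where $\underline{\mathbbm{1}}\colon\scrC^{\op}\to\scrC$ is the constant functor at the monoidal unit. The constant functor at the unit is canonically strong symmetric monoidal (equivalently, the unit is the initial commutative algebra of $\scrC$), so $(\operatorname{id},\underline{\mathbbm{1}})$ is strong symmetric monoidal for the product symmetric monoidal structure on $\scrC^{\op}\times\scrC$. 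Moreover, the internal mapping bifunctor $\Mapin_{\scrC}(-,-)\colon\scrC^{\op}\times\scrC\to\scrC$ of a closed symmetric monoidal $\infty$-category carries a canonical lax symmetric monoidal structure, with unit constraint the equivalence $\mathbbm{1}\xrightarrow{\;\simeq\;}\Mapin_{\scrC}(\mathbbm{1},\mathbbm{1})$ and binary constraint
\[
\Mapin_{\scrC}(X,Y)\otimes\Mapin_{\scrC}(X',Y')\longrightarrow\Mapin_{\scrC}(X\otimes X',\,Y\otimes Y')
\]
adjoint to the composite of the two evaluation maps $\Mapin_{\scrC}(X,Y)\otimes\Mapin_{\scrC}(X',Y')\otimes X\otimes X'\to Y\otimes Y'$. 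The composite of a strong and a lax symmetric monoidal functor being lax symmetric monoidal, $(-)^{\vee}$ inherits a lax symmetric monoidal structure, whose binary constraint is the familiar map $X^{\vee}\otimes Y^{\vee}\to(X\otimes Y)^{\vee}$; concretely, for a cocommutative coalgebra $C$ the resulting commutative algebra structure on $C^{\vee}$ is the convolution product built from the comultiplication of $C$, which recovers the classical ``dual of a coalgebra'' construction.

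The only step that is not formal, and the one I expect to be the main obstacle, is the construction of the lax symmetric monoidal structure on $\Mapin_{\scrC}(-,-)$ with all its higher coherences: in ordinary category theory this is a one-line verification, but $\infty$-categorically it has to be either imported from the theory of closed (and enriched) symmetric monoidal $\infty$-categories, e.g.\ from \cite{ha}, or assembled from the evaluation pairing $\Mapin_{\scrC}(X,Y)\otimes X\to Y$ through the operadic formalism. Everything downstream --- the equivalence $\CAlg(\scrC^{\op})\simeq\operatorname{cCcAlg}(\scrC)^{\op}$, the functoriality of $\CAlg(-)$, and the compatibility with the forgetful functors --- is routine bookkeeping.
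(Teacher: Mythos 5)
Your proposal is correct and follows essentially the same route as the paper: the paper also produces the lax symmetric monoidal structure on $(-)^{\vee}$ from the binary constraint $C^{\vee}\otimes D^{\vee}\to(C\otimes D)^{\vee}$ adjoint to the composite of the two evaluation maps, and then applies $\CAlg(-)$ through the identification $\CAlg(\scrC^{\op})\simeq\operatorname{cCcAlg}(\scrC)^{\op}$. The coherence issue you rightly flag as the only non-formal step is handled in the paper simply by a pointer to \cite[Remark $5.2.5.10$]{ha}, which is exactly the ``imported from \cite{ha}'' option you mention.
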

\begin{proof}
	(See also \cite[Remark $5.2.5.10$]{ha}.) Given two objects $C$ and $D$ of $\scrC$, the natural map$$C^{\vee}\otimes D^{\vee}\otimes C\otimes D\simeq C^{\vee}\otimes C\otimes D^{\vee}\otimes D\overset{\operatorname{ev}_D}{\longrightarrow}C^{\vee}\otimes \mathbbm{1}\otimes C\simeq C^{\vee}\otimes C\overset{\operatorname{ev}_C}{\longrightarrow}\mathbbm{1}$$yields a lax symmetric monoidal structure on $(-)^{\vee}$ by adjunction. Hence, we have$$\CAlg(\scrC^{\op})\simeq\operatorname{cCcAlg}(\scrC)^{\op}\longrightarrow\CAlg(\scrC)$$as desired.
\end{proof}
\begin{propositionn}
	\label{prop:CEhomCEcohomdual}
	The \infinity-functor $\CE^{\varepsilon}\colon\Lie_{\Bbbk}^{\op}\to\Modmixgr_{\Bbbk}$ agrees with the composition$$\Lie_{\Bbbk}\overset{\CE_{\varepsilon}^{\op}}{\longrightarrow}\lp\CoCommmixgraug\rp^{\op}\xrightarrow{\oblv_{\operatorname{cCcAlg}}^{\op}}\lp\Modmixgr_{\Bbbk}\rp^{\op}\overset{(-)^{\vee}}{\longrightarrow}\Modmixgr_{\Bbbk}.$$
	%The diagram of \infinity-functors$$\begin{tikzpicture}[scale=0.7]
	%\node(a) at (-4,3){$\Lie^{\op}_{\Bbbk}$};
	%\node(b) at (4,3){$\lp\CoCommmixgraug\rp^{\op}$};
	%\node(c) at (4,0){$\lp\Modmixgr_{\Bbbk}\rp^{\op}$};
	%%\node (d) at (6,3){$\CAlgmixgr_{\Bbbk}$};
	%\node (e) at (-4,0){$\Modmixgr_{\Bbbk}$};
	%\draw[->,font=\scriptsize] (a) to node[left]{$\CE_{\varepsilon}^{\op}$}(b);
	%\draw[->,font=\scriptsize] (a) to node[left]{$\CE^{\varepsilon}$} (d);
	%\draw[->,font=\scriptsize] (b) to node[right]{$\oblv_{\operatorname{cCcAlg}}^{\op}$}(e);
	%\draw[->,font=\scriptsize] (e) to node[below]{$(-)^{\vee}$}(c);
	%%\draw[->,font=\scriptsize] (d) to node[right]{$\oblv_{\CAlg}$}(e);
	%\end{tikzpicture}$$
	%is commutative.
\end{propositionn}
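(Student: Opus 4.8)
The plan is to deduce the statement from the general fact that, in a closed symmetric monoidal $\infinity$-category, the morphism object of a module into an augmentation module is the linear dual of the corresponding base-changed module; the bulk of the work is then to check that this abstract identification is compatible with the $\infinity$-functorial constructions of \cref{sec:homologicalCE}.

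First I would recall from \ref{functor:CEhomologicalcoalgebra} and \ref{functor:CEcohomologicalalgebra} that both $\CE_{\varepsilon}$ and $\CE^{\varepsilon}$ are built from the single $\infinity$-functor $\gfrak\mapsto\lp\Ug(0),\hsp\UCngmixgr\rp$ valued in $\LMod\lp\Modmixgr_{\Bbbk}\rp$: writing $\epsilon\colon\Ug\to\Bbbk$ for the augmentation and $\epsilon$ again for its image $\Ug(0)\to\Bbbk(0)$ under the strongly monoidal $\infinity$-functor $(-)(0)$ (a datum functorial in $\gfrak$), one has
\begin{align*}
\CE_{\varepsilon}(\gfrak)\simeq\Bbbk(0)\otimesmixgr_{\Ug(0)}\UCngmixgr,\qquad\CE^{\varepsilon}(\gfrak)\simeq\Mormixgr_{\Ug(0)}\lp\UCngmixgr,\hsp\Bbbk(0)\rp,
\end{align*}
where $\Bbbk(0)$ carries the $\Ug(0)$-module structure pulled back along $\epsilon$. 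Hence it suffices to produce, naturally in a pair $(A,\hsp M)$ consisting of an augmented algebra $A$ in a presentable closed symmetric monoidal $\infinity$-category $\scrC$ (with augmentation $\epsilon\colon A\to\mathbbm{1}$) and a left $A$-module $M$, an equivalence $\Mor_A\lp M,\hsp\mathbbm{1}_{\epsilon}\rp\simeq\lp\mathbbm{1}_{\epsilon}\otimes_A M\rp^{\vee}$, where $\mathbbm{1}_{\epsilon}$ is $\mathbbm{1}$ with the $A$-module structure restricted along $\epsilon$ and $(-)^{\vee}$ denotes internal duality; specializing to $\scrC=\Modmixgr_{\Bbbk}$, $A=\Ug(0)$, $M=\UCngmixgr$ then gives $\CE^{\varepsilon}(\gfrak)\simeq\CE_{\varepsilon}(\gfrak)^{\vee}$, which after \cref{prop:promotionCEcocommutativealgebra} is the asserted identification with the composite.

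The key step is then the abstract equivalence, which I would prove by Yoneda. The point is that $\mathbbm{1}_{\epsilon}\otimes_A(-)\colon\LMod_A(\scrC)\to\scrC$ is the extension-of-scalars $\infinity$-functor along $\epsilon$, so it is left adjoint to the restriction $\infinity$-functor $\epsilon^{*}\colon\scrC\to\LMod_A(\scrC)$ (and $\mathbbm{1}_{\epsilon}=\epsilon^{*}\mathbbm{1}$), and it is $\scrC$-linear, i.e. commutes with the tensoring of $\LMod_A(\scrC)$ over $\scrC$, because relative tensor products are computed by a two-sided bar construction which $(-)\otimes X$ preserves. For every $X\in\scrC$ one then computes, using internal duality, $\scrC$-linearity of extension of scalars, the adjunction $\lp\mathbbm{1}_{\epsilon}\otimes_A-\rp\dashv\epsilon^{*}$, and the defining property of the morphism object $\Mor_A(M,-)$ as right adjoint to $M\otimes(-)$,
\begin{align*}
\Map_{\scrC}\lp X,\hsp\lp\mathbbm{1}_{\epsilon}\otimes_A M\rp^{\vee}\rp&\simeq\Map_{\scrC}\lp\lp\mathbbm{1}_{\epsilon}\otimes_A M\rp\otimes X,\hsp\mathbbm{1}\rp\\
&\simeq\Map_{\scrC}\lp\mathbbm{1}_{\epsilon}\otimes_A\lp M\otimes X\rp,\hsp\mathbbm{1}\rp\\
&\simeq\Map_{\LMod_A(\scrC)}\lp M\otimes X,\hsp\mathbbm{1}_{\epsilon}\rp\\
&\simeq\Map_{\scrC}\lp X,\hsp\Mor_A\lp M,\hsp\mathbbm{1}_{\epsilon}\rp\rp,
\end{align*}
and concludes. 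Running this argument over the $\infinity$-category of pairs $(A,\hsp M)$ as above --- a suitable pullback over $\Alg^{\operatorname{aug}}\lp\Modmixgr_{\Bbbk}\rp$ through which, by the previous paragraph, both $\CE_{\varepsilon}$ and $\CE^{\varepsilon}$ factor --- upgrades this to an equivalence of $\infinity$-functors.

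The hard part will be purely organizational: one must verify that the $\infinity$-functors $\Mormixgr$ and the relative tensor product $T$ of \ref{parag:relativetensorproduct}--\ref{parag:LModUGenriched}, together with the adjoint-morphism machinery of \cref{construction:adjointmorphism} and \ref{functor:composition4}, genuinely exhibit $\CE_{\varepsilon}$ and $\CE^{\varepsilon}$ as restrictions of the two $\infinity$-functors $\lp\mathbbm{1}_{\epsilon}\otimes_{(-)}(-)\rp^{\vee}$ and $\Mor_{(-)}\lp-,\hsp\mathbbm{1}_{\epsilon}\rp$ along one and the same $\infinity$-functor into the $\infinity$-category of (augmented-algebra, module) pairs, and that $\mathbbm{1}_{\epsilon}\otimes_A(-)$ is correctly identified with extension of scalars so that the change-of-rings adjunction of \cite[Section $4.6$]{ha} applies. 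Finally, combining the resulting identification with \cref{lemma:dualcoalgebraisalgebra} and \cref{prop:promotionCEcocommutativealgebra} immediately shows that $\CE^{\varepsilon}$ factors through $\CAlgmixgr_{\Bbbk//\Bbbk}$, establishing \cref{prop:CEcohomologicalalgebra}.
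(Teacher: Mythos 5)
Your argument is correct, but it takes a different route from the paper's. The paper proves the statement by explicitly writing down a single comparison map $\CE^{\varepsilon}(\gfrak)\to\CE_{\varepsilon}(\gfrak)^{\vee}$: it adjoins over to a pairing $\CE^{\varepsilon}(\gfrak)\otimesmixgr_{\Bbbk}\CE_{\varepsilon}(\gfrak)\to\Bbbk(0)$ built from the evaluation map of the morphism object $\Mormixgr_{\Ug(0)}{\lp\UCngmixgr,\hsp\Bbbk(0)\rp}$ together with the associativity of the relative tensor product, and then \emph{verifies} that this map is an equivalence by computing both sides on underlying graded $\Bbbk$-modules (via \cref{remark:explicitgradedpiecesCEcoalg} and \cref{remark:weightCEcohomological}), where it becomes the identity of $\Symgr_{\Bbbk}{\lp\oblv_{\Lie}(\gfrak)[-1](1)\rp}^{\vee}$. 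You instead isolate the general statement $\Mor_A{\lp M,\hsp\mathbbm{1}_{\epsilon}\rp}\simeq\lp\mathbbm{1}_{\epsilon}\otimes_A M\rp^{\vee}$ for an augmented algebra in a closed symmetric monoidal $\infinity$-category and prove it by Yoneda, using the $\scrC$-linearity of $\mathbbm{1}_{\epsilon}\otimes_A(-)$ (bar construction commuting with $-\otimes X$), the extension/restriction adjunction along $\epsilon$, and the defining adjunction of the morphism object; this chain of equivalences is valid and, once specialized to $A=\Ug(0)$, $M=\UCngmixgr$, recovers the proposition. What your approach buys is that the equivalence comes for free from Yoneda, with no need for the weight-by-weight check on underlying graded modules; what it costs is exactly the "organizational" burden you flag, namely exhibiting both Chevalley--Eilenberg $\infinity$-functors as restrictions of the two abstract constructions along one and the same $\infinity$-functor into (augmented algebra, module) pairs — but this is no heavier than the bookkeeping the paper already performs in \cref{sec:homologicalCE} and \cref{sec:cohomCE}, since both $\CE_{\varepsilon}$ and $\CE^{\varepsilon}$ are there built from the single $\infinity$-functor \ref{functor:composition4}. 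Note also that the two proofs ultimately produce the same comparison map: unwinding your Yoneda argument at $X=\CE^{\varepsilon}(\gfrak)$ yields precisely the paper's pairing.
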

\begin{proof}
	Given a Lie algebra $\gfrak$, we provide a morphism $$\CE^{\varepsilon}(\gfrak)\longrightarrow\CE_{\varepsilon}(\gfrak)^{\vee}\coloneqq\Mapmixgr_{{\Bbbk}}{\lp\CE_{\varepsilon}(\gfrak),\hsp\Bbbk(0)\rp}$$in the following way. By adjunction, we can restate our goal as providing a morphism$$\CE^{\varepsilon}(\gfrak)\otimesmixgr_{\Bbbk}\CE_{\varepsilon}(\gfrak)\longrightarrow\Bbbk(0).$$Recall that $\CE^{\varepsilon}(\gfrak)$ is the morphism object $\Mormixgr_{\Ug(0)}{\lp\UCngmixgr,\hsp\Bbbk(0)\rp}$; in particular, by the very same definition of \textit{morphism object} (\cite[Definition $4.2.1.28$]{ha}), there exists a map$$\CE^{\varepsilon}\otimesmixgr_{\Bbbk}\UCngmixgr\longrightarrow\Bbbk(0).$$Let us notice now that the associativity of the relative tensor product (\cite[Proposition $4.4.3.14$]{ha}) guarantees that we have equivalence of mixed graded $\Bbbk$-modules $$\CE^{\varepsilon}\otimesmixgr_{\Bbbk}\UCngmixgr\simeq\CE^{\varepsilon}\otimesmixgr_{\Bbbk}\Ug(0)\otimesmixgr_{\Ug(0)}\UCngmixgr,$$hence we can pre-compose the inverse to such equivalence with the natural morphism$$\CE^{\varepsilon}\otimesmixgr_{\Bbbk}\Bbbk(0)\otimesmixgr_{\Ug(0)}\UCngmixgr\longrightarrow\CE^{\varepsilon}\otimesmixgr_{\Bbbk}\Ug(0)\otimesmixgr_{\Ug(0)}\UCngmixgr.$$Noting that the source of such morphism is precisely $\CE^{\varepsilon}(\gfrak)\otimesmixgr_{\Bbbk}\CE_{\varepsilon}(\gfrak)$, we can compose everything accordingly and get a morphism$$\CE^{\varepsilon}(\gfrak)\otimesmixgr_{\Bbbk}\CE_{\varepsilon}(\gfrak)\longrightarrow\Bbbk(0)$$hence by adjunction$$\CE^{\varepsilon}(\gfrak)\longrightarrow\CE_{\varepsilon}(\gfrak)^{\vee}.$$Unraveling all the definition, using the description of $\CE_{\varepsilon}(\gfrak)$ (\cref{remark:explicitgradedpiecesCEcoalg}), the description of the mixed graded $\Bbbk$-linear dual (\ref{parag:monoidalstructureonmodmixgr}) and the description of $\CE^{\varepsilon}(\gfrak)$ (\cref{remark:weightCEcohomological}), we obtain that the map $\CE^{\varepsilon}(\gfrak)\longrightarrow\CE_{\varepsilon}(\gfrak)^{\vee}$, at the level of the underlying graded $\Bbbk$-modules, is precisely the identity of $\Symgr_{\Bbbk}{\lp\oblv_{\Lie}(\gfrak)[-1](1)\rp}^{\vee}$. 
\end{proof}
\cref{lemma:dualcoalgebraisalgebra} and \cref{prop:CEhomCEcohomdual} together obviously imply that, $\CE^{\varepsilon}(\gfrak)$ being the dual of the underlying $\Bbbk$-module of a cocommutative coalgebra, is endowed with a commutative algebra structure. Hence \cref{prop:CEcohomologicalalgebra} is proved.
\begin{warning}
	\label{warning:CEduals}
	In general, the $\Bbbk$-linear dual \textit{does not} exchange cocommutative coalgebras and commutative algebras: it simply sends the former to the latter. The reason lies in the lax monoidal structure of the $\infinity$-functor $(-)^\vee$, which is not oplax monoidal without some suitable finiteness assumptions on the algebra. For example, \cite[Corollary $3.2.5$]{ellipticcohomology1} states that in a symmetric monoidal \infinity-category $\scrC$ the linear dual exchanges commutative algebras and cocommutative coalgebras in the full sub-\infinity-category of $\scrC$ spanned by fully dualizable objects in the sense of \cite[Section $4.6.1$]{ha}. For our scope, this is a way too restrictive assumption: just consider the case of abelian Lie algebra $\triv_{\Lie}\Bbbk[-1]$ whose Chevalley-Eilenberg algebra is the graded $\Bbbk$-algebra $\Bbbk[t]$ where $t$ sits in homological degree $2$ and in weight $-1$. In particular $\CE^{\varepsilon}\lp \triv_{\Lie}\Bbbk[-1]\rp$ is not fully dualizable: indeed, the tensor product $\Bbbk[t]\otimesmixgr_{\Bbbk}\lp\Bbbk[t]\rp^{\vee}$ is equivalent in each weight $p$ to a direct sum of countably many copies of $\Bbbk$ sitting in homological degree $-p$, hence there is no reasonable candidate for the coevaluation morphism $\Bbbk\to\Bbbk[t]\otimesmixgr_{\Bbbk}\lp\Bbbk[t]\rp^{\vee}$. However, it is clear that $\CE^{\varepsilon}\lp \triv_{\Lie}\Bbbk[-1]\rp$ and $\CE_{\varepsilon}\lp\triv_{\Lie}\Bbbk[1]\rp$ should be one the dual of the other. So, for the mixed graded setting, we can refine \cite[Corollary $3.2.5$]{ellipticcohomology1} in the following way.
\end{warning}
\begin{propositionn}
	\label{prop:commalg&cocommcoalg}
	Let $\varepsilon\operatorname{-}\Perf^{{\operatorname{gr},-}}_{\Bbbk}$ be the full sub-\infinity-category of $\Modmixgr_{\Bbbk}$  and $\varepsilon\operatorname{-}\Perf^{{\operatorname{gr},+}}_{\Bbbk}$ be the full sub-\infinity-categories of $\Modmixgr_{\Bbbk}$ defined in \cite[Notation $1.1.7$]{pavia1}. Then the mixed graded linear dual \infinity-functor $$(-)^{\vee}\coloneqq\Mapmixgr_{\Bbbk}{\lp-,\hsp\Bbbk(0)\rp}\colon\lp\Modmixgr_{\Bbbk}\rp^{\op}\longrightarrow\Modmixgr_{\Bbbk}$$induces equivalences$$\operatorname{cCcAlg}{\lp\varepsilon\operatorname{-}\Perf^{{\operatorname{gr},-}}_{\Bbbk}\rp}^{\op}\overset{\simeq}{\longrightarrow}\CAlg{\lp\varepsilon\operatorname{-}\Perf^{{\operatorname{gr},+}}_{\Bbbk}\rp}$$and$$\operatorname{cCcAlg}{\lp\varepsilon\operatorname{-}\Perf^{{\operatorname{gr},+}}_{\Bbbk}\rp}^{\op}\overset{\simeq}{\longrightarrow}\CAlg{\lp\varepsilon\operatorname{-}\Perf^{{\operatorname{gr},-}}_{\Bbbk}\rp}.$$
\end{propositionn}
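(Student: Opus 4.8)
The plan is to exhibit the mixed graded linear dual $(-)^{\vee}$ as a \emph{strong} symmetric monoidal equivalence between $\bigl(\varepsilon\text{-}\Perf^{\gr,-}_{\Bbbk}\bigr)^{\op}$ and $\varepsilon\text{-}\Perf^{\gr,+}_{\Bbbk}$ (and, with the roles of $+$ and $-$ interchanged, between $\bigl(\varepsilon\text{-}\Perf^{\gr,+}_{\Bbbk}\bigr)^{\op}$ and $\varepsilon\text{-}\Perf^{\gr,-}_{\Bbbk}$), and then to transport this equivalence through the functors $\operatorname{cCcAlg}(-)$ and $\CAlg(-)$. Recall from \cref{lemma:dualcoalgebraisalgebra} (see also \cite[Remark $5.2.5.10$]{ha}) that on any closed symmetric monoidal $\infty$-category the dual $(-)^{\vee}=\Mapmixgr_{\Bbbk}{\lp-,\hsp\Bbbk(0)\rp}$ carries a canonical lax symmetric monoidal structure, obtained by adjunction from the evaluation pairings, and that it therefore already sends cocommutative coalgebras to commutative algebras. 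What remains to be proved is that, after restriction to the perfect subcategories of \cite[Notation $1.1.7$]{pavia1}, this lax structure is strong and the resulting functor is an equivalence of $\infty$-categories.

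Two points are needed. \textbf{(i) Equivalence of underlying $\infty$-categories.} Since duals, tensor products, limits and colimits of mixed graded $\Bbbk$-modules are all computed weight by weight, one has $(M^{\vee})_p\simeq(M_{-p})^{\vee}$ for every $p$; hence $(-)^{\vee}$ interchanges the two weight-boundedness conditions defining $\varepsilon\text{-}\Perf^{\gr,\pm}_{\Bbbk}$ and preserves weight-wise perfectness, so it restricts to functors $\bigl(\varepsilon\text{-}\Perf^{\gr,-}_{\Bbbk}\bigr)^{\op}\to\varepsilon\text{-}\Perf^{\gr,+}_{\Bbbk}$ and $\bigl(\varepsilon\text{-}\Perf^{\gr,+}_{\Bbbk}\bigr)^{\op}\to\varepsilon\text{-}\Perf^{\gr,-}_{\Bbbk}$. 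Moreover the double-dual natural transformation $\operatorname{id}\Rightarrow(-)^{\vee\vee}$ is an equivalence on these subcategories, since in each weight $M_p$ is a perfect (hence dualizable) $\Bbbk$-module and $(M^{\vee\vee})_p\simeq M_p^{\vee\vee}\simeq M_p$. Thus the two restrictions of $(-)^{\vee}$ are mutually inverse equivalences of $\infty$-categories. \textbf{(ii) Strong monoidality.} In weight $p$ the lax structure map $M^{\vee}\otimesmixgr_{\Bbbk}N^{\vee}\to\bigl(M\otimesmixgr_{\Bbbk}N\bigr)^{\vee}$ is, under the weight-wise description of the tensor product in \ref{parag:monoidalstructureonmodmixgr}, the canonical comparison $\bigoplus_{i+j=-p}M_i^{\vee}\otimes_{\Bbbk}N_j^{\vee}\to\prod_{i+j=-p}M_i^{\vee}\otimes_{\Bbbk}N_j^{\vee}$; this is an equivalence precisely because the weight-boundedness built into $\varepsilon\text{-}\Perf^{\gr,\pm}_{\Bbbk}$ makes the set $\{(i,j):i+j=-p,\ M_i\neq 0\neq N_j\}$ finite for every $p$, while the unit $\Bbbk(0)$ is visibly self-dual. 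Combining (i) and (ii), $(-)^{\vee}$ is a symmetric monoidal equivalence $\bigl(\varepsilon\text{-}\Perf^{\gr,-}_{\Bbbk}\bigr)^{\op}\xrightarrow{\ \simeq\ }\varepsilon\text{-}\Perf^{\gr,+}_{\Bbbk}$.

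Finally, a strong symmetric monoidal functor that is an equivalence of underlying $\infty$-categories induces an equivalence on commutative-algebra objects, as $\CAlg(-)$ is functorial in symmetric monoidal functors and preserves equivalences; applied to $(-)^{\vee}$, and using that commutative-algebra objects in $\mathscr{E}^{\op}$ are the opposite of cocommutative-coalgebra objects in $\mathscr{E}$ (\cref{warning:coalgebras}: for the cocommutative cooperad the $\infty$-category of all coalgebras coincides with that of commutative comonoids), this yields $\operatorname{cCcAlg}\bigl(\varepsilon\text{-}\Perf^{\gr,-}_{\Bbbk}\bigr)^{\op}\xrightarrow{\ \simeq\ }\CAlg\bigl(\varepsilon\text{-}\Perf^{\gr,+}_{\Bbbk}\bigr)$, and exchanging $+$ and $-$ gives the second equivalence. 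The genuinely non-formal point is step (ii): one has to verify that the precise boundedness conditions of \cite[Notation $1.1.7$]{pavia1} are \emph{exactly} what collapses the relevant infinite products to finite direct sums. This is where the failure of full dualizability observed for unbounded objects such as $\Bbbk[t]$ in \cref{warning:CEduals} gets localized, and I expect the bookkeeping there — rather than any conceptual difficulty — to be the main obstacle.
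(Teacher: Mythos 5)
Your proposal is correct and follows essentially the same route as the paper's proof: the two key computations — that weight-wise perfectness plus the weight-boundedness of $\varepsilon\operatorname{-}\Perf^{\gr,\pm}_{\Bbbk}$ collapses the infinite product in $\lp M_{\bullet}\otimesmixgr_{\Bbbk}N_{\bullet}\rp^{\vee}$ to a finite sum (making the lax monoidal structure of \cref{lemma:dualcoalgebraisalgebra} strict), and that the double-dual unit is an equivalence weight by weight — are exactly the ones the paper uses. Your packaging via mutually inverse restrictions of $(-)^{\vee}$ and functoriality of $\CAlg(-)$ is marginally tidier than the paper's direct check of fully faithfulness through the self-adjunction unit, but the mathematical content is identical.
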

\begin{remark}
	In \cite[Proposition $1.1.8$]{pavia1}, we proved that fully dualizable objects in $\Modmixgr_{\Bbbk}$ are those mixed graded $\Bbbk$-modules which are perfect in each weight and \textit{and trivial outside at most finitely many weights}. However, the class of mixed graded (co)commutative (co)algebras described in \cref{prop:commalg&cocommcoalg} is way larger than that: we can consider even (co)commutative (co)algebras whose underlying objects, while consisting of perfect $\Bbbk$-modules in each weight, are \textit{unbounded} in weights $p\gg 0$ or $p\ll 0$, such as $\CE_{\varepsilon}(\gfrak)$ and $\CE^{\varepsilon}(\gfrak)$ for $\gfrak$ a Lie algebra which is perfect as a $\Bbbk$-module.
\end{remark}
\begin{proof}[Proof of \cref{prop:commalg&cocommcoalg}]
	Since the two statements are clearly dual one to the other, we can just prove the former. The main ingredient of the proof is the fact that the lax symmetric monoidal structure of $(-)^{\vee}$ is \textit{strict} when restricted to such \infinity-categories, hence the linear dual actually swaps cocommutative coalgebras and commutative algebras. \\
	So, let $M_{\bullet}$ and $N_{\bullet}$ be two mixed graded $\Bbbk$-modules, both perfect in each weight and both bounded above. Then, we can describe each weight of the tensor product of their dual as\begin{align*}
	\lp\Mapmixgr_{\Bbbk}{\lp M_{\bullet},\hsp\Bbbk(0)\rp}\otimesmixgr_{\Bbbk}\Mapmixgr_{\Bbbk}{\lp N_{\bullet},\hsp\Bbbk(0)\rp}\rp_p&\simeq\bigoplus_{i+j=p}\Map_{\Mod_{\Bbbk}}{\lp M_{\bullet},\hsp\Bbbk(0)\rp}_i\otimes_{\Bbbk}\Map_{\Mod_{\Bbbk}}{\lp N_{\bullet},\hsp\Bbbk(0)\rp}_j\\&\simeq\bigoplus_{i+j=p}M_{-i}^{\vee}\otimes_{\Bbbk} N_{-j}^{\vee}.
	\end{align*}On the other hand, the dual of their tensor product is described in each weight by the formula\begin{align*}
	\Mapmixgr_{\Bbbk}{\lp M_{\bullet}\otimesmixgr_{\Bbbk}N_{\bullet},\hsp\Bbbk(0)\rp}_p&\simeq \Map_{\Mod_{\Bbbk}}{\lp\bigoplus_{i+j=-p}M_i\otimes N_j,\hsp\Bbbk\rp}\\
	&\simeq\prod_{i+j=-p}\Map_{\Mod_{\Bbbk}}{\lp M_i\otimes_{\Bbbk}N_j,\hsp\Bbbk\rp}\\
	&\simeq \prod_{i+j=-p}\Map_{\Mod_{\Bbbk}}{\lp M_i,\hsp\Map_{\Mod_{\Bbbk}}{\lp N_j,\hsp\Bbbk\rp}\rp}\\&\simeq\prod_{i+j=-p}\Map_{\Mod_{\Bbbk}}{\lp M_i,\hsp\Bbbk\rp}\otimes_{\Bbbk}\Map_{{\Mod_{\Bbbk}}}{\lp N_j,\hsp\Bbbk\rp}\simeq \prod_{i+j=p}M_{-i}^{\vee}\otimes_{\Bbbk}N_{-j}^{\vee}.
	\end{align*}
	In these chains of equivalences, we only used the adjunction between tensor product and mapping spaces, and the fact that $M_{\bullet}$ is perfect in each weight. Since both $M_{\bullet}$ and $N_{\bullet}$ are bounded below, there are only finitely many admissible couples of integers $(i,j)$, hence this product is actually a sum. In particular, the lax symmetric monoidal structure provided in \cref{lemma:dualcoalgebraisalgebra} is strict, so $\Bbbk$-linear duality does swap commutative algebras and cocommutative coalgebras, and it is straightforward to show that if $A_{\bullet}$ is bounded below its dual is bounded above. (In general, if $M_{\bullet}$ is concentrated in weights $[m,n]$, for $m$ and $n$ ranging in $(-\infinity,+\infinity)$, its mixed graded $\Bbbk$-linear dual is concentrated in weights $[-n,-m]$.)\\
	The fact that $\Bbbk$-linear duality is actually an equivalence is easily checked. The mixed graded $\Bbbk$-linear duality is adjoint to itself: in particular, for any cocommutative coalgebra $C_{\bullet}$ as above we have a unit map$$C_{\bullet}\longrightarrow\Mapmixgr_{\Bbbk}{\lp\Mapmixgr_{\Bbbk}{\lp C_{\bullet},\hsp \Bbbk(0)\rp},\hsp\Bbbk(0)\rp}.$$The $\infinity$-functor is fully faithful if and only if this map is an equivalence, and since forgetting the cocommutative coalgebra structure is conservative, we can check this at the level of the underlying mixed graded $\Bbbk$-module. Using the usual formulas for the internal mixed graded mapping $\Bbbk$-module and the fact that $C_{\bullet}$ is perfect in each weight, one straightforwardly checks that this map is actually an equivalence on all weights, hence it is an equivalence.	
	%Let $M_{\bullet}$ a mixed graded $\Bbbk$-module which is perfect in each weight and bounded above. The natural evaluation map$$
	%%\Mapmixgr_{\Bbbk}{\lp M_{\bullet},\hsp\Bbbk(0)\rp}\otimesmixgr_{\Bbbk}M_{\bullet}\simeq
	% M_{\bullet}\otimesmixgr_{\Bbbk}\Mapmixgr_{\Bbbk}{\lp M_{\bullet},\hsp\Bbbk(0)\rp}\longrightarrow\Bbbk(0)$$ provides by adjunction a map of mixed graded $\Bbbk$-modules$$M_{\bullet}\longrightarrow\Mapmixgr_{\Bbbk}{\lp\Mapmixgr_{\Bbbk}{\lp M_{\bullet},\hsp\Bbbk(0)\rp},\hsp\Bbbk(0)\rp}.$$
\end{proof}
\begin{remark}
	\label{remark:CEgperfect}
	Let $\gfrak$ be a Lie algebra which is perfect as a $\Bbbk$-module. Then its Chevalley-Eilenberg algebra $\CE^{\varepsilon}(\gfrak)$ is described, in each weight, by $$\CE^p(\gfrak)\simeq\Sym^p_{\Bbbk}{\lp\oblv_{\Lie}(\gfrak)[-1](1)\rp}^{\vee}.$$Being $\gfrak$ (and in particular $\gfrak[-1]$) perfect, every symmetric power $\Sym^p_{\Bbbk}(\oblv_{\Lie}(\gfrak)[-1])$ is perfect, and moreover one has an equivalence$$\Sym^p_{\Bbbk}(\oblv_{\Lie}(\gfrak)[-1])^{\vee}\simeq\Sym^p_{\Bbbk}{\lp\lp\oblv_{\Lie}(\gfrak)[-1]\rp^{\vee}\rp}\simeq \Sym^p_{\Bbbk}{\lp\oblv_{\Lie}(\gfrak)^{\vee}[1]\rp}.$$In particular, under these assumptions, the underlying mixed graded $\Bbbk$-module of $\CE^{\varepsilon}(\gfrak)$ lies in $\Perfmixgrccn\subseteq\varepsilon\text{-}\Perf^{\gr,-}_{\Bbbk}$, hence its dual is a cocommutative coalgebra which is precisely $\CE_{\varepsilon}(\gfrak)$. This fixes the drawback highlighted in \cref{warning:CEduals}.
\end{remark}
\cref{prop:CEhomCEcohomdual} straight-forwardly implies the following comparison result.
\begin{propositionn}
	\label{prop:CEcohomexplicitmodel}
	Given a Lie algebra $\gfrak$, the Tate realization $\left|\CE^{\varepsilon}(\gfrak)\right|^{\operatorname{t}}$ agrees with the cohomological Chevalley-Eilenberg complex of \cite[Construction $2.2.13$]{dagx}.
\end{propositionn}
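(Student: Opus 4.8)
The plan is to reduce the statement to \cref{prop:CEhomCEcohomdual} and \cref{prop:CEexplicitmodel}, exploiting that the cohomological complex is the $\Bbbk$-linear dual of the homological one. First I would record that, by \cref{remark:weightCEcohomological}, for every Lie algebra $\gfrak$ the mixed graded $\Bbbk$-module underlying $\CE^{\varepsilon}(\gfrak)$ is concentrated in non-positive weights, with $\CE^{\varepsilon}(\gfrak)_{-p}\simeq\Sym^p_{\Bbbk}{\lp\oblv_{\Lie}(\gfrak)[-1]\rp}^{\vee}$ for $p\geqslant 0$. For any mixed graded $\Bbbk$-module $M_{\bullet}$ with $M_q\simeq 0$ for $q>0$, the explicit model \ref{parag:tateexplicit} of the Tate realization shows that $\colim_{i\leqslant 0}\prod_{p\geqslant i}M_{-p}[-2p]$ is already constant in $i$, so the Tate realization agrees with the ordinary realization $\left|M_{\bullet}\right|\simeq\prod_{p\geqslant 0}M_{-p}[-2p]$ of \ref{parag:modeladjunction}, equipped with its total differential. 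Hence $\left|\CE^{\varepsilon}(\gfrak)\right|^{\operatorname{t}}\simeq\prod_{p\geqslant 0}\Sym^p_{\Bbbk}{\lp\oblv_{\Lie}(\gfrak)[-1]\rp}^{\vee}[-2p]$, the total differential being the sum of the weight-wise internal differential and the mixed differential.

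Next I would invoke \cref{prop:CEhomCEcohomdual} to identify the underlying mixed graded module of $\CE^{\varepsilon}(\gfrak)$ with $\CE_{\varepsilon}(\gfrak)^{\vee}$, and recall from \cref{remark:explicitgradedpiecesCEcoalg} that $\CE_{\varepsilon}(\gfrak)$ lies in $\Modmixgrcn_{\Bbbk}$, with $\CE_{\varepsilon}(\gfrak)_p\simeq\Sym^p_{\Bbbk}{\lp\oblv_{\Lie}(\gfrak)[-1]\rp}$ in each weight $p\geqslant 0$. For a non-negatively graded $M_{\bullet}$ the transition maps in the colimit defining $\left|M_{\bullet}\right|^{\operatorname{t}}$ are the obvious inclusions of finite sub-products, whence $\left|M_{\bullet}\right|^{\operatorname{t}}\simeq\bigoplus_{q\geqslant 0}M_q[2q]$ with the total differential; and, using the description of the $\Bbbk$-linear dual of a mixed graded module recalled in \ref{parag:monoidalstructureonmodmixgr} (which transposes both the internal and the mixed differentials weight by weight), one checks that $\left|M_{\bullet}^{\vee}\right|$ is precisely the product complex $\prod_{q\geqslant 0}M_q^{\vee}[-2q]$ with the transposed total differential, that is, the $\Bbbk$-linear dual of $\bigoplus_{q\geqslant 0}M_q[2q]$. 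Taking $M_{\bullet}=\CE_{\varepsilon}(\gfrak)$ and combining with the previous paragraph yields, compatibly with differentials, $\left|\CE^{\varepsilon}(\gfrak)\right|^{\operatorname{t}}\simeq\left|\CE_{\varepsilon}(\gfrak)^{\vee}\right|\simeq{\lp\left|\CE_{\varepsilon}(\gfrak)\right|^{\operatorname{t}}\rp}^{\vee}$.

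Finally, \cref{prop:CEexplicitmodel} identifies $\left|\CE_{\varepsilon}(\gfrak)\right|^{\operatorname{t}}$ with the homological Chevalley-Eilenberg complex $\CE_{\bullet}(\gfrak)\simeq\Sym_{\Bbbk}{\lp\oblv_{\Lie}(\gfrak)[1]\rp}$ of \cite[Construction $2.2.3$]{dagx} together with its differential, so that $\left|\CE^{\varepsilon}(\gfrak)\right|^{\operatorname{t}}\simeq\Hom_{\Bbbk}{\lp\CE_{\bullet}(\gfrak),\hsp\Bbbk\rp}$ with the transposed differential; by the very definition of \cite[Construction $2.2.13$]{dagx} this is the cohomological Chevalley-Eilenberg complex $\CE^{\bullet}(\gfrak)=\Hom_{\Bbbk}{\lp\Sym_{\Bbbk}(\oblv_{\Lie}(\gfrak)[1]),\hsp\Bbbk\rp}$, which is the assertion.

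The step I expect to be the main obstacle is the interchange $\left|M_{\bullet}^{\vee}\right|\simeq{\lp\left|M_{\bullet}\right|^{\operatorname{t}}\rp}^{\vee}$: $\Bbbk$-linear duality does \emph{not} commute with the Tate realization in general, this being exactly the phenomenon behind \cref{warning:CEduals}, and what rescues the argument here is that $\CE_{\varepsilon}(\gfrak)$ is non-negatively graded, so its Tate realization is a genuine direct sum rather than a product, and we dualize only once rather than round-tripping through $(-)^{\vee}$. Keeping careful track of the product-versus-coproduct bookkeeping, and of the homological shifts entering the total differential, is the delicate part; the rest is formal.
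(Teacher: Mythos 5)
Your proposal is correct and follows essentially the same route as the paper's proof: both identify $\CE^{\varepsilon}(\gfrak)$ with the mixed graded $\Bbbk$-linear dual of $\CE_{\varepsilon}(\gfrak)$, observe that the Tate realization coincides with the ordinary realization on objects concentrated in non-positive weights, and then reduce to \cref{prop:CEexplicitmodel} together with the identification of the classical cohomological complex as $\Map_{\Mod_{\Bbbk}}{\lp\CE_{\bullet}(\gfrak),\hsp\Bbbk\rp}$. The only (immaterial) difference is that the paper obtains the key interchange $\left|\CE_{\varepsilon}(\gfrak)^{\vee}\right|\simeq\Map_{\Mod_{\Bbbk}}{\lp\left|\CE_{\varepsilon}(\gfrak)\right|^{\operatorname{t}},\hsp\Bbbk\rp}$ purely formally, by combining the adjunction $(-)(0)\dashv\left|-\right|$ with the tensor-hom adjunction and the adjunction $\left|-\right|^{\operatorname{t}}\dashv(-)(0)$ on non-negatively graded objects from \cite[Porism $2.3.21$]{pavia1}, whereas you verify the same identity by the explicit product-versus-coproduct bookkeeping at the level of chain models, which you correctly flag as the point where the non-negative grading of $\CE_{\varepsilon}(\gfrak)$ is what saves the argument.
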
 
\begin{proof}
	In virtue of \cref{prop:commalg&cocommcoalg}, and using the fact that $\CE^{\varepsilon}(\gfrak)$ is non-negatively graded, we have the following chain of equivalences of $\Bbbk$-modules.\begin{align*}
	\left|\CE^{\varepsilon}(\gfrak)\right|^{\operatorname{t}}&\simeq\left|\Mapmixgr_{\Bbbk}{\lp\CE_{\varepsilon}(\gfrak),\hsp\Bbbk(0)\rp}\right|^{\operatorname{t}}\\&\simeq\left|\Mapmixgr_{\Bbbk}{\lp\CE_{\varepsilon}(\gfrak),\hsp\Bbbk(0)\rp}\right|\\
	&\simeq \Map_{\Modmixgr_{\Bbbk}}{\lp\Bbbk(0),\hsp\Mapmixgr_{{\Bbbk}}{\lp\CE_{\varepsilon}(\gfrak),\hsp\Bbbk(0)\rp}\rp}\\
	&\simeq \Map_{\Modmixgr_{\Bbbk}}{\lp\CE_{\varepsilon}(\gfrak),\hsp \Bbbk(0)\rp}
	\end{align*}
	where the latter $\Bbbk$-module provides the $\Mod_{\Bbbk}$-enrichment of $\Modmixgr_{\Bbbk}$. On the other hand, the usual cohomological Chevalley-Eilenberg complex is the mapping $\Bbbk$-module $\Map_{\Mod_{\Bbbk}}{\lp\CE_{\bullet},\hsp\Bbbk\rp}$ (\cite[Remark $2.2.14$]{dagx}), where $\CE_{\bullet}(\gfrak)$ is the usual Chevalley-Eilenberg homology of $\gfrak$. \cref{prop:CEexplicitmodel} yields an equivalence $\CE_{\bullet}(\gfrak)\simeq\left|\CE_{\varepsilon}(\gfrak)\right|^{\operatorname{t}}$, our claim would follow if we had an equivalence$$\Map_{\Modmixgr_{\Bbbk}}{\lp\CE^{\varepsilon}(\gfrak),\hsp\Bbbk(0)\rp}\simeq \Map_{\Mod_{\Bbbk}}{\lp\left|\CE_{\varepsilon}(\gfrak)\right|^{\operatorname{t}},\hsp\Bbbk\rp}.$$But this is true because of our computation of the explicit right adjoint to the Tate realization when restricted to non-negatively graded mixed $\Bbbk$-modules in \cite[Porism $1.3.21$]{pavia1}, which agrees with $\left|-\right|^{\operatorname{t}}$.
	% we have$$\Map_{\Modmixgr_{\Bbbk}}{\lp\Bbbk(0),\hsp\CE^{\varepsilon}(\gfrak)\rp}\simeq \Map_{\Bbbk}{\lp\left|\CE_{\varepsilon}\right|^{\operatorname{t}},\hsp\Bbbk(0)\rp}$$
\end{proof}
\begin{remark}
	\cref{remark:weightCEcohomological} and \cref{prop:CEcohomexplicitmodel} suggest that our construction matches the explicit mixed graded structure on the cohomological Chevalley-Eilenberg algebra described in \cite[Proposition A.$3$]{CG}.
\end{remark}
\begin{remark}
Since the associated filtered \infinity-functor $(-)^{\fil}\colon\Modmixgr_{\Bbbk}\to\Modfil_{\Bbbk}$ is lax monoidal, $\lp\CE^{\varepsilon}(\gfrak)\rp^{\fil}$ is again a commutative algebra object in $\Modfil_{\Bbbk}$, whose underlying graded corresponds to$$\left\{\Gr_n\lp\CE^{\varepsilon}(\gfrak)\rp^{\fil}\right\}_{n\in\ZZ}\simeq \left\{\Sym^{n}_{\Bbbk}{\lp\gfrak^{\vee}[1]\rp}[-2n]\right\}_{n\in\ZZ}$$and whose underlying object $\lp\CE^{\varepsilon}(\gfrak)\rp^{\fil}_{-\infty}$ is the \textit{completed} cohomological Chevalley-Eilenberg algebra $\widehat{\CE}^{\bullet}(\gfrak)$, i.e., the completion of $\CE^{\bullet}(\gfrak)$ with respect to the ideal generated by $\gfrak^{\vee}[1]$. This recovers the usual filtration on the (completed) cohomological Chevalley-Eilenberg algebra by decreasing symmetric powers.
\end{remark}
\subsection{Representations and mixed graded modules}
\label{sec:CEmod}
Just like Lie algebras can be seen inside mixed graded commutative $\Bbbk$-algebras via the cohomological Chevalley-Eilenberg construction, the \infinity-categories of representations over Lie algebras live naturally inside the \infinity-category of mixed graded $\Bbbk$-modules, i.e., also the Chevalley-Eilenberg $\Bbbk$-modules with coefficients in some representation admit a mixed graded structure. Throughout all this subsction, we shall fix a Lie algebra $\gfrak$: while we expect our construction to be fully faithful only with some size constraints over $\gfrak$ (\cref{conj:modules}), it can be carried out in all generality for any Lie algebra.
%Just like in the case of \cref{subsec:basicmixedgraded}, the hypothesis on the perfectness of the Lie algebra is purely technical: it probably is not necessary.
\begin{parag}
	\label{parag:CEmod}
	Let $\LMod_{\Ug}\simeq\Rep_{\gfrak}$ be the $\infinity$-category of left $\Ug$-modules (hence, of representations of $\gfrak$), endowed with the Hopf symmetric monoidal structure of \cref{construction:monoidalstructureonLModUG}. Since the trivial representation $\Bbbk$ is the unit for such symmetric monoidal structure, we have a natural equivalence $$\LMod_{\Ug}\simeq \Mod_{\Bbbk}{\lp\LMod_{\Ug}^{\otimes_{\gfrak}}\rp}.$$Recall moreover the $\infinity$-functor \ref{functor:CEcohomological}$$\CE^{\varepsilon}(\gfrak;\hsp-)\colon\LMod_{\Ug}\longrightarrow\Modmixgr_{\Bbbk}$$which sends a left $\Ug$-module to the mixed graded $\Bbbk$-module$$\CE^{\varepsilon}(\gfrak;\hsp M)\coloneqq\Mormixgr_{\Ug(0)}{\lp\UCngmixgr,\hsp M(0)\rp}.$$Again, looking at the weight $p$ component, we have an equivalence of mixed graded $\Bbbk$-modules\begin{align*}
	\CE^p(\gfrak;\hsp M)&\coloneqq\Mormixgr_{\Ug(0)}{\lp\UCngmixgr,\hsp M(0)\rp}_p\\&\simeq \Mor_{\Ug}{\lp \UCngmixgr_{-p},\hsp M\rp}\\&\simeq \Mor_{\Ug}{\lp \Ug\otimes_{\Bbbk}\Sym_{\Bbbk}^{-p}{\lp \oblv_{\Lie}\gfrak[-1]\rp},\hsp M\rp}\simeq \Map_{\Mod_{\Bbbk}}{\lp \Sym_{\Bbbk}^{-p}{\lp \oblv_{\Lie}\gfrak[-1]\rp},\hsp M\rp}.
	\end{align*}
	Again, the mixed differential is completely inherited from the action of $\BGa$ on $\UCngmixgr$.
\end{parag}
\begin{remark}
	\label{remark:whengisperfect}
	When $\gfrak$ is perfect as a $\Bbbk$-module, we can write the weight $p$ component of $\CE^{\varepsilon}(\gfrak;\hsp M)$ in another way. The discussion of \ref{parag:CEmod} yields that $$\CE^p(\gfrak;\hsp M)\simeq \Map_{\Mod_{\Bbbk}}{\lp \Sym_{\Bbbk}^{-p}{\lp \oblv_{\Lie}\gfrak[-1]\rp},\hsp M\rp}.$$But if $\gfrak$ is perfect as a $\Bbbk$-module, then so are $\gfrak[-1]$ and each symmetric power of $\gfrak[-1].$ In particular, we can write\begin{align*}
	\Map_{\Mod_{\Bbbk}}{\lp \Sym_{\Bbbk}^{-p}{\lp \oblv_{\Lie}\gfrak[-1]\rp},\hsp M\rp}\simeq \Map_{\Mod_{\Bbbk}}{\lp \Sym_{\Bbbk}^{-p}{\lp \oblv_{\Lie}\gfrak[-1]\rp},\hsp\Bbbk\rp}\otimes_{\Bbbk}M
	\end{align*}and since we are in characteristic $0$, we can write $$\Map_{\Mod_{\Bbbk}}{\lp \Sym_{\Bbbk}^{-p}{\lp \oblv_{\Lie}\gfrak[-1]\rp},\hsp\Bbbk\rp}\simeq \Sym^{-p}_{\Bbbk}{\lp\oblv_{\Lie}\gfrak^{\vee}[1]\rp},$$where $(-)^{\vee}$ denotes the usual $\Bbbk$-linear dual of $\Bbbk$-modules. In particular, we have an equivalence of graded $\Bbbk$-modules$$\oblv_{\varepsilon}\CE^{\varepsilon}(\gfrak)\simeq\Symgr_{\Bbbk}{\lp\oblv_{\Lie}\gfrak^{\vee}[1](-1)\rp}\otimes^{\gr}_{\Bbbk}M(0).$$
\end{remark}
\begin{propositionn}
	\label{prop:CEmod}
	The $\infinity$-functor \ref{functor:CEcohomological} can be promoted to an $\infinity$-functor$$\CE^{\varepsilon}(\gfrak;\hsp-)
	\colon\LMod_{\Ug}\longrightarrow\Mod_{\CE^{\varepsilon}(\gfrak)}{\lp\Modmixgr_{\Bbbk}\rp}$$where $\ModCEmixgr$ is the \infinity-category of modules for the commutative algebra $\CE^{\varepsilon}(\gfrak)$ in $\Modmixgr_{\Bbbk}$.
\end{propositionn}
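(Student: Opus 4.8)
The plan is to realize the \infinity-functor $\CE^{\varepsilon}(\gfrak;\hsp-)$ of \ref{functor:CEcohomological} as a \emph{lax symmetric monoidal} \infinity-functor from $\LMod_{\Ug}$, equipped with the Hopf symmetric monoidal structure $\otimes_{\gfrak}$ of \cref{construction:monoidalstructureonLModUG}, to $\lp\Modmixgr_{\Bbbk}\rp^{\otimesmixgr}$. Granting this, the standard formalism of \cite[Chapter $3$]{ha} produces an induced \infinity-functor on module objects over the unit, $$\Mod_{\Bbbk}{\lp\LMod_{\Ug}^{\otimes_{\gfrak}}\rp}\longrightarrow\Mod_{\CE^{\varepsilon}(\gfrak;\hsp\Bbbk)}{\lp\Modmixgr_{\Bbbk}\rp}.$$ As recalled in \ref{parag:CEmod}, the trivial representation $\Bbbk$ is the unit of $\otimes_{\gfrak}$ and $\LMod_{\Ug}\simeq\Mod_{\Bbbk}{\lp\LMod_{\Ug}^{\otimes_{\gfrak}}\rp}$; moreover $\CE^{\varepsilon}(\gfrak;\hsp\Bbbk)\simeq\CE^{\varepsilon}(\gfrak)$, and the commutative algebra structure produced by the lax structure on the unit agrees with the one of \cref{prop:CEcohomologicalalgebra} (this compatibility is discussed below). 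This yields precisely the desired factorization $\CE^{\varepsilon}(\gfrak;\hsp-)\colon\LMod_{\Ug}\to\ModCEmixgr$.

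To produce the lax symmetric monoidal structure, the first step is to upgrade \cref{prop:CEhomCEcohomdual} to a natural equivalence of \infinity-functors $\LMod_{\Ug}\to\Modmixgr_{\Bbbk}$, $$\CE^{\varepsilon}(\gfrak;\hsp-)\simeq\Mapmixgr_{\Bbbk}{\lp\CE_{\varepsilon}(\gfrak),\hsp(-)(0)\rp}.$$ The comparison map is built exactly as in the proof of \cref{prop:CEhomCEcohomdual}, with $\Bbbk(0)$ replaced throughout by $M(0)$: the morphism object gives an evaluation $\CE^{\varepsilon}(\gfrak;\hsp M)\otimesmixgr_{\Bbbk}\UCngmixgr\to M(0)$; associativity of the relative tensor product (\cite[Proposition $4.4.3.14$]{ha}) identifies the source with $\CE^{\varepsilon}(\gfrak;\hsp M)\otimesmixgr_{\Bbbk}\Ug(0)\otimesmixgr_{\Ug(0)}\UCngmixgr$; precomposition with the unit $\Bbbk(0)\to\Ug(0)$ gives a morphism out of $\CE^{\varepsilon}(\gfrak;\hsp M)\otimesmixgr_{\Bbbk}\CE_{\varepsilon}(\gfrak)$; and adjunction yields the sought map $\CE^{\varepsilon}(\gfrak;\hsp M)\to\Mapmixgr_{\Bbbk}{\lp\CE_{\varepsilon}(\gfrak),\hsp M(0)\rp}$. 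That it is an equivalence is checked on underlying graded $\Bbbk$-modules: since $M(0)$ is concentrated in weight $0$, the weight $p$ component of the right-hand side is $\Map_{\Mod_{\Bbbk}}{\lp\CE_{\varepsilon}(\gfrak)_{-p},\hsp M\rp}\simeq\Map_{\Mod_{\Bbbk}}{\lp\Sym^{-p}_{\Bbbk}{\lp\oblv_{\Lie}\gfrak[-1]\rp},\hsp M\rp}$ by \cref{remark:explicitgradedpiecesCEcoalg}, which matches the computation of $\CE^{p}(\gfrak;\hsp M)$ in \ref{parag:CEmod}; crucially no finiteness hypothesis on $M$ is needed, and the mixed differentials agree since both descend from the one on $\UCngmixgr$.

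With this identification in place, $\CE^{\varepsilon}(\gfrak;\hsp-)$ becomes a composite of \infinity-functors with controlled monoidal behavior. The \infinity-functor $\oblv_{\Ug(0)}\circ\hsp(-)(0)\colon\LMod_{\Ug}\to\Modmixgr_{\Bbbk}$ is strong symmetric monoidal: $(-)(0)$ is strong monoidal and sends the cocommutative Hopf algebra $\Ug$ to the cocommutative Hopf algebra $\Ug(0)$, hence lifts to a strong monoidal \infinity-functor $\LMod_{\Ug}\to\LModmixgr_{\Ug(0)}$ for the two Hopf monoidal structures, and $\oblv_{\Ug(0)}$ is strongly monoidal by \cref{construction:monoidalstructureonLModUG}. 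On the other hand, $\Mapmixgr_{\Bbbk}{\lp\CE_{\varepsilon}(\gfrak),\hsp-\rp}\colon\Modmixgr_{\Bbbk}\to\Modmixgr_{\Bbbk}$ is lax symmetric monoidal by the convolution argument underlying \cref{lemma:dualcoalgebraisalgebra}, now run with an arbitrary second variable rather than the unit, using that $\CE_{\varepsilon}(\gfrak)$ is a cocommutative coalgebra (\cref{prop:promotionCEcocommutativealgebra}). The composite is therefore lax symmetric monoidal, and evaluating the lax structure on the unit $\Bbbk$ returns the convolution algebra $\CE_{\varepsilon}(\gfrak)^{\vee}$, that is, $\CE^{\varepsilon}(\gfrak)$ with the algebra structure of \cref{prop:CEcohomologicalalgebra} (via \cref{prop:CEhomCEcohomdual}).

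The main obstacle I expect is coherence bookkeeping rather than anything conceptual: promoting the objectwise equivalence of the second paragraph to an equivalence of \infinity-functors compatible with all the monoidal data, and correspondingly verifying that the commutative algebra structure on $\CE^{\varepsilon}(\gfrak;\hsp\Bbbk)$ induced by the lax structure coincides with the one of \cref{prop:CEcohomologicalalgebra}. Both checks reduce, after applying the conservative \infinity-functor $\oblv_{\varepsilon}$, to elementary manipulations with the comultiplication and counit of the bialgebra $\UCngmixgr$, which are $\Ug(0)$-linear because $\Ug(0)\to\UCngmixgr$ is a map of bialgebras. A more hands-on alternative, bypassing the reduction to $\CE_{\varepsilon}(\gfrak)^{\vee}$, is to build the lax structure maps $$\CE^{\varepsilon}(\gfrak;\hsp M)\otimesmixgr_{\Bbbk}\CE^{\varepsilon}(\gfrak;\hsp N)\longrightarrow\CE^{\varepsilon}(\gfrak;\hsp M\otimes_{\gfrak}N)$$ directly, by sending a pair of $\Ug(0)$-linear maps $f,g$ out of $\UCngmixgr$ to $\lp f\otimesmixgr_{\Bbbk}g\rp$ precomposed with the comultiplication $\UCngmixgr\to\UCngmixgr\otimesmixgr_{\Bbbk}\UCngmixgr$, and then reading off the associativity, unitality and symmetry coherences from the corresponding properties of that comultiplication and its counit.
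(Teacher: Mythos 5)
Your overall strategy --- exhibit $\CE^{\varepsilon}(\gfrak;\hsp-)$ as lax symmetric monoidal for the Hopf monoidal structure on $\LMod_{\Ug}$ and then pass to modules over the unit --- is exactly the paper's. But the route you take to produce the lax structure contains a genuine error. The claimed natural equivalence $\CE^{\varepsilon}(\gfrak;\hsp M)\simeq\Mapmixgr_{\Bbbk}{\lp\CE_{\varepsilon}(\gfrak),\hsp M(0)\rp}$ is false whenever the $\gfrak$-action on $M$ is nontrivial: the two sides have the same underlying graded $\Bbbk$-module but \emph{not} the same mixed differential. The mixed differential of $\UCngmixgr$ (\cref{remark:CEdifferential}) has two kinds of terms, one involving the multiplication $x_i\cdot u$ in $\Ug$ and one involving the bracket $[x_i,x_j]$. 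Applying $\Bbbk(0)\otimesmixgr_{\Ug(0)}(-)$ kills the first kind (as recorded in \cref{remark:explicitgradedpiecesCEcoalg}), so the mixed differential of $\Mapmixgr_{\Bbbk}{\lp\CE_{\varepsilon}(\gfrak),\hsp M(0)\rp}$ sees only the bracket of $\gfrak$; applying $\Mormixgr_{\Ug(0)}{\lp-,\hsp M(0)\rp}$ instead converts the first kind into the action of $\gfrak$ on $M$. This is the classical discrepancy between $\Hom_{\Ug}{\lp \operatorname{V}_{\bullet}(\gfrak),\hsp M\rp}$ and $\Hom_{\Bbbk}{\lp \Bbbk\otimes_{\Ug}\operatorname{V}_{\bullet}(\gfrak),\hsp M\rp}\cong\Hom_{\Ug}{\lp\operatorname{V}_{\bullet}(\gfrak),\hsp M^{\operatorname{triv}}\rp}$, and the paper flags exactly this failure (in its tensor-product incarnation) in one of the Warnings following \cref{prop:CEmod}: the mixed differential of $\CE^{\varepsilon}(\gfrak;\hsp M)$ contains the extra term $\alpha\colon M\to\gfrak^{\vee}[1]\otimes_{\Bbbk}M[-1]$ encoding the action. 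Consequently the convolution-algebra argument you build on this identification does not go through, and your remark that "the mixed differentials agree since both descend from the one on $\UCngmixgr$" is precisely where the mistake hides.

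The "more hands-on alternative" you sketch in your last paragraph is, however, essentially the paper's actual proof: the lax structure map $\CE^{\varepsilon}(\gfrak;\hsp M)\otimesmixgr_{\Bbbk}\CE^{\varepsilon}(\gfrak;\hsp N)\to\CE^{\varepsilon}(\gfrak;\hsp M\otimes_{\gfrak}N)$ is obtained by precomposing with the comultiplication of $\UCngmixgr$ and applying the two evaluation maps in succession. To make this precise you need two inputs that should not be glossed over: first, that the comultiplication $\mu\colon\UCngmixgr\to\UCngmixgr\otimesmixgr_{\Bbbk}\UCngmixgr$ is a morphism of left $\Ug(0)$-modules (\cref{remark:ugmapleftmodules}, via the bialgebra structure); second, that the Hopf tensor product $\otimes_{\gfrak}$ is compatible with the $\Modmixgr_{\Bbbk}$-tensoring, i.e. $\lp M_{\bullet}\otimes_{\gfrak}N_{\bullet}\rp\otimesmixgr_{\Bbbk}P_{\bullet}\simeq M_{\bullet}\otimes_{\gfrak}\lp N_{\bullet}\otimesmixgr_{\Bbbk}P_{\bullet}\rp$, which the paper isolates as \cref{lemma:UGmodalgebras} and proves by observing that pullback along the augmentation makes $\Modmixgr_{\Bbbk}\to\LModmixgr_{\Ug(0)}$ a symmetric monoidal \infinity-functor. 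If you rebuild your argument around that direct construction and discard the reduction to $\CE_{\varepsilon}(\gfrak)^{\vee}$, you recover the proof in the text.
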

In order to prove \cref{prop:CEmod}, we shall need the following technical lemma.
\begin{lemman}
	\label{lemma:UGmodalgebras}
	Given two mixed graded left $\Ug$-modules $M_{\bullet}$ and $N_{\bullet}$, let $M_{\bullet}\otimes_{\gfrak}N_{\bullet}$ the mixed graded tensor product of \cref{construction:monoidalstructureonLModUG}, i.e., the mixed graded tensor product of left modules for the mixed graded Hopf algebra $\Ug(0)$. For all mixed graded $\Bbbk$-modules $P_{\bullet}$, there is an equivalence of mixed graded left $\Ug$-modules$${\lp M_{\bullet}\otimes_{\gfrak}N_{\bullet}\rp}	\otimesmixgr_{\Bbbk}P_{\bullet}\simeq  M_{\bullet}\otimes_{\gfrak}{\lp N_{\bullet}\otimesmixgr_{\Bbbk}P_{\bullet}\rp}$$where $\otimesmixgr_{\Bbbk}$ denotes the right action of $\Modmixgr_{\Bbbk}$ over $\LModmixgr_{\Ug(0)}$ given by the relative tensor product over $\Bbbk(0)$ described in \ref{parag:LModUGenriched}.
\end{lemman}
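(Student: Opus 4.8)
The plan is to reduce the asserted associativity to the associativity constraint of the symmetric monoidal $\infinity$-category $\lp\LModmixgr_{\Ug(0)},\otimes_{\gfrak}\rp$ of \cref{construction:monoidalstructureonLModUG}. Note first that an equivalence on underlying objects is immediate: $\oblv_{\Ug}$ is strongly monoidal for $\otimes_{\gfrak}$ and carries $\otimesmixgr_{\Bbbk}$ to $\otimesmixgr_{\Bbbk}$, so both sides have underlying mixed graded $\Bbbk$-module $\oblv_{\Ug}(M_{\bullet})\otimesmixgr_{\Bbbk}\oblv_{\Ug}(N_{\bullet})\otimesmixgr_{\Bbbk}P_{\bullet}$; the content of the lemma is that the two left $\Ug(0)$-module structures agree. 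Write $\triv_{\Ug(0)}\colon\Modmixgr_{\Bbbk}\to\LModmixgr_{\Ug(0)}$ for restriction of scalars along the augmentation $\epsilon\colon\Ug(0)\to\Bbbk(0)$, i.e.\ the $\infinity$-functor equipping a mixed graded $\Bbbk$-module with the trivial $\Ug(0)$-action. The organizing idea is the claim that there is an equivalence of left $\Ug(0)$-modules
\[
X_{\bullet}\otimesmixgr_{\Bbbk}P_{\bullet}\;\simeq\;X_{\bullet}\otimes_{\gfrak}\triv_{\Ug(0)}(P_{\bullet}),
\]
natural in $X_{\bullet}\in\LModmixgr_{\Ug(0)}$ and $P_{\bullet}\in\Modmixgr_{\Bbbk}$.

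Granting this, the lemma follows formally: setting $Q_{\bullet}\coloneqq\triv_{\Ug(0)}(P_{\bullet})$ one gets
\begin{align*}
\lp M_{\bullet}\otimes_{\gfrak}N_{\bullet}\rp\otimesmixgr_{\Bbbk}P_{\bullet}
&\simeq\lp M_{\bullet}\otimes_{\gfrak}N_{\bullet}\rp\otimes_{\gfrak}Q_{\bullet}\\
&\simeq M_{\bullet}\otimes_{\gfrak}\lp N_{\bullet}\otimes_{\gfrak}Q_{\bullet}\rp
\simeq M_{\bullet}\otimes_{\gfrak}\lp N_{\bullet}\otimesmixgr_{\Bbbk}P_{\bullet}\rp,
\end{align*}
where the first and last steps are two instances of the displayed identification (for $X_{\bullet}=M_{\bullet}\otimes_{\gfrak}N_{\bullet}$ and for $X_{\bullet}=N_{\bullet}$) and the middle step is the associativity constraint of $\otimes_{\gfrak}$; naturality in $M_{\bullet},N_{\bullet},P_{\bullet}$ is inherited from the naturality of these equivalences.

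It remains to establish the displayed identification, and this is where the real work lies. By \ref{parag:LModUGenriched} the $\infinity$-functor $-\otimesmixgr_{\Bbbk}P_{\bullet}$ is the relative tensor product over $\Bbbk(0)$ obtained by viewing a left $\Ug(0)$-module as a $\lp\Ug(0),\Bbbk(0)\rp$-bimodule (\cite[Theorem $4.3.2.7$]{ha}, \cite[Proposition $4.2.1.33$]{ha}); since $\Bbbk(0)$ is the monoidal unit, this is, on underlying objects, just $\oblv_{\Ug}(-)\otimesmixgr_{\Bbbk}P_{\bullet}$ with the $\Ug(0)$-action on the left-hand factor. On the other hand, by the explicit description of $\otimes_{\gfrak}$ in \cref{construction:monoidalstructureonLModUG}, the $\Ug(0)$-action on $X_{\bullet}\otimes_{\gfrak}\triv_{\Ug(0)}(P_{\bullet})$ is $\mu$ followed by the action on $X_{\bullet}$ on the first factor and by the action on $\triv_{\Ug(0)}(P_{\bullet})$, i.e.\ $\epsilon$, on the second; the counit axiom $\lp\operatorname{id}_{\Ug(0)}\otimesmixgr_{\Bbbk}\epsilon\rp\circ\mu\simeq\operatorname{id}_{\Ug(0)}$ for the bialgebra $\Ug(0)$ collapses this to the $\Ug(0)$-action on $X_{\bullet}$ alone, so the two sides are given by the same module data. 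To promote this from a comparison of underlying objects to a homotopy-coherent equivalence of left $\Ug(0)$-modules, functorial in $X_{\bullet}$ and $P_{\bullet}$, I would build both $-\otimesmixgr_{\Bbbk}P_{\bullet}$ and $-\otimes_{\gfrak}\triv_{\Ug(0)}(P_{\bullet})$ out of the bimodule formalism of \cite[Section $4.4$]{ha} together with the description of $\Ug(0)$ as a group object in mixed graded cocommutative coalgebras (\cref{remark:ugmapleftmodules}, \cite[Proposition C.$1.3$]{studyindag2}), which is exactly what packages the compatibility of $\mu$, $\epsilon$, the module action and the monoidal structure in a coherent way; the counit axiom then yields the identification. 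I expect the coherence bookkeeping in this last step — rather than any conceptual difficulty — to be the main obstacle, since a naive argument only controls underlying objects and not the $\Ug(0)$-linear data.
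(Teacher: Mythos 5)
Your proposal is correct and is essentially the paper's argument: the paper also reduces the statement to the identification of the external action $-\otimesmixgr_{\Bbbk}P_{\bullet}$ with $-\otimes_{\gfrak}F(P_{\bullet})$, where $F$ is restriction of scalars along the augmentation $\Ug(0)\to\Bbbk(0)$, and verifies that $F$ is strongly monoidal by the same observation that the Hopf action on both sides collapses to the augmentation action. The coherence bookkeeping you flag at the end is discharged in the paper by the formal equivalence between a compatible $\Modmixgr_{\Bbbk}$-action on the monoidal $\infinity$-category $\lp\LModmixgr_{\Ug(0)},\otimes_{\gfrak}\rp$ and a symmetric monoidal $\infinity$-functor $\Modmixgr_{\Bbbk}\to\LModmixgr_{\Ug(0)}$, so that only the strong monoidality of the single functor $F$ needs to be checked.
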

\begin{proof}
	Recall that, in virtue of \cite[Remark $4.2.1.21$]{ha}, the fact that $\LModmixgr_{\Ug(0)}$ is right tensored over $\Modmixgr_{\Bbbk}$ is equivalent to the saying that $\LModmixgr_{\Ug(0)}$ is a right $\Modmixgr_{\Bbbk}$-module in the \infinity-category of (not necessarily small) \infinity-categories $\Catinfty$. Since $\Modmixgr_{\Bbbk}$ is a symmetric monoidal \infinity-category, hence an $\Einf$-algebra in the \infinity-category $\Catinfty$, it follows that the $\LModmixgr_{\Ug(0)}$ is both a left and a right $\Modmixgr_{\Bbbk}$-module in \infinity-categories in a compatible way, i.e., it is actually a $\Modmixgr_{\Bbbk}$-bimodule (\cite[Section $4.5$]{ha}). This implies that our request is equivalent to saying that $\LModmixgr_{\Ug(0)}$ is a $\Modmixgr_{\Bbbk}$-algebra object in $\Catinfty$. Since both $\Modmixgr_{\Bbbk}$ and $\LModmixgr_{\Ug(0)}$ are $\Einf$-monoidal categories, a $\Modmixgr_{\Bbbk}$-algebra structure on $\LModmixgr_{\Ug(0)}$ is equivalent to a symmetric monoidal \infinity-functor  $\Modmixgr_{\Bbbk}\to\LModmixgr_{\Ug(0)}$. But now $\Ug(0)$ is an augmented associative $\Bbbk$-algebra, hence pullback along the augmentation provides an \infinity-functor$$F\colon\Modmixgr_{\Bbbk}\longrightarrow\LModmixgr_{\Ug(0)}.$$We are only left to prove that this \infinity-functor is strongly monoidal, but this is a straight-forward computation. Indeed, given two mixed graded $\Bbbk$-modules $M_{\bullet}$ and $N_{\bullet}$, the underlying mixed graded $\Bbbk$-module of both $FM_{\bullet}\otimes_{\gfrak}FN_{\bullet}$ and $F{\lp M_{\bullet}\otimesmixgr_{\Bbbk}N_{\bullet}\rp}$ is exactly $M_{\bullet}\otimesmixgr_{\Bbbk}N_{\bullet}$. Moreover, the action of $\Ug$ is given, in both cases, by the augmentation over $\Bbbk(0)$, hence the identity of $M_{\bullet}\otimesmixgr_{\Bbbk}N_{\bullet}$ is a morphism of mixed graded left $\Ug$-modules which yields our claim.
\end{proof}
\begin{proof}[Proof of \cref{prop:CEmod}]
	In order to prove our assertion, we only need to show that $\CE^{\varepsilon}(\gfrak;\hsp-)$ is lax monoidal with respect to the Hopf monoidal structure of \cref{construction:monoidalstructureonLModUG} and the mixed graded tensor product of $\Bbbk$-modules. Then our claim will follow from the fact that $\CE^{\varepsilon}(\gfrak;\hsp \Bbbk)$ is, by definition, the underlying mixed graded $\Bbbk$-module of $\CE^{\varepsilon}(\gfrak)$ (see \cref{def:CEalgebra} and \cref{prop:CEcohomologicalalgebra}). \\
	Let $M$ and $N$ be two representations of $\gfrak$ seen as left $\Ug$-modules. By the definition of the \infinity-functor \ref{functor:CEcohomological}, for any left $\Ug$-module $M$ we have a natural evaluation map\begin{align*}
	\UCngmixgr\otimesmixgr_{\Bbbk}\CE^{\varepsilon}(\gfrak;\hsp M)\longrightarrow M(0).
	\end{align*}
	Tensoring over $\gfrak$ with $\UCngmixgr$ on the left and tensoring over $\Bbbk$ with $\CE^{\varepsilon}(\gfrak;\hsp N)$ on the right, we obtain a map\begin{align}
	\label{eq:2}
	\begin{split}
	\UCngmixgr\otimes_{\gfrak}\UCngmixgr\otimesmixgr_{\Bbbk}\CE^{\varepsilon}(\gfrak;\hsp M)\otimesmixgr_{\Bbbk}\CE^ {\varepsilon}(\gfrak;\hsp N)\longrightarrow\\\longrightarrow \UCngmixgr\otimes_{\gfrak}M(0)\otimesmixgr_{\Bbbk}\CE^{\varepsilon}(\gfrak;\hsp N).
	\end{split}
	\end{align}Let us remark that in the above formula there is no associativity ambiguity, because $\LModmixgr_{\Ug(0)}$ is a $\Modmixgr_{\Bbbk}$-algebra in \infinity-categories in virtue of \cref{lemma:UGmodalgebras}. Since the tensor product of left $\Ug$-modules is symmetric monoidal, we can swap the first two factors in the target and then apply again the evaluation map:
	\begin{align}
	\label{equation:2}
	M(0)\otimes_{\gfrak}\UCngmixgr\otimesmixgr_{\Bbbk}\CE^{\varepsilon}(\gfrak;\hsp N)\longrightarrow M(0)\otimes_{\gfrak}N(0)
	\end{align}
	Recall now that $\UCngmixgr$ is a mixed graded Hopf algebra, since it is the universal enveloping mixed graded algebra of the mixed graded Lie algebra $\Cnmixgr(\gfrak)$. Moreover, the comultiplication map $$\mu\colon\UCngmixgr\longrightarrow\UCngmixgr\otimesmixgr_{\Bbbk}\UCngmixgr$$is a morphism of mixed graded left $\Ug$-modules, since it is a morphism of mixed graded left $\UCngmixgr$-modules (in virtue of \cref{remark:ugmapleftmodules}), and the action of $\Ug$ over $\UCngmixgr$ is given by the inclusion of $\Ug$ in weight $0$. In particular, tensoring the comultiplication for $\UCngmixgr$ with the identity on $\CE^{\varepsilon}(\gfrak;\hsp M)\otimesmixgr_{\Bbbk}\CE^{\varepsilon}(\gfrak;\hsp N)$ and post-composing with \ref{eq:2} and \ref{equation:2}, we obtain a map$$\UCngmixgr\otimesmixgr_{\Bbbk}\CE^{\varepsilon}(\gfrak;\hsp M)\otimesmixgr_{\Bbbk}\CE^{\varepsilon}(\gfrak;\hsp N)\longrightarrow M(0)\otimes_{\gfrak}N(0).$$By adjunction, this is the same as a map$$\CE^{\varepsilon}(\gfrak;\hsp M)\otimesmixgr_{\Bbbk}\CE^{\varepsilon}(\gfrak;\hsp N)\longrightarrow\Mormixgr_{\Ug(0)}{\lp\UCngmixgr,\hsp \lp M\otimes_{\gfrak} N\rp(0)\rp}\eqqcolon \CE^{\varepsilon}(\gfrak;\hsp M\otimes_{\gfrak}N),$$which testifies the lax monoidality of $\CE^{\varepsilon}(\gfrak;\hsp-)$.
\end{proof}
\cref{prop:CEmod} allows us to promote the \infinity-functor $\CE^{\varepsilon}(\gfrak;\hsp-)$ to a \infinity-functor\begin{align}
\label{functor:CEmod}
\CE^{\varepsilon}(\gfrak;\hsp-)\colon\LMod_{\Ug}\longrightarrow\Mod_{\CE^{\varepsilon}(\gfrak)}{\lp\Modmixgr_{\Bbbk}\rp}.
\end{align}
\begin{warning}
	The \infinity-functor $\CE^{\varepsilon}(\gfrak;\hsp-)$ is \textit{never} strongly monoidal, not even if $\gfrak$ is perfect. In the latter case, we know by the discussion in \cref{remark:whengisperfect} that the underlying graded of $\CE^{\varepsilon}(\gfrak;\hsp M)$ is $\Symgr_{\Bbbk}{\lp\oblv_{\Lie}\gfrak^{\vee}[1](-1)\rp}\otimes_{\Bbbk}M(0)$, so we can interpret the natural map$$\CE^{\varepsilon}(\gfrak;\hsp M)\otimesmixgr_{\Bbbk}\CE^{\varepsilon}(\gfrak;\hsp N)\longrightarrow\CE^{\varepsilon}(\gfrak;\hsp M\otimes_{\gfrak}N)$$simply as tensoring the natural multiplication of the commutative symmetric algebra over $\oblv_{\Lie}\gfrak^{\vee}[1]$, which is compatible with the Chevalley-Eilenberg mixed differential, with the identity of $M\otimes_{\Bbbk}N$. Again, such multiplication map is almost never an isomorphism.
\end{warning}
\begin{warning}
	Even if $\gfrak$ is perfect, hence there is an equivalence of graded $\Bbbk$-modules between $\oblv_{\varepsilon}\lp\CE^{\varepsilon}(\gfrak)\otimesmixgr_{\Bbbk}M(0)\rp\simeq\Symgr_{\Bbbk}{\lp\oblv_{\Lie}\gfrak^{\vee}[1](-1)\rp}\otimes^{\gr}_{\Bbbk}M(0)$ and $\oblv_{\varepsilon}\CE^{\varepsilon}(\gfrak;\hsp M)$, this equivalence is \textit{not} an equivalence of mixed graded $\Bbbk$-modules. Indeed, one can construct a map$$\UCngmixgr\otimesmixgr_{\Bbbk}\CE^{\varepsilon}(\gfrak)\otimesmixgr_{\Bbbk}M(0)\longrightarrow\Bbbk(0)\otimesmixgr_{\Bbbk}M\overset{\simeq}{\longrightarrow}M(0)$$but this is only a map of mixed graded $\Bbbk$-modules, and does not provide by adjunction a map $\CE^{\varepsilon}(\gfrak)\otimesmixgr_{\Bbbk}M(0)\to\CE^{\varepsilon}(\gfrak;\hsp M)$. This cannot be avoided: doing some checking with explicit mixed graded chain complexes of $\Bbbk$-modules, one can see that the mixed differential $\varepsilon_0$ on the domain is given by$$\Bbbk\otimes_{\Bbbk} M\simeq M \overset{0}{\longrightarrow}\gfrak^{\vee}[1]\otimes_{\Bbbk}M[-1]$$while the one on the codomain is given by$$\alpha\colon M\longrightarrow\gfrak^{\vee}[1]\otimes_{\Bbbk}M[-1]\simeq \Map_{\Mod_{\Bbbk}}{\lp\gfrak,\hsp M\rp}$$where $\alpha$ is the morphism that encodes the left action of $\gfrak$ on $M$. 
\end{warning}
We conclude this subsection with just a simple observation.
\begin{propositionn}
	\label{prop:CEalllimitscolimits}
	The Chevalley-Eilenberg \infinity-functor $$\CE^{\varepsilon}(\gfrak;\hsp-)\colon\LMod_{\Ug}\longrightarrow\ModCEmixgr$$is an accessible \infinity-functor which preserves all limits. If $\gfrak$ is perfect, it preserves also all colimits.
\end{propositionn}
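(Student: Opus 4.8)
The plan is to transport the statement along the forgetful $\infinity$-functor $U\colon\ModCEmixgr\to\Modmixgr_{\Bbbk}$, which is conservative, accessible, and creates all limits and all colimits (standard facts about module $\infinity$-categories over an algebra object in a presentable symmetric monoidal $\infinity$-category, see \cite[Section $4.2.3$]{ha}). It therefore suffices to prove the corresponding properties for the composite $\overline{\CE}\coloneqq U\circ\CE^{\varepsilon}(\gfrak;\hsp-)\colon\LMod_{\Ug}\to\Modmixgr_{\Bbbk}$, $M\mapsto\Mormixgr_{\Ug(0)}{\lp\UCngmixgr,\hsp M(0)\rp}$: for limits this lifts back to $\ModCEmixgr$ because $U$ creates limits, and for colimits and accessibility it lifts because $U$ is conservative and creates the relevant colimits, so any comparison map which becomes an equivalence after applying $U$ is already an equivalence.

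First I would factor $\overline{\CE}$ as $\LMod_{\Ug}\xrightarrow{(-)(0)}\LModmixgr_{\Ug(0)}\xrightarrow{\Mormixgr_{\Ug(0)}{\lp\UCngmixgr,\hsp-\rp}}\Modmixgr_{\Bbbk}$, where the first arrow is the $\infinity$-functor on left $\Ug$-modules induced by the strong monoidal $(-)(0)$ of \ref{parag:functorsthatpreservealgebras}, as in \ref{parag:kasugmodule}. The $\infinity$-functor $(-)(0)$ preserves all limits and colimits and is accessible: the forgetful $\infinity$-functors to $\Mod_{\Bbbk}$ and to $\Modmixgr_{\Bbbk}$ create limits and colimits and the relevant square commutes, while on underlying objects $(-)(0)$ sends a diagram to one concentrated in weight $0$, which does not affect the weight-wise (co)limits in $\Modmixgr_{\Bbbk}$ (it is moreover the left adjoint of the realization $\infinity$-functor $\left|-\right|$). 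The second arrow is, by the universal property of the morphism object for the right tensoring of $\LModmixgr_{\Ug(0)}$ over $\Modmixgr_{\Bbbk}$ recalled in \ref{parag:LModUGenriched} (see \cite[Definition $4.2.1.28$]{ha}), the right adjoint of $\UCngmixgr\otimesmixgr_{\Bbbk}(-)$, hence preserves all limits and is accessible, being a right adjoint between presentable $\infinity$-categories. Composing, $\overline{\CE}$ — and therefore $\CE^{\varepsilon}(\gfrak;\hsp-)$ — preserves all limits and is accessible. I would emphasize that limit preservation cannot be read off the underlying graded formula of \cref{remark:whengisperfect}: the $\infinity$-functor $-\otimesmixgr_{\Bbbk}\CE^{\varepsilon}(\gfrak)$ is \emph{not} a right adjoint, since $\CE^{\varepsilon}(\gfrak)$ is supported in infinitely many weights and so fails to be dualizable in $\Modmixgr_{\Bbbk}$ by \cite[Proposition $1.1.8$]{pavia1}.

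For the colimit statement when $\gfrak$ is perfect, I would argue weight by weight, colimits in $\Modmixgr_{\Bbbk}$ being computed in each weight. Using that $\UCngmixgr_{-p}\simeq\Ug(0)\otimes_{\Bbbk}\Sym^{-p}_{\Bbbk}{\lp\oblv_{\Lie}(\gfrak)[-1]\rp}$ as a left $\Ug$-module (\cref{remark:CEdifferential}) together with the induction–restriction adjunction, the computations of \ref{parag:CEmod} and \cref{remark:weightCEcohomological} give, naturally in $M$, that $\overline{\CE}(M)_p\simeq\Mor_{\Ug}{\lp\UCngmixgr_{-p},\hsp M\rp}\simeq\Mor_{\Bbbk}{\lp\Sym^{-p}_{\Bbbk}{\lp\oblv_{\Lie}(\gfrak)[-1]\rp},\hsp M\rp}$, the product formula for the weight-$p$ component of $\Mormixgr_{\Ug(0)}{\lp\UCngmixgr,\hsp M(0)\rp}$ collapsing to a single factor because $M(0)$ is concentrated in weight $0$. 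If $\gfrak$ is perfect, so is each symmetric power $\Sym^{-p}_{\Bbbk}{\lp\oblv_{\Lie}(\gfrak)[-1]\rp}$, hence $\Mor_{\Bbbk}{\lp\Sym^{-p}_{\Bbbk}{\lp\oblv_{\Lie}(\gfrak)[-1]\rp},\hsp-\rp}\simeq\Sym^{-p}_{\Bbbk}{\lp\oblv_{\Lie}(\gfrak)[-1]\rp}^{\vee}\otimes_{\Bbbk}(-)$ preserves all colimits, and so does the $\Ug$-forgetful $\infinity$-functor; therefore each weight component of $\overline{\CE}$, and hence $\overline{\CE}$, preserves all colimits, which lifts to $\CE^{\varepsilon}(\gfrak;\hsp-)$ as explained above.

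The only genuinely delicate point is the colimit clause: it really does use perfectness through the weight-wise reduction, since for a general Lie algebra the components $\Mor_{\Bbbk}{\lp\Sym^{-p}_{\Bbbk}{\lp\oblv_{\Lie}(\gfrak)[-1]\rp},\hsp-\rp}$ need not commute even with filtered colimits. Accessibility for general $\gfrak$ nonetheless holds, by the same weight-wise analysis: $\gfrak$ is $\lambda$-compact in $\Mod_{\Bbbk}$ for some regular cardinal $\lambda$, so every $\Sym^{-p}_{\Bbbk}{\lp\oblv_{\Lie}(\gfrak)[-1]\rp}$, being a finite colimit of $\lambda$-compact objects, is $\lambda$-compact, and the weight components of $\overline{\CE}$ then preserve $\lambda$-filtered colimits uniformly in $p$. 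The remaining care is in pinning down the identification $\Mormixgr_{\Ug(0)}{\lp\UCngmixgr,\hsp M(0)\rp}_p\simeq\Mor_{\Ug}{\lp\UCngmixgr_{-p},\hsp M\rp}$ underlying the argument, which is exactly where the placement of $M$ in weight $0$ is used.
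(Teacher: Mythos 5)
Your proof is correct and follows essentially the same route as the paper: reduce along the forgetful $\infinity$-functor to $\Modmixgr_{\Bbbk}$, factor through $(-)(0)$ and the morphism-object $\infinity$-functor (a right adjoint to the tensor action, hence limit-preserving and accessible), and for colimits under perfectness use the weight-wise identification of $\CE^{\varepsilon}(\gfrak;\hsp M)$ with a tensor product against the dual of the symmetric powers. The extra details you supply (the collapse of the product formula because $M(0)$ sits in weight $0$, and the $\lambda$-compactness argument for accessibility) are correct refinements of what the paper leaves implicit.
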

\begin{proof}
	Recall that, if $\scrC$ is a monoidal \infinity-category such that the tensor product preserves colimits separably in each variable, then colimits in the \infinity-category of modules over some algebra of $\scrC$ are detected by the forgetful \infinity-functor towards $\scrC$ (\cite[Corollary $4.2.3.5$]{ha}). Moreover, both $\LMod_{\Ug}$ and $\ModCEmixgr$ are presentable \infinity-categories, being the \infinity-categories of modules over an algebra in some presentable stable \infinity-category. In particular, our statement reduces to show that $\CE^{\varepsilon}(\gfrak;\hsp-)$ sends limits of left $\Ug$-modules to limits of mixed graded $\Bbbk$-modules. The inclusion$$(-)(0)\colon\LMod_{\Ug}\longhookrightarrow\LModmixgr_{\Ug(0)}$$preserves all limits and colimits, since they are computed weight-wise in the \infinity-category of mixed graded $\Bbbk$-modules. So we are left to prove that $$\Mormixgr_{\Ug(0)}{\lp\UCngmixgr,\hsp-\rp}\colon\LModmixgr_{\Ug(0)}\longrightarrow\Modmixgr_{\Bbbk}$$commutes with limits. This is clear, since  the \infinity-functor is constructed as the right adjoint to the right tensor action of $\Modmixgr_{\Bbbk}$ over $\LModmixgr_{\Ug(0)}$.\\Now, suppose that $\gfrak$ is perfect as a $\Bbbk$-module. Then by the equivalence$$\oblv_{\varepsilon}\CE^{\varepsilon}(\gfrak;\hsp M)\simeq\Symgr_{\Bbbk}{\lp\oblv_{\Lie}\gfrak^{\vee}[1](-1)\rp}\otimes^{\gr}_{\Bbbk}M(0)$$provided in \cref{remark:whengisperfect}, we see that colimits of left $\Ug$-modules are preserved by $\CE^{\varepsilon}(\gfrak;\hsp-)$, since the tensor product commutes with them. 
\end{proof}
\section{Conjectures and open problems}
\label{chapter:conj}
As already mentioned in \cref{chapter:liealgebras}, we conjecture that mixed graded $\Bbbk$-modules offer a framework in which homotopy Lie algebras and their representations can be fully recovered. In this second section, we shall carefully formalize our expectations, providing some conjectures, showing how they are related one to the other, and hinting at possible strategies to adopt in order to prove our assertions. 
\subsection{Lie algebras as mixed graded coalgebras}
In this section, we shall describe how we expect that Lie algebras live fully faithfully inside the \infinity-category of mixed graded cocommutative coalgebras. Our standing assumptions and notations are the same as the ones of \cref{chapter:liealgebras}. 
\begin{defn}
	\label{def:CEcoalgebras}
	We say that an augmented mixed graded cocommutative coalgebra $A_{\bullet}$ is \textit{of Chevalley-Eilenberg type} (or \textit{semi-free}) if its underlying graded (augmented) cocommutative coalgebra is equivalent to a symmetric (free) cocommutative coalgebra $\Sym^{\gr}_{\Bbbk}{\lp M[-1](1)\rp}$, for some $\Bbbk$-module $M$.
\end{defn}
\begin{parag}
	Mixed graded cocommutative coalgebras of Chevalley-Eilenberg type are naturally gathered in a full sub-$\infinity$-category $\CEcAlgmixgr$ of $\CoCommmixgraug$, which we can express via a pullback of $\infinity$-categories in the following way.\begin{align}
	\begin{tikzpicture}[scale=0.75,baseline=(current  bounding  box.center)]
	\label{square:CEpullback}
	\node (a) at (-4,6){$\CEcAlgmixgr$};
	\node (c1) at (2,6){$\CoCommmixgraug$};
	\node (b) at (-4,3){$\Mod_{\Bbbk}$};
	%	\node (c2) at (4,4){$\Mod_{\Bbbk}$};
	%	\node (c3)at (4,2){$\Modgr_{\Bbbk}$};
	\node (d) at (2,3){$\CoCommgraug$};
	\node at (-3.2,5.2){$\lrcorner$};
	\draw[->, font=\scriptsize] (a) to node[left]{} (b);
	\draw[->, font=\scriptsize] (c1) to node[right]{$\oblv_{\varepsilon}$} (d);
	\draw[->, font=\scriptsize] (a) to node[left]{} (c1);
	\draw[->, font=\scriptsize] (b) to node[below]{$\ref{functor:gradedsym}$} (d);
	%	\draw[->, font=\scriptsize] (c1) to node[below,rotate=-90]{$\simeq$} (c2);
	%	\draw[->, font=\scriptsize] (c3) to node[right]{$\Sym^{\gr}_{\Bbbk}$} (d);
	%	\draw[right hook->, font=\scriptsize] (c2) to node[right]{$(-)(1)$} (c3);
	%	\path[->, font=\scriptsize] (c1) edge[bend left=60] node [right] {$\ref{functor:gradedsym}$} (d);
	\end{tikzpicture}\end{align}The above description exhibits $\CEcAlgmixgr$ as a pullback of presentable $\infinity$-categories: in fact, $\Mod_{\Bbbk}$ is obviously presentable (\cite[Corollary $4.2.3.7$]{ha}) while $\CoCommgraug$ and $\CoCommmixgraug$ are $\infinity$-categories of (augmented) cocommutative coalgebra objects in some symmetric monoidal $\infinity$-category whose tensor product preserves colimits separately in each variable. Hence, they are presentable in virtue of \cite[Proposition $3.1.3$]{ellipticcohomology1}. \\Moreover, it is clear from the description of $\CE_{\varepsilon}(\gfrak)$ yielded in \cref{remark:explicitgradedpiecesCEcoalg} that the $\infinity$-functor $$\CE_{\varepsilon}\colon\Lie_{\Bbbk}\longrightarrow\CoCommmixgraug$$factors through $\CEcAlgmixgr$. 
\end{parag}
%\[\begin{tikzcd}
%\CEcAlgmixgr & {\Mod_{\Bbbk}} \\
%& {\Mod_{\Bbbk}} \\
%& {\Mod_{\Bbbk}^{\gr}} \\
%{\lp\CoCommmixgr\rp_{/\Bbbk(0)}} & {\lp\operatorname{cCcAlg}^{\gr}_{\Bbbk}\rp_{/\Bbbk(0)}}
%\arrow["\forget"', from=4-1, to=4-2]
%\arrow["{\Sym_{\Bbbk}^{\gr}}", from=3-2, to=4-2]
%\arrow["{(-)(1)}", hook, from=2-2, to=3-2]
%\arrow["{[-1]}"', from=1-2, to=2-2]
%\arrow["\rotatebox{90}{$\simeq$}"', from=2-2, to=1-2]
%\arrow[from=1-1, to=1-2]
%\arrow[from=1-1, to=4-1]
%\arrow[from=1-1, to=1-2]
%\arrow["\lrcorner"{anchor=center, pos=0.125}, draw=none, from=1-1, to=4-2]
%\end{tikzcd}\]
We characterize some main properties of the $\infinity$-category $\CEcAlgmixgr$. 
\begin{lemman}
	\label{lemma:CElocalization}
	The $\infinity$-category $\CEcAlgmixgr$ is a localization (in the sense of \cite[$5.2.7.2$]{htt}) of the $\infinity$-category $\CoCommmixgraug$.
\end{lemman}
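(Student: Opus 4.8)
The plan is to realise $\CEcAlgmixgr$ as a reflective localization of $\CoCommmixgraug$ at a small set of morphisms, obtained by transporting along $\oblv_{\varepsilon}$ the localization of $\CoCommgraug$ cut out by the fully faithful functor \ref{functor:gradedsym}. First I would record that the defining pullback square \ref{square:CEpullback} exhibits $\CEcAlgmixgr$ as the full sub-$\infinity$-category of $\CoCommmixgraug$ spanned by those $A_{\bullet}$ whose underlying graded cocommutative coalgebra $\oblv_{\varepsilon}(A_{\bullet})$ is of Chevalley--Eilenberg type: indeed, since \cref{prop:gradedsymfullyfaithful} tells us that $\Symgr_{\Bbbk}{\lp(-)[-1](1)\rp}$ is fully faithful, the projection $\CEcAlgmixgr\to\CoCommmixgraug$, being a base change of this functor along $\oblv_{\varepsilon}$, is fully faithful, and its essential image is precisely that full subcategory.

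Next I would exploit the functor $\oblv_{\varepsilon}\colon\CoCommmixgraug\to\CoCommgraug$. By \cref{parag:forgettingmixedstructureisok} it preserves (and indeed creates) all limits and colimits, and it is accessible; since both categories are presentable, the adjoint functor theorem furnishes \emph{both} a right adjoint and a left adjoint $G\colon\CoCommgraug\to\CoCommmixgraug$. On the other hand, I claim that \ref{functor:gradedsym} exhibits $\Mod_{\Bbbk}$ as an accessible reflective localization of $\CoCommgraug$; granting this, the essential image of \ref{functor:gradedsym} --- i.e. the coalgebras of Chevalley--Eilenberg type --- is the full subcategory of $S_0$-local objects for some small set $S_0$ of morphisms in $\CoCommgraug$ (\cite[\S 5.5.4]{htt}). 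A direct adjunction argument using $G\dashv\oblv_{\varepsilon}$, namely $\Map_{\CoCommgraug}{\lp X,\hsp\oblv_{\varepsilon}(A_{\bullet})\rp}\simeq\Map_{\CoCommmixgraug}{\lp G(X),\hsp A_{\bullet}\rp}$, then identifies $\left\{A_{\bullet}\in\CoCommmixgraug:\oblv_{\varepsilon}(A_{\bullet})\text{ is }S_0\text{-local}\right\}$ with the full subcategory of $G(S_0)$-local objects, where $G(S_0)$ is again a small set of morphisms; by the first paragraph this subcategory is exactly $\CEcAlgmixgr$. Since a localization at a small set of morphisms in a presentable $\infinity$-category is a (reflective, accessible) localization in the sense of \cite[5.2.7.2]{htt}, this concludes the argument.

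The hard part will be the claim that \ref{functor:gradedsym} is an accessible reflective localization of $\CoCommgraug$. Full faithfulness is \cref{prop:gradedsymfullyfaithful}; the substantive point is the existence of a reflector, equivalently --- everything being presentable and accessible --- that \ref{functor:gradedsym} preserves limits. This is exactly where the distinction between ind-nilpotent coalgebras and all coalgebras recalled in \cref{warning:coalgebras} intervenes: the functor \ref{functor:gradedsym} is $\operatorname{res}$ composed with the manifestly limit-preserving right adjoint $\operatorname{coFree}^{\operatorname{ind-nil}}_{\operatorname{cCcAlg}}\circ(-)(1)\circ[-1]$ of \cref{prop:fakegradedsymfullyfaithful}, and $\operatorname{res}$ is a priori only known to preserve colimits. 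The resolution I would carry out is to observe that a graded cocommutative coalgebra whose underlying graded object is concentrated in non-negative weights has, in each weight $p$, a comultiplication landing in the finite direct sum $\bigoplus_{i+j=p}C_i\otimes_{\Bbbk}C_j$, hence is automatically ind-nilpotent; consequently $\operatorname{res}$ restricts to an equivalence between the non-negatively graded ind-nilpotent coalgebras and the non-negatively graded coalgebras inside $\CoCommgraug$, limits of non-negatively graded coalgebras may therefore be computed inside $\operatorname{cCcAlg}^{\operatorname{gr,ind-nil}}_{\Bbbk//\Bbbk}$, and under this identification \ref{functor:gradedsym} agrees with the right adjoint of \cref{prop:fakegradedsymfullyfaithful}, whose reflector is the weight-one extraction functor $(-)_1[1]\circ\oblv^{\operatorname{ind-nil}}_{\operatorname{cCcAlg}}$ already used there.

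Finally, the accessibility half (closure of the Chevalley--Eilenberg-type coalgebras under $\kappa$-filtered colimits for a suitable regular cardinal $\kappa$) is routine: colimits of cocommutative coalgebras are detected on underlying objects by \cref{warning:coalgebras}, and symmetric powers commute with filtered colimits weight by weight, so a filtered colimit of coalgebras of the form $\Symgr_{\Bbbk}{\lp M_i[-1](1)\rp}$ is again $\Symgr_{\Bbbk}{\lp\lp\colim_i M_i\rp[-1](1)\rp}$. This, together with the limit-closure established above, shows that the essential image of \ref{functor:gradedsym} is an accessible full subcategory of $\CoCommgraug$ closed under limits, hence an accessible reflective localization, completing the only non-formal input to the plan.
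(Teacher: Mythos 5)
Your proposal is correct, and it reaches the same essential reduction as the paper --- everything comes down to showing that the essential image of $\Symgr_{\Bbbk}{\lp(-)[-1](1)\rp}$ inside $\CoCommgraug$ is closed under limits and $\kappa$-filtered colimits, and then pulling this back along $\oblv_{\varepsilon}$ --- but the packaging differs in two ways. First, where the paper simply observes that the inclusion $\CEcAlgmixgr\subseteq\CoCommmixgraug$ is a full presentable subcategory of a presentable $\infinity$-category preserving limits and $\kappa$-filtered colimits (checking this on underlying graded coalgebras via \ref{parag:forgettingmixedstructureisok}) and invokes the adjoint functor theorem, you instead exhibit the subcategory as the $G(S_0)$-local objects for a small set of morphisms, transporting a localization of $\CoCommgraug$ along the adjunction $G\dashv\oblv_{\varepsilon}$; this is a legitimate alternative characterization of accessible reflective localizations and costs you the extra (but routine) step of producing the left adjoint $G$. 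Second, and more substantively, you isolate and repair a point the paper elides: the paper asserts that $\Symgr_{\Bbbk}{\lp(-)[-1](1)\rp}$ is a right adjoint, but as defined it is $\operatorname{res}\circ\operatorname{coFree}^{\operatorname{ind-nil}}_{\operatorname{cCcAlg}}\circ(-)(1)\circ[-1]$, and $\operatorname{res}$ is a priori only a colimit-preserving functor (\cref{warning:coalgebras}), so limit-preservation is not formal. Your observation that coaugmented coalgebras whose coaugmentation coideal is concentrated in strictly positive weights are automatically ind-nilpotent (the weight-$p$ iterated reduced comultiplication can only hit finitely many cooperations), so that $\operatorname{res}$ is an equivalence on the relevant subcategory and limits may be computed on the ind-nilpotent side where \cref{prop:fakegradedsymfullyfaithful} applies, is exactly the justification the paper's one-line assertion needs. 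Your filtered-colimit argument likewise matches the paper's. In short: same skeleton, different formal dressing, and a welcome patch of a gap in the published argument.
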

\begin{proof}
	Being a full presentable sub-$\infinity$-category of a presentable $\infinity$-category, it is enough to show that the inclusion preserves all limits and $\kappa$-filtered colimits for some regular cardinal $\kappa$. The discussion in \ref{parag:forgettingmixedstructureisok} allows us to reduce ourselves to check the claim at the level of the underlying graded coalgebras. But now the assertion is clear, since $\oblv_{\varepsilon}\circ\hsp\CE_{\varepsilon}$ is naturally equivalent to $\Symgr_{\Bbbk}{\lp(-)[-1](1)\rp}\circ\oblv_{\Lie}$, which is a composition of two right adjoints and as such preserves all limits and $\kappa$-filtered colimits for some regular cardinal $\kappa$.
\end{proof}
\begin{parag}
	\label{parag:CEpreserveslimits}
	\cref{lemma:CElocalization} implies that we have a left adjoint $$\operatorname{L}_{\CE}\colon\CoCommmixgraug\longrightarrow\CEcAlgmixgr$$which, by usual abstract nonsense, is the identity when restricted to mixed graded cocommutative coalgebra of Chevalley-Eilenberg type. Moreover, a slight reworking of the proof of \cref{prop:promotionCEcocommutativealgebra} shows that $\CE_{\varepsilon}$ actually preserves \textit{all} limits: we just need to observe that limits of Lie algebras are sent to limits on the underlying $\Bbbk$-modules by the forgetful $\infinity$-functor $\oblv_{\Lie}\colon\Lie_{\Bbbk}\to\Mod_{\Bbbk}$, and then use again that $\Sym^{\gr}_{\Bbbk}$ is a right adjoint to get
	\begin{align*}
	\Symgr_{\Bbbk}{\lp \oblv_{\Lie}{\lp\lim_{i\in I}\gfrak_i\rp}[-1](1)\rp} &\simeq \Symgr_{\Bbbk}{\lp \lp\lim_i{\lp \oblv_{\Lie}\lp\gfrak_i\rp\rp}\rp[-1](1)\rp}\\
	&\simeq \Symgr_{\Bbbk}{\lp \lim_i{\lp \oblv_{\Lie}\lp\gfrak_i\rp[-1](1)\rp}\rp}\\&\simeq \lim_i{\lp\Symgr_{\Bbbk}{\lp\oblv_{\Lie}\lp\gfrak_i\rp[-1](1)\rp}\rp}.
	\end{align*}
	In particular, $$\CE_{\varepsilon}\colon\Lie_{\Bbbk}\longrightarrow\CEcAlgmixgr$$admits a left adjoint as well, which we denote with \begin{align}
	\label{functor:CEleftadjoint}
	\Lfrak\colon\CEcAlgmixgr\longrightarrow\Lie_{\Bbbk}.
	\end{align}
\end{parag}
\begin{conjn}
	\label{conj:main}
	The \infinity-functor $\CE_{\varepsilon}\colon\Lie_{\Bbbk}\to\CoCommmixgraug$ is a fully faithful embedding. The essential image is the full sub-\infinity-category $\CEcAlgmixgr$ of cocommutative mixed graded coalgebras of Chevalley-Eilenberg type.
\end{conjn}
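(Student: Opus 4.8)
The plan is to upgrade $\CE_{\varepsilon}$ to an equivalence onto its claimed essential image. Since $\CE_{\varepsilon}\colon\Lie_{\Bbbk}\to\CoCommmixgraug$ already factors through the full subcategory $\CEcAlgmixgr$ (by \cref{remark:explicitgradedpiecesCEcoalg}), and the inclusion $\CEcAlgmixgr\hookrightarrow\CoCommmixgraug$ is fully faithful, it suffices to prove that $\CE_{\varepsilon}\colon\Lie_{\Bbbk}\to\CEcAlgmixgr$ is an equivalence of $\infinity$-categories. As recorded in \cref{parag:CEpreserveslimits}, this functor preserves all limits and is accessible, hence admits a left adjoint $\Lfrak$, and the assertion is equivalent to showing that the counit $\Lfrak\circ\CE_{\varepsilon}\Rightarrow\operatorname{id}_{\Lie_{\Bbbk}}$ and the unit $\operatorname{id}_{\CEcAlgmixgr}\Rightarrow\CE_{\varepsilon}\circ\Lfrak$ are both equivalences; the former yields full faithfulness, the latter essential surjectivity onto $\CEcAlgmixgr$. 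The guiding heuristic is that the Tate realization $\left|-\right|^{\operatorname{t}}$ forgets the weight grading and sends $\CE_{\varepsilon}(\gfrak)$ to the classical Chevalley--Eilenberg coalgebra $\CE_{\bullet}(\gfrak)$ (\cref{prop:CEexplicitmodel}), an assignment which is \emph{not} fully faithful (the non-fully-faithfulness discussed at the end of \cref{sec:classicalCE}); so the entire content of the conjecture is that the weight grading together with the mixed differential exactly records the information destroyed by totalization.

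First I would identify $\Lfrak$ with a mixed graded cobar construction, i.e.\ with the Koszul-duality functor for the pair $(\Lie,\operatorname{CoComm}^{\operatorname{aug}})$ of \cite[Chapter 6]{studyindag2} carried out internally to $\Modmixgr_{\Bbbk}$. The key input is a rigidity phenomenon already visible at the graded level: in the proof of \cref{prop:fakegradedsymfullyfaithful} the left adjoint of $\Symgr_{\Bbbk}((-)[-1](1))$ is computed ``on the nose'' as $(-)_1[1]$ composed with the forgetful functor, with no higher cobar corrections, precisely because the weight grading forces the cobar complex of a Chevalley--Eilenberg-type coalgebra to be concentrated in the relevant weight. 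Lifting this to mixed graded modules — using that $(-)_q$ and $\oblv_{\varepsilon}$ preserve all limits and colimits and are conservative, and that $\Symgr_{\Bbbk}((-)[-1](1))$ is fully faithful (\cref{prop:gradedsymfullyfaithful}) — I expect that $\oblv_{\Lie}\circ\Lfrak$ is again ``weight $1$, shifted'', so that the underlying $\Bbbk$-module of $\Lfrak(C_{\bullet})$ is $(C_{\bullet})_1[1]$, equipped with the $\Linf$-structure produced from the mixed differential $\varepsilon$ of $C_{\bullet}$, whose vanishing square $\varepsilon^{2}=0$ encodes the homotopy Jacobi identity; this is the $\infinity$-categorical incarnation of the classical dictionary between codifferentials on $\Sym_{\Bbbk}(M[-1])$ and $\Linf$-structures on $M$, with the mixed differential in the role of the twisting datum.

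Granting this description, the counit $\Lfrak\CE_{\varepsilon}(\gfrak)\to\gfrak$ is checked to be an equivalence by conservativity of $\oblv_{\Lie}$ together with the computation $\oblv_{\Lie}(\Lfrak\CE_{\varepsilon}(\gfrak))\simeq\CE_{\varepsilon}(\gfrak)_{1}[1]\simeq\gfrak$ of \cref{remark:explicitgradedpiecesCEcoalg}, the identification being bracket-preserving because the weight-$2$-to-weight-$1$ component of the mixed differential of $\CE_{\varepsilon}(\gfrak)$ is exactly $[-,-]$ sitting in weight $1$ (see \cref{remark:CEdifferential} and \cref{example:differential}). Dually, for the unit one has, for $C_{\bullet}\in\CEcAlgmixgr$ with $M\coloneqq(C_{\bullet})_{1}[1]=\oblv_{\Lie}\Lfrak(C_{\bullet})$, equivalences $\oblv_{\varepsilon}(C_{\bullet})\simeq\Symgr_{\Bbbk}(M[-1](1))\simeq\oblv_{\varepsilon}\CE_{\varepsilon}(\Lfrak C_{\bullet})$ of underlying graded coalgebras, so one must only check that the unit induces an equivalence on the mixed differentials; since equivalences in $\Modmixgr_{\Bbbk}$ (hence in $\CoCommmixgraug$) are detected weight-wise, it suffices to verify this on each symmetric power, where it amounts to saying that the mixed differential of a Chevalley--Eilenberg-type coalgebra is entirely recovered, with all its coherences, from the $\Linf$-structure it defines on the weight-$1$ part.

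The hard part will be this last verification: making the correspondence ``mixed differential on $\Symgr_{\Bbbk}(M[-1](1))$ compatible with the comultiplication and squaring to zero'' $\leftrightarrow$ ``$\Linf$-structure on $M$'' into a natural equivalence of $\infinity$-categories rather than a mere bijection on strict models. This requires comparing two deformation-theoretic functors and controlling the coherences introduced by the Dwyer--Kan localization, and is plausibly of the same depth as — or reducible to — the full faithfulness of $\operatorname{res}$ conjectured in \cite[Chapter 6, Conjecture 2.8.4]{studyindag2}, which already intervenes in \cref{prop:gradedsymfullyfaithful}. An alternative, possibly more economical, route would sidestep $\Lfrak$: the functor $w\colon\CEcAlgmixgr\to\Mod_{\Bbbk}$ given by ``weight $1$, shifted'' is conservative (equivalences of Chevalley--Eilenberg-type coalgebras are detected on the weight-$1$ part, via \cref{prop:gradedsymfullyfaithful}) and satisfies $w\circ\CE_{\varepsilon}\simeq\oblv_{\Lie}$, so it would be enough to show that $w$ is monadic with monad the free Lie algebra monad, whence $\CE_{\varepsilon}$ becomes the comparison equivalence between two monadic categories over $\Mod_{\Bbbk}$ with the same monad; the obstruction there is that the localization inclusion $\CEcAlgmixgr\hookrightarrow\CoCommmixgraug$ need not preserve geometric realizations, so establishing that $w$ preserves sifted colimits — the one missing hypothesis of the Barr--Beck--Lurie theorem — is itself the crux.
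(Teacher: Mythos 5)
This statement is a \emph{conjecture} in the paper: no proof is given, only partial evidence and two suggested lines of attack, so your proposal cannot be checked against a complete argument — and indeed it is not one. Both of your routes bottom out in exactly the open statement. In the main route, the assertion that the unit $\operatorname{id}\Rightarrow\CE_{\varepsilon}\circ\Lfrak$ is an equivalence — equivalently, that a comultiplication-compatible mixed differential on $\Symgr_{\Bbbk}(M[-1](1))$ ``is'' an $\Linf$-structure on $M$, naturally and with all coherences — is not a lemma you can quote; it is a reformulation of \cref{conj:main} itself (the paper makes this precise in \cref{porism:monad}: the conjecture is equivalent to an equivalence of the two monads on $\Mod_{\Bbbk}$ induced by $\oblv_{\Lie}$ and by $\Prim\circ\oblv_{\varepsilon}$). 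You flag this honestly, but it means the proposal is a reduction, not a proof. A smaller but real slip: you assert that $\Lfrak$ is computed ``on the nose'' as weight-$1$-shifted with no higher cobar corrections by analogy with \cref{prop:fakegradedsymfullyfaithful}; that lemma only computes the left adjoint at the \emph{graded} level for the cofree coalgebra functor, and lifting the computation of $\oblv_{\Lie}\circ\Lfrak$ to the mixed graded setting is unproved (the paper explicitly identifies the description of the relevant left adjoint as ``the main obstruction'').

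Your alternative monadic route is essentially the paper's own second strategy, but you misplace the difficulty there. You claim the crux is showing that $w=\Prim\circ\oblv_{\varepsilon}$ preserves sifted colimits because the localization inclusion need not preserve geometric realizations; the paper actually \emph{proves} this preservation (\cref{lemma:siftedcolimitsCE}, by reducing to the graded free commutative algebra functor in characteristic $0$) and deduces monadicity of both sides over $\Mod_{\Bbbk}$ (\cref{porism:monad}). What remains open after Barr--Beck--Lurie is not monadicity but the identification of the resulting monad with the free Lie algebra monad — i.e.\ precisely the same coherence problem as in your main route. The paper also records a third angle you do not mention: every object of $\CEcAlgmixgr$ deloops (\cref{lemma:CEcalgebrasaregroupobjects}), the Tate realization of $\CE_{\varepsilon}$ on group objects recovers the fully faithful universal enveloping Hopf algebra functor, and group Lie algebras have canonically trivial bracket, so one can try to reduce to trivial Lie algebras via \cref{corollary:trivliealgebrasaretrivcoalgebras}; the residual obstruction there (that $\Lfrak$ need not send trivially mixed coalgebras to abelian Lie algebras) is again the same one. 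In short: your sketch is consistent with the paper's partial results and one of its strategies, but it does not close the conjecture, and the step you identify as ``the one missing hypothesis'' in the monadic route is already supplied by the paper.
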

As a direct consequence of \cref{conj:main} and \cref{prop:commalg&cocommcoalg}, we have the following cohomological analogue.
\begin{conjn}
	\label{conj:main2}
	The \infinity-functor $\CE^{\varepsilon}\colon\Lie^{\op}_{\Bbbk}\to\CAlgmixgr_{\Bbbk}$ restricts to a fully faithful embedding$$\Lieperf_{\Bbbk}\longhookrightarrow\CAlgmixgr_{\Bbbk}$$where $\Lieperf_{\Bbbk}\coloneqq\Alg_{\Lie}{\lp\Perf_{\Bbbk}\rp}$ is the \infinity-category of Lie algebras whose underlying $\Bbbk$-module is perfect.
\end{conjn}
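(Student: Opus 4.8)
The plan is to reduce \cref{conj:main2} to \cref{conj:main} and \cref{prop:commalg&cocommcoalg} by transporting the homological statement across mixed graded linear duality. By \cref{remark:explicitgradedpiecesCEcoalg} and \cref{remark:CEgperfect}, if $\gfrak$ is perfect as a $\Bbbk$-module then the weight-$p$ component of $\CE_{\varepsilon}(\gfrak)$ is $\Sym^p_{\Bbbk}(\oblv_{\Lie}(\gfrak)[-1])$, which is a perfect $\Bbbk$-module and vanishes for $p<0$; hence $\CE_{\varepsilon}$ carries $\Lieperf_{\Bbbk}$ into the full sub-$\infinity$-category $\operatorname{cCcAlg}(\varepsilon\text{-}\Perf^{\gr,+}_{\Bbbk})\subseteq\CoCommmixgraug$. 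Granting \cref{conj:main}, the $\infinity$-functor $\CE_{\varepsilon}\colon\Lie_{\Bbbk}\to\CoCommmixgraug$ is fully faithful, and restricting the source to the full sub-$\infinity$-category $\Lieperf_{\Bbbk}$ and the target to $\operatorname{cCcAlg}(\varepsilon\text{-}\Perf^{\gr,+}_{\Bbbk})$ leaves it fully faithful.

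\textbf{Steps.} First I would record the functor $\CE_{\varepsilon}\colon\Lieperf_{\Bbbk}\hookrightarrow\operatorname{cCcAlg}(\varepsilon\text{-}\Perf^{\gr,+}_{\Bbbk})$ just described as fully faithful. Next, by \cref{prop:commalg&cocommcoalg} the mixed graded linear dual restricts to an \emph{equivalence} $(-)^{\vee}\colon\operatorname{cCcAlg}(\varepsilon\text{-}\Perf^{\gr,+}_{\Bbbk})^{\op}\overset{\simeq}{\longrightarrow}\CAlg(\varepsilon\text{-}\Perf^{\gr,-}_{\Bbbk})$; composing $\CE_{\varepsilon}^{\op}$ with this equivalence produces a fully faithful functor $\Lieperf_{\Bbbk}{}^{\op}\hookrightarrow\CAlg(\varepsilon\text{-}\Perf^{\gr,-}_{\Bbbk})$, since a fully faithful functor followed by an equivalence is again fully faithful. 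By \cref{prop:CEhomCEcohomdual} (together with \cref{lemma:dualcoalgebraisalgebra}, which lifts $(-)^{\vee}$ to the level of (co)algebras) this composite is canonically identified with the restriction of $\CE^{\varepsilon}$ to $\Lieperf_{\Bbbk}{}^{\op}$. Finally, post-composing with the fully faithful inclusion $\CAlg(\varepsilon\text{-}\Perf^{\gr,-}_{\Bbbk})\hookrightarrow\CAlgmixgr_{\Bbbk}$ of a full monoidal sub-$\infinity$-category yields the desired fully faithful embedding $\Lieperf_{\Bbbk}\hookrightarrow\CAlgmixgr_{\Bbbk}$; the augmented/non-augmented discrepancy between \cref{prop:CEcohomologicalalgebra} and the present statement is immaterial here, since the weight-$0$ component of each $\CE^{\varepsilon}(\gfrak)$ is the unit $\Bbbk$, so the canonical augmentation is the weight-$0$ projection and is automatically preserved by any morphism of mixed graded commutative algebras.

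\textbf{Main obstacle.} The only non-formal input is \cref{conj:main} itself, which is open; once it is available the argument above is essentially bookkeeping. To attack \cref{conj:main} one would try to show that the counit $\Lfrak\circ\CE_{\varepsilon}\Rightarrow\operatorname{id}_{\Lie_{\Bbbk}}$ of the adjunction $\Lfrak\dashv\CE_{\varepsilon}$ of \ref{parag:CEpreserveslimits} is an equivalence, identifying $\Lie_{\Bbbk}$ with $\CEcAlgmixgr$. The natural framework is Koszul duality for the $(\Lie,\operatorname{coComm})$ pair in the sense of \cite[Chapter $6$]{studyindag2}, lifted to mixed graded (equivalently, filtered) objects, so that $\CE_{\varepsilon}$ is recognised as the enhanced coprimitives functor; one then hopes to exploit the weight filtration to reduce full faithfulness to its associated graded, where $\oblv_{\varepsilon}\circ\CE_{\varepsilon}\simeq\Symgr_{\Bbbk}((-)[-1](1))\circ\oblv_{\Lie}$ and \cref{prop:gradedsymfullyfaithful} already provides full faithfulness of $\Symgr_{\Bbbk}((-)[-1](1))$. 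The crux — controlling precisely the homotopy coherences between the mixed differential and the Lie bracket that motivate this paper, and establishing the completeness of the weight filtration on both $\CEcAlgmixgr$ and $\Lie_{\Bbbk}$ needed to upgrade the graded statement to the mixed one — is where the real difficulty concentrates.
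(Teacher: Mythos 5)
Your deduction is exactly the paper's: \cref{conj:main2} is stated there as a direct consequence of \cref{conj:main} together with the duality equivalence of \cref{prop:commalg&cocommcoalg}, and your transport of full faithfulness across $(-)^{\vee}$ via \cref{prop:CEhomCEcohomdual}, plus the observation that the remaining content is \cref{conj:main} itself (which the paper leaves open), fills in that reduction correctly. No further comparison is needed.
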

\cref{conj:main} and \cref{conj:main2} can be summarized as follows. Recall the definition of the \infinity-categories $\Perfmixgrcn_{\Bbbk}$ and $\Perfmixgrccn_{\Bbbk}$ of \cite[Notation $1.1.7$]{pavia1}: they are closed under the mixed graded tensor product, hence we can consider both commutative algebras and cocommutative coalgebras in these \infinity-categories. Let moreover $\operatorname{cLie}_{\Bbbk}$ be the \infinity-category of Lie coalgebras in $\Mod_{\Bbbk}$: one has an \infinity-functor$$\operatorname{coChev}\colon\operatorname{cLie}_{\Bbbk}\longrightarrow\operatorname{CAlg}_{\Bbbk//\Bbbk}$$which computes the Chevalley-Eilenberg cohomology of Lie coalgebras (\cite[Section $5$]{colie}). 
\begin{conjn}
	\label{conj:general}
	There exists an $\infinity$-functor $\operatorname{cC}^{\varepsilon}\colon\operatorname{cLie}_{\Bbbk}\to\CAlgmixgr_{\Bbbk}$, such that its Tate realization agrees with $\operatorname{coChev}$, which sits in a diagram of \infinity-categories$$\begin{tikzpicture}[scale=0.75]
	\node (a) at (-6.5,4){$\Lie_{\Bbbk}$};
	\node (b) at (6.5,4){$\CoCommmixgraug$};
	\node (a1) at (-4.5,2) {$\Lieperf_{\Bbbk}$};
	\node (b1) at (4.5,2){${\operatorname{cCcAlg}}{{\lp\Perfmixgrcn_{\Bbbk}\rp}}_{\Bbbk/}$};
	\node (c) at (-6.5,-2){$\lp\operatorname{cLie}_{\Bbbk}\rp^{\op}$};
	\node (d) at (6.5,-2){$\lp\CAlgmixgraug\rp^{\op}$};
	\node (c1) at (-4.5,0) {$\lp\operatorname{cLie}^{\operatorname{perf}}_{\Bbbk}\rp^{\op}$};
	\node (d1) at (4.5,0){$\CAlg{\lp\Perfmixgrccn_{\Bbbk}\rp}_{/\Bbbk}^{\op}$};
	\draw[right hook->,font=\scriptsize] (a) to node[above]{$\CE_{\varepsilon}$}(b);
	\draw[right hook->,font=\scriptsize] (a1) to node[above]{$\CE_{\varepsilon}$}(b1);
	\draw[right hook->,font=\scriptsize] (c) to node[below]{$\lp\operatorname{cC}^{\varepsilon}\rp^{\op}$}(d);
	\draw[right hook->,font=\scriptsize] (c1) to node[below]{$\lp\operatorname{cC}^{\varepsilon}\rp^{\op}$}(d1);
	\draw[right hook->,font=\scriptsize] (a1) to (a);
	\draw[right hook->,font=\scriptsize] (b1) to (b);
	\draw[right hook->,font=\scriptsize] (c1) to (c);
	\draw[right hook->,font=\scriptsize] (d1) to (d);
	%\draw[->,font=\scriptsize] (a) to[bend right] node[left]{$(-)^{\vee}$} (c);
	\draw[->,font=\scriptsize] (b) to[bend left] node[right]{$(-)^{\vee}$} (d);
	\draw[<->,font=\scriptsize] (a1) to node[left]{$(-)^{\vee}$} (c1);
	\draw[<->,font=\scriptsize] (a1) to node[above,rotate=-90]{$\simeq$} (c1);
	\draw[<->,font=\scriptsize] (b1) to node[left]{$(-)^{\vee}$} (d1);
	\draw[<->,font=\scriptsize] (b1) to node[above,rotate=-90]{$\simeq$} (d1);
	\end{tikzpicture}$$
	which commutes in every direction.
\end{conjn}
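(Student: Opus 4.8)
The plan is to build $\operatorname{cC}^{\varepsilon}$ by transporting the machinery of \cref{sec:cohomCE} into the opposite category, and then to reduce the commutativity of the diagram to the duality results already in hand --- \cref{lemma:dualcoalgebraisalgebra}, \cref{prop:commalg&cocommcoalg}, \cref{prop:CEhomCEcohomdual}, \cref{remark:CEgperfect} --- together with one extra input: that $\Bbbk$-linear duality is a self-equivalence of perfect complexes compatible with operadic (co)algebra structures. Concretely, since $(-)^{\vee}\colon(\Perf_{\Bbbk})^{\op}\xrightarrow{\ \simeq\ }\Perf_{\Bbbk}$ is a symmetric monoidal equivalence, it induces $\Alg_{\Lie}\!\bigl((\Perf_{\Bbbk})^{\op}\bigr)\simeq\Alg_{\Lie}(\Perf_{\Bbbk})=\Lieperf_{\Bbbk}$; but a $\Lie$-algebra in $(\Perf_{\Bbbk})^{\op}$ is exactly a $\Lie$-coalgebra in $\Perf_{\Bbbk}$ (in the ``all coalgebras'' sense of \cref{warning:coalgebras}, which over a characteristic-$0$ field is insensitive to divided powers), so we obtain an equivalence $(\operatorname{cLie}^{\operatorname{perf}}_{\Bbbk})^{\op}\simeq\Lieperf_{\Bbbk}$ intertwining the two forgetful functors to $\Perf_{\Bbbk}$. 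Running the same argument with $\CAlg$ in place of $\Lie$, inside the sub-$\infinity$-categories $\Perfmixgrcn_{\Bbbk}$ and $\Perfmixgrccn_{\Bbbk}$ (which the mixed graded dual exchanges, by the weight-count in the proof of \cref{prop:commalg&cocommcoalg}), reproduces exactly the equivalence $\operatorname{cCcAlg}(\Perfmixgrcn_{\Bbbk})_{\Bbbk/}\simeq\CAlg(\Perfmixgrccn_{\Bbbk})^{\op}_{/\Bbbk}$ labelling the right-hand column. Thus every vertical equivalence in the middle of the diagram is an instance of linear duality, and its compatibility with the vertical inclusions is automatic by naturality.

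\textbf{Constructing $\operatorname{cC}^{\varepsilon}$.} I would rerun \cref{sec:cohomCE} step by step in $\varepsilon\text{-}\operatorname{cLie}^{\gr}_{\Bbbk}$: first a mixed graded cone $\infinity$-functor for Lie coalgebras, this time the \emph{left} adjoint of $(-)_0$ (the variance flips because, on coalgebras, $(-)_0$ is built from the oplax structure of \ref{parag:functorsthatpreservealgebras}), presented on underlying objects by the functor $\operatorname{L}_{\varepsilon}$ of \cref{porism:rightadjointforgetful}, so that it sends $\mathfrak c$ to $\mathfrak c$ in weight $0$ and $\mathfrak c[1]$ in weight $-1$; then a ``universal co-enveloping cocommutative coalgebra'' $\infinity$-functor, the coalgebra analogue of the mixed graded enveloping algebra functor \ref{functor:mixedgradedug}, coming from the morphism of cooperads $\operatorname{coAssoc}^{\operatorname{aug}}\to\operatorname{coLie}$ dual to $\Lie\to\operatorname{Assoc}^{\operatorname{aug}}$ (cf.\ \cite[Chapter~6, Section~5.1]{studyindag2}); then the adjoint-morphism formalism of \cref{construction:adjointmorphism} applied to the new adjunction; and finally the morphism-object / cotensor functor of \ref{parag:LModUGenriched} against $\Bbbk(0)$, whose output carries a commutative algebra structure by the dual of \cref{lemma:dualcoalgebraisalgebra}. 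This produces $\operatorname{cC}^{\varepsilon}\colon\operatorname{cLie}_{\Bbbk}\to\CAlgmixgraug$. Inspecting weights exactly as in \cref{remark:weightCEcohomological} shows $\oblv_{\varepsilon}\operatorname{cC}^{\varepsilon}(\mathfrak c)\simeq\Symgr_{\Bbbk}\bigl(\oblv_{\operatorname{cLie}}(\mathfrak c)[1](-1)\bigr)$, so the restriction to perfect Lie coalgebras lands in $\CAlg(\Perfmixgrccn_{\Bbbk})_{/\Bbbk}$ and, via the equivalence of the first paragraph together with \cref{remark:CEgperfect}, identifies $\operatorname{cC}^{\varepsilon}|_{\operatorname{cLie}^{\operatorname{perf}}}$ with $\CE^{\varepsilon}\circ(-)^{\vee}$ --- this is precisely the middle diamond of the diagram. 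The remaining subdiagrams are then formal: the top arrow $(-)^{\vee}\colon\CoCommmixgraug\to(\CAlgmixgraug)^{\op}$ is the lax monoidal dual of \cref{lemma:dualcoalgebraisalgebra}, which by the same weight-count is strict on $\operatorname{cCcAlg}(\Perfmixgrcn_{\Bbbk})$ and there coincides with the duality equivalence, while \cref{prop:CEhomCEcohomdual} already factors $\CE^{\varepsilon}$ through $(-)^{\vee}\circ\oblv_{\operatorname{cCcAlg}}\circ\CE_{\varepsilon}^{\op}$.

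\textbf{Tate realization.} To obtain $\lvert\operatorname{cC}^{\varepsilon}(\mathfrak c)\rvert^{\operatorname{t}}\simeq\operatorname{coChev}(\mathfrak c)$ I would copy the argument of \cref{prop:CEcohomexplicitmodel}: $\operatorname{cC}^{\varepsilon}(\mathfrak c)$ is assembled out of objects of $\Modmixgrcn_{\Bbbk}$, on which the Tate realization is strongly monoidal and preserves colimits (\cite[Porism~2.3.20 and Porism~2.3.21]{pavia1}), so $\lvert\operatorname{cC}^{\varepsilon}(\mathfrak c)\rvert^{\operatorname{t}}$ is computed by the corresponding (co)bar construction and returns the classical Chevalley-Eilenberg cochain algebra of $\mathfrak c$, namely the free commutative algebra on $\mathfrak c[1]$ with differential the cobracket; matching this with the model of \cite[Section~5]{colie} gives the required natural equivalence to $\operatorname{coChev}$.

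\textbf{Main obstacle.} The skeleton above is essentially formal once $\operatorname{cC}^{\varepsilon}$ is available, and it is airtight on \emph{perfect} Lie coalgebras, where linear duality is an honest equivalence. The real difficulty is to produce a single model-independent $\operatorname{cC}^{\varepsilon}$ defined on \emph{all} of $\operatorname{cLie}_{\Bbbk}$ that both restricts correctly to the perfect subcategory and realizes to $\operatorname{coChev}$: the shortcut ``dualize $\mathfrak c$ to a Lie algebra, apply $\CE^{\varepsilon}$, dualize back'' breaks down for non-perfect $\mathfrak c$ because $(-)^{\vee}$ is only lax monoidal and non-invertible there. One route is to present every Lie coalgebra as a filtered colimit of perfect ones and left Kan extend, but then the compatibility with $\operatorname{coChev}$ is delicate since the Tate functor does not commute with arbitrary colimits, so this needs a boundedness or conilpotency input on $\operatorname{cLie}_{\Bbbk}$. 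The cleaner --- and I believe correct --- route is to carry out the intrinsic construction of the second paragraph honestly in $\varepsilon\text{-}\operatorname{cLie}^{\gr}_{\Bbbk}$, which forces one to develop the co-enveloping coalgebra functor, the comodule formalism, and the morphism-object/cotensor functor with the full ``four flavours of coalgebras'' care of \cref{warning:coalgebras}. I expect the bulk of a complete proof to live precisely here, together with checking that the resulting $\operatorname{cC}^{\varepsilon}$ reproduces the $\operatorname{coChev}$ of \cite{colie} on the nose rather than merely up to an uncontrolled equivalence.
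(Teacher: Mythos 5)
This statement is stated in the paper as a \emph{conjecture} (\cref{conj:general}); the paper offers no proof of it, so there is no argument of the authors' to compare yours against. What you have written is, accordingly, best judged as a proof strategy rather than a proof, and you are candid that the decisive step is missing. The parts of your outline that lean on results actually established in the paper are sound: on \emph{perfect} objects, $\Bbbk$-linear duality is a genuine symmetric monoidal equivalence exchanging $\Perfmixgrcn_{\Bbbk}$ and $\Perfmixgrccn_{\Bbbk}$, so the middle diamond of the diagram does reduce to \cref{prop:commalg&cocommcoalg}, \cref{prop:CEhomCEcohomdual} and \cref{remark:CEgperfect}, and the identification $\lp\operatorname{cLie}^{\operatorname{perf}}_{\Bbbk}\rp^{\op}\simeq\Lieperf_{\Bbbk}$ is the standard consequence of $(-)^{\vee}$ being a monoidal self-equivalence of $\lp\Perf_{\Bbbk}\rp^{\op}$. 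Your Tate-realization argument via \cite[Porisms $2.3.20$--$2.3.21$]{pavia1} is also the right tool, conditional on having the functor whose realization is being computed.

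The genuine gap is exactly where you locate it, but it is larger than your phrasing suggests: the conjecture asserts the existence of $\operatorname{cC}^{\varepsilon}$ on \emph{all} of $\operatorname{cLie}_{\Bbbk}$, and your plan to ``rerun \cref{sec:cohomCE} dually'' is not a formal dualization. The homological construction rests on the universal enveloping algebra adjunction, the Hopf structure of \cref{construction:hopfalgebrastructureonUg}, and the bar-type relative tensor product; \cref{warning:coalgebras} itself warns that on the coalgebra side the forgetful functors need not admit adjoints, the relevant comonads do not agree with the cooperadic actions, and the cofree coalgebra functors have no tractable description. So the existence of a ``universal co-enveloping cocommutative coalgebra'' functor and of the cotensor construction you invoke is precisely the open content of the conjecture, not a corollary of the machinery in \cref{chapter:liealgebras}. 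Your alternative route (left Kan extension from perfect Lie coalgebras) founders, as you note, on the failure of $\left|-\right|^{\operatorname{t}}$ to commute with the needed colimits without a boundedness hypothesis. In short: the restriction of the diagram to the perfect sub-$\infinity$-categories is within reach of the paper's results, but the full statement remains unproven by your proposal, consistent with its status as a conjecture.
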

\begin{remark}
	While the dual of a Lie coalgebra is always a Lie algebra, the converse is true only for perfect Lie algebras; hence, we cannot hope to have a well defined $\Bbbk$-linear dual \infinity-functor $\Lie_{\Bbbk}\to\operatorname{cLie}_{\Bbbk}$. Moreover, the diagram in \cref{conj:general} cannot commute in every direction if we add the $\Bbbk$-linear dual $\operatorname{cLie}_{\Bbbk}\to\operatorname{Lie}_{\Bbbk}$: if $\mathfrak{a}$ is an abelian (i.e., trivial) Lie coalgebra, its Chevalley-Eilenberg algebra is equivalent as a commutative algebra to $\Sym_{\Bbbk}{\lp\mathfrak{a}[-1]\rp}$. In particular, if $\mathfrak{a}\coloneqq \Bbbk[t]^{\operatorname{ab}}$, we have that its Chevalley-Eilenberg algebra is the trivial square-zero extension $\Bbbk\oplus\Bbbk[t][-1]$, while the Chevalley-Eilenberg algebra of its dual Lie algebra is $\Bbbk\oplus\Bbbk\lqd t\rqd^{\vee}.$ 
\end{remark}
\begin{parag}
	Conjectures \ref{conj:main} and \ref{conj:main2} provide a derived analogue of \cref{prop:vogliamogeneralizzare}. Indeed, if $\gfrak$ is a discrete Lie algebra, then the homological mixed graded Chevalley-Eilenberg construction can be identified with the usual Grassmann graded coalgebra $\Sym^{\bullet}_{\Bbbk}(\gfrak[-1])\cong\bigwedge^{\bullet}\gfrak$, endowed with the usual Chevalley-Eilenberg differential, considered as a honest chain complex in homological non-negative degrees in the heart of the mixed graded Postnikov $t$-structure on $\Modmixgr_{\Bbbk}$ (\cite[Theorem $1.3.1$]{pavia1}).  If moreover $\gfrak$ is perfect (hence, finite and projective), then the cohomological mixed graded Chevalley-Eilenberg construction sends $\gfrak$ to the Grassmann graded algebra $\Sym_{\Bbbk}^{\bullet}{\lp\gfrak[1]\rp}^{\vee}\cong\bigwedge^{\bullet}\gfrak^{\vee}$, again with a Chevalley-Eilenberg differential, considered as a honest chain complex in homological non-positive degrees in the heart of the mixed graded Postnikov $t$-structure. 
\end{parag}
\begin{remark}
	In \cite[Proposition $3.6.2$]{CPTVV}, the authors proved \cref{conj:main2}, in terms of formal mixed graded commutative $\Bbbk$-algebras, for the case of discrete Lie algebras whose underlying $\Bbbk$-module is finite and projective.
\end{remark}
The main obstruction to proving \cref{conj:main} lies in the description of the left adjoint $\operatorname{L}_{\CE}$ to the inclusion of mixed graded cocommutative coalgebras of Chevalley-Eilenberg type inside all coaugmented mixed graded cocommutative coalgebras. In the following, we provide some evidence that strongly suggests our conjectures to hold.
\begin{lemman}
	\label{lemma:siftedcolimitsCE}
	The natural \infinity-functor $\Prim\circ \oblv_{\varepsilon}\colon\CEcAlgmixgr\to\Mod_{\Bbbk}$ in the diagram \ref{square:CEpullback}  preserves sifted colimits.
\end{lemman}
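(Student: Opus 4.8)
The plan is to reduce the statement to a computation of sifted colimits of graded symmetric powers, via the pullback description \ref{square:CEpullback}. First I would observe that the \infinity-functor in question is (a name for) the left-hand vertical leg $\pi\colon\CEcAlgmixgr\to\Mod_{\Bbbk}$ of the pullback square \ref{square:CEpullback}: by construction an object of $\CEcAlgmixgr$ is a triple consisting of a $\Bbbk$-module $V$, an object $A_{\bullet}\in\CoCommmixgraug$ and an equivalence $\oblv_{\varepsilon}(A_{\bullet})\simeq\Symgr_{\Bbbk}{\lp V[-1](1)\rp}$, with $\pi$ selecting $V$; since $\Symgr_{\Bbbk}{\lp(-)[-1](1)\rp}$ is fully faithful (\cref{prop:gradedsymfullyfaithful}), with inverse on its essential image given, on underlying graded $\Bbbk$-modules, by extracting the weight-$1$ part and shifting it by $[1]$ (cf. the proof of \cref{prop:fakegradedsymfullyfaithful}), this $V$ is precisely $\Prim$ of the underlying graded coalgebra $\oblv_{\varepsilon}(A_{\bullet})$. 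Hence it suffices to show that $\pi$ preserves sifted colimits.

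Second, I would prove that $\pi$ preserves sifted colimits by showing that they are computed "on the ambient coalgebra". Given a sifted diagram $\{A_i\}$ in $\CEcAlgmixgr$, so $\oblv_{\varepsilon}(A_i)\simeq\Symgr_{\Bbbk}{\lp V_i[-1](1)\rp}$ with $V_i=\pi(A_i)$, I would form $A_{\bullet}=\colim_i A_i$ in the ambient $\infinity$-category $\CoCommmixgraug$. Using that $\oblv_{\varepsilon}\colon\CoCommmixgraug\to\CoCommgraug$ preserves all colimits (\ref{parag:forgettingmixedstructureisok}) and that the bottom \infinity-functor $\Symgr_{\Bbbk}{\lp(-)[-1](1)\rp}$ preserves sifted colimits (the remaining step), one gets $\oblv_{\varepsilon}(A_{\bullet})\simeq\colim_i\Symgr_{\Bbbk}{\lp V_i[-1](1)\rp}\simeq\Symgr_{\Bbbk}{\lp(\colim_i V_i)[-1](1)\rp}$, so $A_{\bullet}$ again lies in $\CEcAlgmixgr$ with $\pi(A_{\bullet})\simeq\colim_i V_i$. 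Full faithfulness of $\CEcAlgmixgr\hookrightarrow\CoCommmixgraug$ then makes $A_{\bullet}$ the sifted colimit inside $\CEcAlgmixgr$ as well, and $\pi$ preserves it.

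Third, I would check that $\Symgr_{\Bbbk}{\lp(-)[-1](1)\rp}\colon\Mod_{\Bbbk}\to\CoCommgraug$ preserves sifted colimits. The forgetful \infinity-functor $\oblv^{\gr}_{\operatorname{cCcAlg}}\colon\CoCommgraug\to\Modgr_{\Bbbk}$ is conservative and preserves all colimits (\cref{warning:coalgebras}), so it is enough to treat the composite with it; by \cref{remark:cofreecoalgebra} (together with \ref{eq:forgetfulcoalgebras}) this composite is the usual graded symmetric-algebra \infinity-functor, whose weight-$p$ component is $V\mapsto\Sym^p_{\Bbbk}(V[-1])$. Since colimits of graded $\Bbbk$-modules are computed weight-wise, I am reduced to checking that each $\Sym^p_{\Bbbk}$ preserves sifted colimits; in characteristic $0$ one has $\Sym^p_{\Bbbk}(W)\simeq(W^{\otimes p})_{\Sigma_p}$, and sifted colimits commute both with the finite tensor power $W\mapsto W^{\otimes p}$ (a finite product) and with homotopy $\Sigma_p$-coinvariants (a colimit), whence $\Sym^p_{\Bbbk}$ preserves sifted colimits. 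I expect this last point to be the main obstacle, and essentially the only non-formal one: the cofree cocommutative coalgebra \infinity-functor is a right adjoint, hence a priori incompatible with colimits, and it is only because in characteristic $0$ it decomposes weight-wise into symmetric powers built from finite products and homotopy coinvariants that sifted colimits can be commuted past it — a phenomenon which genuinely fails when $\operatorname{char}\Bbbk>0$.
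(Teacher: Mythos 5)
Your proposal is correct and follows essentially the same route as the paper: identify $\Prim\circ\oblv_{\varepsilon}$ via the full faithfulness of $\Symgr_{\Bbbk}{\lp(-)[-1](1)\rp}$, compute the sifted colimit in the ambient $\infinity$-category using that $\oblv_{\varepsilon}$ preserves colimits of coalgebras, and reduce everything to the fact that the graded symmetric coalgebra $\infinity$-functor preserves sifted colimits. The only cosmetic difference is in the last step, where the paper invokes the characteristic-$0$ identification of the cofree cocommutative coalgebra with the free commutative algebra and cites \cite{studyindag2} for $\oblv_{\operatorname{CAlg}}$ preserving sifted colimits, whereas you verify the same fact directly weight-wise via $\Sym^p_{\Bbbk}(W)\simeq (W^{\otimes p})_{\Sigma_p}$; both justifications are standard and equivalent.
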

\begin{proof}
	The forgetful \infinity-functor $\oblv_{\varepsilon}\colon\CEcAlgmixgr\to\CoCommgraug$ factors through the essential image of the cofree ind-nilpotent cocommutative coalgebra on mixed graded $\Bbbk$-modules of the form $M[-1](1).$ Since such \infinity-functor is fully faithful , the \infinity-functor $\Prim$ can be identified with the composition of the forgetful \infinity-functor $\oblv_{\varepsilon}$ and the inverse equivalence that takes a cofree ind-nilpotent cocommutative graded coalgebra to its primitive elements (i.e., the $\Bbbk$-module of elements in weight $1$). But the composition$$\oblv_{\operatorname{cCcAlg}}\circ \operatorname{coFree}_{\operatorname{CoComm^{aug}}}\circ (-)(-1)\circ [-1]\colon\Mod_{\Bbbk}\longrightarrow\Modgr_{\Bbbk}$$agrees with$$\oblv_{\operatorname{CAlg}}\circ \operatorname{Free}_{\operatorname{Comm^{aug}}}\circ (-)(-1)\circ [-1]\colon\Mod_{\Bbbk}\longrightarrow\Modgr_{\Bbbk},$$because in characteristic $0$ the graded cofree ind-nilpotent cocommutative coalgebra and the graded free commutative algebra on a $\Bbbk$-module are equivalent (\cite[Chapter $6$, Section $4.2$]{studyindag2}). Now, $\oblv_{\operatorname{cCcAlg}}$ is conservative and commutes with all colimits, being a left adjoint. Moreover, $$\oblv_{\operatorname{CAlg}}\circ \operatorname{Free}_{\operatorname{Comm^{aug}}}\circ (-)(-1)\circ [-1]\colon\Mod_{\Bbbk}\to\Modgr_{\Bbbk}$$commutes with sifted colimits, because it is a composition of \infinity-functors which commute with sifted colimits (they are all left adjoint except for $\oblv_{\operatorname{CAlg}}$, which however commutes with sifted colimits in virtue of \cite[Chapter $6$, Section $1.1.3$]{studyindag2}). Finally, the forgetful \infinity-functor $\oblv_{\varepsilon}\colon\CoCommmixgraug\to\CoCommgraug$ commutes with colimits, since the usual forgetful \infinity-functor $\oblv_{\varepsilon}\colon\Modmixgr_{\Bbbk}\to\Mod_{\Bbbk}$ commutes with colimits and colimits of coalgebras are computed in the underlying category. All these ingredients together imply that sifted colimits of mixed graded cocommutative coalgebras of Chevalley-Eilenberg type are again of Chevalley-Eilenberg type, hence our claim follows.
\end{proof}
\begin{porism}
	\label{porism:monad}
	\cref{lemma:siftedcolimitsCE} implies that the \infinity-functor $\Prim\colon\CEcAlgmixgr\to\Mod_{\Bbbk}$ is a monadic \infinity-functor: it commutes with all limits, it commutes with sifted colimits, and it is conservative, hence the conditions of Barr-Beck-Lurie's monadicity theorem (\cite[Theorem $4.7.0.3$]{ha}) are satisfied. Arguing analogously to what we did in the proof of \cref{lemma:CElocalization}, we obtain then that the \infinity-functor $\CE_{\varepsilon}\colon\Lie_{\Bbbk}\to\CoCommgraug$ is monadic as well.\\In particular, \cref{conj:main} can be re-formulated as follows: the two monads over $\Mod_{\Bbbk}$ defined by $\oblv_{\Lie}\colon\Lie_{\Bbbk}\to\Mod_{\Bbbk}$ and $\Prim\circ\oblv_{\varepsilon}\colon\CEcAlgmixgr\to\Mod_{\Bbbk}$ are equivalent.
\end{porism}
\begin{parag}
	\label{parag:trivialgroup}
	Recall (\cite[Chapter $6$, Proposition $1.6.4$]{GR}) that the loop $\infinity$-functor $$\Omega_{\Lie}\colon \Lie\to\GrpLie$$is an equivalence of $\infinity$-categories. Moreover, the diagram
	$$	\begin{tikzpicture}[scale=0.75,baseline=0.5ex]
	\node (a) at (-3,3){$\Lie_{\Bbbk}$};
	\node (b) at (3,3){$\GrpLie$};
	%	\node (c) at (3,3){$\CEcAlgmixgr$};
	\node (a1) at (-3,0){$\Mod_{\Bbbk}$};
	\node (b1) at (0,0){$\Mod_{\Bbbk}$};
	\node (c1) at (3,0){$\Lie_{\Bbbk}$};
	\draw[->,font=\scriptsize] (a) to node[above]{$\simeq$}(b);
	\draw[->,font=\scriptsize] (a) to node[below]{$\Omega_{\Lie}$}(b);
	\draw[->,font=\scriptsize] (a) to node[left]{$\oblv_{\Lie}$}(a1);
	\draw[->,font=\scriptsize] (b) to node[right]{$\oblv_{\operatorname{Grp}}$} (c1);
	\draw[->,font=\scriptsize] (a1) to node[above]{$\simeq$} (b1);
	\draw[->,font=\scriptsize] (a1) to node[below]{$[-1]$} (b1);
	\draw[->,font=\scriptsize] (b1) to node[below]{$\triv_{\Lie}$} (c1);
	%	\draw[->,font=\scriptsize] ([yshift=3pt]b1.west) to node[above]{$(-)_1$}  ([yshift=3pt]b.east);
	%	\draw[->,font=\scriptsize] ([yshift=3pt]b1.west) to node[below]{$\simeq$}  ([yshift=3pt]b.east);
	%	\draw[->,font=\scriptsize] ([yshift=-3pt]b1.west) to node[below]{$\triv_{\varepsilon}$} ([yshift=-3pt]a1.east);
	%	\draw[->,font=\scriptsize] ([xshift=-3pt]a1.south) to node[left]{$\oblv_{\varepsilon}$} ([xshift=-3pt]b1.north);
	%	\draw[right hook->,font=\scriptsize] (b1) to node[right]{$\triv_{\varepsilon}$} (a1);
	%	\draw[->,font=\scriptsize] ([xshift=-3pt]a.south) to node[left]{$\oblv_{\Lie}$}  ([xshift=-3pt]b.north);
	%	\draw[right hook->,font=\scriptsize] (b) to node[left]{$\triv_{\Lie}$} (a);
	\end{tikzpicture}$$
	%$$\begin{tikzpicture}[scale=0.75]
	%contenuto...
	%\end{tikzpicture}$$
	commutes (\cite[Chapter $6$, Proposition $1.7.2$]{GR}).
\end{parag}
A similar result applies also to mixed graded cocommutative coalgebras of Chevalley-Eilenberg type.
\begin{lemman}
	\label{lemma:CEcalgebrasaregroupobjects}
	The loop \infinity-functor$$\Omega_{\varepsilon}\colon\CEcAlgmixgr\overset{\simeq}{\longrightarrow}\operatorname{Grp}{\lp\CEcAlgmixgr\rp}$$provides an equivalence of \infinity-categories, with inverse given by delooping. 
\end{lemman}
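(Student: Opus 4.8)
The plan is to transport the standard equivalence between $\Mod_\Bbbk$ and its $\infty$-category of group objects along the monadic functor $P\coloneqq\Prim\circ\oblv_\varepsilon\colon\CEcAlgmixgr\to\Mod_\Bbbk$. Recall from \cref{porism:monad} that $\CEcAlgmixgr$ is pointed and presentable, with zero object $\Bbbk(0)\simeq\CE_\varepsilon(0)$, and that $P$ is conservative, preserves all limits, and — by \cref{lemma:siftedcolimitsCE} — preserves sifted colimits, hence geometric realizations. Consequently $\CEcAlgmixgr$ admits all finite limits and all geometric realizations, so the loop functor $\Omega_\varepsilon$ has a left adjoint, the delooping functor $B_\varepsilon\colon\operatorname{Grp}(\CEcAlgmixgr)\to\CEcAlgmixgr$, $G_\bullet\mapsto\left|G_\bullet\right|$; proving the lemma amounts to showing that the unit and counit of $B_\varepsilon\dashv\Omega_\varepsilon$ are equivalences.

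Since $P$ preserves finite products it induces a functor $\operatorname{Grp}(P)\colon\operatorname{Grp}(\CEcAlgmixgr)\to\operatorname{Grp}(\Mod_\Bbbk)$ by levelwise postcomposition, and $\operatorname{Grp}(P)$ is conservative because $P$ detects levelwise equivalences of simplicial objects. The point is then that $P$ carries the loop--delooping picture of $\CEcAlgmixgr$ to that of $\Mod_\Bbbk$. For $A\in\CEcAlgmixgr$ the group object $\Omega_\varepsilon A$ is the \v{C}ech nerve $\check{C}(\Bbbk(0)\to A)$; as $P$ preserves finite limits and sends $\Bbbk(0)$ to $0$, applying $P$ levelwise identifies $\operatorname{Grp}(P)(\Omega_\varepsilon A)$ with $\check{C}(0\to PA)=\Omega_{\Mod_\Bbbk}(PA)$, and since the counit $B_\varepsilon\Omega_\varepsilon A\to A$ is the canonical map $\left|\check{C}(\Bbbk(0)\to A)\right|\to A$ and $P$ preserves the realization, $P$ sends it to the canonical map $\left|\check{C}(0\to PA)\right|\to PA$, which is the counit of $B_{\Mod_\Bbbk}\dashv\Omega_{\Mod_\Bbbk}$. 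Symmetrically, for $G_\bullet\in\operatorname{Grp}(\CEcAlgmixgr)$ the unit $G_\bullet\to\Omega_\varepsilon B_\varepsilon G_\bullet$ is the comparison map $G_\bullet\to\check{C}(\Bbbk(0)\to\left|G_\bullet\right|)$, and $\operatorname{Grp}(P)$ carries it to the corresponding comparison map for $\operatorname{Grp}(P)(G_\bullet)$ in $\Mod_\Bbbk$.

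Because $\Mod_\Bbbk$ is stable, $\Omega_{\Mod_\Bbbk}\colon\Mod_\Bbbk\to\operatorname{Grp}(\Mod_\Bbbk)$ is an equivalence with inverse $B_{\Mod_\Bbbk}$, so the unit and counit of $B_{\Mod_\Bbbk}\dashv\Omega_{\Mod_\Bbbk}$ are equivalences. By the previous paragraph the counit $B_\varepsilon\Omega_\varepsilon A\to A$ becomes an equivalence after applying the conservative functor $P$, and the unit $G_\bullet\to\Omega_\varepsilon B_\varepsilon G_\bullet$ becomes an equivalence after applying the conservative functor $\operatorname{Grp}(P)$; hence both are equivalences, and $\Omega_\varepsilon$ is an equivalence with inverse $B_\varepsilon$, i.e. delooping. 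The step I expect to require the most care is the middle one: verifying that $\Omega_\varepsilon$ genuinely is the \v{C}ech-nerve functor and that the unit and counit of $B_\varepsilon\dashv\Omega_\varepsilon$ admit the stated descriptions as comparison maps for \v{C}ech nerves and their realizations — so that limit- and colimit-preservation of $P$ really do translate into compatibility with $\Omega$ and $B$ — together with the bookkeeping making these identifications natural. Everything else is formal once \cref{lemma:siftedcolimitsCE} and \cref{porism:monad} are in hand; this is exactly the analogue for $\CEcAlgmixgr$ of the equivalence $\Lie_\Bbbk\simeq\operatorname{Grp}(\Lie_\Bbbk)$ recalled earlier in this section.
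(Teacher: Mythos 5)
Your proof is correct and follows essentially the same route as the paper's: both reduce the unit and counit of $\mathsf{B}_{\varepsilon}\dashv\Omega_{\varepsilon}$ to the stable case of $\Mod_{\Bbbk}$ by applying the conservative functor $\Prim\circ\oblv_{\varepsilon}$, using \cref{lemma:siftedcolimitsCE} for compatibility with the delooping (geometric realization) and limit-preservation for compatibility with the looping. The only difference is presentational — you phrase the comparison via \v{C}ech nerves where the paper exhibits two explicit commutative squares intertwining $\mathsf{B}_{\varepsilon},\Omega_{\varepsilon}$ with the shifts $[\pm1]$ on $\Mod_{\Bbbk}$.
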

\begin{proof}
	This is, essentially, a re-writing of the proof of \cite[Proposition $1.6.2$]{studyindag2}. Since the delooping $\mathsf{B}_{\varepsilon}\colon\GrpChev\to\CoCommmixgraug$ is a left adjoint to the loop \infinity-functor $\Omega_{\varepsilon}$, we have a unit morphism$$\operatorname{id}_{\operatorname{Grp}{\lp\CoCommmixgraug\rp}}\longrightarrow\Omega_{\varepsilon}\circ\mathsf{B}_{\varepsilon}$$and a counit morphism$$\mathsf{B}_{\varepsilon}\circ\Omega_{\varepsilon}\longrightarrow\operatorname{id}_{\CoCommmixgraug}.$$In order to prove the equivalence, it suffice to show that these natural transformations are equivalences of \infinity-functors. Since both forgetting the group structure and the mixed graded structure are conservative operations, it suffices to prove that$$\oblv_{\varepsilon}\circ\oblv_{\operatorname{Grp}}\longrightarrow \oblv_{\varepsilon}\circ\oblv_{\operatorname{Grp}}\circ\hsp\Omega_{\varepsilon}\circ\mathsf{B}_{\varepsilon}$$and$$\oblv_{\varepsilon}\circ\hsp \mathsf{B}_{\varepsilon}\circ\Omega_{\varepsilon}\longrightarrow\oblv_{\varepsilon}$$are natural equivalences of graded \textit{cofree} cocommutative coalgebras. Now the \infinity-functor $$\Prim\colon\CoCommgraug\longrightarrow\Mod_{\Bbbk},$$given by considering the $\Bbbk$-module $A_1[1]$ of a graded cocommutative coalgebra $A_{\bullet}$, reflects equivalences of graded cofree cocommutative coalgebras, since $$\Symgr_{\Bbbk}{\lp(-)[-1](1)\rp}\colon \Mod_{\Bbbk}\longrightarrow\CoCommgraug$$is fully faithful in virtue of \cref{prop:gradedsymfullyfaithful}. In particular, we are left to show that the natural transformations$$\Prim\circ\oblv_{\varepsilon}\circ\oblv_{\operatorname{Grp}}\longrightarrow\Prim\circ \oblv_{\varepsilon}\circ\oblv_{\operatorname{Grp}}\circ\hsp\Omega_{\varepsilon}\circ\mathsf{B}_{\varepsilon}$$and$$\Prim\circ\oblv_{\varepsilon}\circ\hsp \mathsf{B}_{\varepsilon}\circ\Omega_{\varepsilon}\longrightarrow\Prim\circ\oblv_{\varepsilon}$$are equivalences. Let us remark that $\mathsf{B}_{\varepsilon}$ is, by definition, a sifted colimit of mixed graded cocommutative Chevalley-Eilenberg coalgebras. Thanks to \cref{lemma:siftedcolimitsCE} we have that the following square$$\begin{tikzpicture}[scale=0.75]
	\node (b) at (3,3){$\CEcAlgmixgr$};\node (a) at (-3,3){$\operatorname{Grp}{\lp\CEcAlgmixgr\rp}$};
	\node (c) at (-3,0){$\Mod_{\Bbbk}$};\node (d) at (3,0){$\Mod_{\Bbbk}$};
	\draw[->,font=\scriptsize] (a) to node[above]{$\mathsf{B}_{\varepsilon}$}(b);
	\draw[->,font=\scriptsize] (a) to node[left]{$\Prim\circ\oblv_{\varepsilon}\circ\oblv_{\operatorname{Grp}}$}(c);
	\draw[->,font=\scriptsize] (b) to node[right]{$\Prim\circ\oblv_{\varepsilon}$}(d);
	\draw[->,font=\scriptsize] (c) to node[below]{$[1]$}(d);
	\draw[->,font=\scriptsize] (c) to node[above]{$\simeq$}(d);
	\end{tikzpicture}$$
	commutes, since $\Prim$ commutes with sifted colimits.\\On the other hand, since $\oblv_{\varepsilon}\colon\Modmixgr_{\Bbbk}\to\Modgr_{\Bbbk}$ is strongly monoidal and preserves all colimits, hence geometric realizations, it preserves also relative tensor products. In particular,$$\oblv_{\varepsilon}\colon\CoCommmixgraug\longrightarrow\CoCommgraug$$preserves pullbacks, since they are given by the relative tensor product: this follows from the fact that $\operatorname{cCcAlg}{\lp\scrC\rp}\simeq\CAlg(\scrC^{\op})^{\op}$ and pushouts of commutative algebras are given by the relative tensor product, in virtue of \cite[Proposition $3.2.4.7$]{ha}. Therefore, we have another commutative square$$\begin{tikzpicture}[scale=0.75]
	\node (a) at (-3,3){$\CEcAlgmixgr$};\node (b) at (3,3){$\operatorname{Grp}{\lp\CEcAlgmixgr\rp}$};
	\node (c) at (-3,0){$\Mod_{\Bbbk}$};\node (d) at (3,0){$\Mod_{\Bbbk}.$};
	\draw[->,font=\scriptsize] (a) to node[above]{$\Omega_{\varepsilon}$}(b);
	\draw[->,font=\scriptsize] (a) to node[left]{$\Prim\circ\oblv_{\varepsilon}$}(c);
	\draw[->,font=\scriptsize] (b) to node[right]{$\Prim\circ\oblv_{\varepsilon}\circ\oblv_{\operatorname{Grp}}$}(d);
	\draw[->,font=\scriptsize] (c) to node[below]{$[-1]$}(d);
	\draw[->,font=\scriptsize] (c) to node[above]{$\simeq$}(d);
	\end{tikzpicture}$$Then our claim follows from the fact that looping and delooping are one inverse to the other in $\Mod_{\Bbbk}$.
	%	The forgetful $\infinity$-functor$$\oblv_{\varepsilon}\colon \CEcAlgmixgr\longrightarrow\lp\CoCommgraug\rp^{\operatorname{cofree}}$$preserves all limits and in particular products, hence sends group objects 
\end{proof}
\cref{lemma:CEcalgebrasaregroupobjects} states that the \infinity-category $\CEcAlgmixgr$, which is the \infinity-category of algebras for some monad in virtue of \cref{porism:monad}, shares another important feature with the \infinity-categories of algebras for the monad defined by an operad $\mathscr{O}.$ Namely, every group object admits an essentially unique delooping. 
\begin{parag}
	As already observed (\ref{parag:CEpreserveslimits}), $\CE_{\varepsilon}\colon\Lie_{\Bbbk}\longrightarrow\CEcAlgmixgr$ preserves limits and in particular products.  Hence, it preserves group objects, and so lifts to an $\infinity$-functor\begin{align*}
	%\label{functor:grpCE}
	\CE_{\varepsilon}\colon\GrpLie\longrightarrow{\operatorname{Grp}}{\lp\CEcAlgmixgr\rp}.
	\end{align*}Moreover, being the Tate realization $\infinity$-functor strongly monoidal when restricted to non-negatively graded $\Bbbk$-modules, it lifts to the \infinity-category of cocommutative coalgebras and there it respects products, so we have also an \infinity-functor landing in the \infinity-category of cocommutative bialgebras\begin{align}
	\label{functor:GrpChev}
	\left|-\right|^{\operatorname{t}}\colon{\operatorname{Grp}}{\lp\CEcAlgmixgr\rp}\longrightarrow{\operatorname{Grp}}{\lp\operatorname{cCcAlg}_{\Bbbk//\Bbbk}.\rp}\subseteq{\operatorname{Alg}}{\lp\operatorname{cCcAlg}_{\Bbbk//\Bbbk}\rp}\eqqcolon\operatorname{cCBAlg}_{\Bbbk//\Bbbk}.
	\end{align}It is clear that \cref{prop:CEexplicitmodel} implies that the composition$$\left|-\right|^{\operatorname{t}}\circ\CE_{\varepsilon}\colon{\operatorname{Grp}}{\lp\Lie_{\Bbbk}\rp}\longrightarrow{\operatorname{Grp}}{\lp\operatorname{cCcAlg}_{\Bbbk//\Bbbk}\rp}$$is equivalent to the \infinity-functor $\GrpChev$ of \cite[Chapter $6$, Section $4.3$]{studyindag2}, which in turn is equivalent to the fully faithful \infinity-functor of the universal enveloping algebra with its Hopf structure (\cite[Chapter $6$, Theorem $6.1.2$]{studyindag2}). We can restate \cref{conj:main} in the following way.
\end{parag}
\begin{conjn}
	The \infinity-functor \ref{functor:GrpChev} is fully faithful.
\end{conjn}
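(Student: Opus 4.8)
The plan is to reduce the statement to \cref{conj:main} and then to attack \cref{conj:main} via the monadic description of \cref{porism:monad}. For the reduction, recall from the paragraph preceding the statement that the \infinity-functor \ref{functor:GrpChev}, precomposed with $\CE_{\varepsilon}\colon\GrpLie\to{\operatorname{Grp}}{\lp\CEcAlgmixgr\rp}$, is equivalent to the \infinity-functor $\GrpChev$ of \cite[Chapter $6$, Section $4.3$]{studyindag2}, which is fully faithful by \cite[Chapter $6$, Theorem $6.1.2$]{studyindag2}. By \ref{parag:trivialgroup} and \cref{lemma:CEcalgebrasaregroupobjects} the loop \infinity-functors $\Omega_{\Lie}\colon\Lie_{\Bbbk}\simeq\GrpLie$ and $\Omega_{\varepsilon}\colon\CEcAlgmixgr\simeq{\operatorname{Grp}}{\lp\CEcAlgmixgr\rp}$ are equivalences, and $\CE_{\varepsilon}$ commutes with loops since it preserves limits (\ref{parag:CEpreserveslimits}); hence ${\operatorname{Grp}}{\lp\CE_{\varepsilon}\rp}$ is fully faithful if and only if $\CE_{\varepsilon}\colon\Lie_{\Bbbk}\to\CEcAlgmixgr$ is. Therefore, if \cref{conj:main} holds then ${\operatorname{Grp}}{\lp\CE_{\varepsilon}\rp}$ is an equivalence and \ref{functor:GrpChev} is fully faithful; conversely, the full faithfulness of \ref{functor:GrpChev} forces that of ${\operatorname{Grp}}{\lp\CE_{\varepsilon}\rp}$ and hence of $\CE_{\varepsilon}$. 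So the statement sits between the fully faithful half of \cref{conj:main} and its full content, and it is enough to establish \cref{conj:main}.

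For \cref{conj:main}, by \cref{porism:monad} both $\oblv_{\Lie}\colon\Lie_{\Bbbk}\to\Mod_{\Bbbk}$ and $\Prim\circ\oblv_{\varepsilon}\colon\CEcAlgmixgr\to\Mod_{\Bbbk}$ are monadic, and \cref{remark:explicitgradedpiecesCEcoalg} (through the identification $\oblv_{\varepsilon}\circ\CE_{\varepsilon}\simeq\Symgr_{\Bbbk}{\lp(-)[-1](1)\rp}\circ\oblv_{\Lie}$, together with the fact that $\Prim$ of a graded symmetric coalgebra recovers its generators) gives $\Prim\circ\oblv_{\varepsilon}\circ\CE_{\varepsilon}\simeq\oblv_{\Lie}$, so $\CE_{\varepsilon}$ lies over $\Mod_{\Bbbk}$. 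By the Barr--Beck--Lurie criterion and the comparison of monads (\cite[Theorem $4.7.0.3$]{ha} and \cite[Section $4.7.3$]{ha}), $\CE_{\varepsilon}$ is an equivalence precisely when the induced morphism on $\Mod_{\Bbbk}$ from the free Lie monad to the monad $U_{\CE}$ of $\CEcAlgmixgr$ is an equivalence; writing $F_{\CE}$ for the left adjoint of $\Prim\circ\oblv_{\varepsilon}$, one has $\Lfrak\circ F_{\CE}\simeq(\text{free Lie algebra})$ automatically, as both are left adjoint to $\oblv_{\Lie}$, so this amounts to showing that the unit $F_{\CE}(P)\to\CE_{\varepsilon}{\lp\text{free Lie algebra on }P\rp}$ is an equivalence. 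Since, in characteristic $0$, the underlying functor of $\Symgr_{\Bbbk}{\lp(-)[-1](1)\rp}$ is the free graded commutative algebra functor (as used in the proof of \cref{lemma:siftedcolimitsCE}), both $\CE_{\varepsilon}$ and $\Prim\circ\oblv_{\varepsilon}$ preserve sifted colimits; hence $U_{\CE}$ and the free Lie monad preserve sifted colimits and the comparison need only be tested on perfect $\Bbbk$-modules, after which the discrete finite-dimensional case (\cite[Proposition $3.6.2$]{CPTVV}, \cref{prop:vogliamogeneralizzare}) together with a Postnikov-tower argument would give the general statement.

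The main obstacle — as flagged just before \cref{conj:main} — is the explicit control of the free Chevalley--Eilenberg coalgebra $F_{\CE}$, equivalently of the localization $\operatorname{L}_{\CE}\colon\CoCommmixgraug\to\CEcAlgmixgr$ of \ref{parag:CEpreserveslimits}: one must exhibit the universal Chevalley--Eilenberg-type approximation of a coaugmented mixed graded cocommutative coalgebra as a convergent weight-filtered cobar-type construction and match it, along the Tate realization, with the classical Lie-algebra Koszul dual over $\Modfil_{\Bbbk}$. The natural candidate for $\Lfrak$ is the mixed graded refinement of the enhanced (ind-nilpotent) coprimitives \infinity-functor of \cite[Chapter $6$]{studyindag2}; the reason it should succeed here — whereas the plain cohomological \infinity-functor $\CE^{\bullet}$ of \cref{sec:classicalCE} is not fully faithful in the derived setting (cf.\ the Warning concluding \cref{sec:classicalCE}) — is that the weight grading rigidifies bar--cobar duality and that $\left|-\right|^{\operatorname{t}}$ is strongly monoidal and colimit-preserving on $\Modmixgrcn_{\Bbbk}$ by \cite[Porisms $2.3.20$, $2.3.21$]{pavia1}, so that the Tate realization intertwines the mixed graded bar construction computing $\Lfrak$ with the bar construction computing the usual Koszul dual. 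Verifying that this candidate inverts $\CE_{\varepsilon}$ on both sides, and in particular the convergence of the weight-filtered resolutions involved, is where the genuine work lies; the explicit mixed differential of \cref{remark:CEdifferential} and the acyclicity computation of \cref{prop:CEexplicitmodel} supply the input for the free-object check.
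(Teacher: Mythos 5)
This statement is a \emph{conjecture}: the paper offers no proof of it, and introduces it precisely as a restatement of \cref{conj:main} via the observation (made in the paragraph preceding the conjecture) that $\left|-\right|^{\operatorname{t}}\circ\CE_{\varepsilon}\simeq\GrpChev$ is the fully faithful universal enveloping Hopf algebra \infinity-functor. Your first paragraph therefore does not add anything beyond the paper's own setup: the two-way reduction between the full faithfulness of \ref{functor:GrpChev} and that of $\CE_{\varepsilon}$ (using $\Omega_{\Lie}$, \cref{lemma:CEcalgebrasaregroupobjects}, and the limit-preservation of \cref{parag:CEpreserveslimits}) is exactly the equivalence the paper is asserting when it says \cref{conj:main} ``can be restated'' this way. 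It is correct, but it reduces one open conjecture to another; it is not a proof.

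The substantive content of your proposal is the monadic attack on \cref{conj:main} in the second and third paragraphs, and there the gap is exactly the one the paper flags as ``the main obstruction'': you never produce the left adjoint $F_{\CE}$ of $\Prim\circ\oblv_{\varepsilon}$ (equivalently $\operatorname{L}_{\CE}$) in a form that lets you verify that the unit $F_{\CE}(P)\to\CE_{\varepsilon}{\lp\operatorname{free}(P)\rp}$ is an equivalence — you state yourself that this is ``where the genuine work lies.'' Two specific steps would also need repair even as a strategy. First, the induced morphism of monads goes $\lp\Prim\circ\oblv_{\varepsilon}\rp\circ F_{\CE}\to\oblv_{\Lie}\circ\operatorname{Free}_{\Lie}$, not the other way around as you wrote (a harmless slip, but it matters when you decide which unit to test). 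Second, the reduction ``perfect modules, then the discrete finite-dimensional case of \cite[Proposition $3.6.2$]{CPTVV} plus a Postnikov-tower argument'' does not work as stated: sifted-colimit preservation legitimately reduces the comparison to perfect $\Bbbk$-modules, but perfect modules are not sifted colimits of discrete finite-dimensional ones, a Postnikov tower is a limit and neither monad is shown to commute with the relevant limits, and the cited result of \cite{CPTVV} concerns the cohomological algebra of a discrete finite projective Lie algebra rather than the homological coalgebra of a free Lie algebra on a perfect module. So the proposal correctly locates the difficulty but does not resolve it; the statement remains open both in the paper and in your argument.
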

The advantage of this perspective to the problem is that the Lie structure of any group Lie algebra is canonically trivial (\cite[Corollary $1.7.3$]{studyindag2}). This allows to reduce the proof of the fully faithfulness to trivial Lie algebras, which are sent to mixed graded modules with trivial mixed structure in virtue of \cref{corollary:trivliealgebrasaretrivcoalgebras}. However, it is not guaranteed that $\mathfrak{L}$ sends mixed graded Chevalley-Eilenberg coalgebras with trivial mixed structure to abelian Lie algebras.
%\begin{remark}
%In virtue of \cref{remark:trivliealgebrasaretrivcoalgebras} and \cite[Corollary $1.7.3$]{studyindag2}, which states that every group object in Lie algebras is canonically trivial, it would be tempting to test the fully faithfulness of $\CE_{\varepsilon}$ only on trivial Lie algebras. If one could prove that the counit $\mathfrak{L}\circ\CE_{\varepsilon}\circ\triv_{\Lie}\to\triv_{\Lie}$ is precisely the adjoint of \infinity-functor $\CE_{\varepsilon}\circ\triv_{\Lie}\simeq \triv_{\varepsilon}\circ\Symgr_{\Bbbk}{\lp(-)[-1](1)\rp}$ is realized by 
%\end{remark}
\subsection{Representations as mixed graded modules}
\label{sec:conjrep}
Analogously to the claim of \cref{conj:main}, we can expect representations of Lie algebras to fully faithfully embed into some precise \infinity-category of mixed graded $\Bbbk$-modules, assuming a sufficiently well behaviour of our Lie algebra..\\For this section, we shall fix a Lie algebra $\gfrak$, perfect as a $\Bbbk$-module. As already stated in \cref{prop:CEmod}, the \infinity-functor $\CE^{\varepsilon}(\gfrak;\hsp-)\colon\LMod_{\Ug}\to\Modmixgr_{\Bbbk}$ factors through the \infinity-category of mixed graded $\CE^{\varepsilon}$-modules $\ModCEmixgr$. As a purely graded $\Bbbk$-module, $\CE^{\varepsilon}(\gfrak;\hsp M)$ is equivalent to the graded $\Bbbk$-linear dual $$\oblv_{\varepsilon}\CE^{\varepsilon}(\gfrak;\hsp M)\simeq\Mapin^{\gr}_{\Bbbk}{\lp \oblv_{\varepsilon}\CE_{\varepsilon}(\gfrak),\hsp M(0)\rp}$$ in virtue of the discussion of \cref{parag:CEmod}. Since $\gfrak$ is perfect, and $\oblv_{\varepsilon}\CE_{\varepsilon}(\gfrak)\simeq\Symgr_{\Bbbk}{\lp\oblv_{\Lie}\gfrak[-1](1)\rp}$, we can equivalently write$$\CE^{\varepsilon}(\gfrak;\hsp M)\simeq\Symgr_{\Bbbk}{\lp\gfrak^{\vee}[1](-1)\rp}\otimes^{\gr}_{\Bbbk}M(0).$$This motivates the following definition.
\begin{defn}
	\label{def:constantmodules}
	We say that a mixed graded $\CE^{\varepsilon}(\gfrak)$-module $M_{\bullet}$ is \textit{constant} if, as a graded $\Bbbk$-module, is equivalent to $$\Symgr_{\Bbbk}{\lp\gfrak^{\vee}[1](1)\rp}\otimes^{\gr}_{\Bbbk}M(0)$$for some $\Bbbk$-module $M$.
\end{defn}
Constant mixed graded $\CE^{\varepsilon}(\gfrak)$-modules are naturally gathered in a full sub-\infinity-category of $\ModCEmixgr$, which we denote by $\ModCEmixgrconst$. 
\begin{remark}
	It is clear, in virtue of the discussion above, that the Chevalley-Eilenberg $\infinity$-functor $\CE^{\varepsilon}\colon\LMod_{\Ug}\to\ModCEmixgr$ factors through $\ModCEmixgrconst.$
\end{remark}
The main conjecture concerning left $\Ug$-modules, for $\gfrak$ perfect as a $\Bbbk$-module, is the following.
\begin{conjn}
	\label{conj:modules}
	Under our assumptions, the Chevalley-Eilenberg \infinity-functor $$\CE^{\varepsilon}(\gfrak;\hsp-)\colon\LMod_{\Ug}\longrightarrow\ModCEmixgrconst$$is fully faithful. 
\end{conjn}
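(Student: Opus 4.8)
The plan is to mirror, at the level of modules, the reduction sketched for \cref{conj:main}. The first step is to check that both $\infinity$-categories are presentable and that $\CE^{\varepsilon}(\gfrak;\hsp-)$ admits a left adjoint. The source $\LMod_{\Ug}$ is presentable, so the issue is the target. Since the weight-zero part of $\CE^{\varepsilon}(\gfrak)$ is $\Bbbk$ (\cref{remark:weightCEcohomological}), the free graded $\oblv_{\varepsilon}\CE^{\varepsilon}(\gfrak)$-module $\infinity$-functor $\Mod_{\Bbbk}\to\Mod_{\oblv_{\varepsilon}\CE^{\varepsilon}(\gfrak)}{\lp\Modgr_{\Bbbk}\rp}$ is fully faithful; arguing exactly as for the square \ref{square:CEpullback}, this exhibits $\ModCEmixgrconst$ as the pullback of the cospan $$\Mod_{\Bbbk}\longrightarrow\Mod_{\oblv_{\varepsilon}\CE^{\varepsilon}(\gfrak)}{\lp\Modgr_{\Bbbk}\rp}\longleftarrow\ModCEmixgr$$ along the free graded module $\infinity$-functor and along $\oblv_{\varepsilon}$. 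Both legs preserve colimits, so $\ModCEmixgrconst$ is presentable (\cite[Section $5.5.3$]{htt}) and the inclusion $\ModCEmixgrconst\hookrightarrow\ModCEmixgr$ preserves all colimits. As $\CE^{\varepsilon}(\gfrak;\hsp-)$ preserves all limits and is accessible (\cref{prop:CEalllimitscolimits}) and lands in constant modules, it preserves limits into $\ModCEmixgrconst$ as well, and the Adjoint Functor Theorem produces a left adjoint $\Phi\colon\ModCEmixgrconst\to\LMod_{\Ug}$. The conjecture is then equivalent to the counit $\Phi\circ\CE^{\varepsilon}(\gfrak;\hsp-)\Rightarrow\operatorname{id}_{\LMod_{\Ug}}$ being an equivalence.

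Perfectness of $\gfrak$ is used to cut the problem down to a single test object. By \cref{prop:CEalllimitscolimits} the $\infinity$-functor $\CE^{\varepsilon}(\gfrak;\hsp-)$ preserves all colimits, hence so does $\CE^{\varepsilon}(\gfrak;\hsp-)\colon\LMod_{\Ug}\to\ModCEmixgrconst$ (the inclusion of constant modules being colimit-preserving); and $\Phi$, a left adjoint, preserves colimits. Since $\LMod_{\Ug}$ is compactly generated by the free module $\Ug$, a natural transformation between colimit-preserving $\infinity$-functors out of $\LMod_{\Ug}$ is invertible as soon as it is invertible on $\Ug$. The entire conjecture therefore reduces to the assertion that the counit $$\Phi{\lp\CE^{\varepsilon}{\lp\gfrak;\hsp\Ug\rp}\rp}\longrightarrow\Ug$$ is an equivalence in $\LMod_{\Ug}$, and what remains is a model of $\Phi$ fit for computing it. Two equivalent descriptions are available. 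Following \cref{porism:monad}, the weight-zero $\infinity$-functor $\operatorname{ev}_{0}\colon\ModCEmixgrconst\to\Mod_{\Bbbk}$ is conservative (an equivalence of constant modules is detected on weight zero) and preserves sifted colimits (by the argument of \cref{lemma:siftedcolimitsCE}), hence is monadic (\cite[Theorem $4.7.0.3$]{ha}); under this presentation $\CE^{\varepsilon}(\gfrak;\hsp-)$ is restriction of scalars along the morphism of monads $\operatorname{ev}_{0}\operatorname{free}_{0}\to\Ug\otimes_{\Bbbk}-$ induced by the equivalence $\operatorname{ev}_{0}\circ\CE^{\varepsilon}(\gfrak;\hsp-)\simeq\oblv_{\Ug}$ of \ref{parag:CEmod}, and $\Phi$ is the corresponding extension of scalars. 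Equivalently, $\CE^{\varepsilon}(\gfrak;\hsp-)=\Mormixgr_{\Ug(0)}{\lp\UCngmixgr,\hsp-(0)\rp}$ is corepresented over $\Mod_{\Bbbk}$ by $\UCngmixgr$, taken at once as a left $\Ug(0)$-module (via $\gfrak(0)\hookrightarrow\Cnmixgr(\gfrak)$) and, through its structure as the universal enveloping bialgebra of $\Cnmixgr(\gfrak)$ (cf. the proof of \cref{prop:CEmod} and \cref{remark:ugmapleftmodules}), as a module over the commutative algebra $\CE^{\varepsilon}(\gfrak)\simeq\CE_{\varepsilon}(\gfrak)^{\vee}$ (\cref{remark:CEgperfect}); then $\Phi$ is $\UCngmixgr\otimesmixgr_{\CE^{\varepsilon}(\gfrak)}(-)$ followed by extraction of weight zero, and the counit above becomes the Koszul-duality round-trip $\bigl(\UCngmixgr\otimesmixgr_{\CE^{\varepsilon}(\gfrak)}\Mormixgr_{\Ug(0)}(\UCngmixgr,\hsp\Ug(0))\bigr)_{0}\to\Ug$.

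I expect the main obstacle, as for \cref{conj:main}, to be exactly this: an explicit enough handle on $\Phi$ — equivalently on the monad $\operatorname{ev}_{0}\operatorname{free}_{0}$, equivalently on the module-level counterpart of the left adjoint $\operatorname{L}_{\CE}$ of \ref{parag:CEpreserveslimits}. Lacking it, the counit on $\Ug$ is an a priori uncontrolled totalization of bar complexes and the completion issue flagged at the end of \cref{chapter:conj} recurs; the point of assuming $\gfrak$ perfect is that then $\UCngmixgr$ is perfect in each weight and $\CE^{\varepsilon}(\gfrak)$ bounded on one side (\cref{remark:CEgperfect}), so the round-trip should be governed by the weight filtration, along the lines of the Beilinson-filtered formalism of \cite{BMS}. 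A reasonable order of attack is to treat first the finite-dimensional discrete case — where $\UCngmixgr$ is a bounded complex of finitely generated free $\Ug$-modules in each weight, the totalization defining $\Phi\circ\CE^{\varepsilon}(\gfrak;\hsp-)$ manifestly converges, and the counit on $\Ug$ may be checked against the classical Koszul resolution — as a proof of concept, and then to develop the convergence estimates needed to run the same argument for an arbitrary perfect $\gfrak$; these estimates are the crux. As noted in the introduction, the Lie-algebroid version of this computation, applied to $\mathbb{T}_X[-1]$ with $\CE^{\varepsilon}$ computing $\operatorname{dR}(X)$, is precisely what would yield a model-independent proof of the fully faithful embedding $\Mod_{\scrD_X}\hookrightarrow\Mod_{\operatorname{dR}(X)}{\lp\Modmixgr_{\Bbbk}\rp}$ of \cite{pantev2021moduli} for non-smooth $X$, which is a measure of how hard the general statement is.
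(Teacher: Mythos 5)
You should first note that the statement you are proving is stated in the paper as \cref{conj:modules}, i.e.\ as an open conjecture: the paper supplies no proof, only the surrounding discussion of \cref{sec:conjrep}. Measured against that, your text is a reduction, not a proof, and the reduction itself largely reproduces what the paper already records: presentability of $\ModCEmixgrconst$ and the colocalization structure, the existence of a left adjoint $\Phi$ when $\gfrak$ is perfect (\cref{prop:CEalllimitscolimits} and the paragraph following \cref{conj:modules}), and the monadicity pattern of \cref{porism:monad}. Your two genuinely useful formal steps --- that full faithfulness is equivalent to the counit $\Phi\circ\CE^{\varepsilon}(\gfrak;\hsp-)\Rightarrow\operatorname{id}$ being an equivalence, and that since both sides preserve colimits and $\LMod_{\Ug}$ is generated under colimits by $\Ug$ it suffices to check the counit on the single object $\Ug$ --- are correct. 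But they convert the conjecture into an equivalent open statement rather than settling it.

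The gap is exactly where you place it, and it is the entire mathematical content of the conjecture: you never show that $\Phi{\lp\CE^{\varepsilon}{\lp\gfrak;\hsp\Ug\rp}\rp}\to\Ug$ is an equivalence, nor do you produce the "explicit enough handle on $\Phi$" that your own text says is required. This is not a routine verification that can be deferred. The functor $\CE^{\varepsilon}(\gfrak;\hsp-)$ realizes the \emph{completed} Chevalley--Eilenberg side of Koszul duality (the paper notes that the filtered realization of $\CE^{\varepsilon}(\gfrak)$ has underlying object $\widehat{\CE}{}^{\bullet}(\gfrak)$), and Koszul duality between $\Ug$-modules and modules over the Chevalley--Eilenberg algebra is classically an equivalence only after imposing matching (co)completeness conditions on both sides; the conjecture asserts precisely that the "constant" condition of \cref{def:constantmodules} is the correct such condition, and that assertion is what must be proved. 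The paper itself flags the analogous danger for \cref{conj:main} (it is "not guaranteed that $\mathfrak{L}$ sends mixed graded Chevalley--Eilenberg coalgebras with trivial mixed structure to abelian Lie algebras"), and the same uncontrolled-totalization issue recurs here in your bar-complex description of the counit. Until you either compute $\Phi$ on $\CE^{\varepsilon}(\gfrak;\hsp\Ug)$ --- even in the finite-dimensional discrete case you propose as a warm-up --- or prove that the morphism of monads $\operatorname{ev}_{0}\operatorname{free}_{0}\to\Ug\otimes_{\Bbbk}-$ is an equivalence, what you have is a clean restatement of the conjecture, not an argument for it.
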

\begin{parag}
	Similarly to the case of Lie algebras and mixed graded cocommutative coalgebras, \cref{prop:CEalllimitscolimits} implies that the Chevalley-Eilenberg \infinity-functor $$\CE_{\varepsilon}\colon\LMod_{\Ug}\longrightarrow\ModCEmixgr$$admits both a left and right adjoint if $\gfrak$ is perfect. We suspect that it is the \textit{right} adjoint, that usually does not exist without the finitenss assumption on $\gfrak$, that realizes the quasi-inverse of the equivalence. Indeed, if $A_{\bullet}$ is a mixed graded commutative $\Bbbk$-algebra with some size constraint (e.g., it is bounded in non-positive weights), we can always consider both the \infinity-category of mixed graded $A_{\bullet}$-modules and the full sub-\infinity-category $\Mod^{\operatorname{const}}_{A_{\bullet}}{\lp\Modmixgr_{\Bbbk}\rp}$ of \textit{constant} mixed graded $A_{\bullet}$-modules, that is the \infinity-category spanned by mixed graded $A_{\bullet}$-modules such that the map$$A_q\otimes_{A_0}M_0 \longrightarrow M_q$$is an equivalence in each weight. This sub-\infinity-category is always a colocalization of $\Modmixgr_{\Bbbk}$: it is a presentable \infinity-category and, since the tensor product commutes with colimits separably in each variable (and they are computed as in $\Modmixgr_{\Bbbk}$, i.e., weight-wise), it is also closed under colimits. In particular, in our case, the left adjoint should correspond to the colocalization $\infinity$-functor$$\ModCEmixgr\longrightarrow\ModCEmixgrconst$$followed by the inverse equivalence $\ModCEmixgrconst\simeq\LMod_{\Ug}$.
\end{parag}
\begin{remark}
	\label{remark:tstructure}
	Given a mixed graded $\Bbbk$-algebra $A_{\bullet}$, the $\infinity$-category of mixed graded $A_{\bullet}$-modules $\Mod_{A_{\bullet}}{\lp\Modmixgr_{\Bbbk}\rp}$ is endowed itself with a left complete Postnikov $t$-structure, where the connective part is spanned by those mixed graded $A_{\bullet}$-modules which are connective for the mixed graded Postnikov $t$-structure of \cite[Theorem $1.3.1$]{pavia1}. Similarly, one can define a $t$-structure on $\Mod^{\operatorname{const}}_{A_{\bullet}}{\lp\Modmixgr_{\Bbbk}\rp}$ whose connective part is spanned by those mixed graded $A_{\bullet}$-modules which are connective for the mixed graded Postnikov $t$-structure. If $\gfrak$ is perfect \textit{and coconnective as a $\Bbbk$-module}, then it is clear that$$\CE^{\varepsilon}(\gfrak;\hsp-)\colon\LMod_{\Ug}\longrightarrow\ModCEmixgrconst$$is a $t$-exact \infinity-functor, where on the source we consider the $t$-structure described in \cite[Warning $3.5.9$]{dagx}. Indeed, if $\gfrak$ is coconnective, then its dual $\gfrak^{\vee}$ is connective and each symmetric power $\Sym^p_{\Bbbk}{\lp\gfrak^{\vee}[1]\rp}$ is $p$-connective. Therefore, given a left $\Ug$-module $M$ which is connective as a $\Bbbk$-module, we have$$\CE^{-p}{\lp\gfrak;\hsp M\rp}\simeq \Sym^p_{\Bbbk}{\lp\oblv_{\Lie}\gfrak^{\vee}[1]\rp}\otimes_{\Bbbk}M$$which is again $p$-connective, since the $t$-structure on $\Bbbk$-modules is compatible with the monoidal structure (\cite[Lemma $7.1.3.10$]{ha}).\end{remark}
\begin{remark}
	It is clear that these $t$-structures on both $\LMod_{\Ug}$ and $\ModCEmixgrconst$ described in \cref{remark:tstructure} are left complete, because limits and connective objects are detected by the forgetful \infinity-functors $\LMod_{\Ug}\to\Mod_{\Bbbk}$ and $\ModCEmixgrconst\to\Modmixgr_{\Bbbk}$, and so one can reduce to check the assertion of \cite[Proposition $1.2.1.19$]{ha} at the level of the underlying $\Bbbk$-modules and mixed graded $\Bbbk$-modules, respectively. In particular, if \cref{conj:modules} is true, the $t$-structures on $\LMod_{\Ug}$ and $\ModCEmixgrconst$ should correspond. This is linked to deformation theory of formal moduli problems. Namely, let $X$ be the formal moduli problem that corresponds to $\gfrak$ under the equivalence between homotopy Lie algebras and formal moduli problems over fields of characteristic $0$ (\cite[Theorem $2.0.2$]{dagx}). In virtue of \cite[Theorem $2.4.1$]{dagx}, there exists a monoidal fully faithful embedding $$\Qcoh(X)\longhookrightarrow\Rep_{\gfrak}\simeq\LMod_{\Ug}$$of the \infinity-category of quasi-coherent sheaves over $X$ in the \infinity-category of representations of $\gfrak$, which preserves connective objects. Moreover, $\Rep_{\gfrak}$ can be described geometrically as the $\infinity$-category of Ind-coherent sheaves $\IndCoh(X)$ (\cite[Theorem $3.5.1$]{dagx}): under this equivalence the $t$-structure of $\Rep_{\gfrak}$ corresponds to the canonical $t$-structure of $\IndCoh(X)$ characterized by the property that the $t$-structure is compatible with filtered colimits, and the canonical inclusion $\Coh(X)\hookrightarrow\IndCoh(X)$ is $t$-exact (\cite[Section $1.2.1$]{indcoh}). By Ind-extending the inclusion $\Coh(X)\hookrightarrow\Qcoh(X)$, one has an $\infinity$-functor \begin{align}
	\label{functor:indcohleftcompletion}
	\Phi_X\colon\IndCoh(X)\longrightarrow \Qcoh(X)
	\end{align}
	which exhibits $\Qcoh(X)$ with its $t$-structure described above as the left completion of the $t$-structure of $\IndCoh(X)$ (\cite[Proposition $1.3.4$]{indcoh}). In particular, \cref{conj:modules} would allow to recover quasi-coherent modules over the formal moduli problem $X$ associated to a perfect coconnective Lie algebra $\gfrak$ in the \infinity-category $\ModCEmixgrconst$.
\end{remark}
%We conclude this section with just a simple observation.
%\begin{propositionn}
%The Tate realization of the mixed graded cohomological Chevalley-Eilenberg $\infinity$-functor agrees with the usual (completed) Chevalley-Eilenberg \infinity-functor $\CE^{\bullet}\colon \Lie^{\op}_{\Bbbk}\to\CAlg_{\Bbbk}.$
%\end{propositionn}
%\begin{proof}
%	In order to prove our assertion, we shall prove that given a Lie algebra $\gfrak$, the Tate realization of its \textit{homological} Chevalley-Eilenberg mixed graded agrees with the usual homological Chevalley-Eilenberg \infinity-functor $\CE_{\bullet}\colon\Lie_{\Bbbk}\to\operatorname{cCcAlg}_{\Bbbk}.$ This \infinity-functor is given by the tensor product $\Bbbk\otimes_{\Ug}\Bbbk$: since $\CE_{\varepsilon}(\gfrak)$ is the mixed graded tensor product$$\CE_{\varepsilon}(\gfrak)\coloneqq\Bbbk(0)\otimesmixgr_{\Ug(0)}\UCngmixgr$$and is non-negatively graded, by \cref{porism:taterealizationstronglymonoidal} we know that its Tate realization is just the tensor product of Tate realizations $$\left|\Bbbk(0)\right|^{\operatorname{t}}\otimes_{\left|\Ug(0)\right|^{\operatorname{t}}}\left|\UCngmixgr\right|^{\operatorname{t}}\simeq \Bbbk\otimes_{\Ug}\left|\UCngmixgr\right|^{\operatorname{t}}.$$So we 
%\end{proof}
    \printbibliography
\end{document}